\documentclass[a4paper,reqno]{amsart}
\usepackage[english]{babel}
\usepackage{amsmath,amsthm,amssymb,amsfonts}

\usepackage{enumitem}
\setlist[enumerate]{leftmargin=*}
\setlist[itemize]{leftmargin=*}

\usepackage{mathrsfs}

\usepackage{hyperref}

\numberwithin{equation}{section}

\newtheorem{theorem}{Theorem}[section]
\newtheorem{lemma}[theorem]{Lemma}
\newtheorem{corollary}[theorem]{Corollary}
\newtheorem{proposition}[theorem]{Proposition}
\theoremstyle{definition}
\newtheorem{definition}[theorem]{Definition}
\newtheorem{assumption}[theorem]{Assumption}
\newtheorem{remark}[theorem]{Remark}

\newcommand{\DOIlink}[1]{\href{https://doi.org/#1}{\texttt{doi:#1}}}
\newcommand{\arXivlink}[1]{\href{https://arxiv.org/abs/#1}{\texttt{arXiv:#1}}}

\newcommand\supp{\mathop{\mathrm{supp}}}
\newcommand\arccosh{\mathop{\mathrm{arccosh}}}

\newcommand{\RR}{\mathbb{R}}
\newcommand{\CC}{\mathbb{C}}
\newcommand{\ZZ}{\mathbb{Z}}
\newcommand{\NN}{\mathbb{N}}

\newcommand{\Rpos}{\RR_+}
\newcommand{\Npos}{\NN_+}

\newcommand{\dil}{\mathfrak{d}}

\newcommand{\loc}{\mathrm{loc}}
\newcommand{\sloc}{\mathrm{sloc}}
\newcommand{\rad}{\mathrm{rad}}

\newcommand{\Gf}{\mathcal{G}}

\newcommand{\ee}{\mathrm{e}}

\newcommand{\dd}{\mathrm{d}}

\newcommand{\id}{\mathrm{id}}

\newcommand{\RBd}{\mathfrak{R}}

\newcommand{\Rz}{\mathcal{R}}

\newcommand{\DfD}{\mathcal{\dot D}}
\newcommand{\DfE}{\mathcal{\ddot D}}

\newcommand{\DfTD}{\mathcal{\dot E}}
\newcommand{\DfTE}{\mathcal{\ddot	E}}

\newcommand{\lie}[1]{\mathfrak{#1}}

\newcommand{\BesK}{\mathrm{K}}

\newcommand{\dimV}{d_1}
\newcommand{\dimZ}{d_2}
\newcommand{\dimN}{d}
\newcommand{\dimWZX}{\dimZ}
\newcommand{\dimX}{n}
\newcommand{\dimY}{k}

\newcommand{\opLN}{\mathcal{L}}
\newcommand{\opLaBe}{\Delta_{\mathrm{LB}}}
\newcommand{\opSubLa}{\Delta_{\mathrm{sub}}}

\newcommand\LinBdd{\mathfrak{L}}

\newcommand{\cT}{\mathcal{T}}

\newcommand{\bx}{\boldsymbol{x}}
\newcommand{\by}{\boldsymbol{y}}

\newcommand{\tc}{\mathrel{\,:\,}}
\newcommand{\defeq}{\mathrel{:=}}
\newcommand{\eqdef}{\mathrel{=:}}

\newcommand{\chr}{\mathbf{1}}

\newcommand{\vfX}{X}
\newcommand{\vfXN}{X^\flat}

\newcommand{\thr}{\varsigma}

\newcommand{\fS}{\mathrm{S}}
\newcommand{\fT}{\mathrm{T}}

\newcommand{\Dist}{\mathscr{D}}

\newcommand{\krn}{\mathrm{k}}

\renewcommand{\Re}{\mathop{\mathfrak{Re}}}

\title[Riesz transforms on Damek--Ricci spaces]{Riesz transforms for the distinguished Laplacian on Damek--Ricci spaces and operator-valued multivariate spectral multipliers}
\author{Jie Liu}
\address[J. Liu]{School of Mathematics and Statistics \\ Northwestern Polytechnical University \\ Xi'an 710129 \\ China, and Dipartimento di Scienze Matematiche \\ Politecnico di Torino \\ Corso Duca degli Abruzzi 24 \\ 10129 Torino \\ Italy}
\email{jay2000@mail.nwpu.edu.cn}
\email{jie.liu@polito.it}

\author{Alessio Martini}
\address[A. Martini]{Dipartimento di Scienze Matematiche \\ Politecnico di Torino \\ Corso Duca degli Abruzzi 24 \\ 10129 Torino \\ Italy}
\email{alessio.martini@polito.it}

\keywords{Damek--Ricci space, Riesz transform, H-type group, operator-valued spectral multiplier, heat kernel, singular integral operator}

\subjclass[2020]{22E30, 42B20}
	
\thanks{The first-named author gratefully acknowledges the financial support of the China Scholarship Council (Grant No.\ 202406290140).}

\begin{document}

\begin{abstract}
Let $\Delta = \nabla^* \nabla$ be the distinguished Laplacian on a Damek--Ricci space. We prove the $L^{p}$-boundedness of the vector of first-order Riesz transforms $\nabla \Delta^{-1/2}$ in the full range $p\in(1,\infty)$.
The most demanding part of the proof is establishing the boundedness for $p \in (2,\infty)$; this is obtained as a consequence of an operator-valued spectral multiplier theorem for the joint functional calculus of a commuting system of self-adjoint operators, which we prove here and may be of independent interest.
\end{abstract}

\maketitle

\section{Introduction}

Let $N$ be a group of Heisenberg type (H-type in short), in the sense of Kaplan \cite{K80}. In other words, $N$ is a connected, simply connected $2$-step nilpotent Lie group, whose Lie algebra $\lie{n}$ is equipped with an inner product such that, if $\lie{v}$ is the orthogonal complement of the centre $\lie{z}$ of $\lie{n}$, and $\lie{z}^*$ is the dual of $\lie{z}$, then
\begin{equation}\label{eq:htype}
\left|\mu[x,\cdot]\right| = \left|\mu\right| \left|x\right| \quad\forall x \in \lie{v} \quad\forall \mu \in \lie{z}^*,
\end{equation}
where $\mu [x,\cdot]$ is thought of as an element of the dual $\lie{v}^*$ of $\lie{v}$
(see Section \ref{s:sectionpreliminaries} for more details). The direct sum decomposition $\lie{n} = \lie{v} \oplus \lie{z}$ is a stratification of $\lie{n}$, as $[\lie{v},\lie{v}] = \lie{z}$; we shall denote by $\dimV$ and $\dimZ$ the dimensions of the layers $\lie{v}$ and $\lie{z}$ of $\lie{n}$, and set $\dimN \defeq \dimV+\dimZ$. The group $N$ is equipped with a natural family $(\dil_a)_{a \in \Rpos}$ of automorphic dilations, where $\Rpos = (0,\infty)$, acting on the Lie algebra $\lie{n}$ by
\begin{equation}\label{eq:dil}
\dil_a(x,z) = (a^{1/2} x,a z) \qquad \forall (x,z) \in \lie{v} \oplus \lie{z} \quad \forall a \in \Rpos.
\end{equation}

Let $S = N \rtimes \Rpos$ be the one-dimensional semidirect extension of $N$ obtained by letting the multiplicative group $\Rpos$ act on $N$ by automorphic dilations. Equipped with the natural left-invariant Riemannian metric induced by the factors $\Rpos$ and $N$, the manifold $S$ is also known as a Damek--Ricci space \cite{D87,DR92,DR921}. Damek--Ricci spaces are a class of Riemannian manifolds with exponential volume growth that include, but are not limited to, the rank-one Riemannian symmetric spaces of the non-compact type \cite{ADY96,CDKR91,CDKR98}. Actually, in order to include real hyperbolic spaces, one must extend the definition above, by admitting that, beside H-type groups, $N$ may also be chosen among simply connected abelian Lie groups (namely, ``degenerate H-type groups'' where $\lie{v} = \{0\}$ and thus $N \cong \RR^d$); here, however, we shall mainly focus on the nondegenerate case, where $N$ is nonabelian.

As the Riemannian metric on $S$ is left-invariant, the corresponding Riemannian measure is a left Haar measure on the group $S$. However, the group $S$ is not unimodular, so any right Haar measure $\rho$ on $S$ differs from the Riemannian measure. In this paper, following \cite{CGHM94,GS96,HS03,H74,MV10,V07} among others, we shall equip $S$ with the left-invariant Riemannian metric and the right Haar measure $\rho$; in other words, we consider $S$ as a weighted Riemannian manifold, where the weight is the modular function on $S$. In particular, for all $p\in [1,\infty]$, we shall denote by $L^{p}(S)$ the $L^{p}$ space with respect to the right Haar measure $\rho$.

Let $\vfX_{0},\dots,\vfX_{\dimN}$ be an orthonormal basis of left invariant vector fields on $S$. The so-called \emph{distinguished Laplacian} $\Delta$ on $S$ is the second-order elliptic differential operator on $S$ defined by 
\begin{equation}\label{eq:distinguishedlaplacian_intro}
\Delta = -\sum_{j=0}^{\dimN} \vfX_{j}^{2},
\end{equation}
which is left-invariant and essentially self-adjoint on $C_{c}^{\infty}(S)\subseteq L^{2}(S)$. The distinguished Laplacian $\Delta$ can also be written in divergence form as $\nabla^* \nabla$, where 
\begin{equation}\label{eq:nabla}
	\nabla =(\vfX_{0},\dots,\vfX_{\dimN})
\end{equation}
is the Riemannian gradient and $\nabla^*$ is its adjoint with respect to the measure $\rho$. Notice that $\Delta$ differs from the Laplace--Beltrami operator on $S$, as the latter is self-adjoint with respect to the Riemannian measure on $S$.

In this paper, we study the boundedness of the first-order Riesz transforms associated with $\Delta$ on $S$, namely, the operators
\begin{equation}\label{eq:Riesz}
\Rz_{j} = \vfX_{j} \Delta^{-1/2},\qquad j=0,\dots,\dimN.
\end{equation}
Our main result reads as follows.

\begin{theorem}\label{thm:maintheorem}
For $j=0,\dots,\dimN$, the first-order Riesz transform $\Rz_{j}$ is of weak type $(1,1)$ and bounded on $L^p(S)$ for any $p\in(1,\infty)$.
\end{theorem}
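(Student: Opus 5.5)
The proof splits into $p\in(1,2]$ with weak type $(1,1)$, and $p\in(2,\infty)$, where the real difficulty lies.

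\textbf{The range $p\le 2$.} For $p=2$ there is nothing to prove: $\sum_j \|\vfX_j f\|_2^2 = \langle \Delta f,f\rangle = \|\Delta^{1/2}f\|_2^2$, so the vector $(\Rz_0,\dots,\Rz_{\dimN})$ is an isometry on $L^2(S)$. For weak type $(1,1)$ we use the Calderón--Zygmund theory of Hebisch--Steger on groups with exponential growth, equipped with the right Haar measure. One writes $\Delta^{-1/2} = c\int_0^\infty e^{-t\Delta}\,t^{-1/2}\,\dd t$ and splits the integral into small times $t\le 1$ and large times $t\ge 1$.

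The small-time part $\Rz_j^0$ is handled locally. Its kernel is a local Calderón--Zygmund kernel, supported near the diagonal up to a rapidly decaying tail, and one needs only the Gaussian bounds for $\vfX_j$ applied to the heat kernel at small times. Since $S$ is locally doubling, the local CZ theory applies.

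The large-time part $\Rz_j^\infty$ requires an integrable bound. The plan is to use the explicit heat kernel of $\Delta$. Via the transfer $\Delta = \delta^{1/2}(\opLaBe - Q^2/4)\delta^{-1/2}$, it is expressed through the Laplace--Beltrami heat kernel, for which there are sharp estimates of Anker--Damek--Yacoub type. From these we aim to show $\int_S \bigl|\vfX_j h_t(x)\bigr|\,\dd\rho(x) \lesssim t^{-1/2}$ for $t \ge 1$. The extra $t^{-1/2}$ does not by itself make $\int_1^\infty t^{-1/2}\cdot t^{-1/2}\,\dd t$ converge, so one must instead use the Hebisch--Steger CZ decomposition adapted to admissible sets (Calderón--Zygmund sets). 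Each atom $b_i$ is supported in a set of size $r_i$, and we apply $\int_{r_i^2}^\infty$ versus $\int_0^{r_i^2}$ to it, giving the standard weak type $(1,1)$ argument. Interpolating with $L^2$ then gives $1<p\le2$.

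\textbf{The range $p>2$ (main obstacle).} Duality would require bounding $\Delta^{-1/2}\vfX_j^*$ on $L^{p'}$, which the argument above does not give. Instead, we exploit the group structure. Writing $S = N\rtimes\Rpos$ and taking the Fourier transform in the central variables of $N$, or using the joint functional calculus of the commuting operators $\vfX_0$ (the $\Rpos$-direction, essentially $a\partial_a$) and a partial sub-Laplacian on $N$ dilated by $a$, we express $\Delta$ as a function of a commuting system. Roughly, $\Delta = -\vfX_0^2 + (\text{const})\vfX_0 + a\,\opSubLa + a^2\Delta_{\lie z}$ in suitable coordinates.

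The Riesz transforms $\Rz_j$ then become operator-valued multipliers $m(L_1,\dots,L_k)$, where the $L_i$ are commuting self-adjoint operators and the symbol $m$ takes values in bounded operators on an auxiliary space, for example $L^2$ or $L^p$ in the $\Rpos$ variable. We would prove an operator-valued Mihlin--Hörmander theorem for such joint calculi. The needed hypotheses are Gaussian-type or finite-propagation estimates for the $L_i$ and R-boundedness of the symbol and its derivatives on UMD spaces, following the approach of Kalton--Weis / Hytönen. Verifying R-boundedness of the symbol's derivatives, which requires explicit resolvent bounds for the one-dimensional operator in $\vfX_0$, is the hardest step. Once it is in place, the theorem gives $L^p$-boundedness for $p\in(2,\infty)$ (indeed for all $p$), completing the proof.
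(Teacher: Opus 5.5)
\textbf{The range $p\le 2$.} Your scheme for $p\le 2$ is the paper's Hebisch--Steger/Vallarino scheme, but you do not identify the estimates the criterion actually needs. The unweighted bound $\int_S|\vfX_j h_t|\,\dd\rho\lesssim t^{-1/2}$ does not control the piece $\int_0^{r_i^2}$ away from the enlarged CZ set. It also does not control the piece $\int_{r_i^2}^\infty$, which needs the cancellation of $b_i$ together with an $L^1$ bound on $\nabla$ of the adjoint kernel. What is required is:
\begin{itemize}
\item a weighted bound $\int_S|\nabla h_t|\,(1+t^{-1/2}|\bx|)^\varepsilon\,\dd\rho\lesssim t^{-1/2}$, uniform in $t>0$. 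The paper proves a Gaussian-weighted version. For large $t$ this needs cancellations in the Anker--Damek--Yacoub formulas, not just pointwise bounds.
\item the second-order bound $\|\vfX_\ell(\vfX_j h_t)^*\|_1\lesssim t^{-1}$, which follows from $h_t=h_{t/2}*h_{t/2}$ and Young's inequality.
\end{itemize}
This part is fixable.

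\textbf{The range $p>2$.} This is where the genuine gap lies, and there are three problems.

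First, the operators you propose as a commuting system do not commute: $[a\partial_a,a\opLN]=a\opLN$. So $\Delta$ is not a joint function of $\vfX_0$ and ``$a$ times a sub-Laplacian''. The system that works is $(\opLN,-i\nabla_z)$ acting on $N$ alone. All dependence on $a$, including $\vfX_0$, must go into a symbol valued in $\LinBdd(L^2(\Rpos))$.

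Second, even with that system, $\Rz_j$ for $j=1,\dots,\dimV$ is not in the joint calculus. The field $\vfXN_j$ does not commute with $\opLN$, and the corresponding kernels are not radial in $x$. You must first factor out the $0$-homogeneous, $L^p(N)$-bounded Riesz transforms $\vfXN_j\opLN^{-1/2}$ of $N$, as in Proposition~\ref{prp:RzNfactorisation}.

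Third, and decisively, you simply assume the step you call hardest. That step is R-boundedness on $L^p(\Rpos)$ of $\lambda^{|\alpha|}\partial^\alpha_{(\lambda,\mu)}$ applied to a symbol like $(a\partial_a)[-(a\partial_a)^2+\lambda a+|\mu|^2a^2]^{-1/2}$. Each value of this symbol is already a singular integral: a Riesz transform for a one-dimensional Schr\"odinger operator with exponential potential. Resolvent bounds give at most $L^2$ information. What is needed is uniform $A_2$-weighted or square-function control, and you offer no route to it.

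The paper never faces this symbol. It proceeds as follows:
\begin{itemize}
\item It treats the local parts of $\Rz_j^*$ by Calder\'on--Zygmund theory.
\item It computes the kernel of $\Delta^{-1/2}$ explicitly and extracts asymptotics at infinity. It discards remainders integrable at infinity and a Hilbert-transform-type piece $\tilde K_0$.
\item It applies the multiplier theorem only to the main terms $K_0,K_{\lie v},K_{\lie z}$. Their symbols are integral operators with kernels $F_\ell(\ee^{u'}\lambda,\ee^{u'}\mu)\,\chr_{\{u\le u'-1\}}/(u-u')$, or a difference analogue for $K_0$.
\item Uniform $A_2$-weighted bounds for these symbols follow from \cite[Proposition~5.6]{M23}. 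This requires decay and H\"older estimates for the scalar functions $F_\ell$, which the paper derives from explicit Gelfand-transform formulas.
\end{itemize}
Without this reduction, or a genuine substitute for it, your plan for $p>2$ does not give a proof.
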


As a corollary of the above result, we recover the $L^p$-boundedness for $p \in (1,\infty)$ of the second-order Riesz transforms $\Rz_j \Rz_k^* = -X_j \Delta^{-1} X_k$ for $j,k=0,\dots,\dimN$, which was proved in \cite{GS96} for a certain class of Damek--Ricci spaces $S$. We point out that the convolution kernels of such second-order transforms are integrable at infinity; however, this is not the case for the first-order transforms $\Rz_j$, which constitutes one of the challenges in their analysis.

The $L^{p}$-boundedness of Riesz transforms associated with elliptic and sub-elliptic operators on Lie groups and manifolds has been studied by several authors in the past decades (see, e.g., \cite{AL94,ACDH04,B89,CD03,L06,LM98} and references therein), and it is not possible for us to provide a complete literature review on the matter. We simply observe that many works in the literature are devoted to the case where the underlying metric measure space is doubling; this is not the case here, as the manifold $S$, equipped with the Riemannian metric and the measure $\rho$, has exponential volume growth, which creates several difficulties in the analysis of the Riesz transforms at infinity. One common technique to deal with the latter problem makes use of the fact that the Laplacian under consideration has a spectral gap on $L^2$; however, this is not applicable here, as the distinguished Laplacian $\Delta$ on $S$ has no spectral gap.

As already mentioned, the class of Damek--Ricci spaces $S = N \rtimes \Rpos$ is often extended to include the degenerate case where $N \cong \RR^\dimN$ is abelian, i.e., to the case of the $ax+b$ groups $\RR^\dimN \rtimes \Rpos$. In this degenerate case, the analogue of our main result was established in \cite{HS03} for $p\leq 2$ and in \cite{M23} for $p>2$ (see also \cite{GS99,Sj99} for previous partial results); this is why here we stick to nonabelian, $2$-step H-type groups $N$. While our approach follows by and large that of \cite{HS03,M23}, a number of additional nontrivial difficulties arise when $N$ is nonabelian.

For $p\in(1,2]$, the proof of Theorem \ref{thm:maintheorem} follows a by now fairly standard scheme, based on gradient heat kernel estimates for $\Delta$, combined with the Calder\'on--Zygmund singular integral theory adapted to the non-doubling structure of $S$; the latter theory was developed by Vallarino \cite{V07} within the general framework established by Hebisch and Steger in \cite{HS03}.

More precisely, the proof that $\Rz_{j}$ is of weak type (1,1) and bounded on $L^{p}(S)$ for $p \in (1,2]$ proceeds by showing that $\Rz_{j}$ is a singular integral operator whose kernel satisfies a suitable integral H\"ormander condition (see Section \ref{s:sectionpleq2} for details). In turn, this integral H\"ormander condition is established by subordinating the Riesz transforms to the heat semigroup $\ee^{-t\Delta}$, and exploiting the following weighted $L^1$ estimates for its convolution kernel $h_{t} = \krn_{\ee^{-t\Delta}}$ and the gradient thereof, which we prove here (see Section \ref{s:sectionheatkernel}) and may be of independent interest.

\begin{theorem}\label{thm:heatkernel_intro}
Let $|\bx|$ denote the Riemannian distance of $\bx \in S$ from the identity element. For all $\varepsilon \in [0,1)$ and all $t > 0$,
\begin{equation}\label{eq:heatkernel_intro}
\int_S |h_t(\bx)| \,\ee^{\varepsilon |\bx|^2/4t} \,\dd\rho(\bx) \lesssim_\varepsilon 1,
\qquad
\int_S |\nabla h_t(\bx)| \,\ee^{\varepsilon |\bx|^2/4t} \,\dd\rho(\bx) \lesssim_\varepsilon t^{-1/2}.
\end{equation}
\end{theorem}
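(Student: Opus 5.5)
We reduce the weighted $L^1$ bounds to pointwise estimates for the heat kernel of the Laplace--Beltrami operator, via the standard conjugation by the modular function. Write $\delta$ for the modular function of $S$, so that $\dd\rho = \delta^{-1}\dd\lambda$ with $\lambda$ the Riemannian (left Haar) measure. Let $Q = \dimV/2+\dimZ$ be the homogeneous dimension of $N$.

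\textbf{Step 1: transfer to the Laplace--Beltrami heat kernel.} It is classical (cf.\ \cite{CGHM94,HS03}) that $\delta^{1/2}\Delta\delta^{-1/2} = \opLaBe - Q^2/4$ on $S$. Hence the convolution kernel $h_t$ relative to $\rho$ can be written as
\[
h_t = \ee^{-Q^2 t/4}\,\delta^{1/2}\, p_t ,
\]
up to harmless normalisations. Here $p_t$ is the heat kernel of $\opLaBe$, which is a radial function of $|\bx|$ because $S$ is harmonic. Consequently
\[
|h_t|\,\dd\rho = \ee^{-Q^2t/4}\,\delta^{-1/2}p_t\,\dd\lambda ,
\]
and the whole problem becomes bounding integrals of $p_t(r)$ against $\delta^{-1/2}$. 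For the gradient, $\nabla h_t$ produces two kinds of terms. One is $(\nabla\delta^{1/2})\,p_t$; since $\vfX_0$ differentiates $\delta$ and the other fields annihilate it, this is $O(\delta^{1/2}p_t)$. The other is $\delta^{1/2}\,p_t'(r)\,\nabla r$.

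\textbf{Step 2: integrate out the angular variable.} Let $\bx = (n,a)$ in horocyclic coordinates, so that $\delta(\bx) = a^{-Q}$ (with the appropriate convention). We need the spherical average of $\delta^{-1/2}$ over the sphere of radius $r$ with respect to $\lambda$, and the key claim is
\[
\int_{|\bx|=r}\delta^{-1/2}\,\dd\sigma_r \approx (1+r)\,\ee^{Qr/2}.
\]
This is the value at the bottom of the spectrum of the spherical function $\varphi_0$, namely $\varphi_0(r)\,|\partial B_r| \approx (1+r)\ee^{-Qr/2}\ee^{Qr}$. Indeed, $\delta^{-1/2}$ is an eigenfunction of $\opLaBe$ with eigenvalue $Q^2/4$, so its radialisation about the origin is exactly $\varphi_0$. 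The volume density behaves like $\sinh^{\dimN}r\,\cosh^{\dimZ}r$ times constants, i.e.\ like $\ee^{Qr}$ for large $r$ and $r^{\dimN}$ for small $r$. Thus
\[
\int |h_t|\,\ee^{\varepsilon|\bx|^2/4t}\,\dd\rho \approx \ee^{-Q^2t/4}\int_0^\infty p_t(r)\,\varphi_0(r)\,\ee^{\varepsilon r^2/4t}\,A(r)\,\dd r .
\]

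\textbf{Step 3: insert the sharp heat kernel bounds.} We use the known sharp estimates of Anker--Damek--Yacoub \cite{ADY96} for the heat kernel on Damek--Ricci spaces:
\[
p_t(r) \approx t^{-n/2}(1+r)(1+t+r)^{(n-3)/2}\,\ee^{-Q^2t/4-Qr/2-r^2/4t},
\]
where $n = \dimN+1$. The same paper gives $|\partial_r p_t| \lesssim \frac{1+r}{t}\,p_t$-type bounds, or equivalently this follows from the derivative of the spherical inversion formula. After the factors $\ee^{\pm Q^2t/4}$ and $\ee^{\pm Qr/2}$ cancel, we are left with a Gaussian-type integral
\[
\int_0^\infty t^{-n/2}(1+r)^2(1+t+r)^{(n-3)/2}\,\ee^{-(1-\varepsilon)r^2/4t}\,r^{\dimN}(1+r)^{-\dimN}\,\dd r .
\]
We treat two regimes.

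For $t\le1$, the relevant range is $r\lesssim\sqrt t$. The integrand behaves like the Euclidean one, $t^{-n/2}\,r^{n-1}\,\ee^{-cr^2/t}$, and the integral is $O(1)$.

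For $t\ge1$, we substitute $r=\sqrt t\,s$. The factor $(1+r)^2(1+t+r)^{(n-3)/2}$ is about $t\cdot t^{(n-3)/2}$, and $\dd r = \sqrt t\,\dd s$. The total is then $t^{-n/2}\cdot t^{(n-1)/2}\cdot t^{1/2}\cdot O(1) = O(1)$. Note that the factor $(1+r)$ from $\varphi_0$ is precisely what is needed here.

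For the gradient, the extra factor is about $1+r/t$ from $p_t'$ and $O(1)$ from $\nabla\delta^{1/2}$.

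\textbf{Main obstacle.} The gradient bound $t^{-1/2}$ for large $t$ is where the argument is delicate. The naive bound $|p_t'|\lesssim(1+r/t)\,p_t$ only yields $O(1)$, not $t^{-1/2}$, because of the constant part. There is a genuine cancellation between the term $(\nabla\delta^{1/2})p_t$ and the leading part $-\tfrac{Q}{2}p_t$ of $p_t'$. We will therefore write
\[
\nabla h_t = \ee^{-Q^2t/4}\,\delta^{1/2}\left[p_t'\,\nabla r + \tfrac{Q}{2}\,p_t\,\nabla\log\delta^{-1}\cdot(\ldots)\right],
\]
and use the refined asymptotics
\[
p_t'(r) + \tfrac{Q}{2}\,p_t(r) = O\!\left(\left(\tfrac{1}{1+r}+\tfrac{r}{t}\right)p_t\right)
\]
from \cite{ADY96}. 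We must also control the angular discrepancy $\nabla r - \nabla\log\delta^{-1/Q}$. It is small on the bulk of the sphere where $\delta^{-1/2}$ concentrates (near the geodesic $a\to0$), and we will quantify this through explicit distance formulas in horocyclic coordinates. Each of $\frac1{1+r}$, $\frac rt$ and the angular error contributes $t^{-1/2}$ after the Gaussian integration, since $r\sim\sqrt t$.
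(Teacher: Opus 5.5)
Your overall strategy is the paper's. You conjugate to $\opLaBe$; the correct relation is $\delta^{-1/2}h_t=\ee^{+Q^2t/4}\krn_{\ee^{-t\opLaBe}}$, and it is this sign that makes your exponentials cancel. You integrate radial functions against $\delta^{1/2}\,\dd\rho$; your $\varphi_0$ computation is the case $b=c=s=0$ of \eqref{eq:Rinta}, giving $r^{[\dimN,1]}\ee^{Qr/2}$. You then use \eqref{eq:qt}--\eqref{eq:nablaqt}. This settles the first estimate and the gradient estimate for bounded $t$, exactly as in Proposition~\ref{prp:heateasyestimates}.

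Two slips along the way. First, $|\partial_r p_t|\lesssim\frac{1+r}{t}\,p_t$ is false for large $t$: by \eqref{eq:qt}--\eqref{eq:nablaqt} the ratio is $\simeq\frac{r}{1+r}\bigl(1+\frac{1+r}{t}\bigr)$. Second, the refined bound on $p_t'+\tfrac Q2 p_t$ is not stated in \cite{ADY96}. It has to be extracted from the expansion \eqref{eq:der_htR}, including the Abel-type integral when $\dimZ$ is odd, which is what \eqref{eq:newrepht}--\eqref{eq:est_Ht1} do.

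The genuine gap is the angular term in the large-$t$ gradient bound, which is the heart of the theorem, and there the mechanism you describe is wrong.

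\emph{The sign.} With $\delta=a^{-Q}$ one has $\nabla h_t=\ee^{Q^2t/4}\delta^{1/2}\bigl[(p_t'+\tfrac Q2p_t)\nabla|\bx|-\tfrac Q2p_t\,\nabla(|\bx|+\log a)\bigr]$. So the discrepancy is $\nabla(|\bx|+\log a)$, not $\nabla|\bx|-\nabla\log a$; the latter has size about $2$ on most of each sphere.

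\emph{The concentration claim.} The measure $\delta^{-1/2}\,\dd\sigma_r$ does not concentrate near the geodesic $a\to0$. Under it, $\log a$ is spread essentially uniformly over $[-r,r]$; that is exactly where the factor $1+r$ in $\varphi_0$ comes from. The discrepancy vanishes along the geodesic $a\to0$. However, it is of order one on the band $|\bx|-\log a=O(1)$ around the geodesic $a\to\infty$, and this band carries a fraction $\simeq1/r$ of the weighted mass. Hence the available gain is exactly $1/r\simeq t^{-1/2}$, with nothing to spare. A qualitative ``small on the bulk'' argument cannot deliver it: the region $\log a\geq0$ alone carries half the mass.

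\emph{What closes the gap.} You need the quantitative bound $|\nabla(|\bx|+\log a)|\lesssim a^{1/4}\ee^{-|\bx|/4}$ for $|\bx|\ge1$. This is a reformulation of \eqref{eq:est_nabladeltach}, obtained by differentiating the explicit function $\delta^{1/2}\cosh^{-Q}(|\bx|/2)=2^Q[(a+1+|x|^2/4)^2+|z|^2]^{-Q/2}$; the leftover $O(\ee^{-|\bx|})$ is absorbed because $a\geq \ee^{-|\bx|}$. Combine it with \eqref{eq:Rinta} for the weight $a^{1/4}$: since $s=1/4>0$, one gets $\phi_\varpi(r)\simeq\ee^{(Q/2+1/4)r}$ for $r\geq1$, without the factor $r$. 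This yields $\int_{|\bx|=r}\delta^{-1/2}|\nabla(|\bx|+\log a)|\,\dd\sigma_r\lesssim\ee^{Qr/2}$. The angular term then contributes at most $t^{-3/2}\int_1^\infty r\,\ee^{-\theta r^2/t}\,\dd r\simeq t^{-1/2}$ for some $\theta=\theta(\varepsilon)>0$.

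The paper avoids isolating this term by factoring $\cosh^{-Q}(|\bx|/2)$, rather than $\ee^{-Q|\bx|/2}$ or $p_t$ itself, out of $\delta^{-1/2}h_t$ in \eqref{eq:newrepht}. The whole angular effect is then the single gradient estimate \eqref{eq:est_nabladeltach} for an explicit rational function.
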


Here and in what follows, for any two nonnegative quantities $A$ and $B$, we use the notation $A \lesssim B$ to indicate an inequality up to an unspecified multiplicative constant, and we write $A\simeq B$ to mean that $A\lesssim B$ and $B\lesssim A$; subscripted variants such as $\lesssim_{\varepsilon}$ and $\simeq_{\varepsilon}$ indicate that the constants may depend on a parameter $\varepsilon$.

A crucial feature of the estimates in Theorem \ref{thm:heatkernel_intro} is the fact that they are global in time and space (local estimates of a similar flavour are known to be true for more general elliptic Laplacians on manifolds). The unweighted case $\varepsilon = 0$ of the estimates \eqref{eq:heatkernel_intro} was already established in \cite{V07}, where it was used to prove a spectral multiplier theorem for the distinguished Laplacian $\Delta$; much as in \cite{V07}, a key tool in our proof are the estimates for the Laplace--Beltrami operator on Damek--Ricci spaces from \cite{ADY96}. On $ax+b$ groups, weighted estimates similar to \eqref{eq:heatkernel_intro} were proved in \cite{HS03}, though with weights of the form $\ee^{\kappa |\bx|/t^{1/2}}$ for $\kappa > 0$, instead of the Gaussian-type weights $\ee^{\varepsilon |\bx|^2/4t}$; those weaker estimates were anyway enough to obtain the $L^p$-boundedness for $p \leq 2$ of the Riesz transforms in that setting.

Proving that the Riesz transforms $\Rz_{j}$ are bounded on $L^{p}(S)$ for $p\in(2,\infty)$ is a more delicate problem, since the adjoint Riesz transforms $\Rz_j^*$ do not satisfy the aforementioned integral H\"ormander condition (this was established in \cite{M23,SV08} in the degenerate case $N = \RR^{\dimN}$), and thus the method used for $p \in (1,2)$ is not applicable via duality considerations for $p\in(2,\infty)$. 
Actually, the local parts of the adjoint Riesz transforms $\Rz_j^*$ (obtained by truncating the corresponding convolution kernels via compactly supported smooth cutoffs) are amenable to the aforementioned approach, i.e., their $L^p$-boundedness can also be derived via Calder\'on--Zygmund theory from appropriate localized weighted estimates of derivatives of the heat kernel $h_{t}$. So, as may be expected, the main difficulty in tackling the $\Rz_j^*$ comes from their parts at infinity.

To address the latter problem, much as in \cite{M23}, by means of subordination to the heat kernel, we obtain essentially explicit formulas and asymptotics for the convolution kernels $\krn_{\Rz_j^*}$ of the adjoint Riesz transforms (see Section \ref{s:sectionkernelasy}), thus writing them as sums of appropriate ``main terms'' $K_j$ (singular at infinity, but with simpler expressions) and remainders (integrable at infinity, or anyway with easily established $L^p$-boundedness properties). 
In this way, the $L^p$-boundedness of the $\Rz_j^*$ is reduced to that of the convolution operators by the kernels $K_j$.

To deal with those kernels, in the degenerate case $N=\RR^{\dimN}$ treated in \cite{M23}, one key observation was the fact that the right Haar measure on $S$ factorizes (in suitable coordinates) as the product of the Haar measures on $\RR^{\dimN}$ and $\Rpos$, and therefore the Lebesgue spaces $L^p(S)$ can be thought of as vector-valued Lebesgue spaces $L^p(\RR^{\dimN};L^p(\Rpos))$. In addition, left-invariant operators on $S = \RR^{\dimN} \rtimes \Rpos$ are also translation-invariant in the $\RR^{\dimN}$-coordinate; so they can be thought of as operator-valued Fourier multipliers on $\RR^{\dimN}$, whose $L^p$-boundedness can be established by means of an operator-valued Fourier multiplier theorem available in the literature \cite{SW07,W01}. The problem in \cite{M23} was thus reduced to checking that the the operator-valued symbols $M_j : \RR^{\dimN} \to \LinBdd(L^2(\Rpos))$, corresponding to the convolution operators with kernels $K_j$, satisfy the assumptions of the multiplier theorem of \cite{SW07}; here with $\LinBdd(Z)$ we denote the space of bounded linear operators on a Banach space $Z$.

In the nondegenerate case studied in the present paper, where $N$ is a (nonabelian) H-type group, it is still true that $L^p(S) = L^p(N;L^p(\Rpos))$ and that left-invariant operators on $S$ are also left-invariant in the $N$-variable. However, an operator-valued Fourier multiplier theorem on $N$, analogous to that in \cite{SW07} for $\RR^{\dimN}$, is not available in the literature (nor is it immmediately clear what it would look like, given the complicated nature of the group Fourier transform on $N$, which is already operator-valued even when applied to scalar-valued functions).

Our workaround to the latter problem consists in considering, instead of Fourier multipliers, operator-valued \emph{spectral} multipliers for the sub-Laplacian $\opLN$ and the vector of central derivatives $-i\nabla_z = (-i\partial_{z_1},\dots,-i\partial_{z_{\dimZ}})$ on the H-type group $N$.
More specifically, we reduce the $L^p$-boundedness of the convolution operators $f \mapsto f * K_j$ to that of three operators $M_0(\opLN,-i\nabla_z)$, $M_{\lie{v}}(\opLN,-i\nabla_z)$ and $M_{\lie{z}}(\opLN,-i\nabla_z)$, belonging to an operator-valued joint functional calculus for $\opLN$ and $-i\nabla_z$, whose operator-valued symbols $M_0,M_{\lie{v}},M_{\lie{z}} : \Rpos \times \RR^{\dimZ} \to \LinBdd(L^2(\Rpos))$ we compute explicitly. We then deduce their $L^p$-boundedness from the following result (see Sections \ref{s:sectionmultipliertheorem} and  \ref{ss:convKopvalmult} for more details).

\begin{theorem}\label{thm:opval_intro}
Let $\kappa \in (0,\dimV/2)$ and $\Omega_\kappa \defeq \{ (\lambda,\mu) \in \Rpos \times \RR^{\dimZ} \tc \kappa |\mu| \leq \lambda \}$. Fix an integer $B > (\dimV+5\dimZ)/2+3$, and let $p \in (1,\infty)$. If $M : \Rpos \times \RR^{\dimZ} \to \LinBdd(L^2(\Rpos))$ is (weakly) of class $C^B$ and the family of operators
\[
\left\{ \lambda^{|\alpha|} \partial_{(\lambda,\mu)}^{\alpha}M(\lambda,\mu) \tc \alpha \in \NN \times \NN^{\dimZ}, \ |\alpha| \leq B, \ (\lambda,\mu) \in \Omega_\kappa \right\}
\]
is R-bounded on $L^p(\Rpos)$, then $M(\opLN,-i\nabla_z)$ is bounded on $L^p(S)$.
\end{theorem}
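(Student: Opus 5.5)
We prove Theorem \ref{thm:opval_intro} by transferring to the setting of operator-valued multipliers on $L^p(N;L^p(\Rpos))$, using the identification $L^p(S)=L^p(N;L^p(\Rpos))$. Since $L^p(\Rpos)$ is a UMD space, we can run a Littlewood--Paley/Calder\'on--Zygmund argument on the homogeneous group $N$ in which scalar estimates are replaced by R-boundedness.

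\emph{Reduction to $\Omega_\kappa$.} The joint spectrum of $(\opLN,-i\nabla_z)$ on an H-type group lies in $\{(\lambda,\mu)\tc \lambda\geq \dimV|\mu|/2\}$; indeed, on each Schr\"odinger representation with parameter $\mu$, the sub-Laplacian has eigenvalues $(2k+\dimV/2)|\mu|$. Since $\kappa<\dimV/2$, the spectrum lies in the interior of $\Omega_\kappa$, away from the boundary ray except at the origin. We therefore first extend $M$ from $\Omega_\kappa$ to all of $\Rpos\times\RR^{\dimZ}$ by multiplying it by a smooth function of $|\mu|/\lambda$ that equals $1$ on the spectrum and is supported in $\Omega_\kappa$. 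This homogeneous-of-degree-$0$ cutoff preserves the R-bounded Mihlin-type condition, because products of R-bounded families with bounded scalar functions stay R-bounded.

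\emph{Dyadic decomposition.} Next we decompose $M=\sum_{j\in\ZZ} M_j$, where $M_j(\lambda,\mu)=M(\lambda,\mu)\chi(2^{-j}\lambda)$. By homogeneity under the dilations $\dil_a$, which act on the joint spectrum by $(\lambda,\mu)\mapsto(a\lambda,a\mu)$, each $M_j$ rescales to a multiplier $m_j$ supported in $\{\lambda\in[1/2,2]\}\cap\Omega_\kappa$. This set is compact, since $|\mu|\leq 2/\kappa$ there. On it, $m_j$ has $C^B$ norms whose derivative families are R-bounded uniformly in $j$.

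\emph{Kernel estimates.} For such compactly supported multipliers we need vector-valued weighted kernel estimates. The convolution kernel of $m(\opLN,-i\nabla_z)$ is an $\LinBdd(L^p(\Rpos))$-valued function on $N$. We aim to show that the set of values $\{(1+|x|)^{s}K(x)\}$, suitably averaged, is R-bounded, and that $\int (1+|x|)^{\epsilon}\|K(x)\|\,dx$ and the corresponding gradient quantities are controlled. To do this, we write $m$ as a Fourier integral $m(\lambda,\mu)=\int \hat{m}(t,\tau)\,\ee^{i(t\lambda+\tau\cdot\mu)}\,dt\,d\tau$, apply scalar kernel estimates for $\ee^{it\opLN+i\tau\cdot(-i\nabla_z)}$ (or, more simply, weighted $L^2$ / Plancherel estimates for $F(\opLN,-i\nabla_z)$ with $F$ scalar in a Sobolev class of order $>(\dimV+\dimZ)/2$ in $\lambda$ and more in $\mu$), and then use the fact that integral averages of an R-bounded family against $L^1$ scalar weights are R-bounded. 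The threshold $B>(\dimV+5\dimZ)/2+3$ should come from this Sobolev embedding combined with the weighted Plancherel estimates on H-type groups.

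\emph{Summation via R-boundedness.} This is where we expect the main difficulty. We handle it by proving a Littlewood--Paley square function estimate in $L^p(N;L^p(\Rpos))$ with Rademacher sums, $\|f\|\simeq \mathbb{E}\|\sum_j \varepsilon_j \psi_j(\opLN) f\|$, which is valid because $L^p(\Rpos)$ is UMD and $\opLN$ admits a bounded $H^\infty$ calculus on Bochner spaces. We then show that $\{M_j(\opLN,-i\nabla_z)\}$ is R-bounded on $L^p(N;L^p(\Rpos))$. For this we use the kernel estimates above with Calder\'on--Zygmund theory for R-bounded operator-valued kernels on the doubling space $N$, in the style of Weis / Hyt\"onen. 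Combining the two gives boundedness of $\sum_j M_j\tilde\psi_j(\opLN)$.
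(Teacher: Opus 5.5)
Your reduction to $\Omega_\kappa$ via a $0$-homogeneous cutoff, the dyadic decomposition with rescaling, and the Rademacher Littlewood--Paley step all match the paper (Remark~\ref{rem:conicspectrum}, Lemma~\ref{lem:lemma1}, first part of the proof of Theorem~\ref{thm:theoremmultiplier}). That step reduces everything to the R-boundedness of the dyadic pieces $\{M_j(\opLN,-i\nabla_z)\}_{j\in\ZZ}$ on $L^p(S)$. On $L^p(N\times\Rpos)$ the Littlewood--Paley estimate follows from the scalar one by Fubini, so no Bochner-space $H^\infty$ calculus is needed.

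The gap is exactly the step you flag as the main difficulty, and your proposed route does not work as stated. R-boundedness of $\{T_j\}$ on $L^p(N;E)$, with $E=L^p(\Rpos)$, amounts to uniform boundedness of the diagonal convolution operators $(f_k)_k\mapsto(T_{j_k}f_k)_k$ on $L^p(N;\mathrm{Rad}(E))$. Vector-valued Calder\'on--Zygmund (Benedek--Calder\'on--Panzone) theory needs two things for this: H\"ormander bounds on the $\LinBdd(\mathrm{Rad}(E))$-valued kernel, and an a priori bound on $L^q(N;\mathrm{Rad}(E))$ for some $q$ with the \emph{same} $E$. The only free bound is the spectral-theorem bound on $L^2(N;L^2(\Rpos;\ell^2))$. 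Extrapolating from it gives $L^q(N;L^2(\Rpos;\ell^2))$, not $L^p(S)=L^p(N;L^p(\Rpos))$.

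Kernel bounds alone cannot replace this a priori bound. Write $T_j=\int_N R_x\otimes K_j(x)\,\dd x$ with $R_xf(y)=f(yx^{-1})$. An integral-means argument then needs $\int_N\RBd\{K_j(x):j\in\ZZ\}\,\dd x<\infty$. This fails for genuinely scale-invariant multipliers: already for scalar Riesz transforms, $\sup_j|K_j|$ decays only at the critical, non-integrable rate. Rescaling each $K_j$ to unit scale does not help either, because the translations at different scales then differ, and translations are not R-bounded on $L^p$ for $p\neq2$. What you would actually need is an operator-valued $T(1)$-type theorem for UMD-valued functions on $N$. Its hypotheses (R-bounded kernel conditions, a weak boundedness property, $T(1)$ conditions) you neither state nor verify.

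The kernel estimates themselves are also only sketched. $\ee^{it\opLN}$ has no integrable kernel. Weighted Plancherel estimates control scalar symbols in $L^2$, not R-bounds in $\LinBdd(L^p(\Rpos))$. As a result, the threshold on $B$ is guessed rather than derived.

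The missing idea is to avoid operator-valued kernels entirely and separate variables, so that the scalar $L^p$ calculus for $(\opLN,-i\nabla_z)$ from \cite{MRS96} (Assumption~\ref{ass:fctcal} with $\thr=\dimN/2$) can be used as a black box.
\begin{itemize}
\item Each rescaled piece $M_m(\xi)=M(2^{-m}\xi)\eta(\xi)$ lives in a fixed annulus inside $(-\pi,\pi)^{\dimX}$, with $\dimX=1+\dimZ$. A Fourier series expansion therefore gives $M_m(2^m\vec L)=\sum_{k\in\ZZ^{\dimX}}E_{k,m}(\vec L)\otimes\hat M_m(k)$, where $E_{k,m}(\xi)=\chi(2^m\xi)\,\ee^{ik\cdot2^m\xi}$.
\item Integration by parts and Kahane's contraction principle give $\RBd_{L^p(\Rpos)}\{(1+|k|)^B\hat M_m(k)\}\lesssim C_{p,B}(M)$.
\item The scalar family $\{(1+|k|)^{-\sigma}E_{k,m}(\vec L)\}$ is R-bounded on $L^p(N)$ for $\sigma>\dimX+1+\thr$. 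A Mellin transform in $|\xi|$ writes $E_{k,m}(\vec L)$ as an integral of $2^{imu}\hat H_k(\vec L/|\vec L|,u)\,|\vec L|^{iu}$. The scale $m$ enters only through the unimodular factor $2^{imu}$, so R-boundedness of integral means plus the scalar $L^\infty_{s,\sloc}$ bounds suffice.
\item Multiplying the two commuting families and summing over $k$ with $\sum_k(1+|k|)^{-\dimX-\varepsilon}<\infty$ gives R-boundedness of $\{M_m(2^m\vec L)\}_m$.
\end{itemize}
This is where $B>2\dimX+1+\thr=(\dimV+5\dimZ)/2+3$ comes from.
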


The latter result is a particular instance of a more general operator-valued spectral multiplier theorem that we establish here and may be of independent interest. Namely, in Section \ref{s:sectionmultipliertheorem} we prove a conditional result, essentially stating that, if for a system of commuting self-adjoint operators one has a scalar-valued $L^p$ spectral multiplier theorem of Mihlin--H\"ormander type, then such result can be upgraded to an operator-valued multiplier theorem. In the case of the system $(\opLN,-i\nabla_z)$ on $L^2(N)$, the scalar-valued counterpart to Theorem \ref{thm:opval_intro} is a corollary of results of \cite{MRS96}.

A similar problem was encountered in \cite{MP24bis}, where the $L^p$-boundedness of the Riesz transforms associated with certain \emph{sub-Laplacians} on extensions $N \rtimes \Rpos$ of Carnot groups $N$ is proved; in that case, the problem was reduced to the $L^p$-boundedness of operator-valued spectral multipliers $M(\opLN)$ of a \emph{single} operator (the sub-Laplacian $\opLN$ on $N$), for which one could apply the multiplier theorem from \cite{MP24}.
The operator-valued multiplier theorem that we prove in Section \ref{s:sectionmultipliertheorem} can be thought of as a multivariate extension of that in \cite{MP24}.

To appreciate the difference between \cite{MP24bis} and our set-up, notice that, in the case where $N$ is an H-type group, the sub-Laplacian $\opSubLa$ on $N \rtimes \Rpos$ considered in \cite{MP24bis}
is obtained by restricting the sum in \eqref{eq:distinguishedlaplacian_intro} to the vector fields in the directions of the factor $\Rpos$ and the first layer of $N$; in other words, we can write
\[
\opSubLa = -\sum_{j=0}^{\dimV} \vfX_j^2 = -(a \partial_a)^2 + a \opLN,
\]
where $a$ is the coordinate on the factor $\Rpos$ (see \eqref{eq:leftinvariantN}, \eqref{eq:subLaplacianN} and \eqref{eq:leftinvariant} for details); thus, the Riesz transform relative to $\opSubLa$ in the direction of $\vfX_0 = a \partial_a$ can be written, at least formally, as
\[
\vfX_0 \opSubLa^{-1/2} = (a \partial_a) \left[-(a \partial_a)^2 + a \opLN\right]^{-1/2} = F(\opLN),
\]
where the operator-valued symbol $F : \Rpos \to \LinBdd(L^2(\Rpos))$ is given by
\[
F(\lambda) = (a \partial_a) \left[-(a \partial_a)^2 + \lambda a \right]^{-1/2}.
\]
However, in our case, we work with the elliptic Laplacian
\[
\Delta = \opSubLa - \sum_{j=1}^{\dimZ} \vfX_{\dimV+j}^2 = -(a \partial_a)^2 + a \opLN + a^2 \left|-i\nabla_z\right|^2
\]
on $N \rtimes \Rpos$; thus the Riesz transform relative to $\Delta$ in the direction of $\vfX_0$ can be written as
\[
\Rz_0 = \vfX_0 \Delta^{-1/2} = (a \partial_a) \left[-(a \partial_a)^2 + a \opLN + a^2 \left|-i\nabla_z\right|^2\right]^{-1/2} = G(\opLN,-i\nabla_z),
\]
where the multivariate symbol $G : \Rpos \times \RR^{\dimZ} \to \LinBdd(L^2(\Rpos))$ is given by
\[
G(\lambda,\mu) = (a \partial_a) \left[-(a \partial_a)^2 + \lambda a + |\mu|^2 a^2 \right]^{-1/2}.
\]
This roughly explains why here we need to consider joint functions of multiple operators, and therefore the multiplier theorem of \cite{MP24}, which applies to the functional calculus for a single operator, is not enough for our purposes.

As previously mentioned, here we do not apply the operator-valued multiplier theorem directly to the $\Rz_j$, but to the operators $M_\ell(\opLN,-i\nabla_z)$ with $\ell \in \{0,\lie{v},\lie{z}\}$, which are related to the main terms $K_j$ of the parts at infinity of the $\krn_{\Rz_j^*}$.
As a matter of fact, some of the operators $f \mapsto f * K_j$ are not in the functional calculus for $(\opLN,-i\nabla_z)$ themselves, due to lack of radiality in the first-layer variable on $N$,
and the reduction to the operators $M_\ell(\opLN,-i\nabla_z)$ is based on the known $L^p$-boundedness properties of first- and second-order Riesz transforms on $N$ (see Section \ref{s:sectionlocalpart} and Remark \ref{rem:notradial} for details).
Crucially, while the identification of the main terms $K_j$ in Section \ref{s:sectionkernelasy} is done on the space side (i.e., in terms of coordinates on $S$), in Section \ref{ss:gelfand} we manage, by exploiting some formulas from \cite{RT20}, to obtain relatively explicit expressions for the operator-valued symbols $M_\ell$ on the spectral side (i.e., in terms of coordinates $(\lambda,\mu)$ in the joint spectrum of $\opLN$ and $-i\nabla_z$). Thanks to those expressions, in Section \ref{ss:symbol_estimates} we are eventually able to check that the $M_\ell$ satisfy the assumptions of Theorem \ref{thm:opval_intro} and prove the claimed $L^p$-boundedness.

An interesting open problem remains about endpoint bounds at $p=\infty$ for the Riesz transforms, and specifically, whether the adjoint Riesz transforms $\Rz_j^*$ are of weak type $(1,1)$; indeed, the operator-valued multiplier approach does not seem to provide information on this. In the degenerate case $N = \RR^{\dimN}$, this weak type bound is known to be true for $j > 0$, i.e., for the adjoint Riesz transforms on $S = N \rtimes \Rpos$ in the direction of the factor $N$ \cite{GS99,M23}, but it is not known whether the same bound holds for $\Rz_0^*$. Similar open problems appear in the case of the Riesz transforms associated to the sub-Laplacians on extensions of Carnot groups considered in \cite{MP24bis,MV21}, as well as in the discrete setting of flow Laplacians on trees \cite{HS03,LMSTV23,MSTV25}.

\subsection*{Notation}

Here $\NN$ denotes the set of nonnegative integers (including 0), while $\Npos = \NN \setminus \{0\}$.
For $n \in \Npos$, we write $\dot\RR^n$ for $\RR^n \setminus \{0\}$.
The characteristic function of a set $S$ is denoted by $\chr_S$.
For a real number $a$, we write $a_+$ to denote its positive part $\max\{a,0\}$.
We denote by $\Re s$ the real part of a complex number $s$.

\section{Preliminaries}\label{s:sectionpreliminaries}

As in the introduction, let $N$ be a $2$-step stratified Lie group, i.e., a connected, simply connected, nilpotent Lie group, whose Lie algebra decomposes as a direct sum $\lie{n} = \lie{v} \oplus \lie{z}$, where $[\lie{v},\lie{v}] = \lie{z}$ and $[\lie{n},\lie{z}] = \{0\}$. We equip $\lie{n}$ with an inner product compatible with the stratification, i.e., such that the layers $\lie{v}$ and $\lie{z}$ are orthogonal, and denote by $|\cdot|$ the corresponding norm.
We assume that $\lie{n}$ is an H-type Lie algebra, which means that \eqref{eq:htype} holds, where $\lie{v}^*$ and $\lie{z}^*$ are equipped with the dual norms.
Via the exponential map, we identify the group $N$ with its Lie algebra $\lie{n}$ by 
\[
\begin{array}{rcl}
\lie{v}\oplus \lie{z}&\to&N\\
(x,z)&\mapsto&\exp(x+z).
\end{array}
\]	
In these ``exponential coordinates'', the group law on $N$ is given by
\[
(x,z) \cdot_N (x',z')= \left( x+x',z+z'+\frac{1}{2}[x,x']\right)
\]
for all $(x,z), (x',z') \in N$, with $(0,0)$ as the identity element, and moreover
\[
(x,z)^{-1} = (-x,-z).
\]
The group $N$ is unimodular, and Lebesgue measure $\dd x \, \dd z$ is a left and right Haar measure on $N$.

Let $\dimV$ and $\dimZ$ denote the dimensions of $\lie{v}$ and $\lie{z}$, and let $\dimN \defeq \dimV+\dimZ$ be the topological dimension of $N$. A basis $\vfXN_1,\dots,\vfXN_{\dimN}$ of left-invariant vector fields on $N$, obtained by extending an orthonormal basis $e_1,\dots,e_{\dimN}$ of $\lie{n} = \lie{v} \oplus \lie{z}$, is given by 
\begin{equation}\label{eq:leftinvariantN}
\begin{aligned}
\vfXN_{j} &=	\partial_{x_{j}}+\frac{1}{2} [x,e_{j}]\cdot\nabla _{z} &\qquad\text{for } j=1,\dots,\dimV,\\
\vfXN_{\dimV+j} &= \partial_{z_j} &\qquad\text{for } j=1,\dots,\dimZ.
\end{aligned}
\end{equation}
The corresponding homogeneous left-invariant sub-Laplacian on $N$ is defined by
\begin{equation}\label{eq:subLaplacianN}
\opLN \defeq -\sum_{j=1}^{\dimV} (\vfXN_{j})^{2}.
\end{equation}

Let $S$ be the one-dimensional semidirect extension of $N$ obtained by letting $\Rpos$ act on $N$ via the automorphic dilations $(\dil_a)_{a \in \Rpos}$ given by \eqref{eq:dil}.
Thus, $S$ is a Lie group with the product law
\begin{equation}\label{eq:productlaw}
\begin{split}
	(x,z,a) \cdot (x',z',a')
	&=\left( x+a^{1/2}x',z+az'+\frac{1}{2}a^{1/2}[x,x'],aa' \right) \\
	&=((x,z)\cdot_{N}\dil_{a}(x',z'),aa').
\end{split}
\end{equation}
for all $(x,z,a),(x',z',a')\in S$; here the identity element is $(0,0,1)$, while
\begin{equation}\label{eq:inverse}
	(x,z,a)^{-1}=(-a^{-1/2}x,-a^{-1}z,a^{-1}).
\end{equation} 
We equip $S$ with the right Haar measure $\rho$, given by 
\begin{equation}\label{eq:haar}
\dd\rho(x,z,a)= \dd x \,\dd z \,\frac{\dd a}{a}.
\end{equation}
If $Q \defeq (\dimV+2\dimZ)/2$ denotes the homogeneous dimension of $N$ associated with the dilations $(\dil_a)_{a \in \Rpos}$,
then the modular function of $S$ is given by
\begin{equation}\label{eq:modularfunction}
	\delta(x,z,a)=a^{-Q}
\end{equation}

We point out that, in many references on H-type groups (and more general stratified Lie groups), the automorphic dilations are parametrized as $(\dil_{t^2})_{t \in \Rpos}$, and it is the quantity $\dimV+2\dimZ$ that is called the homogeneous dimension of $N$. The definition of the homogeneous dimension $Q$ that we use here is consistent with that used in many works on Damek--Ricci spaces, such as \cite{ADY96,DR92,DR921,V07}.

Group product and inversion have a counterpart in convolution and $L^1$-isometric involution for functions on $S$, given by
\begin{equation}\label{eq:convinv}
f * g(\bx) = \int_S f(\bx \cdot \by^{-1}) \, g(\by) \,\dd\rho(\by), \qquad f^*(\bx) = \delta(\bx) \, \overline{f(\bx^{-1})};
\end{equation}
the above formulas make sense for sufficiently well-behaved functions on $S$, but can be extended to more general functions, and also to distributions on $S$. Analogous formulas give convolution and involution for functions on the group $N$, where the modular function is identically $1$.

Let $\Dist'(S)$ denote the space of distributions on $S$, i.e., the dual of the LF-space $C^\infty_c(S)$ of test functions. By the Schwartz kernel theorem, any left-invariant, continuous linear operator $T : C^\infty_c(S) \to \Dist'(S)$ can be represented as a convolution operator, i.e.,
\[
T \phi = \phi * \krn_T \qquad \forall \phi \in C^\infty_c(S),
\]
for a suitable convolution kernel $\krn_T \in \Dist'(S)$; moreover, the convolution operator with kernel $\krn_T^*$ is the formal adjoint of $T$. Analogous results are true for left-invariant operators $T : C^\infty_c(N) \to \Dist'(N)$, and we shall use the same notation $\krn_T$ for the corresponding convolution kernels.

The relation \eqref{eq:productlaw} between the product laws on $S$ and $N$ corresponds to the following relation between convolution $*$ on $S$ and convolution $*_N$ on $N$:
\begin{equation}\label{eq:convSN}
(f*g)^{[a]} = \int_{\Rpos} f^{[a']} *_N g^{[a/a']}_{(a')} \,\frac{\dd a'}{a'},
\end{equation}
where we use the notation
\begin{equation}\label{eq:sect_dil}
f^{[a]}(x,z) \defeq f(x,z,a), \qquad
\phi_{(\lambda)}(x,z) \defeq \lambda^{-Q} \phi(\dil_{1/\lambda}(x,z)).
\end{equation}

A basis of left-invariant vector fields on $S = N \rtimes \Rpos$ can be constructed by lifting the vector fields $\vfXN_1,\dots,\vfXN_{\dimN}$ on $N$ to left-invariant vector fields $\vfX_1,\dots,\vfX_{\dimN}$ on $S$, and adding a vector field $\vfX_0$ in the direction of $\Rpos$. Namely,
\begin{equation}\label{eq:leftinvariant}
\begin{aligned}
\vfX_0 &= a\partial_{a},\\
\vfX_{j} &= a^{1/2} \left( \partial_{x_{j}}+\frac{1}{2} [x,e_{j}]\cdot\nabla_{z} \right) &\quad\text{for } j=1,\dots,\dimV,\\
\vfX_{\dimV+j} &= a\partial_{z_{j}},\quad &\quad\text{for } j=1,\dots,\dimZ.
\end{aligned}
\end{equation}
The corresponding gradient $\nabla$ on $S$ takes the form \eqref{eq:nabla},
while the distinguished Laplacian $\Delta = \nabla^* \nabla$ on $S$ is given by
\eqref{eq:distinguishedlaplacian_intro}.

We equip $S$ with the Riemannian metric associated with $\nabla$, i.e., such that $\vfX_{0},\dots,\vfX_{\dimN}$ form an orthonormal frame. Then, for the Riemannian distance $|\bx|$ of a point $\bx =(x,z,a) \in S$ from the identity element, from \cite[eq.\ (2.18)]{ADY96} and \cite[eq.\ (3)]{V07}, we have the explicit formula 
\begin{equation}\label{eq:distance}
\begin{split}
\cosh^2 \frac{|\bx|}{2}
&=\left(\frac{a^{1/2}+a^{-1/2}}{2}+\frac{1}{8}a^{-1/2}|x|^2\right)^2+\frac{1}{4}a^{-1}|z|^2 \\
&=\frac{1}{4a} \left[ \left( 1 + a + \frac{|x|^2}{4} \right)^2 + |z|^2 \right].
\end{split}
\end{equation}
Notice that from \eqref{eq:distance} it follows that 
\begin{equation}\label{eq:est_d_a}
\cosh \frac{|\bx|}{2} \geq \frac{a^{1/2}+a^{-1/2}}{2} = \cosh \frac{\log a}{2}, \quad\text{i.e.,}\quad |\bx| \geq \left|\log a\right|,
\end{equation}
see also \cite[eq. (1.20)]{ADY96}.

For $b,c \in \RR$ and $r \in \Rpos$, we shall use the notation
\begin{equation*}
r^{[b,c]} \defeq 
\begin{cases}
r^{b} &\text{if } r \leq 1,\\
r^{c} &\text{if } r \geq 1.
\end{cases}
\end{equation*}
The next lemma, which may be compared to \cite[Lemma 3.3]{M23} and \cite[Lemma 2.6]{MP24}, provides integration formulas for nonnegative radial functions on $S$ against certain weights, which we will use repeatedly.
 
\begin{lemma}\label{lem:intradial}
Let $b,c,\gamma,\tilde\gamma \in [0,\infty)$ and $s \in \RR$, and set
\begin{equation}\label{eq:intwgt}
\varpi \defeq (b,c,s,\gamma,\tilde\gamma), \qquad 
w_\varpi(\bx) \defeq a^s \left|\log a\right|^{[\gamma,\tilde\gamma]} |x|^b \,|z|^c 
\end{equation}
for $\bx = (x,z,a) \in S$.
For all measurable functions $F : [0,\infty) \to [0,\infty)$,
\begin{align}
\label{eq:Rinta}
\int_S \delta^{1/2}(\bx) \, F(|\bx|) \,w_\varpi(\bx) \,\dd\rho(\bx)
&\simeq_\varpi \int_0^\infty F(r) \,\phi_\varpi(r) \,\dd r \\
\label{eq:Rintm}
\int_{\{a \leq \ee\}} \delta^{1/2}(\bx) \, F(|\bx|) \,w_\varpi(\bx) \,\dd\rho(\bx)
&\simeq_\varpi \int_0^\infty F(r) \,\phi^-_\varpi(r) \,\dd r \\
\label{eq:Rintp}
\int_{\{a \geq 1/\ee\}} \delta^{1/2}(\bx) \, F(|\bx|) \,w_\varpi(\bx) \,\dd\rho(\bx)
&\simeq_\varpi \int_0^\infty F(r) \,\phi^+_\varpi(r) \,\dd r \\
\label{eq:Rintz}
\int_{\{1/\ee \leq a \leq \ee\}} \delta^{1/2}(\bx) \, F(|\bx|) \,w_\varpi(\bx) \,\dd\rho(\bx)
&\simeq_\varpi \int_0^\infty F(r) \,\phi^0_\varpi(r) \,\dd r,
\end{align}
where
\begin{align}
\label{eq:RintaD}
\phi_\varpi(r) &\defeq \begin{cases}
r^{[\dimN+b+c+\gamma,\tilde\gamma]} \,\ee^{\left(\frac{Q}{2} + \frac{b}{4} + \frac{c}{2} + \left| \frac{b}{4} + \frac{c}{2} + s\right|\right) r} &\text{if } \frac{b}{4}+\frac{c}{2}+s \neq 0,\\
r^{[\dimN+b+c+\gamma,1+\tilde\gamma]} \,\ee^{\left(\frac{Q}{2} + \frac{b}{4} + \frac{c}{2} \right) r} &\text{if } \frac{b}{4}+\frac{c}{2}+s = 0,
\end{cases} \\
\label{eq:RintmD}
\phi^-_\varpi(r) &\defeq \begin{cases}
r^{[\dimN+b+c+\gamma,\tilde\gamma]} \,\ee^{\left(\frac{Q}{2} - s\right) r} &\text{if } \frac{b}{4}+\frac{c}{2}+s < 0,\\
r^{[\dimN+b+c+\gamma,1+\tilde\gamma]} \,\ee^{\left(\frac{Q}{2} - s\right) r} &\text{if } \frac{b}{4}+\frac{c}{2}+s = 0,\\
r^{[\dimN+b+c+\gamma,0]} \,\ee^{\left(\frac{Q}{2} + \frac{b}{4} + \frac{c}{2}\right) r} &\text{if } \frac{b}{4}+\frac{c}{2}+s > 0,
\end{cases} \\
\label{eq:RintpD}
\phi^+_\varpi(r) &\defeq \begin{cases}
r^{[\dimN+b+c+\gamma,\tilde\gamma]} \,\ee^{\left(\frac{Q}{2} + \frac{b}{2} + c + s \right) r} &\text{if } \frac{b}{4}+\frac{c}{2}+s > 0,\\
r^{[\dimN+b+c+\gamma,1+\tilde\gamma]} \,\ee^{\left(\frac{Q}{2} + \frac{b}{4} + \frac{c}{2}\right) r} &\text{if } \frac{b}{4}+\frac{c}{2}+s = 0,\\
r^{[\dimN+b+c+\gamma,0]} \,\ee^{\left(\frac{Q}{2} + \frac{b}{4} + \frac{c}{2}\right) r} &\text{if } \frac{b}{4}+\frac{c}{2}+s < 0,
\end{cases} \\
\label{eq:RintzD}
\phi^0_\varpi(r) &\defeq 
r^{[\dimN+b+c+\gamma,0]} \,\ee^{\left(\frac{Q}{2} + \frac{b}{4} + \frac{c}{2}\right) r}.
\end{align}
\end{lemma}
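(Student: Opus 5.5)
The plan is to reduce everything to an explicit computation in polar-type coordinates adapted to formula \eqref{eq:distance}. Since $\delta^{1/2}(\bx) = a^{-Q/2}$ and $\dd\rho = \dd x\,\dd z\,\dd a/a$, and since both $|\bx|$ and $w_\varpi$ depend on $x$ and $z$ only through $u = |x|$ and $v = |z|$, polar coordinates in $\lie{v}$ and $\lie{z}$ turn the left-hand side of \eqref{eq:Rinta} into
\[
\int_0^\infty \int_0^\infty \int_0^\infty F(r(u,v,a))\, a^{s-Q/2} \left|\log a\right|^{[\gamma,\tilde\gamma]} u^{b+\dimV-1} v^{c+\dimZ-1} \,\dd u \,\dd v \,\frac{\dd a}{a},
\]
where $4a\cosh^2(r/2) = (1+a+u^2/4)^2 + v^2$. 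For fixed $a$, I substitute $A = 1+a+u^2/4 \in [1+a,\infty)$, so that $u^{b+\dimV-1}\,\dd u \simeq (A-1-a)^{(b+\dimV)/2-1}\,\dd A$. I then pass to polar coordinates in the $(A,v)$ half-plane:
\[
A = 2a^{1/2}\cosh(r/2)\cos\varphi, \qquad v = 2a^{1/2}\cosh(r/2)\sin\varphi,
\]
with Jacobian $\simeq a\cosh(r/2)\sinh(r/2)$. This exhibits the integral as $\int_0^\infty F(r)\,\Phi(r)\,\dd r$ with an explicit density $\Phi(r)$. The density is a double integral over $a$ and $\varphi$, subject to the constraints $A \geq 1+a$ (which forces $|\log a| \leq r$, consistent with \eqref{eq:est_d_a}) and $\varphi \in [0,\pi/2)$. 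The remaining task is to show $\Phi \simeq \phi_\varpi$ (respectively $\phi^\pm_\varpi$, $\phi^0_\varpi$ after restricting the $a$-range).

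For small $r$ (say $r \leq 1$), $a$ stays in a compact subset of $\Rpos$ and the metric is comparable to the Euclidean one in the coordinates $(x,z,\log a)$ on a $(\dimN+1)$-dimensional neighbourhood. There $|x| \lesssim r$, $|z| \lesssim r$ and $|\log a| \lesssim r$, with comparability on a sector of proportional measure. A direct Euclidean polar-coordinates argument then gives $\Phi(r) \simeq r^{\dimN} r^{b+c+\gamma} = r^{\dimN+b+c+\gamma}$ in all four cases. For the same reason the restrictions $a \leq \ee$, $a \geq 1/\ee$ and $1/\ee \leq a \leq \ee$ do not change the small-$r$ behaviour.

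For large $r$, the main contribution comes from $\cos\varphi$ and $\sin\varphi$ bounded below and from $a$ with $a^{1/2}\ee^{r/2} \gg 1+a$, i.e.\ $\ee^{-r} \ll a \ll \ee^{r}$, where $A-1-a \simeq A \simeq v \simeq a^{1/2}\ee^{r/2}$. Collecting powers and using $Q/2 = (\dimV+2\dimZ)/4$, the integrand in $a$ behaves like
\[
\ee^{(Q/2+b/4+c/2)r}\, a^{\,b/4+c/2+s}\,\left|\log a\right|^{[\gamma,\tilde\gamma]}\,\frac{\dd a}{a}.
\]
Integrating over $\ee^{-r} \lesssim a \lesssim \ee^r$ produces the extra factor $\ee^{|b/4+c/2+s|r}$ when the exponent is nonzero; the powers of $|\log a| \simeq r$ at the dominant endpoint give $r^{\tilde\gamma}$. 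When $b/4+c/2+s=0$, the logarithmic integral gives $r^{1+\tilde\gamma}$, which is the content of \eqref{eq:RintaD}.

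Restricting to $a \leq \ee$ or $a \geq 1/\ee$ selects one endpoint, and this yields the three cases of \eqref{eq:RintmD} and \eqref{eq:RintpD}. When the exponent has the unfavourable sign, the range $a \simeq 1$ dominates, giving $r^0\,\ee^{(Q/2+b/4+c/2)r}$. For $a \geq 1/\ee$ with positive exponent, the endpoint $a \simeq \ee^r$ must be handled separately: there $A-1-a$ is no longer comparable to $A$, and the constraint region in $\varphi$ shrinks. This accounts for the different exponent $Q/2+b/2+c+s$ in \eqref{eq:RintpD}. The case \eqref{eq:RintzD} is immediate from the same computation with $a \simeq 1$.

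The main obstacle is the two-sided estimate near the endpoints $a \approx \ee^{\pm r}$. There the constraint $A \geq 1+a$ cuts the $\varphi$-range to a thin set, and the factor $(A-1-a)^{(b+\dimV)/2-1}$ is no longer comparable to a power of $A$. I would handle this by parametrizing with $t = r - |\log a|$ and $\varphi$ near the boundary, and computing the contribution exactly up to constants. I expect the boundary layer $a \simeq \ee^{r}$ to contribute $\ee^{(Q/2+b/2+c+s)r}$, and I would check that this dominates exactly when $b/4+c/2+s>0$ and is dominated by the interior otherwise. The lower bounds would follow by restricting to these explicit subregions where all comparabilities hold.
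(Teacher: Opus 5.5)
Your route is viable and is, up to reparametrization, the paper's. With $u=\log a$, your angle and the paper's variable $y$ are related by $\cosh(y/2)=\cosh(r/2)\cos\varphi$. Your $v^2/(4a)=\cosh^2(r/2)\sin^2\varphi$ is the paper's $t$, and your constraints $A\ge 1+a$, $0\le\varphi<\pi/2$ become $|u|\le y\le r$. The difference lies in how the density is estimated:
\begin{itemize}
\item The paper writes the density exactly as $D_{\omega,b,c}(r)$ and bounds every factor above and below on the whole domain via \eqref{eq:chchest}. The factors $(r-y)^{[\cdot,0]}$ and $(y-|u|)^{[\cdot,0]}$ encode precisely your two boundary layers $\varphi\to0$ and $A\to1+a$.
\item It then applies the one-variable estimate \eqref{eq:estintpe} twice, which handles small, intermediate and large $r$ simultaneously.
\item It also never treats \eqref{eq:Rintp} directly. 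The map $\bx\mapsto\bx^{-1}$ preserves $|\bx|$ and the measure $\delta^{1/2}\,\dd\rho$, swaps $\{a\le\ee\}$ and $\{a\ge1/\ee\}$, and turns $w_{(b,c,s,\gamma,\tilde\gamma)}$ into $w_{(b,c,-s-b/2-c,\gamma,\tilde\gamma)}$. So only \eqref{eq:Rintm} and \eqref{eq:Rintz} need proof, and \eqref{eq:Rinta} follows by summing.
\end{itemize}

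As written, however, your large-$r$ argument is incomplete. The lower-bound idea (restricting to a bulk subregion) is sound, but there is no upper bound off the bulk, and the obstacle you single out is misdiagnosed. With $\sigma\defeq b/4+c/2+s$, one has $Q/2+b/2+c+s=Q/2+b/4+c/2+\sigma$ and $Q/2-s=Q/2+b/4+c/2-\sigma$. So every exponent in \eqref{eq:RintaD}--\eqref{eq:RintzD} is exactly what your bulk computation predicts. The layers $a\simeq\ee^{\pm r}$ never create a new exponent and need no asymptotic analysis.

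What is missing is an upper bound valid everywhere, including $\varphi$ near $0$ or near the constraint, for every $a$. It is immediate. Since $\dimV\ge2$ and $\dimZ\ge1$, the exponents $(b+\dimV)/2-1$ and $c+\dimZ-1$ are nonnegative. Hence $A-1-a\le A\le R$ and $v\le R$ give, for $r\ge1$,
\[
\Phi(r)\lesssim\ee^{(Q/2+b/4+c/2)r}\int a^{\sigma}|\log a|^{[\gamma,\tilde\gamma]}\,\frac{\dd a}{a},
\]
with the integral taken over $\{|\log a|\le r\}$ intersected with the relevant $a$-range. This is already of the claimed size.

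Two further points need repair:
\begin{itemize}
\item \textbf{Small $r$.} Comparability of $|\bx|$ with the Euclidean norm $\rho$ of $(x,z,\log a)$ controls weighted measures of balls, not the density of $|\bx|$. The density is what an estimate valid for all $F$ amounts to. You need in addition $\partial_\rho|\bx|\simeq1$ along Euclidean rays. Alternatively, evaluate your own double integral directly, which gives $\Phi(r)\simeq r\int_{-r}^{r}|u|^{\gamma}(r^2-u^2)^{(\dimN+b+c-2)/2}\,\dd u\simeq r^{\dimN+b+c+\gamma}$ for small $r$.
\item \textbf{Intermediate $r$.} The range of $r$ between the small- and large-$r$ regimes must also be covered, e.g.\ by continuity and positivity of $\Phi$ on compact intervals.
\end{itemize}
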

\begin{proof}
By \eqref{eq:inverse}, \eqref{eq:distance} and \eqref{eq:intwgt},
\[
|\bx^{-1}| = |\bx|, \qquad
w_{(b,c,s,\gamma,\tilde\gamma)}(\bx^{-1}) 
= w_{(b,c,-s-b/2-c,\gamma,\tilde\gamma)}(\bx);
\]
so, by the change of variables $\bx \mapsto \bx^{-1}$, we see that \eqref{eq:Rintm} is equivalent to \eqref{eq:Rintp}, and moreover \eqref{eq:Rinta} follows by summing \eqref{eq:Rintm} and \eqref{eq:Rintp}. Therefore, we only need to prove \eqref{eq:Rintm} and \eqref{eq:Rintz}.

Let $\omega : \RR \to [0,\infty)$. From \eqref{eq:distance}, by the change of variables $a = e^u$ and using polar coordinates in $x \in \RR^{\dimV}$ and $z \in \RR^{\dimZ}$, we see that
\[\begin{split}
&\int_S \delta^{1/2}(\bx) \, F(|\bx|) \, \omega(\log a) \, |x|^b \, |z|^c \,\dd\rho(\bx)\\
&\simeq \int_\RR \ee^{-Qu/2} \, \omega(u) \int_0^\infty \int_0^\infty  F\left(2 \arccosh \sqrt{\left(\cosh \frac{u}{2}+\frac{s^2}{8 \ee^{u/2}}\right)^2 + \frac{t^2}{4\ee^{u}}}\right) \\
&\quad\times s^{b+\dimV} \, t^{c+\dimZ} \,\frac{\dd s}{s} \, \frac{\dd t}{t} \,\dd u\\
&\simeq_{b,c} \int_\RR \ee^{\left(\frac{b}{4}+\frac{c}{2}\right) u} \, \omega(u) \int_0^\infty \int_0^\infty  F\left(2 \arccosh \sqrt{\left(\cosh \frac{u}{2}+s\right)^2 + t}\right) \\
&\quad\times s^{\frac{b+\dimV}{2}} \, t^{\frac{c+\dimZ}{2}} \,\frac{\dd s}{s} \, \frac{\dd t}{t} \,\dd u.
\end{split}\]
The change of variables $s= \cosh(y/2)-\cosh(u/2)$, followed by $t=\cosh^2(r/2)-\cosh^2(y/2)$, then gives
\[\begin{split}
&\int_S \delta^{1/2}(\bx) \, F(|\bx|) \, \omega(\log a) \, |x|^b \, |z|^c \,\dd\rho(\bx)\\
&\simeq_{b,c} \int_\RR \ee^{\left(\frac{b}{4}+\frac{c}{2}\right) u} \, \omega(u) 
\int_{|u|}^\infty \int_0^\infty F\left(2 \arccosh \sqrt{\cosh^2 \frac{y}{2} + t}\right) \, t^{\frac{c+\dimZ}{2}} \, \frac{\dd t}{t} \\
&\quad\times \left(\cosh \frac{y}{2} - \cosh\frac{u}{2}\right)^{\frac{b+\dimV}{2}-1} \sinh \frac{y}{2} \,\dd y \,\dd u\\
&\simeq_{b,c} \int_\RR \ee^{\left(\frac{b}{4}+\frac{c}{2}\right) u} \, \omega(u) 
\int_{|u|}^\infty \int_{y}^\infty F(r) \left(\cosh^2\frac{r}{2}-\cosh^2\frac{y}{2}\right)^{\frac{c+\dimZ}{2}-1} \, \sinh r \, \dd r \\
&\quad\times \left(\cosh \frac{y}{2} - \cosh\frac{u}{2}\right)^{\frac{b+\dimV}{2}-1} \sinh \frac{y}{2} \,\dd y \,\dd u \\
&\simeq_{b,c} \int_0^\infty F(r) \, D_{\omega,b,c}(r) \,\dd r,
\end{split}\]
where
\[\begin{split}
D_{\omega,b,c}(r) &\defeq \int_0^r \int_{-y}^y \ee^{\left(\frac{b}{4}+\frac{c}{2}\right) u} \, \omega(u) \left(\cosh^2\frac{r}{2}-\cosh^2\frac{y}{2}\right)^{\frac{c+\dimZ}{2}-1} \, \sinh r \\
&\quad\times \left(\cosh \frac{y}{2} - \cosh\frac{u}{2}\right)^{\frac{b+\dimV}{2}-1} \sinh \frac{y}{2} \,\dd u \,\dd y.
\end{split}\]
Notice now that, for $r \geq y \geq 0$, we have $\sinh r \simeq r^{[1,0]} \,\ee^r$ and
\begin{equation}\label{eq:chchest}
\begin{split}
\cosh^2\frac{r}{2}-\cosh^2\frac{y}{2} &= \frac{\cosh r - \cosh y}{2} \\
&= \sinh \frac{r+y}{2} \sinh \frac{r-y}{2} \simeq (r-y)^{[1,0]} \, r^{[1,0]} \,\ee^r,
\end{split}
\end{equation}
whence also, for $y \geq |u|$,
\[
\cosh \frac{y}{2} - \cosh \frac{u}{2} \simeq (y-|u|)^{[1,0]} \, y^{[1,0]} \, \ee^{y/2},
\]
and therefore
\[\begin{split}
D_{\omega,b,c}(r) &\simeq_{b,c} r^{[(c+\dimZ)/2,0]} \,\ee^{(c+\dimZ)r/2} 
\int_0^r (r-y)^{[(c+\dimZ)/2-1,0]} \,y^{[(b+\dimV)/2,0]} \,\ee^{(b+\dimV)y/4}  \\
&\qquad\times \int_{-y}^y (y-|u|)^{[(b+\dimV)/2-1,0]} \, \ee^{\left(\frac{b}{4}+\frac{c}{2}\right) u} \, \omega(u) \,\dd u \,\dd y.
\end{split}\]

Now, in the case $\omega(u) = \omega^{-}_{s,\gamma,\tilde\gamma}(u) \defeq \ee^{su} \, |u|^{[\gamma,\tilde\gamma]} \,\chr_{\{ u \leq 1\}}$, we get
\begin{equation}\label{eq:Dminus}
\begin{split}
&D_{\omega^{-}_{s,\gamma,\tilde\gamma},b,c}(r) \\
&\simeq_{b,c} r^{[(c+\dimZ)/2,0]} \,\ee^{(c+\dimZ)r/2} 
\int_0^r (r-y)^{[(c+\dimZ)/2-1,0]} \,y^{[(b+\dimV)/2,0]} \,\ee^{(b+\dimV)y/4}  \\
&\qquad\times \int_{-y}^{\min\{1,y\}} (y-|u|)^{[(b+\dimV)/2-1,0]} \, |u|^{[\gamma,\tilde\gamma]} \, \ee^{\left(\frac{b}{4}+\frac{c}{2}+s\right) u} \,\dd u \,\dd y \\
&\simeq_{b,c} r^{[(c+\dimZ)/2,0]} \,\ee^{(c+\dimZ)r/2} 
\int_0^r (r-y)^{[(c+\dimZ)/2-1,0]} \,y^{[(b+\dimV)/2,0]} \,\ee^{(b+\dimV)y/4}  \\
&\qquad\times \int_0^y (y-u)^{[(b+\dimV)/2-1,0]} \, u^{[\gamma,\tilde\gamma]} \, \ee^{-\left(\frac{b}{4}+\frac{c}{2}+s\right) u} \,\dd u \,\dd y;
\end{split}
\end{equation}
in the last step, we used the splitting $\int_{-y}^{\min\{1,y\}} = \int_{-y}^0 + \int_{0}^{\min\{1,y\}}$ and the fact that, for $0 \leq u \leq \min\{1,y\}$, one has $\ee^{\pm\left(\frac{b}{4}+\frac{c}{2}+s\right) u} \simeq_{b,c} 1$, so the second summand can effectively be absorbed into the first one via the change of variables $u \mapsto -u$.

It is not difficult to prove that, if $\alpha,\beta>-1$, $\nu \geq 0$, $\sigma \in \RR$, then, for all $x \geq 0$,
\begin{equation}\label{eq:estintpe}
\int_0^x (x-t)^{[\alpha,0]} \, t^{[\beta,\nu]} \, \ee^{\sigma t} \,\dd t
\simeq_{\alpha,\beta,\nu,\sigma} \begin{cases}
x^{[\alpha+\beta+1,\nu]} \, \ee^{\sigma x} &\text{if } \sigma > 0,\\
x^{[\alpha+\beta+1,\nu+1]} &\text{if } \sigma = 0,\\
x^{[\alpha+\beta+1,0]} &\text{if } \sigma < 0.
\end{cases}
\end{equation}
By applying \eqref{eq:estintpe} twice (first to the integral in $u$ and then to that in $y$), we can estimate the expression in \eqref{eq:Dminus} and readily obtain that $D_{\omega^{-}_{s,\gamma,\tilde\gamma},b,c}(r) \simeq_{\varpi} \phi_\varpi^-(r)$, where $\phi_\varpi^-(r)$ is as in \eqref{eq:RintmD}, thus proving \eqref{eq:Rintm}.

On the other hand, in the case $\omega(u) = \omega^{0}_{s,\gamma,\tilde\gamma}(u) \defeq \ee^{su} \, |u|^{[\gamma,\tilde\gamma]} \,\chr_{\{ -1 \leq u \leq 1\}}$, we get
\[\begin{split}
&D_{\omega^{0}_{s,\gamma,\tilde\gamma},b,c}(r) \\
&\simeq_{b,c} r^{[(c+\dimZ)/2,0]} \,\ee^{(c+\dimZ)r/2} 
\int_0^r (r-y)^{[(c+\dimZ)/2-1,0]} \,y^{[(b+\dimV)/2,0]} \,\ee^{(b+\dimV)y/4}  \\
&\qquad\times \int_{-\min\{1,y\}}^{\min\{1,y\}} (y-|u|)^{[(b+\dimV)/2-1,0]} \, |u|^{[\gamma,\tilde\gamma]} \, \ee^{\left(\frac{b}{4}+\frac{c}{2}+s\right) u} \,\dd u \,\dd y \\
&\simeq_{\varpi} r^{[(c+\dimZ)/2,0]} \,\ee^{(c+\dimZ)r/2} 
\int_0^r (r-y)^{[(c+\dimZ)/2-1,0]} \,y^{[b+\dimV+\gamma,0]} \,\ee^{(b+\dimV)y/4}  \,\dd y,
\end{split}\]
and again an application of \eqref{eq:estintpe} gives that $D_{\omega^{0}_{s,\gamma,\tilde\gamma},b,c}(r) \simeq_\varpi \phi_\varpi^0(r)$, where $\phi_\varpi^0(r)$ is as in \eqref{eq:RintzD}, thus proving \eqref{eq:Rintz}.
\end{proof}

Below we collect some consequences of Lemma \ref{lem:intradial} that we shall use later.

\begin{corollary}\label{cor:radial}
For every nonnegative function $F\in C_{c}^{\infty}(S)$,
\begin{multline}\label{eq:radial456}
	\int_{S} a^{-\frac{1}{2}} \left(\left(1+a+\frac{|x|^{2}}{4}\right)|x| + |x||z|\right)\,\delta^{1/2}(\bx)F(|\bx|) \,\dd \rho(\bx)\\
	\simeq \int_{0}^{\infty} F(r) \,r^{\left[\dimN+1,0\right] } \,\ee^{(Q+2)r/2} \,\dd r,
\end{multline}
\begin{multline}\label{eq:radial4561}
	\int_{\{a\leq\ee\}} a^{-\frac{1}{2}}\left( \left(a+\frac{|x|^{2}}{4}\right)|x|+|x||z|\right) \,\delta^{1/2}(\bx)F(|\bx|) \,\dd \rho(\bx) \\
	\simeq \int_{0}^{\infty} F(r) \,r^{\left[\dimN+1,0\right]} \,\ee^{(Q/2+3/4)r} \,\dd r,
\end{multline}
\begin{multline}\label{eq:radial32}
	\int_{\{\frac{1}{\ee}\leq a\leq\ee\}} |1-a^{-1}| \left(1+a+\frac{|x|^{2}}{4}\right)\,\delta^{1/2}(\bx) F(|\bx|) \,\dd \rho(\bx)\\
	\simeq 	\int_{0}^{\infty} F(r) \,r^{\left[\dimN+1,0\right]} \,\ee^{(Q+1)r/2} \,\dd r,
\end{multline}
\begin{multline}\label{eq:radial31}
\int_{S} |1-a^{-1}| \left(1+a+\frac{|x|^{2}}{4}\right)\,\delta^{1/2}(\bx)F(|\bx|) \,\dd \rho(\bx) \\
\simeq \int_{0}^{\infty} F(r) \,r^{\left[\dimN+1,0\right]} \,\ee^{(Q+2)r/2} \,\dd r.
\end{multline}
\end{corollary}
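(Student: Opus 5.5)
Each of the four integrals can be reduced to Lemma \ref{lem:intradial}: we expand the weight into a finite sum of monomial weights $w_\varpi$ and use the fact that, for nonnegative $F$, the integral of a sum of nonnegative terms is comparable to the sum of the integrals. On the right-hand side we then keep only the dominant profile $\phi_\varpi$ (or $\phi^\pm_\varpi$, $\phi^0_\varpi$). The profiles have the form $r^{[\alpha,\beta]}\ee^{\theta r}$. A sum of such profiles is comparable to $r^{[\min\alpha,\cdot]}\ee^{\max\theta\, r}$, with the polynomial factor at infinity taken from the terms achieving the maximal exponent $\theta$. The whole proof is therefore bookkeeping of exponents.

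For \eqref{eq:radial456}, expand the weight as
\[
a^{-1/2}|x| + a^{1/2}|x| + \tfrac14 a^{-1/2}|x|^3 + a^{-1/2}|x||z|,
\]
which gives monomial weights with $\gamma=\tilde\gamma=0$ and
\[
(b,c,s)\in\{(1,0,-\tfrac12),\,(1,0,\tfrac12),\,(3,0,-\tfrac12),\,(1,1,-\tfrac12)\}.
\]
For each term compute $\frac b4+\frac c2+s$ and the exponent $\frac Q2+\frac b4+\frac c2+|\frac b4+\frac c2+s|$ in \eqref{eq:RintaD}:
\begin{itemize}
\item $(1,0,-\frac12)$: $\frac b4+\frac c2+s=-\frac14$, exponent $\frac Q2+\frac12$;
\item $(1,0,\frac12)$: $\frac b4+\frac c2+s=\frac34$, exponent $\frac Q2+1$;
\item $(3,0,-\frac12)$: $\frac b4+\frac c2+s=\frac14$, exponent $\frac Q2+1$;
\item $(1,1,-\frac12)$: $\frac b4+\frac c2+s=\frac14$, exponent $\frac Q2+1$.
\end{itemize}
The maximal exponent is $\frac Q2+1=(Q+2)/2$, and all nonzero cases have power $0$ at infinity. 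Near $0$ the smallest power is $\dimN+b+c=\dimN+1$, attained by $b=1$, $c=0$. This gives $r^{[\dimN+1,0]}\ee^{(Q+2)r/2}$, as claimed.

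For \eqref{eq:radial4561}, the term $a^{-1/2}|x|$ is absent. The remaining terms have $\frac b4+\frac c2+s>0$, so \eqref{eq:RintmD} gives exponent $\frac Q2+\frac b4+\frac c2$:
\begin{itemize}
\item $(1,0,\frac12)$: $\frac Q2+\frac14$;
\item $(3,0,-\frac12)$: $\frac Q2+\frac34$;
\item $(1,1,-\frac12)$: $\frac Q2+\frac34$.
\end{itemize}
The maximum is $\frac Q2+\frac34$, and the minimal power near $0$ is $\dimN+1$, from the term $(1,0,\frac12)$. The factor $\ee^{\theta r}$ is irrelevant for $r\le1$, so this is consistent.

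For \eqref{eq:radial32} and \eqref{eq:radial31}, write $|1-a^{-1}|=a^{-1}|a-1|$. On $\{1/\ee\le a\le\ee\}$ we have $|a-1|\simeq|\log a|$, and for $a\ge1/\ee$ also $|a-1|\simeq a\,|\log a|^{[1,0]}$. Set $\ell(a)\defeq|\log a|^{[1,0]}$, so that $\gamma=1$, $\tilde\gamma=0$. For $a\le\ee$, the bound $|1-a^{-1}|\simeq a^{-1}\ell(a)$ holds.

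Hence, for \eqref{eq:radial32}, the weight is comparable to $\ell(a)(1+|x|^2)$, since $a\simeq1$. Apply \eqref{eq:Rintz} with $(b,c)=(0,0)$ and $(2,0)$. This yields the exponents $\frac Q2$ and $\frac Q2+\frac12$, and the power near $0$ is $\dimN+0+\gamma=\dimN+1$, giving $r^{[\dimN+1,0]}\ee^{(Q+1)r/2}$.

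For \eqref{eq:radial31}, split $S$ into $\{a\le\ee\}$ and $\{a\ge1/\ee\}$. On the first piece the weight is $\simeq \ell(a)\,a^{-1}(1+a+|x|^2)$. The term $(b,s)=(0,-1)$ has $\frac b4+\frac c2+s<0$, so \eqref{eq:RintmD} gives exponent $\frac Q2-s=\frac Q2+1$. The other terms give exponents $\le\frac Q2+\frac12$. On the second piece the weight is $\simeq\ell(a)(1+a+|x|^2)$, and \eqref{eq:RintpD} applies:
\begin{itemize}
\item $s=1$: exponent $\frac Q2+1$;
\item $b=2$, $s=0$: exponent $\frac Q2+1$;
\item $s=0$, $b=0$: $\frac b4+\frac c2+s=0$, exponent $\frac Q2$ with power $1+\tilde\gamma$ at infinity, which is subdominant.
\end{itemize}
Summing, the total is $r^{[\dimN+1,0]}\ee^{(Q+2)r/2}$. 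The overlap region $1/\ee\le a\le \ee$ is counted twice, which is harmless for $\simeq$.

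The main obstacle is purely careful bookkeeping. One must track which profile is dominant at $r\to\infty$ versus $r\to0$, and make sure that the subdominant terms with equal exponent do not carry larger polynomial powers at infinity. No term with equality case and maximal exponent arises, so the power $0$ at infinity is correct.
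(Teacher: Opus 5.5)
Your proposal is correct and follows the paper's proof: the paper likewise expands each weight into the same monomial weights $w_\varpi$ and sums the corresponding instances of Lemma~\ref{lem:intradial}. There is one harmless slip in \eqref{eq:radial31} on $\{a\le\ee\}$. The term $(b,s)=(2,-1)$ has $\frac b4+s=-\frac12<0$, so \eqref{eq:RintmD} gives $r^{[\dimN+3,0]}\,\ee^{(Q/2+1)r}$, not an exponent $\le\frac Q2+\frac12$ as you claim. This profile is still dominated by the target $r^{[\dimN+1,0]}\,\ee^{(Q+2)r/2}$, so your conclusion is unaffected.
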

\begin{proof}
Estimate \eqref{eq:radial456} follows by summing the instances of \eqref{eq:Rinta} with $(b,c,s,\gamma,\tilde\gamma)$ equal to $(1,0,-1/2,0,0)$, $(1,0,1/2,0,0)$, $(3,0,-1/2,0,0)$ and $(1,1,-1/2,0,0)$. Instead, estimate \eqref{eq:radial4561} follows by summing the instances of \eqref{eq:Rintm} corresponding to $(b,c,s,\gamma,\tilde\gamma)$ equal to $(1,0,1/2,0,0)$, $(3,0,-1/2,0,0)$ and $(1,1,-1/2,0,0)$.

Finally, notice that 
\[
|1-a^{-1}| \simeq \begin{cases}
\left| \log a\right|^{[1,0]} &\text{if } a \geq 1/\ee\\
\left|\log a\right|^{[1,0]} a^{-1} &\text{if } a \leq \ee.
\end{cases}
\]
Thus, \eqref{eq:radial32} follows from summing the instances of \eqref{eq:Rintz} where $(b,c,s,\gamma,\tilde\gamma)$ is equal to $(0,0,0,1,0)$, $(0,0,1,1,0)$ and $(2,0,0,1,0)$. Instead, \eqref{eq:radial31} follows by summing the instances of \eqref{eq:Rintm} where $(b,c,s,\gamma,\tilde\gamma)$ is equal to $(0,0,-1,1,0)$, $(0,0,0,1,0)$ and $(2,0,-1,1,0)$, together with the instances of \eqref{eq:Rintp} where $(b,c,s,\gamma,\tilde\gamma)$ is equal to $(0,0,0,1,0)$, $(0,0,1,1,0)$ and $(2,0,0,1,0)$.
\end{proof}

\section{Heat kernel estimates}\label{s:sectionheatkernel}

Let $h_{t}\defeq \krn_{\ee^{-t\Delta}}$ denote the heat kernel associated to $\Delta$ on $S$. The main purpose of this section is proving the weighted $L^1$-estimates for $h_t$ and its gradient stated in Theorem \ref{thm:heatkernel_intro}.
In the unweighted case $\varepsilon = 0$, the first estimate in \eqref{eq:heatkernel_intro} is a consequence of the general fact that $\|h_t\|_1 = 1$ for all $t>0$, while the second estimate is established in \cite[Proposition 4.7]{V07}. 
We shall exploit several ideas from \cite{V07} to prove the estimates for arbitrary $\varepsilon \geq 0$.

The operator $\Delta$ has a special relationship with the Laplace--Beltrami operator $\opLaBe$ associated with the Riemannian structure of $S$, see \cite[\S IX.1]{VSC92}, whence one obtains that
\[
\delta^{-1/2} h_t = \ee^{Q^2 t/4}  \krn_{\ee^{-t\opLaBe}},
\]
see also \cite[Proposition 2.3]{V07}. From the formulas and estimates for $\krn_{\ee^{-t\opLaBe}}$ obtained in \cite{ADY96} we can therefore deduce analogous results for $\delta^{-1/2} h_t$, which we state here.

If a function $f : S \to \CC$ is radial, with a slight abuse of notation we shall also write $f(r)$ with $r \geq 0$ to denote the value of $f$ at any point $\bx \in S$ with $|\bx| = r$.

\begin{proposition}\label{prp:ADJ}
The function $\delta^{-1/2} h_t$ is smooth and radial on $S$. Moreover, when $\dimZ$ is even,
\begin{equation}\label{eq:formulahteven}
(\delta^{-1/2} h_t)(r) 
= 2^{-\dimV-\frac{\dimZ}{2}} \, \pi^{-\frac{\dimN}{2}} \, \DfD_r^{\dimZ/2} \DfE_r^{\dimV/2} h_t^\RR(r),
\end{equation}
where $h_t^{\RR}(r) \defeq (4\pi t)^{-1/2} \,\ee^{-r^2/4t}$ is the heat kernel on $\RR$, while
\begin{equation}\label{eq:DfDE}
\DfD_r \defeq -\frac{1}{\sinh r}\partial_r, \quad \DfE_r \defeq -\frac{1}{\sinh(r/2)}\partial_r;
\end{equation}
when $\dimZ$ is odd, instead,
\begin{equation}\label{eq:formulahtodd}
(\delta^{-1/2} h_t)(r) = 2^{-\dimV-\frac{\dimZ}{2}} \, \pi^{-\frac{\dimN+1}{2}} \int_{r}^{\infty} \DfD_s^{(\dimZ+1)/2} \DfE_s^{\dimV/2} h_t^\RR(s) \,\dd\nu_r(s),
\end{equation}
where 
\[
\dd \nu_r(s) \defeq \frac{\sinh s}{(\cosh s-\cosh r)^{1/2}} \,\dd s.
\]
In addition, for any $\dimZ$, we have the following pointwise estimates:
\begin{align}
\label{eq:qt}
(\delta^{-1/2} h_t)(r) &\simeq t^{-3/2} \, (1+r)\left(1+\frac{1+r}{t}\right)^{(\dimN-2)/2}\ee^{-Qr/2-r^2/4t}, \\
\label{eq:nablaqt}
|\nabla(\delta^{-1/2}h_t)|(r) &\simeq t^{-3/2} \, r\left(1+\frac{1+r}{t}\right)^{\dimN/2}\ee^{-Qr/2}\ee^{-r^{2}/4t}.
\end{align}
\end{proposition}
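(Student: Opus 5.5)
The whole statement will be transferred from the Laplace--Beltrami heat kernel via the identity $\delta^{-1/2} h_t = \ee^{Q^2 t/4} \krn_{\ee^{-t\opLaBe}}$ recalled above. Smoothness is inherited from the heat kernel of the elliptic operator $\opLaBe$. Radiality comes from the fact that Damek--Ricci spaces are harmonic manifolds: the heat kernel $\krn_{\ee^{-t\opLaBe}}(\bx)$ depends only on the Riemannian distance $|\bx|$, as shown in \cite{ADY96} via the spherical analysis (the radial part of $\opLaBe$ is the Jacobi-type operator $\partial_r^2 + (\tfrac{\dimV}{2}\coth\tfrac r2 + \dimZ\coth r)\partial_r$).

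For the explicit formulas I would quote the inversion formulas of \cite{ADY96}. They express $\krn_{\ee^{-t\opLaBe}}$ through the Abel transform, whose inverse is a composition of the operators $\DfD_r$ and $\DfE_r$; here $\DfE_r$ accounts for the $\dimV$ first-layer directions (half-angle) and $\DfD_r$ for the $\dimZ$ central ones. These operators are applied to the one-dimensional kernel $\ee^{-Q^2t/4}h^\RR_t(r)$. Since $\dimV$ is always even for H-type groups, $\DfE_r^{\dimV/2}$ makes sense. When $\dimZ$ is odd, a fractional half power of $\DfD$ is needed, which produces the Weyl-type integral against $\dd\nu_r$. The factor $\ee^{-Q^2t/4}$ cancels with $\ee^{Q^2t/4}$, which gives \eqref{eq:formulahteven} and \eqref{eq:formulahtodd}. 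The constants $2^{-\dimV-\dimZ/2}\pi^{-\dimN/2}$ (respectively $\pi^{-(\dimN+1)/2}$) are to be matched against the normalisation in \cite{ADY96}, where the Riemannian measure is used. I would double-check this normalisation by comparing with the real hyperbolic case.

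The estimate \eqref{eq:qt} is the Anker--Damek--Yacoub heat kernel bound $\krn_{\ee^{-t\opLaBe}}(r) \simeq t^{-3/2}(1+r)(1+\tfrac{1+r}{t})^{(\dimN-2)/2}\ee^{-Q^2t/4-Qr/2-r^2/4t}$, multiplied by $\ee^{Q^2t/4}$. For \eqref{eq:nablaqt}, since the function is radial and the Riemannian gradient of $|\bx|$ has unit length, we have $|\nabla(\delta^{-1/2}h_t)| = |\partial_r(\delta^{-1/2}h_t)(r)|$. The analogous two-sided estimate for the radial derivative of the heat kernel is also in \cite{ADY96}.

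The main obstacle is this derivative estimate, if one wants to rederive it rather than cite it. The plan there is to differentiate the explicit formulas. Note that $-\partial_r = \sinh r\,\DfD_r$, so $\partial_r$ of \eqref{eq:formulahteven} equals $-\sinh r$ times the analogous expression with $\dimZ/2$ replaced by $\dimZ/2+1$. This is the heat kernel formula in "dimension" $\dimN+2$ with $Q$ replaced by $Q+2$, up to constants and the factor $\ee^{-Q^2t/4}$ bookkeeping. Applying the known asymptotics in that shifted dimension then gives
\[
\sinh r \cdot t^{-3/2}(1+r)\left(1+\tfrac{1+r}{t}\right)^{\dimN/2}\ee^{-(Q+2)r/2-r^2/4t}.
\]
Combined with $\sinh r\,\ee^{-r}(1+r)\simeq r$ for $r$ bounded below (and with a direct local check near $r=0$), this yields exactly \eqref{eq:nablaqt}. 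The odd case is handled by an analogous computation inside the $\nu_r$-integral.
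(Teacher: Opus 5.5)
Your proposal is correct and follows the paper's route. After the transfer identity $\delta^{-1/2} h_t = \ee^{Q^2 t/4}\krn_{\ee^{-t\opLaBe}}$, the paper's proof simply cites \cite[eqs.\ (5.7) and (5.8)]{ADY96} for the explicit formulas and \cite[Theorem 5.9 and Corollary 5.49]{ADY96} for both pointwise bounds, as you do. Your optional rederivation of \eqref{eq:nablaqt} via $\partial_r = -\sinh r\,\DfD_r$ (shifting $\dimZ \to \dimZ+2$) is also sound, with two remarks: the shifted pair $(\dimV,\dimZ+2)$ need not come from an actual H-type group, so you must rely on the ADY96 analysis of $\DfD_r^q \DfE_r^p h_t^\RR$ being valid for arbitrary integers $p,q$; and $\sinh r\,\ee^{-r}(1+r) \simeq r$ holds for all $r>0$, so no separate local check near $r=0$ is needed.
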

\begin{proof}
Formulas \eqref{eq:formulahteven} and \eqref{eq:formulahtodd} are consequences of \cite[eq.\ (5.7) and (5.8)]{ADY96}. The estimates \eqref{eq:qt} and \eqref{eq:nablaqt} are contained in \cite[Theorem 5.9 and Corollary 5.49]{ADY96}.
\end{proof}

The pointwise estimates \eqref{eq:qt}-\eqref{eq:nablaqt} are enough to establish the first estimate in \eqref{eq:heatkernel_intro} for any $t$, as well as the second estimate for small $t$; in addition, we can obtain similar estimates with a faster decay for large $t$ by restricting the integral to a bounded subset of $S$.

\begin{proposition}\label{prp:heateasyestimates}
Let $\varepsilon \in [0,1)$ and $t_0 > 0$. Then
\begin{equation}\label{eq:htintest0}
\int_S |h_t(\bx)| \,\ee^{\varepsilon |\bx|^2/4t} \,\dd\rho(\bx) \lesssim_{\varepsilon} 1 \qquad \forall t>0
\end{equation}
and
\begin{equation}\label{eq:nablahtintestsmallt}
\int_S |\nabla h_t(\bx)| \,\ee^{\varepsilon |\bx|^2/4t} \,\dd\rho(\bx) \lesssim_{\varepsilon,t_0} t^{-1/2} \qquad\forall t \in (0,t_0].
\end{equation}
In addition, for all compact subsets $K \subseteq S$,
\begin{equation}\label{eq:htintestlocal}
\int_K [|h_t(\bx)| + |\nabla h_t(\bx)|] \,\ee^{\varepsilon |\bx|^2/4t} \,\dd\rho(\bx) \lesssim_{\varepsilon,t_0,K} t^{-3/2} \qquad\forall t \in [t_0,\infty).
\end{equation}
\end{proposition}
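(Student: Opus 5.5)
The plan is to write $h_t = \delta^{1/2}(\delta^{-1/2}h_t)$ and reduce every integral to a one-dimensional integral in $r=|\bx|$ via Lemma \ref{lem:intradial}, using the pointwise bounds \eqref{eq:qt}--\eqref{eq:nablaqt} of Proposition \ref{prp:ADJ}.

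\emph{First estimate.} Since $\delta^{-1/2}h_t$ is radial, we apply \eqref{eq:Rinta} with $\varpi=(0,0,0,0,0)$, for which $\phi_\varpi(r)= r^{[\dimN,0]}\ee^{Qr/2}$. Combined with \eqref{eq:qt}, this gives
\[
\int_S |h_t|\,\ee^{\varepsilon|\bx|^2/4t}\,\dd\rho \simeq \int_0^\infty t^{-3/2}(1+r)\Bigl(1+\tfrac{1+r}{t}\Bigr)^{\frac{\dimN-2}{2}} r^{[\dimN,0]}\,\ee^{-(1-\varepsilon)r^2/4t}\,\dd r .
\]
The exponentials $\ee^{\pm Qr/2}$ cancel exactly, which is the key point. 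We then substitute $r=\sqrt t\,u$ and split into the cases $t\le1$ and $t\ge1$. For $t\le 1$ the region $r\le1$ contributes $t^{-3/2}t^{-(\dimN-2)/2}t^{(\dimN+1)/2}\int u^{\dimN}\ee^{-(1-\varepsilon)u^2/4}\dd u\lesssim 1$, and the region $r\ge1$ is exponentially small in $1/t$. For $t\ge1$ we have $(1+r)/t\lesssim 1+u/\sqrt t$, and the integral is $\lesssim t^{-3/2}\int (1+\sqrt t u)\,\sqrt t\,(1+u)^{C}\ee^{-(1-\varepsilon)u^2/4}\dd u\lesssim 1$. The constants depend on $\varepsilon$ only through the Gaussian factor $\ee^{-(1-\varepsilon)u^2/4}$.

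\emph{Gradient estimate for small $t$.} We first need $|\nabla h_t|$ in terms of the radial function $q_t\defeq\delta^{-1/2}h_t$. Since $\nabla(\delta^{1/2}q_t)=\delta^{1/2}\nabla q_t+q_t\nabla\delta^{1/2}$ and $\delta^{1/2}=a^{-Q/2}$, only $\vfX_0$ hits $\delta^{1/2}$, giving $\vfX_0\delta^{1/2}=-\frac Q2\delta^{1/2}$. Hence
\[
|\nabla h_t|\lesssim \delta^{1/2}\bigl(|\nabla q_t|+q_t\bigr).
\]
The $q_t$ term is handled exactly as in the first estimate, except that it carries an extra factor $1$ rather than $t^{-1/2}$; for $t\le t_0$ we have $1\lesssim_{t_0} t^{-1/2}$. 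The $|\nabla q_t|$ term is treated with \eqref{eq:nablaqt} and the same scaling. The factor $r(1+(1+r)/t)^{\dimN/2}$ in place of $(1+r)(1+(1+r)/t)^{(\dimN-2)/2}$ yields, for $r\le 1$, the quantity $t^{-3/2}t^{-\dimN/2}t^{(\dimN+2)/2}=t^{-1/2}$, as required. For $r\ge1$ and $t\le t_0$ the Gaussian $\ee^{-(1-\varepsilon)r^2/4t}$ absorbs all polynomial factors, at a cost depending on $t_0$.

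\emph{Local estimate for large $t$.} On a compact $K$ we have $r\le R_K$ and $a\simeq_K1$, so $\delta^{1/2}\simeq1$ and $\ee^{\varepsilon r^2/4t}\le \ee^{R_K^2/4t_0}$. For $t\ge t_0$ the factor $(1+(1+r)/t)$ is bounded, so both \eqref{eq:qt} and \eqref{eq:nablaqt} give pointwise bounds $\lesssim_{K,t_0} t^{-3/2}$. Since $\rho(K)<\infty$, integrating gives \eqref{eq:htintestlocal}.

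The main obstacle I expect is the bookkeeping in the small-$t$ gradient case: one must check that the powers of $t$ combine to exactly $t^{-1/2}$ near $r=0$, where the factor $r^{[\dimN,0]}$ from Lemma \ref{lem:intradial} matters. One must also check that the term $q_t\nabla\delta^{1/2}$ does not spoil the estimate, which it does not, precisely because we restrict to $t\le t_0$. For large $t$ this term would only give $O(1)$ instead of $t^{-1/2}$, which explains why the global gradient bound for large $t$ requires more refined arguments later.
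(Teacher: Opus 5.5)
Your proposal is correct and follows essentially the same route as the paper: \eqref{eq:Rinta} combined with the pointwise bounds \eqref{eq:qt}--\eqref{eq:nablaqt}, the Leibniz bound using $|\nabla\delta^{1/2}|=\frac{Q}{2}\delta^{1/2}$, and Gaussian scaling near and away from $r=0$. The paper splits at $r=r_0$ and reads the local bound off the $r\le r_0$ part, which is equivalent to your pointwise argument on $K$. One slip to fix: for $\varpi=0$ one has $\frac{b}{4}+\frac{c}{2}+s=0$, so \eqref{eq:RintaD} gives $\phi_\varpi(r)=r^{[\dimN,1]}\ee^{Qr/2}$ rather than $r^{[\dimN,0]}\ee^{Qr/2}$. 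With your weight and $\varepsilon=0$, the large-$t$ computation would give $\|h_t\|_1\lesssim t^{-1/2}$, contradicting $\|h_t\|_1=1$. With the extra factor $r=\sqrt{t}\,u$ your scaling still yields $O(1)$, and the other cases are unaffected, since the two weights agree for $r\le 1$ and the Gaussian absorbs the extra $r$ elsewhere.
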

\begin{proof}
By the integration formula \eqref{eq:Rinta} and the pointwise bound \eqref{eq:qt}, the integral in \eqref{eq:htintest0} is controlled by
\begin{equation*}
\begin{split}
&\int_S |h_t(\bx)| \,\ee^{\varepsilon |\bx|^2/4t} \,\dd\rho(\bx) \\
&\simeq \int_{0}^{\infty} (\delta^{-1/2} h_{t})(r) \,\ee^{\varepsilon r^2/4t} \,r^{[\dimN,1]} \, \ee^{Qr/2} \,\dd r\\
&\simeq\int_{0}^{\infty} t^{-3/2} \left(1+\frac{1+r}{t}\right)^{(\dimN-2)/2} \ee^{-\theta r^{2}/t} r^{[\dimN,2]} \,\dd r 
= \int_0^{r_0} + \int_{r_0}^\infty
\end{split}
\end{equation*}
for any $r_0 \in \Rpos$, where $\theta \defeq (1-\varepsilon)/4 > 0$.
In the integral for $r \geq r_0$, we use the estimate $1+\frac{1+r}{t} \lesssim_{r_0} 1+\frac{r^2}{t}$ to obtain that
\begin{equation*}
\begin{split}
\int_{r_0}^\infty 
&\lesssim_{r_0} \int_{r_0}^{\infty} t^{-1/2} \, \frac{r^{2}}{t} \left(1+\frac{r^2}{t}\right)^{(\dimN-2)/2} \,\ee^{-\theta r^{2}/t} \,\dd r\\
&\leq \int_{0}^{\infty} s^{2}(1+s^{2})^{(\dimN-2)/2} \,\ee^{-\theta s^{2}} \,\dd s \lesssim_\varepsilon 1,
\end{split}
\end{equation*}
where we used the change of variables $s = t^{-1/2} \, r$.
In the integral for $r \leq r_0$, instead, we have $1 + \frac{1+r}{t} \simeq_{r_0} t^{[-1,0]}$, so
\begin{equation}\label{eq:extradecayht}
\int_0^{r_0}
\lesssim_{r_0} \int_{0}^{r_0} t^{[-(\dimN+1)/2,-3/2]}\,\ee^{-\theta r^{2}/t} \,r^{\dimN} \,\dd r
\lesssim_{\varepsilon,r_0} t^{[0,-3/2]};
\end{equation} 
for the last estimate, when $t$ is large one can simply bound the exponential term by $1$, while when $t$ is small one can use the change of variables $s=t^{-1/2} \, r$ to obtain a uniform bound in $t$.

This completes the proof of \eqref{eq:htintest0}; in addition, the decay $t^{-3/2}$ obtained for $r \leq r_0$ and large $t$ in \eqref{eq:extradecayht} also proves the estimate for $h_t$ in \eqref{eq:htintestlocal}.

We are left with proving the estimates for the gradient $\nabla h_t$. Here we observe that, from \eqref{eq:modularfunction} and \eqref{eq:leftinvariant} it readily follows that
\begin{equation}\label{eq:leftinvariantmod}
\vfX_{0} \delta^{1/2} = -\frac{Q}{2} \delta^{1/2}(\bx),\quad  
\vfX_{j} \delta^{1/2} = 0 \quad\text{for } j=1,\dots,\dimN\\
\end{equation}
thus $|\nabla \delta^{1/2}| = \frac{Q}{2} \delta^{1/2}$ and, by the Leibniz rule,
\begin{equation}\label{eq:nablaht_leibniz}
|\nabla h_t| \lesssim \delta^{1/2} |\nabla(\delta^{-1/2} h_t)| + |h_t| .
\end{equation}
Therefore, when proving the estimates for $\nabla h_t$ in \eqref{eq:nablahtintestsmallt} and \eqref{eq:htintestlocal}, we can effectively replace $|\nabla h_t|$ with $\delta^{1/2} |\nabla(\delta^{-1/2} h_t)|$, as the estimates for the remaining part $|h_t|$ have already been proved.

Now, from \eqref{eq:Rinta} and the pointwise bound \eqref{eq:nablaqt} we obtain that
\[\begin{split}
&\int_S \delta^{1/2}(\bx) \, |\nabla (\delta^{-1/2} h_t)(\bx)| \,\ee^{\varepsilon |\bx|^2/4t} \,\dd\rho(\bx) \\
&\simeq \int_{0}^{\infty} |\nabla (\delta^{-1/2} h_{t})|(r) \,\ee^{\varepsilon r^2/4t} \,r^{\left[\dimN,1\right]} \, \ee^{Qr/2} \,\dd r\\
&\simeq \int_0^\infty t^{-3/2}\,\ee^{-\theta r^2/t} \left(1+\frac{1+r}{t}\right)^{\dimN/2}  \, r^{[\dimN+1,2]} \,\dd r
= \int_{0}^{r_0} + \int_{r_0}^\infty.
\end{split}\]
In the part where $r \leq r_0$, arguing much as in \eqref{eq:extradecayht}, we obtain, for all $t>0$,
\begin{equation}\label{eq:extradecaynablaht}
\int_0^{r_0}
\simeq_{r_0} \int_0^{r_0} t^{[-(\dimN+3)/2,-3/2]} \,\ee^{-\theta r^2/t} \,r^{\dimN+1} \,\dd r
\lesssim_{\varepsilon,r_0} t^{[-1/2,-3/2]}.
\end{equation}
In the part where $r \geq r_0$, instead, if we additionally assume $t \leq t_0$ for some $t_0 \in \Rpos$, then 
we can estimate $\ee^{-\theta r^2/t} \leq \ee^{-\theta r^2/2t_0} \ee^{-\theta r_0^2/2t}$ and obtain
\[
\int_{r_0}^\infty 
\lesssim_{r_0,t_0} t^{-(\dimN+1)/2} \ee^{-\theta r_0/2t} \int_{r_0}^\infty \ee^{-\theta r^2/2t_0} \, r^{(\dimN+2)/2} \,\dd r
\lesssim_{\varepsilon,r_0,t_0,M} t^M
\]
for any $M \in \RR$ and $t \in (0,t_0]$. Combining these two estimates with the previous ones for $|h_t|$ completes the proof of \eqref{eq:nablahtintestsmallt}. Moreover, the extra decay $t^{-3/2}$ for large $t$ in \eqref{eq:extradecaynablaht}, together with \eqref{eq:extradecayht}, also completes the proof of \eqref{eq:htintestlocal}.
\end{proof}

In order to complete the proof of Theorem \ref{thm:heatkernel_intro}, in light of the estimates in Proposition \ref{prp:heateasyestimates}, it only remains to prove the weighted $L^1$-estimate for the gradient $|\nabla h_t(\bx)|$ for large $t$ and $\bx$. The approach used in Proposition \ref{prp:heateasyestimates}, based on \eqref{eq:nablaht_leibniz}, fails in this case, as $\|h_t\|_{L^1}$ does not decay for large $t$. Thus, in order to obtain the desired $t^{-1/2}$ decay, we cannot simply rely on the pointwise estimates \eqref{eq:qt} and \eqref{eq:nablaqt} as a black box, and we must exploit, much as in \cite{V07}, some subtler cancellations between the two terms.

We start with recalling some technical estimates from \cite{ADY96}.

\begin{lemma}
With the notation of Proposition \ref{prp:ADJ}, for any $p,q \in \NN$, we can write
\begin{equation}\label{eq:der_htR}
\DfD_r^q \DfE_r^p h_t^\RR(r) = \frac{\alpha_{p,q} \, r \, h_t^\RR(r)}{t \, \cosh^{p+2q} \frac{r}{2}} + Z_t^{p,q}(r),
\end{equation}
where $\alpha_{p,q} \in \Rpos$, while the function $Z_t^{p,q} : \RR \to \RR$ satisfies, for any $c \in (1,\infty)$,
\begin{equation}\label{eq:est_Htpq}
|Z_t^{p,q}(r)|+ |\partial_r Z_t^{p,q}(r)| \lesssim_{p,q,c} t^{-1} h_{ct}^\RR(r) \,\ee^{-(p+2q)r/2} \quad\forall r,t \geq 1.
\end{equation}
\end{lemma}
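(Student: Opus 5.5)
The plan is to establish a structural description of $\DfD_r^q \DfE_r^p h_t^\RR$ by induction on $p+q$. We then read off the leading term, and check that everything else satisfies \eqref{eq:est_Htpq}. Throughout, the basic computation is
\[
-\frac{1}{\sinh(\beta r)}\,\partial_r\bigl(F(r)\,h_t^\RR(r)\bigr) = \frac{1}{\sinh(\beta r)}\Bigl(\frac{r}{2t}F(r) - F'(r)\Bigr)h_t^\RR(r), \qquad \beta \in \{1,1/2\}.
\]
Iterating this shows that $\DfD_r^q\DfE_r^p h_t^\RR$ is a finite sum of terms of the form $t^{-k} r^{j} G(r)\, h_t^\RR(r)$. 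Here $G$ is a smooth function on $(0,\infty)$ built from products of $1/\sinh(r)$, $1/\sinh(r/2)$ and their derivatives (equivalently, rational functions of $\ee^{r/2}$). The ``total weight'' of $G$ is such that $|G^{(m)}(r)| \lesssim_m \ee^{-(p+2q)r/2}$ for $r\ge 1$, since $\sinh r \sim \ee^{r}/2$ and $\sinh(r/2) \sim \ee^{r/2}/2$. Moreover $k \ge 1$ whenever $p+q\ge1$, because the first operator applied to $h_t^\RR$ necessarily differentiates the Gaussian.

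Next I would classify the terms. The \emph{main} contribution comes from the unique chain in which the Gaussian is differentiated only at the first step. Each later step differentiates only the coefficient $1/\sinh$, never the factor $r$. This term has the form $c\, r\, t^{-1} G_0(r) h_t^\RR(r)$, with $G_0$ a product of reciprocal hyperbolic sines and their derivatives. Since $-\partial_r$ of $\sinh^{-m}(\beta r)$ is a \emph{positive} multiple of $\cosh(\beta r)\sinh^{-m-1}(\beta r)$, and all prefactors $\frac{1}{2}$ are positive, one gets $c\,G_0(r) = \alpha_{p,q}\,\cosh^{-(p+2q)}(r/2)\,(1+O(\ee^{-r}))$ for $r \geq 1$, with $\alpha_{p,q}>0$. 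This uses $\sinh r = 2\sinh(r/2)\cosh(r/2)$ and $\sinh(r/2)/\cosh(r/2) = 1+O(\ee^{-r})$. I then define $\alpha_{p,q}$ as this leading constant, and put the difference $\alpha_{p,q} r t^{-1} h_t^\RR(r)\cosh^{-(p+2q)}(r/2)\,O(\ee^{-r})$ into $Z_t^{p,q}$. Since $r\ee^{-r}\lesssim 1$, this piece is $\lesssim t^{-1}h_t^\RR(r)\ee^{-(p+2q)r/2}$.

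All remaining terms go into $Z_t^{p,q}$. They are of the form $t^{-k}r^j G h_t^\RR$ with either $k\ge2$ and $j\le k$, or $k \geq 1$ and $j\le k-1$; the latter occurs when some derivative hits a power of $r$, which lowers $j$ while keeping $k$. The key elementary inequality, for $r,t\ge1$ and $c>1$, is
\[
\frac{r^j}{t^{k}}\, h_t^\RR(r) = t^{-k+j/2}\Bigl(\frac{r^2}{t}\Bigr)^{j/2} \ee^{-(1-1/c)r^2/4t}\, c^{1/2}h_{ct}^\RR(r) \lesssim_{j,k,c} t^{-k+j/2}\, h_{ct}^\RR(r).
\]
In both cases $-k+j/2\le -1$, using $k\ge1$ and $t\ge1$. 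Combined with $|G|\lesssim \ee^{-(p+2q)r/2}$, this gives the bound on $|Z_t^{p,q}|$.

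For $\partial_r Z_t^{p,q}$, I would differentiate each term of $Z_t^{p,q}$ directly. Differentiating $G$ preserves the exponential bound. Differentiating $r^j$ lowers $j$, which only helps. Differentiating $h_t^\RR$ produces the factor $r/(2t)$, i.e.\ $(j,k)\mapsto(j+1,k+1)$, which preserves $-k+j/2\le -1$. This includes the error piece from the main term. The main obstacle I expect is the bookkeeping in the main-term step: verifying that the chain where only the Gaussian is hit first, and only coefficients afterwards, really is the only contribution with $(j,k)=(1,1)$. One must also check that its leading constant is strictly positive and exactly matches $\cosh^{-(p+2q)}(r/2)$ up to $O(\ee^{-r})$ relative error. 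Here I would track the exponents $\ee^{-r/2}$ per $\DfE$ and $\ee^{-r}$ per $\DfD$ explicitly through the induction.
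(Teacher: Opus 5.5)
Your proof is correct, but it takes a different route from the paper.

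\textbf{How the routes differ.} The paper does not derive the structure of $\DfD_r^q\DfE_r^p h_t^\RR$ itself. It imports from \cite[eqs.\ (5.23)--(5.24)]{ADY96} (cf.\ \cite[Lemma 4.4(ii)]{V07}) the expansion
\[
\DfD_r^q \DfE_r^p \ee^{-r^2/4t} = \ee^{-r^2/4t}\Bigl[\frac{\alpha_{p,q}\, r}{t\cosh^{p+2q}\frac{r}{2}} + \sum_{j=1}^{p+q} a_{p,q,j}(r)\, t^{-j}\Bigr],
\]
with $|\partial_r^k a_{p,q,1}(r)|\lesssim \ee^{-(p+2q)r/2}$ and $|\partial_r^k a_{p,q,j}(r)|\lesssim (1+r)^j\ee^{-(p+2q)r/2}$ for $j\ge2$. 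It then only performs the Gaussian absorption $t^{-j}(1+r)^j\le t^{-1}(1+r/t^{1/2})^j$, $(1+r/t^{1/2})^{1+j}h_t^\RR\lesssim_c h_{ct}^\RR$. That is exactly your inequality with $-k+j/2\le -1$.

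Your induction on the exponent pairs $(j,k)$ rebuilds that black box from scratch:
\begin{itemize}
\item The invariant $j\le k$ is the $(1+r)^k$ bound on the coefficient of $t^{-k}$.
\item The uniqueness of the chain reaching $(j,k)=(1,1)$ is precisely why the $t^{-1}$ coefficient has no polynomial growth in $r$. Such growth would cost a factor $t^{1/2}$.
\end{itemize}
So your version is self-contained and explains the shape of the cited formula; the paper's is shorter.

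\textbf{One step to rephrase.} Write the main-chain term as $r\,t^{-1}G_0(r)\,h_t^\RR(r)$. After two or more differentiations, $G_0$ is not a sum of positive terms. For instance, $-\partial_r[\cosh(\beta r)\sinh^{-m-1}(\beta r)]$ contains $-\beta\sinh^{-m}(\beta r)$. So positivity of the prefactors does not by itself give $\alpha_{p,q}>0$ and the $\cosh^{-(p+2q)}(r/2)$ asymptotics.

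What works is the leading-order tracking you mention at the end. For $r>0$ one has $1/\sinh(r/2)=2\ee^{-r/2}\sum_{n\ge0}\ee^{-nr}$ and $1/\sinh r=2\ee^{-r}\sum_{n\ge0}\ee^{-2nr}$. Hence every coefficient produced has the form $\ee^{-\gamma r}\Phi(\ee^{-r})$ with $\Phi$ analytic on the unit disc. The map $G\mapsto -G'/\sinh(\beta r)$ sends a leading term $C\ee^{-\gamma r}$ to $2\gamma C\ee^{-(\gamma+\beta)r}$, with $\gamma>0$ along the main chain.

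Consequently $G_0(r)\cosh^{p+2q}(r/2)=\Psi(\ee^{-r})$ with $\Psi$ analytic and $\Psi(0)>0$. Setting $\alpha_{p,q}=\Psi(0)$, this gives positivity and also
\[
\bigl|\partial_r^m[G_0(r)-\alpha_{p,q}\cosh^{-(p+2q)}(r/2)]\bigr|\lesssim_m \ee^{-(p+2q)r/2-r}, \qquad r\ge1 .
\]
You need this derivative bound, and use it implicitly, when you differentiate the error piece as a $(j,k)=(0,1)$ term whose coefficient is $r$ times this difference.

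\textbf{Range of $(p,q)$.} Your remark that $k\ge1$ needs $p+q\ge1$ is essential. For $p=q=0$ the statement is actually false: $h_t^\RR(1)\simeq t^{-1/2}$ is not $\lesssim t^{-1}h_{ct}^\RR(1)$ as $t\to\infty$. This is harmless, since the lemma is only applied with $p=\dimV/2\ge1$.
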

\begin{proof}
From \cite[eqs.\ (5.23)-(5.24)]{ADY96} (see also \cite[Lemma 4.4(ii)]{V07}) we see that
\[
\DfD_r^q \DfE_r^p \ee^{-r^2/4t} = \ee^{-r^2/4t} \left[ \frac{\alpha_{p,q} \, r}{t \,  \cosh^{p+2q} \frac{r}{2}} + \sum_{j=1}^{p+q} a_{p,q,j}(r) \, t^{-j} \right],
\]
where $\alpha_{p,q} \in \Rpos$, while, for all $k \in \NN$ and $r \geq 0$,
\begin{equation}\label{eq:est_apqj}
|\partial_r^k a_{p,q,j}(r)| \lesssim_{k,p,q} \begin{cases}
\ee^{-(p+2q)r/2} &\text{if } j =1,\\
(1+r)^j\, \ee^{-(p+2q)r/2} &\text{if } j \geq 2.
\end{cases}
\end{equation}
The identity \eqref{eq:der_htR} thus holds if we set
\[
Z_t^{p,q}(r) = \sum_{j=1}^{p+q} a_{p,q,j}(r) \, t^{-j} \, h_t^\RR(r).
\]
Now, by \eqref{eq:est_apqj},
\[\begin{split}
&|a_{p,q,j}(r) \, t^{-j} \, h_t^\RR(r)| + |\partial_r [a_{p,q,j}(r) \, t^{-j} \, h_t^\RR(r)]|\\[.2em]
&\lesssim [|a_{p,q,j}(r)| \, (1+r/t) + |\partial_r a_{p,q,j}(r)|] \, t^{-j} \, h_t^\RR(r) \\
&\lesssim_{p,q} \begin{cases} 
t^{-1} \, (1+r/t) \, h_t^\RR(r) \, \ee^{-(p+2q)r/2} &\text{if } j = 1,\\[.2em]
t^{-j} \, (1+r)^j \, (1+r/t) \, h_t^\RR(r) \, \ee^{-(p+2q)r/2} &\text{if } j \geq 2.
\end{cases}
\end{split}\]
If $r,t \geq 1$, then $1+r/t \leq 1+r/t^{1/2}$ and, for $j \geq 2$,
\[
t^{-j} (1+r)^j \simeq t^{-j} r^j = t^{-j/2} (r/t^{1/2})^j \leq t^{-1} (1+r/t^{1/2})^j;
\]
thus, for any $j \geq 1$,
\begin{multline*}
|a_{p,q,j}(r) \, t^{-j} \, h_t^\RR(r)| + |\partial_r [a_{p,q,j}(r) \, t^{-j} \, h_t^\RR(r)]| \\
\lesssim t^{-1} \, (1+r/t^{1/2})^{1+j} \, h_t^\RR(r) \,\ee^{-(p+2q)r/2} \lesssim_c t^{-1} \, h_{ct}^{\RR}(r) \,\ee^{-(p+2q)r/2}
\end{multline*}
and the estimate \eqref{eq:est_Htpq} follows.
\end{proof}

From the previous lemma and the formulas \eqref{eq:formulahteven}-\eqref{eq:formulahtodd}, we now derive another convenient representation for $h_t$.

\begin{lemma}
With the notation of Proposition \ref{prp:ADJ}, we can write
\begin{equation}\label{eq:newrepht}
(\delta^{-1/2} h_t)(r) = \frac{H_t^0(r)}{\cosh^Q \frac{r}{2}} + H_t^{1}(r),
\end{equation}
where, for any $c \in (1,\infty)$, the functions $H_t^0$ and $H_t^1$ satisfy
\begin{align}
\label{eq:est_Ht0}
t^{-1/2} \, |H_t^0(r)| + |\partial_r H_t^0(r)| &\lesssim_c t^{-1} \, h_{ct}^\RR(r) &&\forall t,r \geq 1,\\
\label{eq:est_Ht1}
|H_t^1(r)| + |\partial_r H_t^1(r)| &\lesssim_c t^{-1} \, h_{ct}^\RR(r) \, \ee^{-Qr/2} &&\forall t,r \geq 1.
\end{align}
\end{lemma}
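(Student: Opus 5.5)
The plan is to substitute the decomposition \eqref{eq:der_htR} into the explicit formulas \eqref{eq:formulahteven} and \eqref{eq:formulahtodd}, with $p=\dimV/2$ and $q=\dimZ/2$ (respectively $q=(\dimZ+1)/2$). Note that $p+2q = \dimV/2+\dimZ = Q$ in the even case.

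\emph{Even $\dimZ$.} Formula \eqref{eq:der_htR} directly gives
\[
(\delta^{-1/2}h_t)(r) = C\,\frac{\alpha_{p,q}\, r\, h_t^\RR(r)}{t\cosh^{Q}\frac r2} + C\,Z_t^{p,q}(r),
\]
so we set $H_t^0(r) \defeq C\alpha_{p,q}\, r\, h_t^\RR(r)/t$ and $H_t^1 \defeq C Z_t^{p,q}$.

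Estimate \eqref{eq:est_Ht1} is then exactly \eqref{eq:est_Htpq}. For \eqref{eq:est_Ht0}, we use that $r h_t^\RR(r) = t^{1/2}(r/t^{1/2}) h_t^\RR(r) \lesssim_c t^{1/2} h_{ct}^\RR(r)$, which gives $t^{-1/2}|H^0_t| \lesssim t^{-1}h^\RR_{ct}$. We also have
\[
\partial_r\bigl(r h^\RR_t\bigr) = \bigl(1 - r^2/2t\bigr) h^\RR_t,
\]
which is bounded by $h^\RR_{ct}$ up to constants, so $|\partial_r H_t^0| \lesssim t^{-1}h_{ct}^\RR$ as required.

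\emph{Odd $\dimZ$.} Here $p+2q = Q+1$, and we must handle the Abel-type integral against $\dd\nu_r$. Inserting \eqref{eq:der_htR} gives two integrals:
\[
I_0(r) = \frac{C}{t}\int_r^\infty \frac{s\, h^\RR_t(s)}{\cosh^{Q+1}\frac s2}\,\dd\nu_r(s), \qquad I_1(r) = C\int_r^\infty Z^{p,q}_t(s)\,\dd\nu_r(s).
\]
We substitute $s = r+u$ with $u\ge 0$. For $r\ge1$ we have
\[
\frac{\sinh s}{(\cosh s-\cosh r)^{1/2}} = \ee^{(r+u)/2}\,g_r(u),
\]
where $g_r(u) \simeq u^{-1/2}$ for small $u$, $g_r(u)\simeq 1$ for large $u$, and $g_r$ has an expansion in $\ee^{-r}$ with smooth, uniformly controlled $r$-derivatives.

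For $I_0$, this yields
\[
I_0(r) = \frac{1}{\cosh^Q\frac r2}\cdot \frac{C}{t}\int_0^\infty (r+u)\,h^\RR_t(r+u)\,\ee^{-Qu/2}\,\tilde g_r(u)\,\dd u,
\]
where $\tilde g_r$ is built from $g_r$ and the ratio $\cosh^Q\frac r2\,\ee^{(r+u)/2}/\cosh^{Q+1}\frac{r+u}2$. The extra factor $\ee^{-Qu/2}$ makes the $u$-integral converge. Since $h^\RR_t(r+u)\le h^\RR_t(r)$ for $r,u\ge 0$ and $(r+u) \lesssim r(1+u)$, the integrand is dominated by $r h^\RR_t(r)$ times an integrable function of $u$. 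So we define $H_t^0$ as this integral, and \eqref{eq:est_Ht0} follows as in the even case. For the $r$-derivative, we differentiate under the integral sign in the $u$-variable, where the singularity $u^{-1/2}$ does not move. Any terms of relative size $\ee^{-r}$ arising from the exact form of $\tilde g_r$ can be moved into $H_t^1$, since they carry an extra factor $\ee^{-r}$ and $r h_t/t \lesssim t^{-1/2}h_{ct}$.

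For $I_1$, the same substitution combined with \eqref{eq:est_Htpq} gives
\[
|I_1(r)| \lesssim t^{-1}\ee^{r/2}\int_0^\infty h^\RR_{ct}(r+u)\,\ee^{-(Q+1)(r+u)/2}\,\ee^{u/2}\,u^{[-1/2,0]}\,\dd u \lesssim t^{-1}h^\RR_{ct}(r)\,\ee^{-Qr/2}.
\]
The derivative is handled by differentiating in the fixed-$u$ parametrization, using the bound on $\partial_r Z$ in \eqref{eq:est_Htpq}.

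\emph{Main obstacle.} The delicate step is the odd case: making the change of variables rigorous, and checking that $\partial_r$ of the kernel $\tilde g_r(u)$ stays uniformly integrable in $u$ near $u=0$ for $r\ge1$. I will verify this by writing
\[
\cosh(r+u)-\cosh r = \ee^{r}\,\frac{\ee^{u}-1}{2}\,\bigl(1 - \ee^{-2r-u}\bigr),
\]
which isolates the $u$-singularity in an $r$-independent factor and leaves $r$-dependence only through smooth, bounded terms involving $\ee^{-2r}$.
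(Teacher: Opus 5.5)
Your proposal is correct. The even case coincides with the paper's argument; in the odd case you take a different route.

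\textbf{How the odd case differs.} The paper uses two separate devices:
\begin{itemize}
\item For $H^0_t$ it substitutes $v=\cosh(s/2)/\cosh(r/2)$. This makes the weight $v^{-Q}(v^2-1)^{-1/2}\,\dd v$ exactly independent of $r$, so $r$ enters only through the argument $2\arccosh(v\cosh(r/2))$ of $s\,h^\RR_t(s)$, whose $r$-derivative is $\simeq 1$.
\item For $H^1_t$ it uses the commutation identity \eqref{eq:byparts} to move $\DfD_r$ onto $Z_t$, which gives $\partial_r H^1_t(r)=C\sinh r\int_r^\infty \partial_s Z_t(s)\,(\cosh s-\cosh r)^{-1/2}\,\dd s$.
\end{itemize}
Your additive shift $s=r+u$, combined with the factorization $\cosh(r+u)-\cosh r=\ee^{r}\frac{\ee^u-1}{2}(1-\ee^{-2r-u})$, handles both pieces with one mechanism. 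The kernel for $H^0_t$ becomes $\ee^{-(Q-1)u/2}(\ee^u-1)^{-1/2}G(r,u)$, and the one for $H^1_t$ becomes $\ee^{r/2}\ee^{u}(\ee^u-1)^{-1/2}G_1(r,u)$. Here $G,G_1$ are bounded and their $r$-derivatives are $O(\ee^{-r})$ uniformly in $u\geq 0$. So the singularity at $u=0$ does not move, and differentiating under the integral sign is legitimate.

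The price is that you must track these $O(\ee^{-r})$ corrections, which the paper's multiplicative substitution avoids entirely for $H^0_t$. The gain is a single uniform argument that does not need \eqref{eq:byparts}.

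\textbf{One slip in your justification.} You say the $O(\ee^{-r})$ terms are harmless because they carry a factor $\ee^{-r}$ and $r h^\RR_t/t\lesssim t^{-1/2}h^\RR_{ct}$. Those two facts only give a bound of $\ee^{-r}t^{-1/2}h^\RR_{ct}(r)$, multiplied by $\ee^{-Qr/2}$ if the terms are moved into $H^1_t$. That does not yield the $t^{-1}$ decay required in \eqref{eq:est_Ht1} when $t\gg \ee^{2r}$.

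The right observation is $(r+u)\ee^{-r}\lesssim 1+u$. It gives $t^{-1}(r+u)h^\RR_t(r+u)\,\ee^{-r}\lesssim_c t^{-1}(1+u)\,h^\RR_{ct}(r)$, which is integrable against your $u$-weight. With this, nothing needs to be moved into $H^1_t$. The terms where $\partial_r$ falls on $G$ can stay in $\partial_r H^0_t$, and they are bounded by $t^{-1}h^\RR_{ct}(r)$ as required in \eqref{eq:est_Ht0}.
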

\begin{proof}
In the case $\dimZ$ is even, by combining \eqref{eq:formulahteven} and \eqref{eq:der_htR} for $p=\dimV/2$ and $q=\dimZ/2$, we obtain that \eqref{eq:newrepht} holds true with
\[
H_t^0(r) = \frac{\alpha_{\dimV/2,\dimZ/2}}{2^{\dimV+\frac{\dimZ}{2}} \pi^{\frac{\dimN}{2}}} \frac{r \, h_t^\RR(r)}{t}, \qquad H_t^1(r) = \frac{1}{2^{\dimV+\frac{\dimZ}{2}} \pi^{\frac{\dimN}{2}}} Z_t^{\dimV/2,\dimZ/2}(r);
\]
in this case, \eqref{eq:est_Ht1} follows from \eqref{eq:est_Htpq}, while \eqref{eq:est_Ht0} is immediate.

When $\dimZ$ is odd, instead, by combining \eqref{eq:formulahtodd} and \eqref{eq:der_htR} for $p=\dimV/2$ and $q=(\dimZ+1)/2$, we obtain that \eqref{eq:newrepht} holds true with
\begin{align}
\label{eq:Ht0}
H_t^0(r) &= \frac{\alpha_{\dimV/2,(\dimZ+1)/2} \, \cosh^Q \frac{r}{2}}{2^{\dimV+\frac{\dimZ}{2}} \, \pi^{\frac{\dimN+1}{2}}}  \int_{r}^{\infty} \frac{s \, h_t^\RR(s)}{t \, \cosh^{Q+1} \frac{s}{2}} \,\dd\nu_r(s),\\
\label{eq:Ht1}
H_t^1(r) &= \frac{1}{2^{\dimV+\frac{\dimZ}{2}} \, \pi^{\frac{\dimN+1}{2}}}
 \int_{r}^{\infty} Z_t^{\dimV/2,(\dimZ+1)/2}(s) \,\dd\nu_r(s).
\end{align}

Now, from \eqref{eq:chchest} we see that, for $s \geq r \geq 1$,
\[
\cosh s - \cosh r \simeq \ee^s \, (s-r)^{[1,0]}, \quad \frac{\sinh s}{(\cosh s - \cosh r)^{1/2}} \simeq \ee^{s/2} (s-r)^{[-1/2,0]},
\]
whence, by \eqref{eq:est_Htpq} and \eqref{eq:Ht1}, for all $r,t \geq 1$,
\[\begin{split}
|H_t^1(r)| 
&\lesssim \int_{r}^{\infty} |Z_t^{\dimV/2,(\dimZ+1)/2}(s)| \,\dd\nu_r(s) \\
&\lesssim_c \int_{r}^{\infty} t^{-1} h_{ct}^\RR(s) \,\ee^{-(Q+1)s/2} \, \ee^{s/2} \,(s-r)^{[-1/2,0]} \,\dd s \\
&\leq t^{-1} h_{ct}^\RR(r) \, \ee^{-Qr/2} \int_{r}^{\infty} \,\ee^{-Q(s-r)/2} \,(s-r)^{[-1/2,0]} \,\dd s
\lesssim t^{-1} h_{ct}^\RR(r) \, \ee^{-Qr/2} ,
\end{split}\]
where we used that $h_{ct}$ is decreasing, and the change of variables $u=s-r$ in the integral to obtain a uniform bound in $r$.

Recall now from \cite[eq.\ (2.12)]{M23} that, for any smooth function $g:\Rpos\to \CC$ with sufficiently fast decay, and any $\ell \in \NN$ and $r > 0$,
\begin{equation}\label{eq:byparts}
	\DfD_r^\ell \int_r^{\infty} g(s) \,\dd\nu_r(s) = 
	\int_r^{\infty} \DfD_s^\ell g(s) \,\dd\nu_r(s).
\end{equation}
In particular, by \eqref{eq:Ht1} and \eqref{eq:byparts},
\[\begin{split}
\partial_r H^1_t(r) &= (-\sinh r) \DfD_r H^1_t(r) \\
&= \frac{\sinh r}{2^{\dimV+\frac{\dimZ}{2}} \, \pi^{\frac{\dimN+1}{2}}} 
\int_{r}^{\infty} \partial_s Z_t^{\dimV/2,(\dimZ+1)/2}(s) \,\frac{\dd s}{(\cosh s - \cosh r)^{1/2}}
\end{split}\]
and, much as before, by \eqref{eq:est_Htpq},
\[\begin{split}
|\partial_r H^1_t(r)| 
&\lesssim \sinh r \int_{r}^{\infty} |\partial_s Z_t^{\dimV/2,(\dimZ+1)/2}(s)| \,\frac{\dd s}{(\cosh s - \cosh r)^{1/2}} \\
&\lesssim_c \sinh r \int_{r}^{\infty} t^{-1} h_{ct}^\RR(s) \,\ee^{-(Q+1)s/2} \,\ee^{-s/2} \,(s-r)^{[-1/2,0]} \,\dd s \\
&\lesssim t^{-1} h_{ct}^\RR(r) \,\ee^{-(Q+2)r/2} \sinh r \int_{r}^{\infty} \,\ee^{-(Q+2)(s-r)/2} \,(s-r)^{[-1/2,0]} \,\dd s \\
&\lesssim t^{-1} h_{ct}^\RR(r) \,\ee^{-Qr/2}.
\end{split}\]
Combining the above estimates for $H_t^1(r)$ and $\partial_r H_t^1(r)$ proves \eqref{eq:est_Ht1}.

It remains to prove \eqref{eq:est_Ht0}. Much as in \cite[proof of Lemma 4.6]{V07}, notice that
\[
\frac{1}{\cosh^{Q+1} \frac{s}{2}} \frac{\sinh s}{(\cosh s-\cosh r)^{1/2}} = \frac{1}{\cosh^{Q} \frac{s}{2}} \frac{\sqrt{2} \sinh \frac{s}{2}}{(\cosh^2 \frac{s}{2}-\cosh^2\frac{r}{2})^{1/2}}.
\]
Therefore, by the change of variables $v = \frac{\cosh (s/2)}{\cosh (r/2)}$, we can write
\[
\int_{r}^{\infty} \frac{s \, h_t^\RR(s)}{t \, \cosh^{Q+1} \frac{s}{2}} \,\dd\nu_r(s) 
= \frac{2^{3/2}}{t \, \cosh^Q \frac{r}{2}} \int_1^\infty \tilde h_t^{\RR}(2\arccosh(v \cosh(r/2))) \,\frac{\dd v}{v^Q (v^2-1)^{1/2}}
\]
where $\tilde h_t^\RR(s) \defeq s \, h_t^\RR(s)$, and, by \eqref{eq:Ht0},
\begin{equation}\label{eq:Ht0new}
H_t^0(r) = \frac{\alpha_{\dimV/2,(\dimZ+1)/2}}{2^{\dimV+\frac{\dimZ}{2}} \, \pi^{\frac{\dimN+1}{2}}} \frac{2^{3/2}}{t} \int_1^\infty \tilde h_t^{\RR}(2\arccosh(v \cosh(r/2))) \,\frac{\dd v}{v^Q (v^2-1)^{1/2}}.
\end{equation}
As $\tilde h^\RR_t(s) \lesssim_c t^{1/2} \, h_{ct}^\RR(s)$, we thus see that
\[
|H_t^0(r)| \lesssim t^{-1} \int_1^\infty \tilde h_{t}^{\RR}(2\arccosh(v \cosh(r/2))) \,\frac{\dd v}{v^Q (v^2-1)^{1/2}} \lesssim_c t^{-1/2} \, h_{ct}^\RR(r),
\]
where we also used the monotonicity of $h_{ct}^\RR$.

In addition, from the elementary estimate $|\partial_s \tilde h_t^\RR(s)| \lesssim_c h_{ct}^\RR(s)$, we also see that
\[\begin{split}
&|\partial_r [\tilde h_t^\RR(2\arccosh(v \cosh(r/2)))]|\\
&= |(\partial_s\tilde h_t^\RR)(2\arccosh(v \cosh(r/2)))| \frac{v \sinh \frac{r}{2}}{(v^2 \cosh^2 \frac{r}{2} - 1)^{1/2}}\\
&\simeq |(\partial_s\tilde h_t^\RR)(2\arccosh(v \cosh(r/2)))| \lesssim_c h_{ct}^\RR(2\arccosh(v \cosh(r/2))) \leq h_{ct}^\RR(r)
\end{split}
\]
for $r,v \geq 1$. Thus, by \eqref{eq:Ht0new},
\[
|\partial_r H_t^0(r)| \lesssim t^{-1} \int_1^\infty |\partial_r[\tilde h_t^{\RR}(2\arccosh(v \cosh(r/2)))]| \,\frac{\dd v}{v^Q (v^2-1)^{1/2}} \lesssim_c t^{-1} \, h_{ct}^\RR(r),
\]
and the proof of \eqref{eq:est_Ht0} is complete.
\end{proof}

When proving large time estimates for $\nabla h_t$, the first summand in \eqref{eq:newrepht} is the most problematic, due to the worse decay in $t$. To deal with that term, we need a more precise spatial estimate for its gradient, which is captured in the statement below and is implicit in the proof of \cite[Lemma 4.5]{V07}.

\begin{lemma}
For all $\bx = (x,z,a) \in S$,
\begin{equation}\label{eq:est_nabladeltach}
|\nabla[\delta^{1/2}(\bx) \cosh^{-Q}(|\bx|/2)]| \lesssim \delta^{1/2}(\bx) \, a^{1/4} \,\ee^{-\left(\frac{Q}{2}+\frac{1}{4}\right) |\bx|}.
\end{equation}
\end{lemma}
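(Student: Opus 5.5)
The plan is to make the function explicit in coordinates, differentiate it directly using \eqref{eq:leftinvariant}, and then reduce the claim to a polynomial inequality between the derivatives of a single quantity and powers of that quantity. Set
\[
A \defeq 1+a+\frac{|x|^2}{4}, \qquad \Phi(\bx) \defeq A^2+|z|^2 = 4a\cosh^2\frac{|\bx|}{2},
\]
where the last identity is \eqref{eq:distance}. By \eqref{eq:modularfunction}, $\delta^{1/2}(\bx) = a^{-Q/2}$, so
\[
\delta^{1/2}(\bx)\cosh^{-Q}(|\bx|/2) = a^{-Q/2}\,(4a)^{Q/2}\,\Phi^{-Q/2} = 2^Q\,\Phi^{-Q/2}.
\]
The factors of $a$ cancel. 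Hence
\[
|\nabla[\delta^{1/2}\cosh^{-Q}(|\bx|/2)]| \simeq \Phi^{-Q/2-1}\,|\nabla\Phi|.
\]
Since $\Phi^{-Q/2} \simeq \delta^{1/2}(\bx)\,\ee^{-Q|\bx|/2}$ and $\ee^{|\bx|/2}\simeq \cosh(|\bx|/2) = \Phi^{1/2}/(2a^{1/2})$, we have $\ee^{-|\bx|/4}\simeq a^{1/4}\Phi^{-1/4}$. The claimed estimate \eqref{eq:est_nabladeltach} is therefore equivalent to
\[
|\nabla\Phi| \lesssim a^{1/2}\,\Phi^{3/4}.
\]

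Next I compute $\nabla\Phi$ from \eqref{eq:leftinvariant}. The three groups of vector fields give
\[
\vfX_0\Phi = 2aA, \qquad \vfX_{\dimV+j}\Phi = 2a z_j,
\]
and, for $j=1,\dots,\dimV$,
\[
\vfX_j\Phi = a^{1/2}\bigl(A x_j + [x,e_j]\cdot z\bigr).
\]
For the last expression, the H-type condition \eqref{eq:htype} gives $\bigl(\sum_j |[x,e_j]\cdot z|^2\bigr)^{1/2} = |z|\,|x|$, since $[x,e_j]\cdot z = z[x,e_j]$ are the components of $z[x,\cdot]$ in $\lie{v}^*$. Altogether,
\[
|\nabla\Phi| \lesssim aA + a^{1/2}A|x| + a^{1/2}|x||z| + a|z|.
\]

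The remaining step is a set of trivial bounds coming from the definition of $\Phi$:
\[
A\le\Phi^{1/2}, \qquad |z|\le\Phi^{1/2}, \qquad a\le A\le\Phi^{1/2}, \qquad |x|\le 2A^{1/2}\lesssim\Phi^{1/4}.
\]
From these, each term is bounded as follows:
\begin{itemize}
\item $aA = a^{1/2}\,a^{1/2}A \le a^{1/2}\Phi^{1/4}\Phi^{1/2}$;
\item $a^{1/2}A|x| \lesssim a^{1/2}\Phi^{3/4}$;
\item $a^{1/2}|x||z| \lesssim a^{1/2}\Phi^{3/4}$;
\item $a|z| \le a^{1/2}\Phi^{1/4}\Phi^{1/2}$.
\end{itemize}
This gives $|\nabla\Phi|\lesssim a^{1/2}\Phi^{3/4}$, as required.

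The only genuinely non-routine point is the handling of the mixed term $[x,e_j]\cdot z$ coming from the first-layer vector fields. There one must use the H-type identity \eqref{eq:htype}, or simply Cauchy--Schwarz with a constant, rather than naive bounds, so as not to lose powers of $|x|$. Apart from that, the key observation that makes everything elementary is that $\delta^{1/2}\cosh^{-Q}(|\bx|/2)$ is, up to a constant, exactly $\Phi^{-Q/2}$, because the power of $a$ cancels.
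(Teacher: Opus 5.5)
Your proof is correct and follows essentially the same route as the paper. Both rewrite $\delta^{1/2}(\bx)\cosh^{-Q}(|\bx|/2)$ as $2^Q[(1+a+|x|^2/4)^2+|z|^2]^{-Q/2}$ using \eqref{eq:distance}, differentiate it directly with the vector fields \eqref{eq:leftinvariant}, and bound the result by $a^{1/2}$ times your $\Phi^{-Q/2-1/4}$ before converting back (the paper writes this intermediate bound as $a^{1/2}(a+1+|x|^2+|z|)^{-Q-1/2}$). Your reduction to $|\nabla\Phi|\lesssim a^{1/2}\Phi^{3/4}$ is just a tidier bookkeeping of the same term-by-term estimates.
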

\begin{proof}
Recall from \eqref{eq:distance} that
\begin{equation}\label{eq:chdist}
\cosh(|\bx|/2)=2^{-1} a^{-1/2} \left[(a+1+|x|^2/4)^2+|z|^2\right]^{1/2}.
\end{equation}
Thus, by \eqref{eq:modularfunction},
\[
\delta^{1/2}(\bx) \cosh^{-Q}(|\bx|/2)=2^{Q} \left[(a+1+|x|^2/4)^2+|z|^2\right]^{-Q/2}.
\]
Consequently, by \eqref{eq:leftinvariant}, we see that
\begin{align*}
|\vfX_{0}[\delta^{1/2}(\bx)\cosh^{-Q}(|\bx|/2)]|
&\lesssim\frac{a\left(a+1+|x|^{2}/4\right)}{\left[(a+1+|x|^{2}/4)^{2}+|z|^{2}\right]^{Q/2+1}},\\
|\vfX_{j}[\delta^{1/2}(\bx)\cosh^{-Q}(|\bx|/2))]|
&\lesssim\frac{a^{1/2}\left(a+1+|x|^{2}/4+|z|\right)|x|}{\left[(a+1+|x|^{2}/4)^{2}+|z|^{2}\right]^{Q/2+1}},\\
|\vfX_{\ell+\dimV}[\delta^{1/2}(\bx)\cosh^{-Q}(|\bx|/2))]|
&\lesssim\frac{a|z|}{\left[(a+1+|x|^{2}/4)^{2}+|z|^{2}\right]^{Q/2+1}} 
\end{align*}
for $j=1,\dots,\dimV$ and $\ell=1,\dots,\dimZ$.
Combining the above estimates gives
\[\begin{split}
|\nabla[\delta^{1/2}(\bx) \cosh^{-Q}(|\bx|/2)]| &\lesssim \frac{a^{1/2}}{(a+1+|x|^{2}+|z|)^{Q+1/2}} \\
&\simeq a^{1/2} a^{-Q/2-1/4} \cosh^{-Q-1/2}(|\bx|/2) \\
&\simeq a^{1/4} \delta^{1/2}(\bx) \,\ee^{-\left(\frac{Q}{2}+\frac{1}{4}\right) |\bx|},
\end{split}\]
where we used again \eqref{eq:chdist} and \eqref{eq:modularfunction}.
\end{proof}

We can finally complete the proof of Theorem \ref{thm:heatkernel_intro}.

\begin{proof}[Proof of Theorem \ref{thm:heatkernel_intro}]
In light of Proposition \ref{prp:heateasyestimates}, it only remains to prove that, for any $\varepsilon \in [0,1)$,
\[
\int_{|\bx| \geq 1} |\nabla h_t(\bx)| \,\ee^{\varepsilon |\bx|^2/4t} \,\dd\rho(\bx) \lesssim_\varepsilon t^{-1/2} \qquad\forall t \geq 1.
\]
Now, by \eqref{eq:newrepht}, we can write
\[
h_t(\bx) = \delta^{1/2}(\bx) \frac{H_t^0(|\bx|)}{\cosh^Q \frac{|\bx|}{2}} + \delta^{1/2}(\bx) H_t^{1}(|\bx|);
\]
therefore, for any $c \in (1,\infty)$, by the Leibniz rule,
\begin{equation}\label{eq:newptwestnablaht}
\begin{split}
|\nabla h_t(\bx)| 
&\leq \delta^{1/2}(\bx) \cosh^{-Q}(|\bx|/2) \, |(\partial_r H_t^0)(|\bx|)| \\
&\quad+ |\nabla[\delta^{1/2}(\bx) \cosh^{-Q}(|\bx|/2)]| \, |H_t^0(|\bx|)| \\
&\quad+ \delta^{1/2}(\bx) \,|(\partial_r H_t^{1})(|\bx|)| + |\nabla \delta^{1/2}(\bx)| \, |H_t^{1}(|\bx|)|\\
&\lesssim_c \delta^{1/2}(\bx) \, h_{ct}^\RR(|\bx|) \left[ t^{-1} \, \ee^{-Q|\bx|/2} + t^{-1/2} \, a^{1/4} \,\ee^{-(Q/2+1/4)|\bx|} \right] 
\end{split}
\end{equation}
for all $r,t \geq 1$,
where we used the estimates \eqref{eq:leftinvariantmod}, \eqref{eq:est_Ht0}, \eqref{eq:est_Ht1} and \eqref{eq:est_nabladeltach}, and the fact that $|\nabla[F(|\bx|)]| = |(\partial_r F)(|\bx|)|$ for a radial function $F(|\bx|)$.

Choose $c \in (1,1/\varepsilon)$, so that $\theta \defeq (1/c-\varepsilon)/4>0$.
From the pointwise estimate \eqref{eq:newptwestnablaht} and the integration formula
\eqref{eq:Rinta}
we finally deduce that, for $t \geq 1$,
\[\begin{split}
&\int_{|\bx| \geq 1} |\nabla h_t(\bx)| \,\ee^{\varepsilon |\bx|^2/4t} \, \dd\rho(\bx)\\
&\lesssim_\varepsilon t^{-3/2} \int_{|\bx| \geq 1} \delta^{1/2}(\bx) \, \ee^{-\theta |\bx|^2/t} \, \ee^{-Q|\bx|/2} \,\dd\rho(\bx) \\
&\quad+t^{-1} \int_{|\bx| \geq 1} \delta^{1/2}(\bx) \, \ee^{-\theta |\bx|^2/t} \, a^{1/4} \,\ee^{-(Q/2+1/4)|\bx|} \,\dd\rho(\bx) \\
&\simeq t^{-3/2} \int_1^\infty \ee^{-\theta r^2/t} \, r \,\dd r + t^{-1} \int_1^\infty \ee^{-\theta r^2/t} \,\dd r \lesssim_\varepsilon t^{-1/2},
\end{split}\]
as required.
\end{proof}

From Theorem \ref{thm:heatkernel_intro} and Proposition \ref{prp:heateasyestimates} we can also derive some estimates for second-order derivatives of the heat kernel, which we record here.
For simplicity we only present the unweighted version of the estimates, though a similar argument would also yield weighted variants.

\begin{proposition}\label{prp:higherheatestimates}
For all $t > 0$ and $j,k=0,\dots,\dimN$,
\begin{equation}\label{eq:XjXkstarht}
\int_S |\vfX_j(\vfX_k h_t)^*(\bx)| \,\dd\rho(\bx) \lesssim t^{-1};
\end{equation}
moreover, for all compact subsets $K \subseteq S$,
\begin{equation}\label{eq:XjXkhtloc}
\int_K |\vfX_j \vfX_k h_t(\bx)| \,\dd\rho(\bx) \lesssim t^{-1}.
\end{equation}
\end{proposition}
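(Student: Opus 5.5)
The plan is to use the semigroup property $h_t = h_{t/2} * h_{t/2}$ together with left invariance of the vector fields, and to split the second derivative between the two factors. This reduces both estimates to the first-order bound of Theorem \ref{thm:heatkernel_intro} (with $\varepsilon=0$) and, for the local estimate, to Proposition \ref{prp:heateasyestimates}.

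For \eqref{eq:XjXkstarht}, we use the convolution identities. A left-invariant vector field acts on the right factor: $\vfX(f*g) = f * \vfX g$. For the adjoint, $(\vfX_k h_t)^*$ is the kernel of the adjoint of $f\mapsto f*\vfX_k h_t$, and $\vfX_k h_t = h_{t/2} * \vfX_k h_{t/2}$. Hence
\[
(\vfX_k h_t)^* = (\vfX_k h_{t/2})^* * h_{t/2}^* = (\vfX_k h_{t/2})^* * h_{t/2},
\]
since $h_{t/2}^*=h_{t/2}$ by self-adjointness. Applying $\vfX_j$ then gives
\[
\vfX_j(\vfX_k h_t)^* = (\vfX_k h_{t/2})^* * \vfX_j h_{t/2}.
\]
Because the involution is an $L^1$-isometry and $\|f*g\|_1\le\|f\|_1\|g\|_1$, we obtain
\[
\|\vfX_j(\vfX_k h_t)^*\|_1 \le \|\vfX_k h_{t/2}\|_1\,\|\vfX_j h_{t/2}\|_1 \lesssim t^{-1}
\]
by Theorem \ref{thm:heatkernel_intro}. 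The identities above should first be checked on test functions, or via the operator identity $\vfX_j \ee^{-t\Delta/2}(\vfX_k \ee^{-t\Delta/2})^*$, and the kernels read off from it. This step is routine.

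For \eqref{eq:XjXkhtloc}, both derivatives act on the same (right) factor, since $\vfX_j\vfX_k h_t = h_{t/2} * \vfX_j\vfX_k h_{t/2}$. This does not split directly, so I would handle small and large $t$ separately.

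For large $t$ (say $t\ge t_0$), the plan is to write $\vfX_j\vfX_k h_t$ by commuting vector fields. The commutator $[\vfX_j,\vfX_k]$ is a linear combination of the $\vfX_\ell$, so it costs only a first-order term, which is controlled by Theorem \ref{thm:heatkernel_intro} as $t^{-1/2}$. That bound is too weak globally, but on the compact set $K$ we have the decay $t^{-3/2}$ from \eqref{eq:htintestlocal}. I would rather use pointwise bounds from Proposition \ref{prp:ADJ} on $K$. For $\bx\in K$ and $t$ large, smoothness of $\delta^{-1/2}h_t$ as a radial function together with the explicit formulas \eqref{eq:formulahteven}--\eqref{eq:formulahtodd} give second derivatives of size $O(t^{-3/2})$ uniformly on $K$: differentiating $\ee^{-r^2/4t}$ in $r$ only produces factors $r/t$ and $1/t$, which are bounded on $K$. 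This yields $\int_K|\vfX_j\vfX_k h_t|\lesssim t^{-3/2}\le t^{-1}$ there.

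For small $t$, the local estimate on $K$ is the classical one for elliptic heat kernels: Gaussian bounds for second derivatives give $t^{-1}$ after integration against the locally Euclidean measure. I would get it from the derivative formulas of Proposition \ref{prp:ADJ} via the same $\DfD,\DfE$ calculus, where each $r$-derivative costs $t^{-1/2}$.

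The main obstacle is making this small-time second-derivative estimate rigorous on $K$ without a ready-made pointwise bound in the paper. I would try to obtain it from the structure \eqref{eq:der_htR}, differentiated once more, or alternatively invoke standard local parabolic estimates for the uniformly elliptic operator $\Delta$ on a neighbourhood of $K$.
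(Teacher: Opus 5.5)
Your argument for \eqref{eq:XjXkstarht} is the paper's: $\vfX_j(\vfX_k h_t)^* = (\vfX_k h_{t/2})^* * \vfX_j h_{t/2}$, followed by Young's inequality and Theorem \ref{thm:heatkernel_intro}.

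The gap is in \eqref{eq:XjXkhtloc}. The small-time second-derivative bound on $K$, which you correctly identify as the crux, is not proved.

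Your first suggested fix does not work as stated. The remainder bound \eqref{eq:est_Htpq} that accompanies \eqref{eq:der_htR} only holds for $r,t\geq 1$. To reach small $t$ you would have to go back to the coefficient bounds \eqref{eq:est_apqj}. You would also have to handle the non-smoothness of $r=|\bx|$ at the identity, e.g.\ by viewing $\delta^{-1/2}h_t$ as a function of $\cosh^2(r/2)$, whose derivatives are $\DfD_r$-derivatives. For odd $\dimZ$ you would further need to control the integral over $s\in[r,\infty)$ in \eqref{eq:formulahtodd}. Your large-time sketch (``factors $r/t$ and $1/t$'') glosses over the same points.

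Your second fix would work: scale-invariant interior parabolic estimates on cylinders of size $t^{1/2}$, combined with the Gaussian bound \eqref{eq:qt}. However, it is only named, not carried out. The commutator remark does not help either, since commuting $\vfX_j$ and $\vfX_k$ never moves a derivative onto the other factor of $h_{t/2}*h_{t/2}$.

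The idea you are missing is that no pointwise second-derivative bound is needed, because \eqref{eq:XjXkhtloc} follows from \eqref{eq:XjXkstarht}. For smooth $f$ one has $(\vfX_j f)^* = \vfX_j^\bullet f^*$, where $\vfX_j^\bullet$ is a right-invariant first-order operator: a right-invariant vector field plus a constant coming from $\delta$. Right-invariant vector fields are linear combinations of the $\vfX_\ell$ with smooth coefficients. Hence, on any compact set, $|(\vfX_j f)^*| \lesssim_K \sum_\ell |\vfX_\ell f^*| + |f^*|$.

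Take $f=\vfX_k h_t$. Since the involution preserves $L^1$-norms, $\int_K |g|\,\dd\rho = \int_{K^{-1}} |g^*|\,\dd\rho$, and so
\[
\begin{split}
\int_K |\vfX_j \vfX_k h_t|\,\dd\rho
&= \int_{K^{-1}} |\vfX_j^\bullet (\vfX_k h_t)^*|\,\dd\rho \\
&\lesssim_K \sum_{\ell=0}^{\dimN} \|\vfX_\ell (\vfX_k h_t)^*\|_{L^1(S)} + \int_K |\vfX_k h_t|\,\dd\rho \lesssim t^{-1}.
\end{split}
\]
The sum is controlled by \eqref{eq:XjXkstarht}. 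The last term is controlled by \eqref{eq:nablahtintestsmallt} for $t\leq 1$ and by \eqref{eq:htintestlocal} for $t\geq 1$.

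This treats all $t>0$ at once. In effect, it lets one derivative land on each convolution factor locally, which is exactly what you observed cannot be done directly.
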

\begin{proof}
Notice that $h_t = h_{t/2} * h_{t/2}$ and $h_t^* = h_t$, so
\[
\vfX_j(\vfX_k h_t)^* = \vfX_j(h_{t/2} * \vfX_k h_{t/2})^* = (\vfX_k h_{t/2})^* * (\vfX_j h_{t/2})
\]
and therefore, by Young's inequality,
\[
\|\vfX_j(\vfX_k h_t)^*\|_1 \leq \|(\vfX_k h_{t/2})^*\|_1 \|(\vfX_j h_{t/2})\|_1 = \|\vfX_k h_{t/2}\|_1 \|\vfX_j h_{t/2}\|_1 \lesssim t^{-1},
\]
where the last estimate follows from Theorem \ref{thm:heatkernel_intro}.

Moreover, for any smooth function $f$ on $S$, from \eqref{eq:convinv} one sees that
\[
(\vfX_j f)^* = \vfX_j^\bullet f^*
\]
for certain right-invariant first-order differential operators $\vfX_j^\bullet$ on $S$; as the latter are differential operators with smooth (thus, locally bounded) coefficients, we have
\[
|(\vfX_j f)^*| \lesssim_K \sum_{\ell=0}^{\dimN} |\vfX_\ell f^*| + |f^*| \qquad\text{on } K
\]
pointwise for any compact subset $K \subseteq S$. As a consequence,
\[
|(\vfX_j \vfX_k h_t)^*| \lesssim_K \sum_{\ell=0}^{\dimN} |\vfX_\ell (\vfX_k h_t)^*| + |(\vfX_k h_t)^*| \qquad\text{on } K.
\]
Therefore, if $K^{-1} \defeq \{ \bx^{-1} \tc \bx \in K\}$, then
\[\begin{split}
\int_K |\vfX_j \vfX_k h_t(\bx)| \,\dd\rho(\bx) 
&=\int_{K^{-1}} |(\vfX_j \vfX_k h_t)^*(\bx)| \,\dd\rho(\bx) \\
&\lesssim_K \sum_{\ell=0}^{\dimN} \|\vfX_\ell (\vfX_k h_t)^*\|_1 + \int_{K} |\vfX_k h_t(\bx)| \,\dd\rho(\bx) \lesssim_K t^{-1},
\end{split}\]
where we applied the estimates \eqref{eq:XjXkstarht}, \eqref{eq:nablahtintestsmallt} and \eqref{eq:htintestlocal}.
\end{proof}

\section{\texorpdfstring{$L^{p}$}{Lp}-boundedness of the Riesz transforms for \texorpdfstring{$1<p\leq2$}{1<p<=2}.}\label{s:sectionpleq2}

From the heat kernel estimates proved in Section \ref{s:sectionheatkernel}, we can now deduce the $L^p$-boundedness for $p \in (1,2]$ of the Riesz transforms $\Rz_j$ given by \eqref{eq:Riesz}.

The proof is based on the following criterion to deduce $L^p$-boundedness of singular integral operators on Damek--Ricci spaces. This result was established in \cite{V07}, as a consequence of the fact that any Damek--Ricci space $S$ with the right Haar measure $\rho$ and the Riemannian distance $d$ is a Calder\'on--Zygmund space in the sense of \cite{HS03}. The statement below is actually a particular case of \cite[Theorem 4.1]{V07}, specialized to convolution operators (cf.\ \cite[Theorem 2.3 and Remark 2.4]{MV21}).

\begin{proposition}\label{prp:propcriteria}
Let $T=\sum_{n\in\ZZ}T_{n}$ be a linear operator bounded on $L^{2}(S)$, where:
\begin{enumerate}[label=(\roman*)]
\item the series converges in the strong topology of $L^{2}(S)$;
\item every $T_{n}$ is a convolution operator with kernel $k_{n}\in L^{1}(S)$;
\item there exists positive constants $a, A, \varepsilon$ and $c \neq 1$ such that
\begin{align}
\label{eq:criteria1}
\int_{S} |k_n(\bx)| \, (1+c^n|\bx|)^\varepsilon \,\dd\rho(\bx) &\leq A \quad \forall y\in S,\\
\label{eq:criteria3}
\int_{S} |\nabla k_{n}^{*}(\bx)| \,\dd\rho(\bx) &\leq Bc^{n}.
\end{align}
\end{enumerate}
Then $T$ extends from $L^{1}(S)\cap L^{2}(S)$ to an operator of weak type (1,1) and to an operator bounded on $L^{p}(S)$ for $1<p\leq 2$.
\end{proposition}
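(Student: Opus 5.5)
The statement is a special case of \cite[Theorem 4.1]{V07}; to prove it directly, I would run the Hebisch--Steger Calder\'on--Zygmund argument of \cite{HS03} and then interpolate. The three steps are as follows.

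First, I would recall the Calder\'on--Zygmund structure of $(S,d,\rho)$ from \cite{HS03,V07}. There are constants $\kappa_0, C_0$ such that every $f\in L^1(S)\cap L^2(S)$ and every level $\alpha>0$ admit a decomposition
\[
f=g+\sum_i b_i, \qquad \|g\|_2^2\lesssim \alpha\|f\|_1, \qquad \sum_i\|b_i\|_1\lesssim\|f\|_1.
\]
Here each $b_i$ is supported in an admissible ``Calder\'on--Zygmund set'' $R_i$ (a suitable box in the coordinates $(x,z,a)$), has vanishing integral, and $\sum_i\rho(R_i)\lesssim\|f\|_1/\alpha$. Moreover, $R_i$ has an associated radius $r_i$ with $R_i\subseteq B(\bx_i,C_0 r_i)$, and the enlargement $R_i^*=\{\bx: d(\bx,R_i)<r_i\}$ satisfies $\rho(R_i^*)\le\kappa_0\,\rho(R_i)$. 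This is exactly the content of ``$S$ is a Calder\'on--Zygmund space'', and I would take it from those references rather than reprove it.

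Second, I would reduce the weak type $(1,1)$ bound to a H\"ormander-type estimate. The good part is handled by the $L^2$ boundedness of $T$ and Chebyshev's inequality, and the union $\bigcup_i R_i^*$ has measure $\lesssim\|f\|_1/\alpha$. It therefore suffices to show
\[
\int_{S\setminus R_i^*}|Tb_i|\,\dd\rho\lesssim (A+B)\|b_i\|_1 .
\]
I would write $Tb_i=\sum_n b_i*k_n$, justifying the interchange by the strong $L^2$ convergence in (i) and by approximating $b_i$ by $L^2$ functions; the bound then follows from a uniform bound on $\sum_n\|b_i*k_n\|_{L^1(S\setminus R_i^*)}$. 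Without loss of generality $c>1$; otherwise replace $n$ by $-n$. The sum then splits into two ranges.

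\emph{Large scales, $c^n r_i>1$.} Here I would use no cancellation. By left-invariance, $b_i*k_n(\bx)$ only involves values of $k_n$ at points $\by^{-1}\bx$ with $\by\in R_i$ and $\bx\notin R_i^*$, and these have $|\by^{-1}\bx|\ge r_i$. Hence \eqref{eq:criteria1} gives
\[
\|b_i*k_n\|_{L^1(S\setminus R_i^*)}\le\|b_i\|_1\int_{|\bx|\ge r_i}|k_n|\,\dd\rho\le A\,(c^nr_i)^{-\varepsilon}\|b_i\|_1 ,
\]
which sums geometrically over $n$ with $c^n r_i>1$.

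\emph{Small scales, $c^nr_i\le1$.} Here I would use the vanishing mean of $b_i$. I would subtract the value of the kernel at the centre $\bx_i$ and write the difference as an integral of a derivative along a geodesic of length $\lesssim r_i$ from $\bx_i$ to $\by$. Since the variation is in the ``first'' variable of the convolution, the relevant derivative is a left-invariant derivative of $k_n^*$, as the identity $(\vfX_j f)^*=\vfX_j^\bullet f^*$ and the involution \eqref{eq:convinv} show. This gives
\[
\|b_i*k_n\|_1\lesssim r_i\,\|b_i\|_1\int_S|\nabla k_n^*|\,\dd\rho\le B\,c^nr_i\,\|b_i\|_1 ,
\]
which again sums geometrically over $n$ with $c^nr_i\le1$.

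Third, combining the two ranges gives the weak type $(1,1)$ bound, and Marcinkiewicz interpolation with the $L^2$ bound yields $L^p$ boundedness for $1<p\le2$.

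I expect the main obstacle to be the geometric input in the first step: the existence of Calder\'on--Zygmund sets whose enlargements have comparable measure, despite the exponential volume growth. This must be imported from \cite{HS03,V07}. A secondary point needing care is the bookkeeping in the cancellation step, namely checking that the derivative of the kernel in the translation variable is exactly controlled by $\nabla k_n^*$ under right Haar measure, with no modular-function factor spoiling the estimate.
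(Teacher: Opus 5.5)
Your proposal is correct and follows the same route as the paper, which simply invokes \cite[Theorem 4.1]{V07}, i.e., the Hebisch--Steger Calder\'on--Zygmund theory on the Calder\'on--Zygmund space $(S,d,\rho)$ specialized to convolution kernels. Your split into large scales (size condition \eqref{eq:criteria1}) and small scales (cancellation plus \eqref{eq:criteria3}) is the argument underlying that result, and the modular bookkeeping you flag works out. Writing $b_i*k_n(\bx)=\int_S b_i(\by)\,\delta(\by)\,k_n(\by^{-1}\bx)\,\dd\rho(\by)$, one finds $\int_S|\delta(\by g)k_n(g^{-1}\by^{-1}\bx)-\delta(\by)k_n(\by^{-1}\bx)|\,\dd\rho(\bx)=\int_S|k_n^*(\bx g)-k_n^*(\bx)|\,\dd\rho(\bx)\leq|g|\,\|\nabla k_n^*\|_{L^1(S)}$ by right invariance of $\rho$, with no leftover factor of $\delta$.
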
 

We can now prove the $L^p$-boundedness for $p \in (1,2]$ of the Riesz transforms. The proof follows closely that of \cite[Theorem 1.1(i)]{MV21}.
  
\begin{theorem}\label{thm:Theoremboundedleq2}
The first-order Riesz transforms $\Rz_j = \vfX_j \Delta^{-1/2}$ are of weak type (1,1) and $L^{p}$-bounded for $1<p\leq2$ and $j=0,\dots,\dimN$.
\end{theorem}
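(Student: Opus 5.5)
We will apply Proposition \ref{prp:propcriteria} to $T = \Rz_j$. The operator $\Rz_j$ is bounded on $L^2(S)$, since $\sum_j \|\vfX_j f\|_2^2 = \|\Delta^{1/2} f\|_2^2$. To decompose it, we subordinate to the heat semigroup via
\[
\Delta^{-1/2} = \pi^{-1/2}\int_0^\infty t^{-1/2}\,\ee^{-t\Delta}\,\dd t,
\]
and split the $t$-integral dyadically. Taking $c=1/2$, say, we set
\[
T_n \defeq \pi^{-1/2}\int_{4^{-n}}^{4^{-n+1}} t^{-1/2}\,\vfX_j \ee^{-t\Delta}\,\dd t,
\qquad
k_n = \pi^{-1/2}\int_{4^{-n}}^{4^{-n+1}} t^{-1/2}\,\vfX_j h_t\,\dd t .
\]
Let $I_n$ denote the dyadic interval of integration, so that $t\simeq 4^{-n}$ on $I_n$ and hence $t^{-1/2}\simeq 2^n$. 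The partial sums $\sum_{|n|\le M} T_n$ are $\vfX_j\Delta^{-1/2}\chi_M(\Delta)$ with $\chi_M(\lambda)=\pi^{-1/2}\int_{4^{-M}}^{4^{M+1}} t^{-1/2}\lambda^{1/2}\ee^{-t\lambda}\,\dd t$. These functions are uniformly bounded and converge pointwise to $1$ on $(0,\infty)$. Moreover $\Delta$ has trivial kernel on $L^2$ (by the absence of harmonic $L^2$ functions, or simply because $0$ is not an eigenvalue), so the spectral theorem gives strong convergence to $\Rz_j$, which is condition (i). By Theorem \ref{thm:heatkernel_intro}, $\|\vfX_j h_t\|_1\lesssim t^{-1/2}$, so $k_n\in L^1(S)$, which is condition (ii).

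For \eqref{eq:criteria1} with $c = 2$ (so that $c^n = t^{-1/2}$ roughly), we use the weighted gradient bound of Theorem \ref{thm:heatkernel_intro} with some $\varepsilon\in(0,1)$. Since $(1+|\bx|/t^{1/2})^{\varepsilon'}\lesssim_{\varepsilon,\varepsilon'} \ee^{\varepsilon|\bx|^2/4t}$, we obtain
\[
\int_S |k_n|\,(1+2^n|\bx|)^{\varepsilon'}\,\dd\rho \lesssim \int_{I_n} t^{-1/2}\int_S|\nabla h_t|\,\ee^{\varepsilon|\bx|^2/4t}\,\dd\rho\,\dd t \lesssim \int_{I_n} t^{-1}\,\dd t \lesssim 1 .
\]

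For \eqref{eq:criteria3}, note that $k_n^* = \pi^{-1/2}\int_{I_n} t^{-1/2}(\vfX_j h_t)^*\,\dd t$. By \eqref{eq:XjXkstarht} in Proposition \ref{prp:higherheatestimates} we have $\|\vfX_\ell(\vfX_j h_t)^*\|_1\lesssim t^{-1}$. Summing over $\ell$ gives
\[
\int_S|\nabla k_n^*|\,\dd\rho\lesssim\int_{I_n} t^{-3/2}\,\dd t\simeq 2^{n},
\]
which is the required bound with the constant $c=2$, matching the one used in \eqref{eq:criteria1}. Proposition \ref{prp:propcriteria} then yields weak type $(1,1)$ and $L^p$-boundedness for $1<p\le 2$.

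The main obstacle is the weighted estimate, which is exactly what Theorem \ref{thm:heatkernel_intro} supplies; in particular, we need its global-in-time decay $t^{-1/2}$ for large $t$. The only remaining care point is justifying the strong $L^2$ convergence of the series. This reduces to $\ker\Delta=\{0\}$ and a dominated convergence argument in the spectral calculus. If needed, one can also check that the exchange of $\vfX_j$ with the $t$-integral is legitimate, using the $L^1$ bounds above.
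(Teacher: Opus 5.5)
Your proof is correct and follows the paper's argument essentially verbatim: subordination to the heat semigroup, a dyadic decomposition in $t$, the weighted gradient bound of Theorem \ref{thm:heatkernel_intro} for \eqref{eq:criteria1}, and \eqref{eq:XjXkstarht} for \eqref{eq:criteria3}. The only difference is cosmetic (you use intervals $[4^{-n},4^{-n+1}]$ with $c=2$, the paper uses $[2^n,2^{n+1}]$ with $c=2^{-1/2}$), but the stray ``Taking $c=1/2$'' at the start should read $c=2$, which is the value you actually and correctly use in both kernel conditions.
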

\begin{proof}
The $L^2$-boundedness of $\Rz_j = \vfX_j \Delta^{-1/2}$ is an immediate consequence of the fact that, for any $f\in C_{c}^{\infty}(S)$,
\[
\|\vfX_j f \|_{L^{2}(S)} \lesssim \left\||\nabla f| \right\|_{L^{2}(S)} = \left\||\Delta^{1/2} f| \right\|_{L^{2}(S)}
\]
(cf., e.g., \cite[Proposition 4.1]{MV21}). To prove the remaining boundedness properties of $\Rz_j$, we shall apply Proposition \ref{prp:propcriteria}. For this, we decompose
\begin{equation}\label{eq:Rzsubheatdec}
\sqrt{\pi} \Rz_j = \sqrt{\pi} \vfX_j \Delta^{-1/2} 
= \sum_{n\in\ZZ}\int_{2^n}^{2^{n+1}} t^{-1/2} \vfX_j e^{-t\Delta} \,\dd t
\eqdef \sum_{n\in\ZZ} T_n,
\end{equation}
where the series converges in the strong operator topology on $L^{2}$.
Each $T_{n}$ is a convolution operator 
with kernel 
\[
   	k_{n}=\int_{2^{n}}^{2^{n+1}}t^{-1/2} \, \vfX_j h_{t} \,\dd t.
\]
Thus, to establish the weak type (1,1) and $L^{p}$-boundedness of this operator for $1<p\leq2$, it suffices to verify the kernel estimates \eqref{eq:criteria1} and \eqref{eq:criteria3}, as stated in Proposition \ref{prp:propcriteria}; in what follows, we verify these estimates with $c=2^{-1/2}$.
    
First, from Theorem \ref{thm:heatkernel_intro} we deduce that, for any $\varepsilon >0$,
\begin{equation}\label{eq:inequalitycriteria1}
\begin{split}
&\int_{S} |k_n(\bx)| \, (1+2^{-n/2}|\bx|)^\varepsilon \,\dd \rho(\bx) \\
&\lesssim_\varepsilon \int_{2^{n}}^{2^{n+1}} t^{-1/2} \int_{S} |\vfX_j h_{t}(\bx)| \,(1+t^{-1/2}|\bx|)^\varepsilon \,\dd\rho(\bx) \,\dd t\\
&\lesssim_\varepsilon \int_{2^{n}}^{2^{n+1}} t^{-1/2} \int_{S} |\nabla h_{t}(\bx)| \,\ee^{|\bx|^2/8t} \,\dd\rho(\bx) \,\dd t
\lesssim \int_{2^{n}}^{2^{n+1}} t^{-1}\,\dd t \lesssim 1.
\end{split}
\end{equation}
Next, from Proposition \ref{prp:higherheatestimates}, 
\begin{multline*}
\left\||\nabla k_{n}^{*}| \right\|_{L^{1}(S)}
\leq \sum_{\ell=0}^{\dimN} \|\vfX_\ell k_n^* \|_{L^1(S)}
\leq \sum_{\ell=0}^{\dimN} \int_{2^n}^{2^{n+1}} t^{-1/2} |\vfX_\ell (\vfX_j h_t)^*(\bx)| \,\dd\rho(\bx) \,\dd t
\\
\lesssim \int_{2^n}^{2^{n+1}} t^{-3/2} \,\dd t \simeq 2^{-n/2}.
\end{multline*}
This verifies conditions \eqref{eq:criteria1} and \eqref{eq:criteria3}, as required.
\end{proof}   

We cannot use the same approach to prove the $L^p$-boundedness of the $\Rz_j$ for $p>2$, i.e., that of the adjoint Riesz transforms $\Rz_j^*$ for $p <2$, due to the lack of global, large time estimates for second derivatives $\vfX_j \vfX_k h_t$ of the heat kernel analogous to \eqref{eq:XjXkstarht}. Thanks to the localized estimates \eqref{eq:XjXkhtloc}, however, we can prove the $L^p$-boundedness of the ``local parts'' of the adjoint Riesz transforms $\Rz_j^*$.
   
\begin{theorem}\label{thm:theoremlocalpart}
Let $\zeta \in C_{c}^{\infty}(S)$. Then, for $j=0,\dots,\dimN$, the convolution operators with kernels $\zeta \krn_{\Rz_{j}}$ and $\zeta \krn_{\Rz_{j}^{*}}$ are of weak type (1,1) and bounded on $L^{p}(S)$ for all $p\in (1,\infty)$.
\end{theorem}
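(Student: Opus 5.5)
Since $\zeta$ is compactly supported and $\zeta\krn_{\Rz_j^*}$ is the adjoint kernel of $\zeta^*\krn_{\Rz_j}$ (with $\zeta^*\in C^\infty_c(S)$ and $\krn_{\Rz_j}^* = \krn_{\Rz_j^*}$), it suffices to treat both families of kernels $\zeta\krn_{\Rz_j}$ and $\zeta\krn_{\Rz_j^*}$ for arbitrary $\zeta \in C_c^\infty(S)$. For each we prove weak type $(1,1)$ and $L^p$-boundedness for $p\in(1,2]$ via Proposition \ref{prp:propcriteria}. The range $p\in[2,\infty)$ then follows by duality, because the adjoint of convolution by $\zeta\krn_{\Rz_j}$ is convolution by $\zeta^*\krn_{\Rz_j^*}$, which belongs to the other family.

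We fix one of these kernels, say $K = \zeta \krn_{\Rz_j^*}$; the other case is analogous but simpler, using Theorem \ref{thm:heatkernel_intro} and Proposition \ref{prp:higherheatestimates}. Subordination as in \eqref{eq:Rzsubheatdec} gives
\[
\sqrt{\pi}\,\krn_{\Rz_j^*} = \int_0^\infty t^{-1/2} (\vfX_j h_t)^* \,\dd t,
\]
and we split $K = \sum_{n\le 0} k_n + k_\infty$, where
\[
k_n = \zeta \int_{2^n}^{2^{n+1}} t^{-1/2}(\vfX_j h_t)^*\,\dd t
\]
for $n \le 0$, and $k_\infty$ collects the contribution of $t\ge 1$. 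Since $(\vfX_j h_t)^* = \vfX_j^\bullet h_t$ with $\vfX_j^\bullet$ right-invariant with smooth coefficients, on $\supp\zeta$ we have $|(\vfX_jh_t)^*|\lesssim \sum_\ell |\vfX_\ell h_t| + |h_t|$, and similarly for one further derivative. By \eqref{eq:htintestlocal} and \eqref{eq:XjXkhtloc}, the large-time part $k_\infty$ and its gradient are then in $L^1$: the $t^{-3/2}$ local decay makes $\int_1^\infty t^{-1/2}t^{-3/2}\,\dd t$ converge, and for the gradient we use $t^{-1}$ from \eqref{eq:XjXkhtloc} together with the $t^{-1/2}$ factor. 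So $k_\infty \in L^1(S)$, and convolution with it is bounded on every $L^p$. The main point is thus the small-time pieces $k_n$, $n\le 0$.

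For these we verify \eqref{eq:criteria1} and \eqref{eq:criteria3} with $c = 2^{-1/2}$. The condition \eqref{eq:criteria1} follows from the weighted estimate \eqref{eq:nablahtintestsmallt} with $t_0=2$, after converting $|(\vfX_j h_t)^*|$ into $|\nabla h_t|+|h_t|$ on $\supp\zeta$ as above. The polynomial weight $(1+t^{-1/2}|\bx|)^\varepsilon$ is dominated by $\ee^{|\bx|^2/8t}$, and the $|h_t|$ term contributes only $\int_{2^n}^{2^{n+1}}t^{-1/2}\,\dd t\lesssim 1$ for $n\le0$. For \eqref{eq:criteria3} we must bound $\nabla k_n^*$, where $k_n^* = \zeta^*\int t^{-1/2}\,\vfX_j h_t\,\dd t$. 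The Leibniz rule gives one term with $\nabla\zeta^*$, handled by \eqref{eq:nablahtintestsmallt} (bounded by $\lesssim 1 \le 2^{-n/2}$ for $n\le0$), and one term with $\zeta^*\,\vfX_\ell\vfX_j h_t$, handled by the local estimate \eqref{eq:XjXkhtloc}, which yields $\int_{2^n}^{2^{n+1}} t^{-3/2}\,\dd t\simeq 2^{-n/2}$.

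Finally, $L^2$-boundedness of the whole operator must be checked. The part $k_\infty$ is in $L^1$, and the sum over $n\le0$ converges strongly on $L^2$ because each $k_n$ is integrable with $\|k_n\|_1 \lesssim 1$. The step requiring most care is precisely this $L^2$ issue: uniform $L^1$ bounds do not by themselves sum. We plan to get $L^2$-boundedness of convolution by $\zeta \krn_{\Rz_j^*}$ by writing it as convolution by $\krn_{\Rz_j^*}$ (bounded on $L^2$) minus convolution by $(1-\zeta)\krn_{\Rz_j^*}$, where we arrange $\zeta \equiv 1$ near the identity. The latter kernel is the large-distance part of the subordination integral: for $|\bx|$ bounded below, the Gaussian factor $\ee^{-|\bx|^2/4t}$ makes the small-$t$ contributions absolutely integrable, so this part is controlled in $L^1$ near $\supp \zeta$ by the same local estimates. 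Alternatively, the partial sums are themselves Calderón--Zygmund kernels satisfying the hypotheses uniformly, and one can pass to the limit.
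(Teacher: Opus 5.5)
\paragraph{Gap: the $L^2$-boundedness step.} Your verification of \eqref{eq:criteria1}--\eqref{eq:criteria3} for the pieces $k_n$, $n\le 0$, and your treatment of the large-time part as an $L^1$ kernel are fine. This is a harmless variant of the paper, which instead runs Proposition \ref{prp:propcriteria} over all $n\in\ZZ$ using the local large-time estimates. The gap is the $L^2$-boundedness, which Proposition \ref{prp:propcriteria} requires as a hypothesis.

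Your main route writes convolution by $\zeta\krn_{\Rz_j^*}$ as $\Rz_j^*$ minus convolution by $(1-\zeta)\krn_{\Rz_j^*}$, and claims the latter is controlled in $L^1$. It is not. The kernel $(1-\zeta)\krn_{\Rz_j^*}$ is precisely the part at infinity. For $j\ge1$, Proposition \ref{prp:TheoremkernelRiesz} shows it contains $-\tilde C_{\dimV,\dimZ}\, r_j(x,z)\,\chr_{\{a\le 1/\ee\}}/\log a$, whose $L^1(S)$-norm is $\tilde C_{\dimV,\dimZ}\|r_j\|_{L^1(N)}\int_1^\infty \dd u/u=\infty$. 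The Gaussian factor $\ee^{-|\bx|^2/4t}$ only helps for small $t$; the large-$t$ part of the subordination integral has no such decay. $L^1$ bounds on compact sets near $\supp\zeta$ say nothing about the global convolution operator. In fact, since $\Rz_j^*$ is $L^2$-bounded, $L^2$-boundedness of convolution by $(1-\zeta)\krn_{\Rz_j^*}$ is \emph{equivalent} to what you want, so the argument is circular.

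Your fallback fails too. The constants from Proposition \ref{prp:propcriteria} depend on the $L^2$ norm of the operator. Moreover, \eqref{eq:criteria1}--\eqref{eq:criteria3} encode no cancellation: nonnegative unit-mass bumps $k_n$ at scale $2^{n/2}$ satisfy them, while $\sum_{n\le 0}k_n$ is a non-integrable positive singularity at the identity. So these conditions cannot give uniform $L^2$ bounds for the partial sums. For the same reason, $\|k_n\|_1\lesssim 1$ does not give the strong convergence of $\sum_{n\le 0}$ required in (i).

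\paragraph{How to close it.} The paper handles this step by citing \cite[Lemma 4.1]{M23}: cutting off the kernel of an $L^2$-bounded convolution operator by any $\zeta\in C^\infty_c(S)$ preserves $L^2$-boundedness. Your subtraction idea can also be repaired, but only on the small-time kernel
\[
u\defeq\int_0^2 t^{-1/2}(\vfX_j h_t)^*\,\dd t;
\]
the $t\ge2$ part is already in $L^1$ after the cutoff.

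First, $u$ is the kernel of $G(\Delta)\Rz_j^*$ with $G(\lambda)\defeq\int_0^2 t^{-1/2}\lambda^{1/2}\ee^{-t\lambda}\,\dd t$, which is bounded. Hence convolution by $u$ is $L^2$-bounded, and the truncated versions of $G$ give strong convergence of the partial sums.

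Second, if $\zeta\equiv1$ on $\{|\bx|<r_0\}$, then $(1-\zeta)u\in L^1(S)$. Indeed, for $t\le 2$ one has $\int_{|\bx|\ge r_0}(|\nabla h_t|+|h_t|)\,\dd\rho\lesssim_M t^M$ by the Gaussian factor, as in the proof of Proposition \ref{prp:heateasyestimates}. Also, $\{|\bx|\ge r_0\}$ is invariant under the $L^1$-isometric involution.

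Finally, the statement allows an arbitrary $\zeta$, so you cannot simply ``arrange'' $\zeta\equiv1$ near the identity. You need an extra step. For example, take $\zeta_1\in C^\infty_c(S)$ with $\zeta_1\equiv 1$ near the identity and write $\zeta=\zeta(0,0,1)\,\zeta_1+(\zeta-\zeta(0,0,1)\,\zeta_1)$. Integrating \eqref{eq:qt}--\eqref{eq:nablaqt} in $t$ gives $|u(\bx)|\lesssim|\bx|^{-(\dimN+1)}$ near the identity. Since the second piece of $\zeta$ vanishes at the identity, this bound shows that it times $u$ is in $L^1$.
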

\begin{proof}
Notice first that $(\zeta \krn_{\Rz_j})^* = \check\zeta \krn_{\Rz_j^*}$, where $\check\zeta(\bx) \defeq \overline{\zeta(\bx^{-1})}$ is also in $C^\infty_c(S)$. So, by duality, it is enough to establish $L^p$-boundedness for $p \leq 2$.

To prove this theorem, we apply Proposition \ref{prp:propcriteria}. Note that $\Rz_{j}$ is $L^{2}$-bounded, and so is its adjoint $\Rz_{j}^{*}$. Thus, by \cite[Lemma 4.1]{M23}, the convolution operators with kernels $\zeta \krn_{\Rz_{j}}$ and $\zeta \krn_{\Rz_{j}^{*}}$ are $L^2$-bounded too. We shall now apply Proposition \ref{prp:propcriteria} to obtain the remaining boundedness properties. We only present the argument for $\zeta \krn_{\Rz_{j}^{*}}$, as that for $\zeta \krn_{\Rz_{j}}$ is entirely analogous.

From the subordination formula \eqref{eq:Rzsubheatdec}, we deduce that
\[
\sqrt{\pi} \, \krn_{\Rz_{j}^{*}}= \sum_{n\in\ZZ}\int_{2^{n}}^{2^{n+1}}t^{-1/2} (\vfX_{j}h_{t})^* \,\dd t \eqdef \sum_{n\in\ZZ}u_{n}.
\]
According to Proposition \ref{prp:propcriteria}, it is enough to verify that \eqref{eq:criteria1} and \eqref{eq:criteria3} hold for the kernels $k_n = \zeta u_{n}$ with constant $c = 2^{-1/2}$.
For all $\varepsilon > 0$ and $n\in\ZZ$, we have
\[\begin{split}
\int_{S}|(\zeta u_{n})(\bx)| \, (1+2^{-n/2}|\bx|)^{\varepsilon} \,\dd\rho(\bx)
&\lesssim \int_{S}| u_{n}(\bx)| \,(1+2^{-n/2}|\bx|)^{\varepsilon} \,\dd\rho(\bx)\\
&=\int_{S}| u_{n}^{*}(\bx)| \,(1+2^{-n/2}|\bx|)^{\varepsilon} \,\dd\rho(\bx)
\lesssim_\varepsilon 1.
\end{split}\]
where the last inequality is given by \eqref{eq:inequalitycriteria1}. 
Moreover, $(\zeta u_{n})^{*}=\check{\zeta} u_{n}^{*}$ for some $\check{\zeta}\in C_{c}^{\infty}(S)$; thus, by the Leibniz rule, if we set $K = \supp \check\zeta$, then
\[\begin{split}
\left\| \nabla (\zeta u_{n})^{*}\right\|_{L^{1}(S)}
&\lesssim \left\| u_{n}^{*} \chr_K \right\|_{L^{1}(S)} +\left\| \nabla (u_{n}^{*}) \chr_K \right\|_{L^{1}(S)} \\
&\leq \int_{2^n}^{2^{n+1}} t^{-1/2} \int_{K} |\vfX_j h_t(\bx)| \,\dd\rho(\bx) \,\dd t \\
&\quad+\sum_{\ell=0}^{\dimN} \int_{2^n}^{2^{n+1}} t^{-1/2} \int_K |\vfX_\ell(\vfX_j h_t)(\bx)| \,\dd\rho(\bx) \,\dd t \\
&\lesssim \int_{2^n}^{2^{n+1}} t^{-3/2} \,\dd t \simeq 2^{-n/2},
\end{split}\]
where we used the estimates from Propositions \ref{prp:heateasyestimates} and \ref{prp:higherheatestimates}.
\end{proof}

In order to complete the proof of the $L^p$-boundedness for $p<2$ of the adjoint Riesz transforms $\Rz_j^*$, we can therefore concentrate on their ``parts at infinity'', which shall be the focus of the rest of the paper.

\section{Kernel asymptotics for the Riesz transforms}\label{s:sectionkernelasy}

Starting from the formula for the heat kernel $h_t = \krn_{\ee^{-t\Delta}}$ in Proposition \ref{prp:ADJ},
and arguing much as in \cite[Proposition 2.2]{M23},
by subordination we can obtain an explicit formula for the convolution kernel of $\Delta^{-1/2}$.

\begin{proposition}
The distribution $\krn_{\Delta^{-1/2}}$ coincides with a smooth function away from the origin, and $\delta^{-1/2} \krn_{\Delta^{-1/2}}$ is radial. Moreover, for all $r>0$, if $\dimZ$ is even, then 
\begin{equation}\label{eq:expressionkernel}
(\delta^{-1/2} \krn_{\Delta^{-1/2}})(r)
= 2^{-\dimV-\frac{\dimZ}{2}} \pi^{-\frac{\dimN+2}{2}} \DfD_r^{\dimZ/2} \DfE_r^{\dimV/2-1}\left(\frac{1}{r\sinh (r/2)}\right),
\end{equation}
while, if $\dimZ$ is odd, then 
\begin{equation}\label{eq:expressionkernel2}
\begin{split}
&(\delta^{-1/2} \krn_{\Delta^{-1/2}})(r) 
= 2^{-\dimV-\frac{\dimZ}{2}}\pi^{-\frac{\dimN+3}{2}}\int_{r}^{\infty} \DfD_s^{(\dimZ+1)/2} \DfE_s^{\dimV/2-1} \left(\frac{1}{s\sinh (s/2)}\right) \,\dd\nu_r(s)\\
&\quad=2^{-\dimV-\frac{\dimZ}{2}} \pi^{-\frac{\dimN+3}{2}} \DfD_r^{(\dimZ+1)/2} \int_{r}^{\infty} \DfE_s^{\dimV/2-1} \left(\frac{1}{s\sinh (s/2)}\right) \,\dd\nu_r(s).
\end{split}
\end{equation}
\end{proposition}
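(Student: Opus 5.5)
The plan is to use the subordination formula
\[
\Delta^{-1/2} = \frac{1}{\sqrt{\pi}} \int_0^\infty t^{-1/2} \ee^{-t\Delta} \,\dd t,
\]
valid in the strong sense on the range of $\Delta$; since $\Delta$ has trivial kernel on $L^2(S)$, it is also valid in the sense of distributions. At the level of convolution kernels this gives
\[
\krn_{\Delta^{-1/2}} = \pi^{-1/2} \int_0^\infty t^{-1/2} h_t \,\dd t
\]
as distributions on $S$.

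First I would check convergence away from the origin. The pointwise bound \eqref{eq:qt} gives, for $r$ in a compact subset of $\Rpos$, $(\delta^{-1/2}h_t)(r) \lesssim t^{-3/2}$ for large $t$ and super-polynomial decay $\ee^{-r^2/4t}$ for small $t$. Hence the integral $\int_0^\infty t^{-1/2} \delta^{-1/2} h_t\,\dd t$ converges locally uniformly on $S \setminus \{0\}$. Near the origin, the bound $t^{-(\dimN+1)/2}(1+r)\ee^{-r^2/4t}$ integrates to something of order $r^{1-\dimN}$, which is locally integrable against $\dd\rho$, so the distributional identity holds with this locally integrable function. The same argument applied to derivatives, using the explicit formulas, gives smoothness away from the origin. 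Radiality of $\delta^{-1/2}\krn_{\Delta^{-1/2}}$ is then inherited from that of $\delta^{-1/2}h_t$ (Proposition \ref{prp:ADJ}).

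Next comes the explicit formula in the even case. I would insert \eqref{eq:formulahteven} and interchange $\int_0^\infty t^{-1/2}\,\dd t$ with the operators $\DfD_r^{\dimZ/2}\DfE_r^{\dimV/2}$. These are differential operators in $r$ with coefficients smooth for $r>0$, and the interchange is justified by dominated convergence, since the $t$-integrals of $\partial_r^k h_t^\RR$ converge locally uniformly for $r>0$. A direct computation gives $\int_0^\infty t^{-1/2} h_t^\RR(r)\,\dd t = (4\pi)^{-1/2}\int_0^\infty t^{-1}\ee^{-r^2/4t}\,\dd t$, which diverges logarithmically at $t=\infty$. So one cannot integrate $h_t^\RR$ itself; instead I would first apply one factor $\DfE_r$:
\[
\DfE_r h^\RR_t(r) = \frac{r}{2t\sinh(r/2)}\, h_t^\RR(r),
\qquad
\int_0^\infty t^{-1/2}\,\frac{r}{2t}\,(4\pi t)^{-1/2}\ee^{-r^2/4t}\,\dd t = \frac{1}{\sqrt{\pi}\, r}.
\]
This gives $\int_0^\infty t^{-1/2}\DfE_r h_t^\RR\,\dd t = \pi^{-1/2}/(r\sinh(r/2))$. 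Applying the remaining $\DfD_r^{\dimZ/2}\DfE_r^{\dimV/2-1}$ and tracking constants (the extra $\pi^{-1/2}$ from subordination and $\pi^{-1/2}$ from this integral) yields \eqref{eq:expressionkernel}. This requires $\dimV\ge 2$, which holds since $\dimV$ is even and positive in the H-type case.

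The odd case proceeds the same way from \eqref{eq:formulahtodd}. The one delicate point, which I expect to be the main obstacle, is justifying the exchange of $\int_0^\infty \dd t$ with $\int_r^\infty \dd\nu_r(s)$. This needs a Tonelli-type bound on
\[
\int_0^\infty\int_r^\infty t^{-1/2}\,\bigl|\DfD_s^{(\dimZ+1)/2}\DfE_s^{\dimV/2}h_t^\RR(s)\bigr|\,\dd\nu_r(s)\,\dd t.
\]
I would obtain it from \eqref{eq:der_htR}--\eqref{eq:est_Htpq} for $s,t\ge1$, which give exponential decay $\ee^{-(Q+1)s/2}$ beating the $\ee^{s/2}$ growth of $\dd\nu_r$ and $t^{-3/2}$ integrability in $t$. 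The small $s$ or small $t$ regimes would be handled by the crude bounds $|\partial^k h_t^\RR|\lesssim t^{-(k+1)/2}\ee^{-s^2/8t}$ together with $s\ge r>0$.

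Having exchanged the integrals, the inner $t$-integral is computed exactly as in the even case, giving the first line of \eqref{eq:expressionkernel2}. The second line follows from the commutation identity \eqref{eq:byparts}, applied to $g(s)=\DfE_s^{\dimV/2-1}(1/(s\sinh(s/2)))$, which is smooth on $\Rpos$ and decays exponentially with its derivatives.
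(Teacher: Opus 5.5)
Your argument is essentially the paper's: subordination to the heat kernel, one factor $\DfE_r$ absorbed into the $t$-integral via \eqref{eq:equation1}, and, for $\dimZ$ odd, an exchange of the $t$- and $s$-integrals together with \eqref{eq:byparts}. The only difference is the order of steps in the odd case. You exchange the integrals while the whole operator $\DfD_s^{(\dimZ+1)/2}\DfE_s^{\dimV/2}$ is still inside, which gives the first line of \eqref{eq:expressionkernel2} directly. The paper instead first pulls $\DfD_r^{(\dimZ+1)/2}$ outside via \eqref{eq:byparts} and only then exchanges. (A minor slip: once the factor $t^{-1/2}$ is included, the small-$t$ contribution near the origin is of order $r^{-\dimN}$, not $r^{1-\dimN}$. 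This is still locally integrable, because the radial density of $\rho$ near the identity is $r^{\dimN}$.)

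The step that needs correcting is the final application of \eqref{eq:byparts} to $g(s)=\DfE_s^{\dimV/2-1}(1/(s\sinh(s/2)))$.
\begin{itemize}
\item That identity requires $\int_r^\infty g\,\dd\nu_r$ to converge. Since $\dd\nu_r(s)\simeq \ee^{s/2}\,\dd s$ as $s\to\infty$, ``exponential decay'' of $g$ is not enough; $g$ must decay faster than $\ee^{-s/2}$.
\item For large $s$ one has $g(s)\simeq s^{-1}\ee^{-\dimV s/4}$, so the condition holds when $\dimV\geq 4$.
\item It fails when $\dimV=2$, which forces $\dimZ=1$, i.e.\ $N$ is the three-dimensional Heisenberg group. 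Then $g(s)\,\dd\nu_r(s)\sim \sqrt{2}\,\dd s/s$, and the integral in the second line of \eqref{eq:expressionkernel2} diverges logarithmically.
\end{itemize}
So in that case the second expression is only formal. This is a defect of the statement itself, and the paper's derivation has the same problem: for $\dimV=2$, the iterated integral obtained after pulling out $\DfD_r^{(\dimZ+1)/2}$ is infinite by Tonelli, since the integrand $t^{-1/2}\DfE_s h_t^{\RR}(s)$ is nonnegative.

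Your ordering actually proves the first line rigorously in every case, and that is the form used later in Proposition \ref{prp:ThkL}. You should simply restrict the derivation of the second line to $\dimV\geq 4$.
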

\begin{proof}
Notice that, at least formally,
\begin{equation}\label{eq:formularieszkernel}
    	\krn_{\Delta^{-1/2}}=\frac{1}{\sqrt{\pi}}\int_{0}^{\infty}h_{t} \,\frac{\dd t}{t^{1/2}}.
\end{equation}

Assume first $\dimZ$ even. Substituting the formula \eqref{eq:formulahteven} into the expression \eqref{eq:formularieszkernel}, we obtain
\begin{equation}\label{eq:Keven_prelim}
(\delta^{-1/2} \krn_{\Delta^{-1/2}})(r)=2^{-\dimV-\frac{\dimZ}{2}}\pi^{-\frac{\dimN+1}{2}}\int_{0}^{\infty} \DfD_r^{\dimZ/2} \DfE_r^{\dimV/2} h_t^\RR(r) \,\frac{\dd t}{t^{1/2}}.
\end{equation}
As $\dimV$ is always even, we have $\dimV/2\geq 1$. In particular, much as in \cite[fourth display equation on p.\ 9]{M23}, for all $r>0$,
\begin{equation}\label{eq:equation1}
\int_{0}^{\infty} \DfE_r h_{t}^{\RR}(r) \,{\frac{\dd t}{t^{1/2}}}
=\frac{1}{4\sqrt{\pi}}\frac{r}{\sinh(r/2)}\int_{0}^{\infty}\ee^{-r^{2}/4t} \,\frac{\dd t}{t^{2}}
=\frac{1}{\sqrt{\pi}}\frac{1}{r\sinh (r/2)}.
\end{equation}
By exchanging differentiation and integration in \eqref{eq:Keven_prelim} and using \eqref{eq:equation1}, we obtain the expression \eqref{eq:expressionkernel}. 
	 
Now consider the case where $\dimZ$ is odd. Plugging \eqref{eq:formulahtodd} into \eqref{eq:formularieszkernel}, we find
\begin{equation}\label{eq:Kodd_prelim}
(\delta^{-1/2} \krn_{\Delta^{-1/2}})(r)
= 2^{-\dimV-\frac{\dimZ}{2}} \pi^{-\frac{\dimN+2}{2}} 
\int_{0}^{\infty} \int_{r}^{\infty} \DfD_s^{(\dimZ+1)/2}\DfE_s^{\dimV/2} h_t^\RR(s) \,\dd\nu_r(s) \,\frac{\dd t}{t^{1/2}}.
\end{equation}
Interchanging the order of integration in \eqref{eq:Kodd_prelim} and using \eqref{eq:equation1} and \eqref{eq:byparts}, we finally obtain that
\[
\begin{aligned}
&(\delta^{-1/2} \krn_{\Delta^{-1/2}})(r)\\
&= 2^{-\dimV-\frac{\dimZ}{2}} \pi^{-\frac{\dimN+2}{2}} \DfD_r^{(\dimZ+1)/2}
\int_{r}^{\infty} \DfE_s^{\dimV/2-1}\int_{0}^{\infty} \DfE_s h_t^\RR(s) \,\frac{\dd t}{t^{1/2}} \,\dd\nu_r(s) \\
&= 2^{-\dimV-\frac{\dimZ}{2}} \pi^{-\frac{\dimN+3}{2}} \DfD_r^{(\dimZ+1)/2}
\int_{r}^{\infty} \DfE_s^{\dimV/2-1}\left( \frac{1}{s\sinh (s/2)}\right) \,\dd\nu_r(s),
\end{aligned}
\]
which is the second expression in \eqref{eq:expressionkernel2}; the first one follows by applying \eqref{eq:byparts}.
\end{proof}

A variant of the arguments in \cite[Proposition 2.3]{M23} and \cite[Proposition 6.3]{MP24} now allows us to obtain precise asymptotics at infinity for the convolution kernel of $\Delta^{-1/2}$ and its derivatives.

\begin{proposition}\label{prp:ThkL}
For all $r>0$, 
\begin{equation}\label{eq:Phi}
( \delta^{-1/2} \krn_{\Delta^{-1/2}})(r) = C_{\dimV,\dimZ}\Phi_{\dimV,\dimZ}(\cosh( r/2)),
\end{equation}
where
\begin{equation}\label{eq:PhiC}
C_{\dimV,\dimZ} \defeq 2^{-\dimV-\dimZ-1}\pi^{-\frac{\dimV+\dimZ+3}{2}},
\end{equation}
while $\Phi_{\dimV,\dimZ} : (1,\infty) \to \Rpos$ is smooth and satisfies
\begin{equation}\label{eq:Phi0}
\Phi_{\dimV,\dimZ}(X)
= \frac{\Gamma\left( \frac{\dimV}{4}+\frac{1}{2} \right) \Gamma\left( \frac{\dimV}{4}+\frac{\dimZ}{2} \right) }{X^{\frac{\dimV}{2}+\dimZ}\log X}\left(1+O\left(\frac{1}{\log X}\right) \right) \quad\text{as } X \to \infty
\end{equation}
and
\begin{equation}\label{eq:Phi1}
-\frac{1}{2X}\partial_{X} \Phi_{\dimV,\dimZ}(X)=\Phi_{\dimV,\dimZ+2}(X).
\end{equation}
\end{proposition}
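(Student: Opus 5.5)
In the variable $X \defeq \cosh(r/2)$ one has $\partial_r = \tfrac12 \sinh(r/2)\,\partial_X$ and $\sinh r = 2X\sinh(r/2)$, so that
\[
\DfD_r = -\frac{1}{4X}\,\partial_X, \qquad \DfE_r = -\frac12\,\partial_X .
\]
Moreover, in the odd case,
\[
\dd\nu_r(s) = \frac{4Y\,\dd Y}{\sqrt{2}\,(Y^2-X^2)^{1/2}}, \qquad Y \defeq \cosh(s/2).
\]
Here I used $\cosh s-\cosh r = 2(Y^2-X^2)$ and $\sinh s\,\dd s = 4Y\,\dd Y$. I therefore plan to define the whole family $\Phi_{\dimV,m}$, for every integer $m\ge 0$ (so that \eqref{eq:Phi1} can be stated for all $\dimZ$), directly in the $X$-variable.

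First set
\[
\Phi_{\dimV,0}(X) \defeq c_{\dimV}\,(-\partial_X)^{\dimV/2-1}\,g(X),
\qquad
g(X) \defeq \frac{1}{2\arccosh(X)\,\sqrt{X^2-1}} = \frac{1}{r\sinh(r/2)} .
\]
Then define, for $m>0$, the Riemann--Liouville-type transform in the variable $X^2$:
\[
\Phi_{\dimV,m}(X) \defeq \frac{c'_m}{\Gamma(m/2)}\int_X^\infty \Phi_{\dimV,0}(Y)\,(Y^2-X^2)^{m/2-1}\,2Y\,\dd Y .
\]
The constants $c_{\dimV}$ and $c'_m$ will be normalised at the end so that both \eqref{eq:PhiC} and \eqref{eq:Phi0} come out exactly.

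The identity $-\frac{1}{2X}\partial_X\Phi_{\dimV,m}=\Phi_{\dimV,m+2}$, which is \eqref{eq:Phi1}, is the standard semigroup property $I^{\alpha}I^{\beta}=I^{\alpha+\beta}$ of Riemann--Liouville integrals in the variable $u=X^2$, together with $-\partial_u I^{\alpha+1}=I^{\alpha}$. For $m$ even, iterating \eqref{eq:Phi1} gives $\Phi_{\dimV,m} \propto (-\tfrac{1}{2X}\partial_X)^{m/2}\Phi_{\dimV,0}$. With the conversions above this matches \eqref{eq:expressionkernel}, up to a power of $2$ and $\pi$ that I will track to obtain $C_{\dimV,\dimZ}$. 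For $m$ odd, $\Phi_{\dimV,m}$ is $(-\tfrac1{2X}\partial_X)^{(m+1)/2}$ applied to the half-order integral $\Phi_{\dimV,-1}$. That half-order integral is exactly the $\nu_r$-integral in the second line of \eqref{eq:expressionkernel2}, so the odd case also reduces to the same formula. Smoothness and positivity on $(1,\infty)$ are inherited from $g$: $g$ is smooth there, and its derivatives have a sign, since $g$ is completely monotone in $X$, being a product of the completely monotone functions $1/\arccosh X$ and $1/\sqrt{X^2-1}$.

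For the asymptotics \eqref{eq:Phi0}, I would first show by induction that
\[
(-\partial_X)^k g(X) = \frac{k!\,\Gamma\text{-type constant}}{X^{k+1}\log X}\left(1+O\left(\frac{1}{\log X}\right)\right).
\]
This follows from $\arccosh X=\log(2X)+O(X^{-2})$, from the fact that each $X$-derivative of $(\log X)^{-j}$ gains a factor $1/\log X$, and from an expansion of $(X^2-1)^{-1/2}$ in powers of $X^{-2}$. It gives $\Phi_{\dimV,0}(X)\sim c\,X^{-\dimV/2}(\log X)^{-1}$. Next I insert this into the Riemann--Liouville integral and substitute $Y=Xv$. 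The slowly varying factor $(\log(Xv))^{-1}$ equals $(\log X)^{-1}\bigl(1+O(\log v/\log X)\bigr)$, and the remaining integral $\int_1^\infty v^{-\dimV/2}(v^2-1)^{m/2-1}v\,\dd v$ is a Beta function. This yields the decay $X^{-\dimV/2-m}/\log X$.

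The main obstacle is the exact constant $\Gamma(\frac{\dimV}4+\frac12)\Gamma(\frac{\dimV}4+\frac{\dimZ}2)$ together with the normalisation \eqref{eq:PhiC}. The inductive derivative constants and the Beta integrals must telescope correctly, for instance via the duplication formula. To control this I would rather compute a Mellin transform: write $1/r=\int_0^\infty \ee^{-rs}\,\dd s$ with $\ee^{-rs}=(X+\sqrt{X^2-1})^{-2s}$, express $\Phi_{\dimV,m}$ as a Mellin-type integral in $s$, and read off the leading behaviour as $X\to\infty$ from the behaviour of the integrand near $s=0$. The Gamma factors then appear from the Beta integrals at $s=0$. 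The $O(1/\log X)$ error is uniform because the $\log v$ correction is integrable against the Beta weight.
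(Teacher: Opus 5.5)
Your family $\Phi_{\dimV,m}$ is built in the wrong direction, and this breaks both \eqref{eq:Phi1} and \eqref{eq:Phi0} as you argue them. In the variable $u=X^2$ one has $-\frac{1}{2X}\partial_X=-\partial_u$. Your definition reads $\Phi_{\dimV,m}(\sqrt u)=c'_m\,(W^{m/2}f)(u)$, with $f(w)=\Phi_{\dimV,0}(\sqrt w)$ and $W^{\alpha}f(u)=\Gamma(\alpha)^{-1}\int_u^\infty f(w)\,(w-u)^{\alpha-1}\,\dd w$. This is a fractional \emph{integral} of order $m/2$. The rule you invoke, $-\partial_u W^{\alpha+1}=W^{\alpha}$, then gives $-\frac{1}{2X}\partial_X\Phi_{\dimV,m}=(c'_m/c'_{m-2})\,\Phi_{\dimV,m-2}$, so the index drops by $2$ instead of rising.

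Integration also lowers the decay. Since $\Phi_{\dimV,0}(Y)\sim c\,Y^{-\dimV/2}/\log Y$, your substitution $Y=Xv$ yields $X^{-\dimV/2+m}/\log X$, not $X^{-\dimV/2-m}/\log X$. Moreover the integrand behaves like $Y^{m-1-\dimV/2}/\log Y$ at infinity, so $\Phi_{\dimV,m}$ is not even finite once $m\ge \dimV/2$. This happens, e.g., for the Heisenberg group ($\dimV=2$, $\dimZ=1$) and the octonionic case ($\dimV=8$, $\dimZ=7$). Your later sentences describe the right objects: $(-\frac{1}{2X}\partial_X)^{m/2}\Phi_{\dimV,0}$ for even $m$, and derivatives applied to the half-order integral $\Phi_{\dimV,-1}$ for odd $m$. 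But they contradict your definition, and your Beta-function computation with exponent $m/2-1$ is carried out on the wrong object.

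To repair this, $\Phi_{\dimV,\dimZ}$ must be a fractional \emph{derivative} of order $\dimZ/2$ in $u$, as in the paper. For even $\dimZ$ apply $(-\partial_u)^{\dimZ/2}$ to $(-\partial_X)^{\dimV/2-1}$ of your $g$ (the paper's $\phi_0=2g$). For odd $\dimZ$ place $(-\partial_u)^{(\dimZ+1)/2}$ inside the half-order integral $\int_X^\infty(\cdot)\,\frac{2Y\,\dd Y}{\sqrt{Y^2-X^2}}$. Then \eqref{eq:Phi1} is immediate, using \eqref{eq:byparts} to commute $-\partial_u$ with the half-order integral. The asymptotics then need two ingredients.
\begin{enumerate}
\item[(i)] The leading term of $(-\frac{1}{2X}\partial_X)^{k}(-\partial_X)^{j}\phi_0$, not only of $(-\partial_X)^j g$. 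This requires an expansion that can be differentiated termwise; the paper uses a convergent double series in $1/X$ and $1/\log X$, since an $O(\cdot)$ remainder cannot simply be differentiated. The leading coefficients satisfy $c^{0,j+1}=(j+1)c^{0,j}$ and $c^{k+1,j}=\frac{j+2k+1}{2}c^{k,j}$, and the duplication formula then gives the Gamma factors. No Mellin transform is needed.
\item[(ii)] For odd $\dimZ$ only, one half-order integral of a function behaving like $Y^{-\dimV/2-\dimZ-1}/\log Y$. Here your substitution $Y=Xv$, with $1/\log(Xv)=(1/\log X)\,(1+O(\log v/\log X))$ uniformly, does work. It gives $\int_1^\infty v^{-\dimV/2-\dimZ}(v^2-1)^{-1/2}\,\dd v=\frac{\sqrt{\pi}\,\Gamma(\frac{\dimV}{4}+\frac{\dimZ}{2})}{2\,\Gamma(\frac{\dimV}{4}+\frac{\dimZ+1}{2})}$. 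This is a clean alternative to the paper's binomial-series expansion and summation identity.
\end{enumerate}

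Finally, there is no freedom to ``normalise at the end''. Once \eqref{eq:Phi} with the prescribed $C_{\dimV,\dimZ}$ fixes $\Phi_{\dimV,\dimZ}$, the constant in \eqref{eq:Phi0} must be verified by this computation, not chosen. Your complete-monotonicity argument for positivity is sound and carries over to the corrected family.
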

\begin{proof}
Under the change of variables
\[
X = \cosh (r/2),
\]
we see that
\[
\sinh (r/2)=\sqrt{X^{2}-1}, \quad\sinh r=2X\sqrt{X^{2}-1}, \quad\cosh r=2X^{2}-1,
\]
and
\[
r=2\arccosh X=2\log(X+\sqrt{X^{2}-1}).
\]
Consequently,
\begin{equation}\label{eq:phi0}
\frac{2}{r\sinh(r/2)}=\frac{1}{\sqrt{X^2-1}\log(X+\sqrt{X^{2}-1})} \eqdef \phi_0(X),
\end{equation}
where $\phi_{0}$ is a smooth function on $(1,\infty)$.

Moreover, by the chain rule,
\[
\partial_X=\frac{2}{\sinh (r/2)}\partial_r,
\]
whence, by \eqref{eq:DfDE},
\begin{equation}\label{eq:DfTDE}
2\DfD_r=-\frac{1}{2X}\partial_X \eqdef \DfTD_X, \qquad 2\DfE_r = -\partial_X \eqdef \DfTE_X.
\end{equation}

When $\dimZ$ is even, from \eqref{eq:expressionkernel}, \eqref{eq:phi0} and \eqref{eq:DfTDE} we obtain
\[\begin{split}
( \delta^{-1/2} \krn_{\Delta^{-1/2}}) (2\arccosh X)=
2^{1-\dimV/2} \sqrt{\pi} \, C_{\dimV,\dimZ} \DfTD^{\dimZ/2}_X \DfTE^{\dimV/2-1}_X \phi_{0}(X),
\end{split}\]
where $C_{\dimV,\dimZ}$ is as in \eqref{eq:PhiC}; 
when $\dimZ$ is odd, from \eqref{eq:expressionkernel2} and the change of variables $Y=\cosh(s/2)$ we derive
\begin{multline*}
		(\delta^{-1/2} \krn_{\Delta^{-1/2}}) (2\arccosh X)\\
		=2^{1-\dimV/2} C_{\dimV,\dimZ} \int_{X}^{\infty} \DfTD_Y^{(\dimZ+1)/2} \DfTE_Y^{\dimV/2-1} \phi_{0}(Y)\, \frac{2Y}{(Y^{2}-X^{2})^{1/2}} \,\dd Y.
\end{multline*}
Thus, \eqref{eq:Phi} holds true by setting
\begin{equation}\label{eq:PhiDef}
\Phi_{\dimV,\dimZ}(X) \defeq \begin{cases}
2^{1-\dimV/2} \sqrt{\pi} \,\DfTD_X^{\dimZ/2} \DfTE_X^{\dimV/2-1} \phi_0(X), &\dimZ \text{ even},\\[.5em]
2^{1-\dimV/2} \int_X^\infty \DfTD_Y^{(\dimZ+1)/2} \DfTE_Y^{\dimV/2-1} \phi_0(Y) \frac{2Y}{\sqrt{Y^2-X^2}} \,\dd Y, &\dimZ \text{ odd}.
\end{cases}
\end{equation}
From the above expression, together with \eqref{eq:byparts}, it is clear that $\DfTD_X \Phi_{\dimV,\dimZ}(X) = \Phi_{\dimV,\dimZ+2}(X)$, whence \eqref{eq:Phi1} follows. In order to derive the asymptotics \eqref{eq:Phi0}, we shall now study the behaviour of $\DfTD_X^u \DfTE_X^v \phi_0(X)$ as $X \to \infty$ for any $u,v \in \NN$.

Our strategy is to expand $\phi_{0}(X)$ into a double series in powers of $1/X$ and $1/\log X$; namely, from \cite[last display of p.\ 11]{M23}, we know that there exist some coefficients $c_{m,\ell}\in\RR$, with $c_{0,0}=1$, such that
\begin{equation}\label{eq:series}
		\phi_0(X)=\frac{1}{X\log X}\sum_{\ell,m\geq0}c_{\ell,m}\frac{1}{X^\ell}\frac{1}{\log^mX},
\end{equation}
for sufficiently large $X$.
Let us now study the action of the differential operators $\DfTD_X = - \frac{1}{2X} \partial_{X}$ and $\DfTE_X = -\partial_X$ on the series. We observe that
\[
\DfTE_X \left(\frac{1}{X}\right) = \frac{1}{X^2}, \quad 
\DfTE_X \left(\frac{1}{\log X}\right) = \frac{1}{X\log^2X}.
\]
Therefore, termwise differentiation of \eqref{eq:series} yields that, for any $v\in\NN$,
\[
\DfTE_X^{v}\phi_0(X)
= \frac{1}{X^{v+1}\log X} \sum_{\ell,m \geq 0} c_{\ell,m}^{0,v} \frac{1}{X^\ell} \frac{1}{\log^m X},
\]
where the coefficients satisfy the recurrence relationh $c_{0,0}^{0,v+1} = (v+1) c_{0,0}^{0,v}$. As $c_{0,0}^{0,0}=c_{0,0}=1$, we conclude that $c_{0,0}^{0,v}=v!$.
Similarly, for $\DfTD_X$, we compute
\[
\DfTD_X \left(\frac{1}{X}\right) = \frac{1}{2X^3}, \quad 
\DfTD_X \left(\frac{1}{\log X}\right) = \frac{1}{2X^{2}\log^2X}
\]
and for any $u\in\NN$,
\[
\DfTD^{u}_X \DfTE^{v}_X \phi_0(X)
= \frac{1}{X^{v+2u+1}\log X} \sum_{\ell,m \geq 0} c_{\ell,m}^{u,v} \frac{1}{X^\ell} \frac{1}{\log^m X}
\]
with recursive structure $c_{0,0}^{u+1,v} = \frac{v+1+2u}{2} c_{0,0}^{u,v}$. Using $c_{0,0}^{0,v}=v!=\Gamma(v+1)$, we can express the leading coefficient explicitly: 
\[
c_{0,0}^{u,v}
=\frac{\Gamma(v/2+u+1/2)}{\Gamma(v/2+1/2)}\Gamma(v+1)
=\pi^{-1/2} 2^v \Gamma(v/2+1) \Gamma(v/2+u+1/2),
\]
where the last equality follows from the duplication formula for the Gamma function \cite[5.5.5]{DLMF}.
Putting everything together, we conclude that, for all $u,v \in \NN$, 
\[
2^{-v} \sqrt{\pi} \, \DfTD^{u}_X \DfTE^{v}_X \phi_{0}(X)
= \frac{ \Gamma\left(\frac{v+2}{2}\right) \Gamma\left( \frac{v+2u+1}{2}\right)}{X^{v+2u+1}\log X} \left( 1+O\left( \frac{1}{\log X}\right) \right) \quad\text{as } X \to \infty.
\]

Plugging the latter expression into \eqref{eq:PhiDef} immediately gives \eqref{eq:Phi0} in case $\dimZ$ is even. If instead $\dimZ$ is odd, we obtain
\begin{equation}\label{eq:integralmzodd1}
\Phi_{\dimV,\dimZ}(X) 
= \frac{2 \Gamma\left(\frac{\dimV}{4}+\frac{1}{2}\right) \Gamma\left( \frac{\dimV}{4}+\frac{\dimZ+1}{2}\right)}{\sqrt{\pi}}
 \int_X^\infty \frac{(Y^2-X^2)^{-\frac{1}{2}} \,\dd Y}{Y^{\frac{\dimV}{2}+\dimZ}\log Y} \left( 1+O\left( \frac{1}{\log X}\right) \right)
\end{equation}
and it just remains to estimate the latter integral. Much as in \cite[proof of Proposition 6.3]{MP24},
we apply the binomial series expansion for $\alpha>0$ and $\xi>\eta>0$:
\begin{equation}\label{eq:binomial}
(\xi-\eta)^{-\alpha} = \sum_{k\in\NN} \binom{\alpha+k-1}{k} \, \xi^{-\alpha-k} \,\eta^{k}
\end{equation}
(notice that all summands in \eqref{eq:binomial} are positive); thus,
\begin{equation}\label{eq:integralmzodd}
\begin{split}
\int_X^\infty \frac{(Y^2-X^2)^{-\frac{1}{2}} \,\dd Y}{Y^{\frac{\dimV}{2}+\dimZ}\log Y} 
&=\sum_{k\in\NN} \binom{k-\frac{1}{2}}{k} X^{2k} \int_{X}^{\infty} \,\frac{\dd Y}{Y^{2k+\frac{\dimV}{2}+\dimZ+1}\log Y}\\
&=\sum_{k\in\NN} \binom{k-\frac{1}{2}}{k} X^{2k} \int_{(2k+\frac{\dimV}{2}+\dimZ)\log X}^{\infty} \,\frac{\dd x}{x \,\ee^{x}} \\
&=
\sum_{k\in\NN} \frac{\binom{k-\frac{1}{2}}{k}}{2k+\frac{\dimV}{2}+\dimZ} \frac{1}{X^{\frac{\dimV}{2}+\dimZ}\log X}\left(1+O\left(\frac{1}{\log X }\right) \right),
\end{split}
\end{equation}
as $X\to \infty$,
where we used that $\int_A^\infty \frac{\dd x}{x \,\ee^x} = \frac{1}{A \,\ee^A} \left( 1+ O\left(\frac{1}{A} \right) \right)$ for $A > 0$.
Moreover, by \cite[\S 1.4, eq.\ (2)]{EMOT1},
\begin{equation}\label{eq:constantbecoms}
\sum_{k\in\NN} \frac{\binom{k-\frac{1}{2}}{k}}{2k+\frac{\dimV}{2}+\dimZ} 
= \frac{1}{2} \sum_{k\in\NN} \frac{(-1)^k \binom{-\frac{1}{2}}{k}}{k+\frac{\dimV}{4}+\frac{\dimZ}{2}} 
= \frac{\sqrt{\pi} \,\Gamma(\frac{\dimV}{4}+\frac{\dimZ}{2})}{2\Gamma(\frac{\dimV}{4}+\frac{\dimZ+1}{2})}.
\end{equation}
Plugging \eqref{eq:integralmzodd} and \eqref{eq:constantbecoms} into \eqref{eq:integralmzodd1}
eventually yields \eqref{eq:Phi0} for $\dimZ$ odd.
\end{proof}

By differentiating the expression for $\krn_{\Delta^{-1/2}}$ in Proposition \ref{prp:ThkL}, we now derive expressions for the convolution kernels of the Riesz transforms $\Rz_j = \vfX_j \Delta^{-1/2}$.

\begin{proposition}\label{prp:Thk}
With the notation of Proposition \ref{prp:ThkL},
for all $\bx = (x,z,a)\in S\setminus\{(0,0,1)\}$, if we set $r=|\bx|$ then
\begin{equation}\label{eq:expressionk0*}
\krn_{\Rz_{0}-\Rz_{0}^{*}}(\bx)
=-\frac{1}{2}C_{\dimV,\dimZ} a^{-Q/2} \left( 1-\frac{1}{a} \right) \left(1+a+\frac{|x|^{2}}{4}\right) \Phi_{\dimV,\dimZ+2}(\cosh(r/2)),
\end{equation}
and, for $j=1,\dots,\dimV$,
\begin{equation}\label{eq:expressionkiX}
\krn_{\Rz_j}(\bx)
= -\frac{C_{\dimV,\dimZ}}{4} a^{-(Q+1)/2} \left( \left(1+a+\frac{|x|^{2}}{4}\right) x_j+[x,e_j]\cdot z\right) \Phi_{\dimV,\dimZ+2}(\cosh(r/2)) ,
\end{equation}
while, for $j=1,\dots,\dimZ$,
\begin{equation}\label{eq:expressionkiZ}
\krn_{\Rz_{j+\dimV}}(\bx)
= -\frac{C_{\dimV,\dimZ}}{2} a^{-Q/2} z_{j} \Phi_{\dimV,\dimZ+2}(\cosh(r/2)) .
\end{equation}
\end{proposition}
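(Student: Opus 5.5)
Write $K \defeq \krn_{\Delta^{-1/2}}$. By Proposition \ref{prp:ThkL}, away from the identity
\[
K(\bx) = C_{\dimV,\dimZ}\, \delta^{1/2}(\bx)\, \Phi_{\dimV,\dimZ}(X(\bx)), \qquad X(\bx) \defeq \cosh(|\bx|/2),
\]
and by \eqref{eq:chdist} together with \eqref{eq:modularfunction},
\[
\delta^{1/2}(\bx) = a^{-Q/2}, \qquad X^2 = \frac{1}{4a}\left[ \left(1+a+\frac{|x|^2}{4}\right)^2 + |z|^2 \right].
\]

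Since $\Delta^{-1/2}$ is self-adjoint, $K^* = K$. Since $\Rz_j = \vfX_j\Delta^{-1/2}$ is left-invariant, its kernel is obtained by applying the left-invariant field $\vfX_j$ to the kernel of $\Delta^{-1/2}$. Concretely, $\vfX_j(f*K) = f*(\vfX_j K)$, so $\krn_{\Rz_j} = \vfX_j K$, understood as a smooth function off the origin. Similarly, $\krn_{\Rz_j^*} = (\vfX_j K)^*$.

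The key simplification is the chain rule combined with \eqref{eq:Phi1}:
\[
\vfX_j[\Phi_{\dimV,\dimZ}(X)] = \Phi'_{\dimV,\dimZ}(X)\, \vfX_j X = -\Phi_{\dimV,\dimZ+2}(X)\, \vfX_j (X^2).
\]
Hence everything reduces to computing $\vfX_j(X^2)$ with the explicit fields \eqref{eq:leftinvariant}, together with $\vfX_j \delta^{1/2}$ from \eqref{eq:leftinvariantmod}.

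For $j=1,\dots,\dimV$ we have $\vfX_j\delta^{1/2} = 0$. The $\partial_{x_j}$ part gives
\[
a^{1/2}\,\frac{1}{4a}\cdot 2\left(1+a+\frac{|x|^2}{4}\right)\frac{x_j}{2}.
\]
The term $\frac12 [x,e_j]\cdot\nabla_z$ acts only on $|z|^2$ and gives
\[
a^{1/2}\,\frac{1}{4a}\cdot [x,e_j]\cdot z .
\]
Collecting the powers of $a$ gives $-\tfrac14 a^{-(Q+1)/2}(\dots)$, which is \eqref{eq:expressionkiX}. For $\vfX_{\dimV+j} = a\partial_{z_j}$ we get $\vfX_{\dimV+j}(X^2) = z_j/2$, which yields \eqref{eq:expressionkiZ}.

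For $j=0$ I would not compute $\krn_{\Rz_0}$ and $\krn_{\Rz_0^*}$ separately; I use the symmetry of $K$ instead. The kernel $\krn_{\Rz_0^*} = (\vfX_0 K)^*$ is $-\vfX_0^\bullet K$ for the corresponding right-invariant field. By \eqref{eq:productlaw}, right translation by $(0,0,\ee^s)$ gives $\vfX_0^\bullet = a\partial_a + \tfrac12 x\cdot\nabla_x + z\cdot\nabla_z$. I would verify the sign and any modular-function factor directly from \eqref{eq:convinv}. Because $K(\bx^{-1}) = \delta(\bx)^{-1}K(\bx)$, the adjoint can alternatively be computed as $(\vfX_0K)^*(\bx) = \delta(\bx)\,\overline{(\vfX_0K)(\bx^{-1})}$. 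Using inversion \eqref{eq:inverse} and the invariance of $X$ under inversion, the difference $\krn_{\Rz_0} - \krn_{\Rz_0^*}$ becomes $C\,a^{-Q/2}\Phi_{\dimV,\dimZ+2}(X)$ times a combination of $\vfX_0(X^2)$ and the $\delta$-derivative terms.

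For the $a\partial_a$ derivative of $X^2$ I would write
\[
X^2 = \frac{(1+a+|x|^2/4)^2}{4a} + \frac{|z|^2}{4a}.
\]
Differentiating, the $Q/2$ terms from $\delta^{\pm1/2}$ appear with opposite signs in $\krn_{\Rz_0}$ and $\krn_{\Rz_0^*}$, so they cancel in the difference (they would otherwise carry $\Phi_{\dimV,\dimZ}$). The remaining terms combine to $(1-a^{-1})(1+a+|x|^2/4)\cdot\frac14\cdot 2$, which gives \eqref{eq:expressionk0*}.

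The main obstacle is this $j=0$ bookkeeping: getting correct signs for the adjoint and the inversion, and checking that the $|z|^2$ and $|x|^2$ contributions cancel in the difference as the formula claims. The check I would carry out is to evaluate both terms explicitly in coordinates under $\bx\mapsto\bx^{-1}$ and confirm that the $Q/2$ and $|z|^2$ pieces cancel. The other cases are routine chain-rule computations.
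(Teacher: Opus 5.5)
Your strategy is the paper's: the Leibniz rule with $\delta^{1/2}$, the chain rule combined with \eqref{eq:Phi1}, and an explicit computation of $\vfX_j(\cosh^2(r/2))$. For $j=0$, the paper uses $\krn_{\Rz_0^*}=(\vfX_0\krn_{\Delta^{-1/2}})^*$ evaluated via $(\delta^{1/2}f)^*(\bx)=\delta^{1/2}(\bx)\overline{f(\bx^{-1})}$. Your computations for $j=1,\dots,\dimN$ are correct. The $j=0$ bookkeeping you sketch, however, contains two errors that would give a wrong result if taken at face value.

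The first error is that the involution does not turn $\vfX_0$ into minus a right-invariant field alone. From \eqref{eq:convinv} and $\delta(x,z,a)=a^{-Q}$ one gets
\[
(\vfX_0 f)^* = -\Bigl(a\partial_a+\tfrac12\, x\cdot\nabla_x+z\cdot\nabla_z\Bigr) f^* - Q\, f^* .
\]
Here the bracket is the field generated by the left translations $\bx\mapsto(0,0,\ee^s)\cdot\bx$, and the zeroth-order term comes from the modular function.

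The second error is that the $\Phi_{\dimV,\dimZ}$ terms do not appear with opposite signs in the two kernels; if they did, they would double rather than cancel in the difference. Write $X=\cosh(r/2)$ and use $\vfX_0(X^2)=-X^2+\frac12(1+a+|x|^2/4)$. Then
\[
\krn_{\Rz_0}(\bx) = C_{\dimV,\dimZ}\,a^{-Q/2}\Bigl[-\tfrac{Q}{2}\,\Phi_{\dimV,\dimZ}(X)+\Bigl(X^2-\tfrac12\bigl(1+a+\tfrac{|x|^2}{4}\bigr)\Bigr)\Phi_{\dimV,\dimZ+2}(X)\Bigr].
\]
Inversion sends $a\mapsto a^{-1}$ and $|x|^2\mapsto a^{-1}|x|^2$, and fixes $X$. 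Hence $\krn_{\Rz_0^*}$ is the same expression with $\frac12(1+a+|x|^2/4)$ replaced by $\frac{1}{2a}(1+a+|x|^2/4)$. So the terms $-\frac{Q}{2}C_{\dimV,\dimZ}a^{-Q/2}\Phi_{\dimV,\dimZ}(X)$ have the same sign and cancel in the difference. The terms $X^2\,\Phi_{\dimV,\dimZ+2}(X)$, which carry the explicit $|z|^2$, cancel as well. The $|x|^2$ dependence does not cancel; it survives in the factor $1+a+|x|^2/4$. Subtracting then gives \eqref{eq:expressionk0*}.

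If you drop the $-Qf^*$ term, you get $+\frac{Q}{2}$ in $\krn_{\Rz_0^*}$ and a spurious leftover $-Q\,C_{\dimV,\dimZ}a^{-Q/2}\Phi_{\dimV,\dimZ}(X)$. Your fallback, computing $\delta(\bx)\overline{(\vfX_0K)(\bx^{-1})}$ directly, is exactly the paper's route and avoids both pitfalls.
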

\begin{proof}
For $j=0,\dots,\dimN$,
\begin{equation}\label{eq:kR01}
		\krn_{\Rz_{j}}
		=\vfX_{j} \krn_{\Delta^{-1/2}}
		=(\vfX_{j}\delta^{1/2}) \, \delta^{-1/2} \krn_{\Delta^{-1/2}}+ \delta^{1/2} \vfX_{j}(\delta^{-1/2} \krn_{\Delta^{-1/2}}).
\end{equation}
Now, \eqref{eq:Phi} expresses $\delta^{-1/2} \krn_{\Delta^{-1/2}}$ in terms of $\Phi_{\dimV,\dimZ}$; moreover, with the notation $r = |\bx|$,
from the chain rule and \eqref{eq:Phi1} we derive that
\begin{equation}\label{eq:X0Psi1}
\begin{split}
&\vfX_{j}(\Phi_{\dimV,\dimZ}(\cosh (r/2)))\\
&=\Phi'_{\dimV,\dimZ}(\cosh (r/2)) \,\vfX_{j}(\cosh(r/2))\\
&=\Phi'_{\dimV,\dimZ}(\cosh (r/2)) \,\frac{1}{2\cosh(r/2)} \vfX_{j}(\cosh^{2}(r/2))\\
&=-\Phi_{\dimV,\dimZ+2}(\cosh(r/2)) \,\vfX_{j}(\cosh^{2}(r/2)).
\end{split}
\end{equation}
In addition, from the formulas \eqref{eq:modularfunction}, \eqref{eq:leftinvariant} and \eqref{eq:distance}, it is easily seen that
\begin{equation}\label{eq:leftinvariantcosh}
\begin{aligned}
\vfX_{0}(\cosh^{2}(r/2))&=-\cosh^{2}(r/2)+\frac{1}{2}\left(1+a+\frac{|x|^{2}}{4}\right),\\
\vfX_{j}(\cosh^{2}(r/2))&=\frac{a^{-1/2}}{4}\left( \left(1+a+\frac{|x|^{2}}{4}\right) x_{j}+[x,e_{j}]\cdot z\right)  &&\text{for } j=1,\dots,\dimV,\\
\vfX_{j+\dimV}(\cosh^{2}(r/2))&=\frac{z_{j}}{2} &&\text{for } j=1,\dots,\dimZ.
\end{aligned}
\end{equation}

Thus, by combining equations \eqref{eq:Phi}, \eqref{eq:kR01}, \eqref{eq:X0Psi1}, \eqref{eq:leftinvariantmod} and \eqref{eq:leftinvariantcosh}, we immediately obtain \eqref{eq:expressionkiX} and \eqref{eq:expressionkiZ}, together with
\begin{multline}\label{eq:kR03}
\krn_{\Rz_{0}}(\bx)
=-\frac{Q}{2}C_{\dimV,\dimZ}\,a^{-Q/2}\Phi_{\dimV,\dimZ}(\cosh (r/2))\\
-\frac{1}{2}C_{\dimV,\dimZ}a^{-Q/2}\left(1+a+\frac{|x|^{2}}{4}-2\cosh^{2}(r/2)\right) \Phi_{\dimV,\dimZ+2}(\cosh(r/2)).
\end{multline}
Now, by using the fact that $(\delta^{1/2}f)^{*}(\bx)=\delta^{1/2}(\bx) \overline{f(\bx^{-1})}$, we also get
\begin{multline}\label{eq:kR04}
			\krn_{\Rz_{0}^{*}}(\bx) = \krn_{\Rz_0}^*(\bx) 
			=-\frac{Q}{2}C_{\dimV,\dimZ}\,a^{-Q/2}\Phi_{\dimV,\dimZ}(\cosh (r/2))\\
			-\frac{1}{2}C_{\dimV,\dimZ}a^{-Q/2}\left(\frac{1}{a}(1+a+\frac{|x|^{2}}{4})-2\cosh^{2}(r/2)\right) \Phi_{\dimV,\dimZ+2}(\cosh(r/2)),
\end{multline}
and \eqref{eq:expressionk0*} follows by subtracting \eqref{eq:kR03} from \eqref{eq:kR04}.
\end{proof}

We shall say that a function $F \in L^1_\loc(S \setminus \{(0,0,1)\})$ is \emph{integrable at infinity} if $F$ is integrable on the complement of any neighbourhood of the identity element.

We can now use the integration formulas from Corollary \ref{cor:radial} to simplify the expressions for the kernels of the Riesz transforms obtained in Proposition \ref{prp:Thk}, by identifying a main term and a remainder, where the latter is integrable at infinity. The following result may be compared to \cite[Proposition 3.4]{M23}.

For a function $F:N\to\CC$ and $\lambda>0$, as in \eqref{eq:sect_dil}, we shall use the notation 
\begin{equation*}
F_{(\lambda)}(x,z)
\defeq \lambda^{-Q} F(\dil_{1/\lambda} (x,z))
=\lambda^{-Q} F(\lambda^{-1/2}x, \lambda^{-1}z).
\end{equation*}
This notation will be of convenience in presenting the following expressions.

\begin{proposition}\label{prp:TheoremkernelRiesz}
We can write
\begin{align}
\label{eq:kRz0skew}
\krn_{\Rz_{0}-\Rz^{*}_{0}}& = - \tilde C_{\dimV,\dimZ} (\tilde{K}_{0}+K_{0}) +p_{0},\\
\label{eq:kRzj}
\krn_{\Rz^{*}_{j}} &= - \tilde C_{\dimV,\dimZ} K_{j}+p_{j} \qquad\text{for } j=1,\dots,\dimN,
\end{align}
where
\begin{equation}\label{eq:tildeC}
\tilde C_{\dimV,\dimZ} \defeq
 2^{1-\frac{\dimV}{2}} \pi^{-\frac{\dimV+\dimZ+3}{2}} \Gamma\left( \frac{\dimV}{4}+\frac{1}{2}\right) \Gamma\left( \frac{\dimV}{4}+\frac{\dimZ}{2}+1\right),
\end{equation}
the $p_{j}$ are integrable at infinity for $j=0,\dots,\dimN$, while 
\begin{align}
\label{eq:tK0}
\tilde{K}_{0}(\bx) &= r_{0}(x,z) \,\frac{\chr_{\{a\geq \ee\}}+\chr_{\{a\leq \frac{1}{\ee}\}}}{\log a},\\
\label{eq:K0}
K_{0}(\bx) &= \left[ (r_{0})_{(a)}(x,z)-r_{0}(x,z)\right] \frac{\chr_{\{a\geq \ee\}}}{\log a},\\
\label{eq:Kj}
K_{j}(\bx) &= r_{j}(x,z) \,\frac{\chr_{\{a\leq \frac{1}{\ee}\}}}{\log a} \qquad\text{for } j=1,\dots,\dimN,
\end{align}
and
\begin{align}
\label{eq:r0}
r_{0}(x,z)&= \frac{ 1+\frac{1}{4}|x|^{2} }{H(x,z)^{\frac{Q+2}{2}}},\\
\label{eq:rjV}
r_{j}(x,z)&= \frac{1}{2} \frac{\left( 1+\frac{1}{4}|x|^{2}\right) x_{j}-[x,e_{j}] \cdot z }{H(x,z)^{\frac{Q+2}{2}}} &\text{for } j=1,\dots,\dimV,\\
\label{eq:rjZ}
r_{j+\dimV}(x,z)&= \frac{z_{j}}{H(x,z)^{\frac{Q+2}{2}}} &\text{for } j=1,\dots,\dimZ,
\end{align}
and
\begin{equation}\label{eq:Hnorm}
H(x,z) \defeq \left( 1+\frac{1}{4} |x|^{2}\right)^{2}+|z|^{2}.
\end{equation}
\end{proposition}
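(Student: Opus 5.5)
We start from the exact expressions in Proposition \ref{prp:Thk}, together with the asymptotics \eqref{eq:Phi0} for $\Phi_{\dimV,\dimZ+2}$. Write $X=\cosh(r/2)$. By \eqref{eq:distance}, $X^2 = H_a/(4a)$, where $H_a \defeq (1+a+|x|^2/4)^2+|z|^2$. Then \eqref{eq:Phi0} gives
\[
\Phi_{\dimV,\dimZ+2}(X) = \frac{\Gamma(\frac{\dimV}{4}+\frac12)\Gamma(\frac{\dimV}{4}+\frac{\dimZ}{2}+1)}{X^{Q+2}\log X}\Bigl(1+O\bigl(\tfrac{1}{\log X}\bigr)\Bigr).
\]
Here $Q+2 = \dimV/2+\dimZ+2$, and $X^{-(Q+2)} = (4a)^{(Q+2)/2} H_a^{-(Q+2)/2}$. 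The plan is as follows.

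\textbf{Localize.} Near the identity and on any bounded set away from it, everything is locally integrable, so only the region $r$ large matters. Since $r\ge|\log a|$, we split into three regions: $a\ge \ee$, $a\le 1/\ee$, and $1/\ee\le a\le \ee$ with $r$ large.

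\textbf{Main-term extraction.} Replace $\Phi_{\dimV,\dimZ+2}$ by its leading term. The $O(1/\log X)$ correction yields a kernel bounded by the leading size times $r^{-1}$. By Corollary \ref{cor:radial} (\eqref{eq:radial456} and \eqref{eq:radial31}), the leading size integrated against $\delta^{1/2}$-weights produces $\int F(r)\,r^{[\dimN+1,0]}\ee^{(Q+2)r/2}\,\dd r$, with $F(r)\simeq \ee^{-(Q+2)r/2}r^{-1}$ from $X^{-(Q+2)}(\log X)^{-1}$. This would be borderline ($\int^\infty r^{-1}\,\dd r$). The extra factor $1/\log X$ in the error turns it into $\int^\infty r^{-2}\,\dd r<\infty$, so the error terms are integrable at infinity and go into the $p_j$. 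In the same way, the middle region $1/\ee\le a\le\ee$ is handled by \eqref{eq:radial32}: it only produces growth $\ee^{(Q+1)r/2}$ against decay $\ee^{-(Q+2)r/2}$, hence is integrable, and this covers the part of $\krn_{\Rz_0-\Rz_0^*}$ there.

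\textbf{Simplification in the regions $a\le1/\ee$ and $a\ge \ee$.} We then replace $\log X$ by $\tfrac12|\log a|$ and $H_a$ by a simpler expression. For $a\to0$ we have $H_a\to H(x,z)$, and the prefactors $a^{-Q/2}$, $a^{-(Q+1)/2}$ combine with $(4a)^{(Q+2)/2}$ and the measure to produce exactly $r_j(x,z)/\log a$. Here the $\frac{b}{4}+\frac c2+s$ sign analysis of Lemma \ref{lem:intradial}, specifically \eqref{eq:Rintm} with its faster exponential, shows that the differences $H_a^{-(Q+2)/2}-H^{-(Q+2)/2}$ (gaining a factor $a$) and $\log X-\frac12|\log a|$ (gaining relative $O(1/r)$ once more, or exponentially small terms) are integrable. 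The sign in \eqref{eq:rjV} reflects the passage from $\Rz_j$ to $\Rz_j^*$ via $\bx\mapsto\bx^{-1}$, which swaps $a\ge \ee$ with $a\le 1/\ee$. We check that only $a\le 1/\ee$ carries a nonintegrable main term for $j\ge1$: \eqref{eq:Rintp} gives integrability on $a\ge\ee$, because the relevant exponent is $\frac Q2+\frac b4+\frac c2<\frac Q2+1$.

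\textbf{The $j=0$ kernel for $a\ge\ee$.} Here we rewrite, via $\dil$-homogeneity, $H_a(x,z)=a^2H(\dil_{1/a}(x,z))$ up to the $1+a$ versus $a$ discrepancy. This gives the term $(r_0)_{(a)}$. We then split it as $(r_0)_{(a)} = r_0 + [(r_0)_{(a)}-r_0]$, producing $\tilde K_0$ on both regions and $K_0$.

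\textbf{Main obstacle.} The hard part is the bookkeeping of all remainders in the region $a\ge \ee$ for $j=0$. There the replacement of $1+a$ by $a$ and of $\log X$ by $\log a$ must be shown to leave remainders integrable at infinity, although each is only borderline. We would carry this out by applying Lemma \ref{lem:intradial} with $\gamma,\tilde\gamma$ chosen to track the logarithmic gains.
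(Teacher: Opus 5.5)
Your overall architecture is the paper's: take the leading term from \eqref{eq:Phi0}, use Lemma \ref{lem:intradial} for the $O(1/\log X)$ correction and for the non-critical regions, use $\dil$-homogeneity on $\{a\ge\ee\}$ to produce $(r_0)_{(a)}$, and split $(r_0)_{(a)}=r_0+[(r_0)_{(a)}-r_0]$. Working directly on $\{a\le 1/\ee\}$ with the adjoint kernels, instead of computing on $\{a\ge\ee\}$ and then taking adjoints as the paper does, is a harmless variant. You must, however, first apply $f^*(\bx)=\delta(\bx)\overline{f(\bx^{-1})}$ to the formulas of Proposition \ref{prp:Thk}. The prefactors then become $a^{-Q/2-1}$ rather than $a^{-Q/2}$, $a^{-(Q+1)/2}$, and only these cancel against $(4a)^{(Q+2)/2}$.

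The genuine gap is the step you yourself flag as the main obstacle: replacing $1/\log X$, with $X=\cosh(r/2)$, by $2/|\log a|$. This does \emph{not} cost only a relative $O(1/r)$. By \eqref{eq:distance}, on $\{a\le 1/\ee\}$ one has exactly $\log X=\tfrac12|\log a|+\tfrac12\log H_a-\log 2$. On $\{a\ge\ee\}$ one has $\log X=\tfrac12\log a+\tfrac12\log H(\dil_{1/a}(x,z))-\log 2+O(a^{-1})$. So the relative error is $O\bigl((1+\log H)/|\log a|\bigr)$, which is not small, and indeed unbounded, where $|\log a|\ll r$. Lemma \ref{lem:intradial} cannot capture the true gain. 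It only admits $\gamma,\tilde\gamma\ge 0$, so it cannot encode a factor $|\log a|^{-1}$. Even an extension to $\tilde\gamma=-1$, combined with the bound $1+\log H\lesssim r$, only yields $\int^\infty r^{-1}\,\dd r=\infty$. This is because in the critical case the weighted radial integral concentrates exactly where $|\log a|\approx r$. So choosing $\gamma,\tilde\gamma$ to track the logarithmic gains does not close the argument.

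The missing idea is to drop radial integration at this step and use the product structure $\dd\rho=\dd x\,\dd z\,\dd a/a$, as the paper does after \eqref{eq:chlchapprox}. The remainder is bounded by the new main term times $(1+\log H)/|\log a|$. Change variables $u=|\log a|$, and also $(x,z)\mapsto\dil_a(x,z)$ on $\{a\ge\ee\}$; on $\{a\le 1/\ee\}$ you can simply use $H\le H_a\lesssim H$. The remainder's integral is then controlled by
\[
\int_1^\infty \frac{\dd u}{u^{2}} \int_N |r_j(x,z)| \,\bigl(1+\log H(x,z)\bigr)\,\dd x\,\dd z<\infty .
\]
This is finite because each of $r_0,r_1,\dots,r_{\dimN}$ is bounded and decays at infinity on $N$ with homogeneous order at least $Q+\tfrac12>Q$, which absorbs the logarithm. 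For $j=0$, the $(1+a)$-versus-$a$ and $(1-a^{-1})$ discrepancies only contribute relative errors $O(a^{-1})\lesssim 1/\log a$, so they are absorbed by the same bound. With this in place of the Lemma \ref{lem:intradial} argument, your outline goes through.
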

\begin{proof}
Recall that $Q=\frac{\dimV}{2}+\dimZ$ and notice that, by \eqref{eq:PhiC} and \eqref{eq:tildeC},
\[
C_{\dimV,\dimZ}\Gamma \left( \frac{\dimV}{4}+\frac{1}{2}\right) \Gamma\left( \frac{\dimV}{4}+\frac{\dimZ}{2}+1\right)
= 2^{-Q-2} \tilde C_{\dimV,\dimZ}.
\]
Thus, by Propositions \ref{prp:ThkL} and \ref{prp:Thk}, we obtain that, for $j=1,\dots,\dimZ$,
\begin{equation}\label{eq:RzZprelim}
\begin{split}
	\krn_{\Rz_{j+\dimV}}(\bx)
	&=-\frac{C_{\dimV,\dimZ}}{2} a^{-Q/2} z_{j} \Phi_{\dimV,\dimZ+2}(\cosh(r/2))\\
	&=-\frac{\tilde C_{\dimV,\dimZ}}{2^{Q+3}}\frac{\delta^{\frac{1}{2}}(\bx) \, z_{j} }{\cosh^{Q+2}\frac{r}{2} \, \log \cosh\frac{r}{2}} (1+O(r^{-1}) ),
\end{split}
\end{equation}
as $r \to \infty$, where we use the notation $r = |\bx|$.
From \eqref{eq:Rinta} we see that the contribution from the Big-O term in \eqref{eq:RzZprelim} is integrable at infinity, as
\[
	\int_{r \geq 1} \frac{\delta^{\frac{1}{2}}(\bx) \,|z| \,r^{-1}}{\cosh^{Q+2} \frac{r}{2} \, \log \cosh \frac{r}{2}} \,\dd\rho(\bx)
	\simeq \int_{1}^{\infty} \ee^{-\frac{Q+2}{2} r} r^{-2} \ee^{\frac{Q+2}{2}r} \,\dd r
	< \infty.
\]
Similarly, the main term in \eqref{eq:RzZprelim} for $a\leq \ee$ is integrable at infinity by \eqref{eq:Rintm}:
\[
\begin{aligned}
	\int_{r\geq 1} \chr_{\{a\leq \ee\}} \frac{\delta^{\frac{1}{2}}(\bx) \,|z| \,r^{-1}}{\cosh^{Q+2} \frac{r}{2} \, \log \cosh \frac{r}{2}}  \,\dd\rho(\bx)
	&\simeq \int_{1}^{\infty} \ee^{-\frac{Q+2}{2}r} r^{-1} \ee^{\frac{Q+1}{2}r} \,\dd r < \infty.
\end{aligned}
\]
This proves that $\krn_{\Rz_{j+\dimV}}(\bx)$ differs from
\begin{equation}\label{eq:RzZbetter}
-\frac{\tilde C_{\dimV,\dimZ}}{2^{Q+3}}\frac{\delta^{\frac{1}{2}}(\bx) \, z_{j} }{\cosh^{Q+2}\frac{r}{2} \, \log \cosh\frac{r}{2}} \chr_{\{a \geq \ee\}} 
\end{equation}
by a term which is integrable at infinity.

Now, from \eqref{eq:distance}, if $a\geq \ee$, then
\begin{equation}\label{eq:coshr2approximation}
\begin{split}
\cosh\frac{r}{2}&=\frac{a^{\frac{1}{2}}}{2}\left[ \left( 1+a^{-1}+\frac{1}{4}\left| \frac{x}{a^{\frac{1}{2}}}\right| ^{2}\right) ^{2}+\left| \frac{z}{a}\right| ^{2}\right] ^{\frac{1}{2}}\\
&=\frac{a^{\frac{1}{2}}}{2}\left[ \left( 1+\frac{1}{4}\left| \frac{x}{a^{\frac{1}{2}}}\right| ^{2}\right) ^{2}+\left| \frac{z}{a}\right| ^{2}+a^{-2}+2a^{-1}\left( 1+\frac{1}{4}\left| \frac{x}{a^{\frac{1}{2}}}\right| ^{2}\right)\right] ^{\frac{1}{2}}\\
&=\frac{a^{\frac{1}{2}}}{2}H(\dil_{1/a}(x,z))^{\frac{1}{2}}(1+O(a^{-1})),
\end{split}
\end{equation}
where we are using the notation \eqref{eq:dil} and \eqref{eq:Hnorm};
additionally,
\[
\log\cosh \frac{r}{2}=\frac{1}{2}\log a +\frac{1}{2}\log H(\dil_{1/a}(x,z))-\log 2+O(a^{-1}), 
\]
hence,
\begin{equation}\label{eq:logcoshr2approximation}
\begin{split}
	\frac{1}{\log \cosh \frac{r}{2}}=&\frac{1}{\frac{1}{2}\log a}-\frac{\frac{1}{2}\log H(\dil_{1/a}(x,z))-\log 2+O(a^{-1})}{\frac{1}{2}\log a\log\cosh\frac{r}{2}}\\
	=&\frac{1}{\frac{1}{2}\log a}\left( 1+O\left(\frac{1+\log H(\dil_{1/a}(x,z))}{\log a} \right) \right);
\end{split}
\end{equation}
here we have used the fact that,  by \eqref{eq:est_d_a}, $\log\cosh\frac{r}{2} \simeq r \geq \log a \geq 1$ for $a\geq \ee$.
From \eqref{eq:coshr2approximation} and \eqref{eq:logcoshr2approximation} we obtain in particular that
\begin{equation}\label{eq:chlchapprox}
\frac{1}{\cosh^{Q+2}\frac{r}{2} \, \log\cosh \frac{r}{2}} = \frac{2^{Q+3} a^{-Q/2-1}}{H(\dil_{1/a}(x,z))^{\frac{Q+2}{2}} \, \log a} \left( 1+O\left(\frac{1+\log H(\dil_{1/a}(x,z))}{\log a} \right) \right).
\end{equation}
By using \eqref{eq:chlchapprox}, the term \eqref{eq:RzZbetter} can be rewritten as
\begin{equation}\label{eq:mainitem1}
-\tilde C_{\dimV,\dimZ} \frac{a^{-Q} ( z_{j}/a) }{H(\dil_{1/a}(x,z))^{\frac{Q+2}{2}} \log a}\chr_{\{a\geq \ee\}}
\left( 1+O\left(\frac{1+\log H(\dil_{1/a}(x,z))}{\log a} \right) \right) .	
\end{equation}
Now, the contribution from the Big-O item in \eqref{eq:mainitem1} is integrable, as
\begin{multline*}
\int_{\ee}^{\infty} \int_{N} \frac{a^{-Q}|z/a| }{H(\dil_{1/a}(x,z))^{\frac{Q+2}{2}} \log a} \frac{1+\log H(\dil_{1/a}(x,z))}{\log a}\,\dd x \,\dd z \,\frac{\dd a}{a}\\
=\int_{1}^{\infty} \int_{N} \frac{ |z| }{u^{2}}\frac{ 1+\log\left( \left( 1+\frac{1}{4}|x| ^{2}\right) ^{2}+|z| ^{2}\right)}{\left[ \left( 1+\frac{1}{4}|x| ^{2}\right) ^{2}+|z| ^{2}\right] ^{\frac{Q+2}{2}}} \,\dd x \,\dd z \,\dd u<\infty.
\end{multline*}
For the remaining part in \eqref{eq:mainitem1}, we take adjoints and obtain that
$\krn_{\Rz^{*}_{j+\dimV}}=\krn_{\Rz_{j+\dimV}}^{*}$ differs from 
\[
-\tilde C_{\dimV,\dimZ} \frac{z_{j}}{H(x,z)^{\frac{Q+2}{2}}} \frac{\chr_{\{a\leq \frac{1}{\ee}\}}}{\log a}
\]
by a term which is integrable at infinity. In light of \eqref{eq:rjZ}, this proves \eqref{eq:kRzj} for $j=\dimV+1,\dots,\dimV+\dimZ$.

When $j=1,\dots,\dimV$, from Propositions \ref{prp:ThkL} and \ref{prp:Thk}, we obtain that
\begin{equation}\label{eq:KjVprelim}
\begin{split}
\krn_{\Rz_{j}}(\bx)&=-\frac{C_{\dimV,\dimZ}}{4}a^{-(Q+1)/2}\left( (1+a+\frac{|x|^{2}}{4})x_j+[x,e_j]\cdot z\right) \Phi_{\dimV,\dimZ+2}(\cosh(r/2))\\
&=-\frac{\tilde C_{\dimV,\dimZ}}{2^{Q+4}}\frac{\delta^{\frac{1}{2}}(\bx) \, a^{-1/2}\left( (1+a+\frac{|x|^{2}}{4}) x_j+[x,e_j]\cdot z\right)}{\cosh^{Q+2}\frac{r}{2} \,\log \cosh\frac{r}{2}} (1+O(r^{-1}) ).
\end{split}
\end{equation}
From \eqref{eq:radial456} we see that the contribution from the Big-O item in \eqref{eq:KjVprelim} is integrable at infinity, as
\[
	\int_{r \geq 1}\frac{\delta^{\frac{1}{2}}(\bx) \, a^{-1/2}\left( (1+a+\frac{|x|^{2}}{4})|x|+|z||x|\right) }{\cosh^{Q+2}\frac{r}{2} \,\log \cosh\frac{r}{2}} \frac{1}{r} \,\dd\rho(\bx)
	\simeq\int_{1}^{\infty}\ee^{-\frac{Q+2}{2}r}r^{-2} \,\ee^{\frac{Q+2}{2}r} \,\dd r.
\]
Now, in the main term in \eqref{eq:KjVprelim}, we split the factor $(1+a+\frac{|x|^{2}}{4}) x_j+[x,e_j]\cdot z$ as the sum of $x_j$ and $(a+\frac{|x|^{2}}{4}) x_j+[x,e_j]\cdot z$.
The part corresponding to the first summand $x_j$ is integrable at infinity by
\eqref{eq:Rinta}, as
\[
\int_{r\geq 1} \frac{\delta^{\frac{1}{2}}(\bx) \, a^{-1/2} |x|}{\cosh^{Q+2} \frac{r}{2} \,\log \cosh\frac{r}{2}} \,\dd\rho(\bx)	\simeq \int_{1}^{\infty} \ee^{-\frac{Q+2}{2}r} r^{-1} \,\ee^{\frac{Q+1}{2}r} \,\dd r.
\]
As for the part corresponding to the second summand, by \eqref{eq:radial4561} it is integrable at infinity when restricted to $a\leq \ee$, as
\[
\int_{r \geq 1} \chr_{\{a\leq \ee\}} \frac{\delta^{\frac{1}{2}}(\bx) \, a^{-1/2}\left( (a+\frac{|x|^{2}}{4})|x|+|z||x|\right) }{\cosh^{Q+2}\frac{r}{2} \,\log \cosh\frac{r}{2}} \,\dd \rho(\bx)
\simeq\int_{1}^{\infty} \ee^{-\frac{Q+2}{2}r}r^{-1} \ee^{(\frac{Q}{2}+\frac{3}{4})r} \,\dd r.
\]
Gathering these facts together, we conclude that $\krn_{\Rz_j}(\bx)$ differs from
\[
-\frac{\tilde C_{\dimV,\dimZ}}{2^{Q+4}}\frac{\delta^{\frac{1}{2}}(\bx) \, a^{-1/2}\left( (a+\frac{|x|^{2}}{4}) x_j+[x,e_j]\cdot z\right)}{\cosh^{Q+2}\frac{r}{2} \,\log \cosh\frac{r}{2}} \chr_{\{a \geq \ee\}}
\]
by a term that is integrable at infinity. By \eqref{eq:chlchapprox}, the latter expression can be rewritten as
\begin{equation}\label{eq:mainitem2}
-\frac{\tilde C_{\dimV,\dimZ}}{2}\frac{a^{-Q} L_j(\dil_{1/a}(x,z))}{H(\dil_{1/a}(x,z))^{\frac{Q+2}{2}} \,\log a} \chr_{\{a \geq \ee\}} \left( 1+O\left(\frac{1+\log H(\dil_{1/a}(x,z))}{\log a} \right) \right),
\end{equation}
where
\[
L_j(x,z) \defeq (1+|x|^2/4)x_j+[x,e_j] \cdot z.
\]
Now, the contribution from the Big-O item in \eqref{eq:mainitem2} is integrable, as
\begin{multline*}
\int_{\ee}^{\infty}\int_{N} \frac{a^{-Q} |L_j(\dil_{1/a}(x,z))|}{H(\dil_{1/a}(x,z))^{\frac{Q+2}{2}} \log a}
\frac{1+\log H(\dil_{1/a}(x,z))}{\log a}\,\dd x \,\dd z \,\frac{\dd a}{a}\\
\lesssim \int_{1}^{\infty}\int_{N} \frac{ \left( 1+\frac{1}{4}|x| ^{2}\right) |x| +|x||z| }{ u^{2}} \frac{1+\log\left( \left( 1+\frac{1}{4}|x|^{2}\right)^{2}+|z|^{2}\right)}{\left[ \left( 1+\frac{1}{4}|x|^{2}\right)^{2}+|z|^{2}\right]^{\frac{Q+2}{2}}} \,\dd x \,\dd z \,\dd u<\infty.
\end{multline*}
For the remaining part in \eqref{eq:mainitem2}, we take adjoints and obtain that $\krn_{\Rz^*_j} = \krn_{\Rz_j}^*$ differs from
\begin{equation*}
-\frac{\tilde C_{\dimV,\dimZ}}{2} \frac{ \left( 1+\frac{1}{4}|x| ^{2}\right)x_{i}-[x,e_{i}] \cdot z }{\left[ \left( 1+\frac{1}{4}|x|^{2}\right)^{2}+|z|^{2}\right]^{\frac{Q+2}{2}}}\frac{\chr_{\{a\leq \frac{1}{\ee}\}}}{\log a}
\end{equation*}
by a term which is integrable at infinity. In light of \eqref{eq:rjV}, this proves \eqref{eq:kRzj} for $j=1,\dots,\dimV$.

In a similar way we can discuss the kernel $\krn_{\Rz_{0}-\Rz_{0}^{*}}$. From Proposition \ref{prp:Thk} and Proposition \ref{prp:ThkL} we obtain that
\begin{equation*}
\krn_{\Rz_{0}-\Rz_{0}^{*}}(\bx)
= -\frac{\tilde C_{\dimV,\dimZ}}{2^{Q+3}} \frac{\delta^{\frac{1}{2}}(\bx) \,(1-a^{-1}) (1+a+\frac{|x|^{2}}{4})}{\cosh^{Q+2}\frac{r}{2} \,\log\cosh\frac{r}{2}}(1+O(r^{-1})).
\end{equation*}
From Corollary \ref{cor:radial} (formulas \eqref{eq:radial31} and \eqref{eq:radial32}), we then see that
$\krn_{\Rz_{0}-\Rz_{0}^{*}}$ differs from
\begin{equation}\label{eq:R0part}
-\frac{C_{\dimV,\dimZ}}{2^{Q+3}} \frac{\delta^{\frac{1}{2}}(\bx) \,( 1-a^{-1}) (1+a+\frac{|x|^{2}}{4})}{\cosh^{Q+2}\frac{r}{2}\, \log\cosh\frac{r}{2}}\chr_{\{a\leq\frac{1}{\ee}\}\cup\{ a\geq\ee\} }.
\end{equation}
by a term which is integrable at infinity.

We first restrict our attention to the region where $a\geq \ee$. If $a\geq\ee$, then 
\[
( 1-a^{-1}) (1+a+\frac{|x|^{2}}{4})=a\left( 1+\frac{1}{4}\left| \frac{x}{a^{\frac{1}{2}}}\right|^{2} \right) (1+O(a^{-1}));
\]
combined with \eqref{eq:chlchapprox}, this allows us to rewrite the part of \eqref{eq:R0part} where $a \geq \ee$ as 
\[
-\tilde C_{\dimV,\dimZ} \frac{a^{-Q}\left( 1+\frac{1}{4}\left| \frac{x}{a^{\frac{1}{2}}}\right|^{2} \right)}{H(\dil_{1/a}(x,z))^{\frac{Q+2}{2}}\log a}\chr_{\{a\geq \ee\}}
\left( 1+O\left(\frac{1+\log H(\dil_{1/a}(x,z))}{\log a} \right) \right).
\]
The contribution from the Big-O item in the above expression is integrable, as
\begin{multline*}
\int_{a}^{\infty} \int_{N} \frac{a^{-Q} \left( 1+\frac{1}{4}\left| \frac{x}{a^{\frac{1}{2}}}\right|^{2} \right)}{H(\dil_{1/a}(x,z))^{\frac{Q+2}{2}} \, \log a} 
\frac{1+ \log H(\dil_{1/a}(x,z))}{\log a} \,\dd x \,\dd z \,\frac{\dd a}{a} \\
=\int_{1}^{\infty} \int_{N} \frac{1+\frac{1}{4}|x|^{2}}{u^2} 
\frac{ 1+\log\left( \left( 1+\frac{1}{4}|x|^{2}\right)^{2}+|z|^{2}\right)}{\left( \left( 1+\frac{1}{4}|x| ^{2}\right)^{2}+|z|^{2}\right)^{\frac{Q+2}{2}}} \,\dd x \,\dd z \, \dd u< \infty.
\end{multline*}
Thus, in the region where $a\geq \ee$, $\krn_{\Rz_{0}-\Rz^{*}_{0}}(\bx)$ differs from 
\[
-\tilde C_{\dimV,\dimZ} \frac{a^{-Q}\left( 1+\frac{1}{4}\left| \frac{x}{a^{\frac{1}{2}}}\right|^{2} \right)}{H(\dil_{1/a}(x,z))^{\frac{Q+2}{2}}}\frac{\chr_{\{a\geq \ee\}}}{\log a}
\]
by a term which is integrable at infinity.
As $\krn_{\Rz_{0}-\Rz^{*}_{0}}^* = -\krn_{\Rz_{0}-\Rz^{*}_{0}}$, by taking adjoints we deduce that, in the region where $a\leq\frac{1}{\ee}$, $\krn_{\Rz_{0}-\Rz^{*}_{0}}(\bx)$ differs from 
\[
-\tilde C_{\dimV,\dimZ} \frac{ 1+\frac{1}{4}| x |^{2} }{H(x,z)^{\frac{Q+2}{2}}} \frac{\chr_{\{a\leq 1/\ee\}}}{\log a}
\]
by a term which is integrable at infinity. By summing the two expressions above, we finally deduce that 
$\krn_{\Rz_{0}-\Rz^{*}_{0}}(\bx)$ differs from 
\[
-\tilde C_{\dimV,\dimZ} \left[\frac{a^{-Q}\left( 1+\frac{1}{4}\left| \frac{x}{a^{\frac{1}{2}}}\right|^{2} \right)}{H(\dil_{1/a}(x,z))^{\frac{Q+2}{2}}}\frac{\chr_{\{a\geq \ee\}}}{\log a}
+\frac{ 1+\frac{1}{4}\left| x \right|^{2} }{H(x,z)^{\frac{Q+2}{2}}} \frac{\chr_{\{a\leq 1/\ee\}}}{\log a}\right]
\]
by a term which is integrable at infinity. In light of \eqref{eq:K0}, \eqref{eq:tK0} and \eqref{eq:r0}, this proves \eqref{eq:kRz0skew}.
\end{proof}

\begin{proposition}\label{prp:rjXjH}
The functions $r_j$ for $j=1,\dots,\dimN$ from Proposition \ref{prp:TheoremkernelRiesz} can be rewritten as
\begin{equation}\label{eq:rjXj}
r_j = Q^{-1} (\vfXN_j H^{-Q/2})^*
\end{equation}
where $H$ is as in \eqref{eq:Hnorm}, and ${}^*$ denotes involution on $N$.
\end{proposition}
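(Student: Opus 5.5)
Since $H$ is even, $H(-x,-z)=H(x,z)$, and $N$ is unimodular, the involution is simply $f^*(x,z)=\overline{f(-x,-z)}$. So the plan is to compute $\vfXN_j H^{-Q/2}$ explicitly from \eqref{eq:leftinvariantN} and \eqref{eq:Hnorm}, and then evaluate the result at $(-x,-z)$.

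First, $\vfXN_j H^{-Q/2} = -\frac{Q}{2} H^{-(Q+2)/2}\,\vfXN_j H$. It therefore suffices to compute $\vfXN_j H$.

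For $j=\dimV+\ell$ with $\ell=1,\dots,\dimZ$, we have $\vfXN_j=\partial_{z_\ell}$ and $\partial_{z_\ell}H = 2z_\ell$. Hence $\vfXN_j H^{-Q/2} = -Q z_\ell H^{-(Q+2)/2}$. Replacing $(x,z)$ by $(-x,-z)$ flips the sign of $z_\ell$ and leaves $H$ unchanged, which gives $Q z_\ell H^{-(Q+2)/2}$, i.e.\ $Q\, r_{j}$ by \eqref{eq:rjZ}.

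For $j=1,\dots,\dimV$, we have $\partial_{x_j}H = 2(1+|x|^2/4)\cdot \frac{x_j}{2} = (1+|x|^2/4)x_j$. We also have $\frac12[x,e_j]\cdot\nabla_z H = \frac12 [x,e_j]\cdot 2z = [x,e_j]\cdot z$. Therefore
\[
\vfXN_j H = \left(1+\tfrac14|x|^2\right)x_j + [x,e_j]\cdot z ,
\qquad
\vfXN_j H^{-Q/2} = -\frac{Q}{2}\,\frac{(1+\frac14|x|^2)x_j+[x,e_j]\cdot z}{H^{(Q+2)/2}}.
\]
Under $(x,z)\mapsto(-x,-z)$, the term $x_j$ changes sign. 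The bracket $[x,e_j]$ also changes sign (it is linear in $x$), while $z$ changes sign too, so $[x,e_j]\cdot z$ is invariant. The involution thus gives
\[
\frac{Q}{2}\,\frac{(1+\frac14|x|^2)x_j-[x,e_j]\cdot z}{H^{(Q+2)/2}} = Q\,r_j
\]
by \eqref{eq:rjV}. The functions are real, so conjugation is irrelevant.

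There is no real obstacle here. The only point needing care is the sign bookkeeping of the bracket term under inversion: it explains the minus sign in \eqref{eq:rjV}, which appears even though the direct derivative carries a plus sign.
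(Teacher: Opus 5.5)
Your proof is correct and follows the same route as the paper. You compute $\vfXN_j H^{-Q/2} = -\frac{Q}{2}H^{-(Q+2)/2}\,\vfXN_j H$ explicitly from \eqref{eq:leftinvariantN} and \eqref{eq:Hnorm}, then apply the involution $f^*(x,z)=\overline{f(-x,-z)}$ on the unimodular group $N$, and your sign bookkeeping for the invariant term $[x,e_j]\cdot z$ correctly produces \eqref{eq:rjV} and \eqref{eq:rjZ}.
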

\begin{proof}
From \eqref{eq:Hnorm} we see that
\[\begin{aligned}
\nabla_x [H(x,z)^{-Q/2}] &= -\frac{Q}{2} H(x,z)^{-(Q+2)/2} \left( 1+\frac{1}{4} |x|^{2}\right) x,\\
\nabla_z [H(x,z)^{-Q/2}] &= -\frac{Q}{2} H(x,z)^{-(Q+2)/2} \, 2z.
\end{aligned}\]
So from \eqref{eq:leftinvariantN} we deduce that
\[\begin{aligned}
\vfXN_j H^{-Q/2}(x,z) &= -Q \frac{\left( 1+\frac{1}{4} |x|^{2}\right) x_j + [x,e_j] \cdot z}{2 \, H(x,z)^{(Q+2)/2}}  &\text{for } j=1,\dots,\dimV,\\
\vfXN_{j+\dimV} H^{-Q/2}(x,z) &= -Q \frac{z_j}{H(x,z)^{(Q+2)/2}} &\text{for } j=1,\dots,\dimZ.
\end{aligned}\]
By taking adjoints,
\[\begin{aligned}
(\vfXN_j H^{-Q/2})^*(x,z) &= Q \frac{\left( 1+\frac{1}{4} |x|^{2}\right) x_j - [x,e_j] \cdot z}{2 \, H(x,z)^{(Q+2)/2}}  &\text{for } j=1,\dots,\dimV,\\
(\vfXN_{j+\dimV} H^{-Q/2})^*(x,z) &= Q \frac{z_j}{H(x,z)^{(Q+2)/2}} &\text{for } j=1,\dots,\dimZ,
\end{aligned}\]
which, in light of \eqref{eq:rjV} and \eqref{eq:rjZ}, gives \eqref{eq:rjXj}.
\end{proof}

\section{Reduction to the parts at infinity}\label{s:sectionlocalpart}

\begin{remark}\label{rem:productmeasure}
By formula \eqref{eq:haar}, the measure space $(S,\dd\rho)$ is the product of the measure spaces $(N,\dd x \,\dd z)$ and $(\Rpos,\dd a/a)$. In particular, we can think of the Lebesgue space $L^p(S)$ as $L^p(N \times \Rpos)$. In addition, the change of variables $a = \ee^u$ is a Lie group isomorphism between $\Rpos$ (with multiplication) and $\RR$ (with addition), and can be used to identify the measure space $(\Rpos,\dd a/a)$ with $(\RR,\dd u)$. These considerations will be central in what follows.
\end{remark}

In this section, we shall show that the $L^p$-boundedness properties of the adjoint Riesz transforms $\Rz_j^*$ can be reduced to analogous properties of the convolution operators with the kernels $\tilde K_0,K_0,K_1,\dots,K_{\dimN}$ introduced in Proposition \ref{prp:TheoremkernelRiesz}.

Among those, the kernel $\tilde K_0$ is the easiest to deal with (cf.\ \cite[Proposition 4.4]{M23}).

\begin{proposition}\label{prp:propsitiontildeK0}
Let $\tilde{K}_{0}$ be as in Proposition \ref{prp:TheoremkernelRiesz}. Then the operator 
\[
	f\mapsto f*\tilde{K}_{0}
\]
is of weak type (1,1) and bounded on $L^{p}(S)$ for all $p\in(1,\infty)$.
\end{proposition}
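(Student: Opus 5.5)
The kernel $\tilde K_0(\bx) = r_0(x,z)\,\psi(a)$, where $\psi(a) \defeq (\chr_{\{a\geq \ee\}}+\chr_{\{a\leq 1/\ee\}})/\log a$, factorizes as a tensor product of a function on $N$ and a function on $\Rpos$. The plan is to exploit the product structure of $(S,\dd\rho)$ from Remark \ref{rem:productmeasure} together with the convolution formula \eqref{eq:convSN}. By \eqref{eq:convSN},
\[
(f*\tilde K_0)^{[a]} = \int_{\Rpos} f^{[a']} *_N (r_0)_{(a')} \,\psi(a/a')\,\frac{\dd a'}{a'}.
\]
Note that $r_0 \in L^1(N)$: indeed $r_0 \lesssim (1+|x|^2)H^{-(Q+2)/2}$, and by homogeneity $\int_N (1+|x|^2)H^{-(Q+2)/2}\,\dd x\,\dd z<\infty$, since $H^{1/2}$ behaves like a homogeneous norm of degree $1$ (homogeneous dimension $Q$) and the integrand decays like $H^{-Q/2-1/2}$. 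Hence $\|(r_0)_{(a')}\|_{L^1(N)} = \|r_0\|_{L^1(N)}$ uniformly in $a'$.

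By Minkowski's inequality and Young's inequality on $N$, pointwise in $a$ we get
\[
\|(f*\tilde K_0)^{[a]}\|_{L^p(N)} \le \|r_0\|_{L^1(N)} \int_{\Rpos} \|f^{[a']}\|_{L^p(N)}\, |\psi(a/a')|\,\frac{\dd a'}{a'}.
\]
Passing to $u=\log a$, the right-hand side is $\|r_0\|_1\,(F *_\RR |\tilde\psi|)(u)$ with $F(u)=\|f^{[\ee^u]}\|_{L^p(N)}$ and $\tilde\psi(u) = \chr_{\{|u|\geq1\}}/u$. The kernel $|\tilde\psi|$ is not integrable, so this crude bound fails. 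The main obstacle is therefore that one must keep the odd cancellation of $\tilde\psi$: $\tilde\psi$ is the truncated Hilbert kernel on $\RR$, i.e.\ $1/u$ minus its local part $\chr_{\{|u|<1\}}/u$, and so convolution with it is bounded on $L^p(\RR)$ for $1<p<\infty$ and of weak type $(1,1)$, uniformly as a Calder\'on--Zygmund operator (also vector-valued).

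To use this, I would view $f*\tilde K_0 = T_{r_0}\circ H_\psi f$. Here $H_\psi$ is convolution in the $\Rpos$-variable by $\psi$, which commutes appropriately only if no dilation intervenes. The dilation $(r_0)_{(a')}$ depends on $a'$, so the factors do not literally commute; instead I would write $\tilde K_0 = \delta_{N}\otimes\psi$ convolved with $r_0\otimes\delta_1$ in $S$. This works because on $S$ a kernel $g\otimes\delta_{1}$ (supported in $a=1$) convolves as $f\mapsto f^{[a]}*_N g$, while $\delta_0\otimes\psi$ acts as a convolution in $a$ with $f^{[a']}$ (left-translation by $(0,0,a')$ preserves $N$-Lebesgue measure up to the dilation, which preserves the product structure). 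Checking via \eqref{eq:productlaw} gives $(\delta\otimes\psi)*(r_0\otimes \delta_1)=\tilde K_0$ in the right order. The first factor is bounded on $L^p(N;L^p)$ by Young, and the second is the Hilbert-type operator $L^p(\RR;L^p(N))$, bounded for $1<p<\infty$ by vector-valued Calder\'on--Zygmund theory (the space $L^p(N)$ being UMD).

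For weak type $(1,1)$ I would instead argue directly with Proposition \ref{prp:propcriteria}. Decompose $\tilde\psi$ dyadically in $|u|\in[2^n,2^{n+1})$, giving kernels $k_n = r_0\otimes\psi_n$. Condition \eqref{eq:criteria1} holds with $c=2^{-1}$ since $|\bx|\simeq|\log a| + O(\log H)$ and $r_0$ has logarithmic moments. The gradient bound \eqref{eq:criteria3} for $k_n^*$ needs $\|\nabla k_n^*\|_1\lesssim 2^{-n}$. This I expect to be the delicate point: the $X_0$ derivative gives $2^{-n}$ from $\psi_n'$, but the $N$-derivatives of $k_n^*$ involve dilated $r_0$ at scale $a\sim \ee^{\pm 2^n}$, which can be large. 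If this fails, I would fall back on deducing weak type from the factorization, using weak $(1,1)$ of the vector-valued Hilbert transform and the $L^1$-bounded factor composed on the other side.
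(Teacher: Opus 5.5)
Your decomposition matches the paper's: the fiberwise $N$-convolution by $(r_0)_{(a)}$ is bounded on every $L^p(S)$, $1\le p\le\infty$, because $\|(r_0)_{(a)}\|_{L^1(N)}=\|r_0\|_{L^1(N)}$, and the other factor is the truncated Hilbert transform in $u=\log a$. However, the factorization you actually assert is in the wrong order, and this matters. Write $\delta_0,\delta_1$ for the Dirac masses at the identities of $N$ and $\Rpos$. Since $(0,0,a)\cdot(x,z,1)=(\dil_a(x,z),a)$, the convolution $(\delta_0\otimes\psi)*(r_0\otimes\delta_1)$ has density $(r_0)_{(a)}(x,z)\,\psi(a)$, not $\tilde K_0$. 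For the same reason, $T_{r_0}\circ H_\psi$ (Hilbert transform applied first) is not $f\mapsto f*\tilde K_0$.

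Since $(x,z,1)\cdot(0,0,a)=(x,z,a)$, the correct identity is $\tilde K_0=(r_0\otimes\delta_1)*(\delta_0\otimes\psi)$. Equivalently, $f*\tilde K_0=A(Bf)$, where $(Bf)^{[a]}=f^{[a]}*_N(r_0)_{(a)}$ is applied \emph{first} and $A$ is convolution in $u$ by $\chr_{\{|u|\ge1\}}/u$. Note the dilation by $a$ in $B$, which your description $f\mapsto f^{[a]}*_N g$ drops. This is exactly what your own first display says. The two orders differ by convolution with $[(r_0)_{(a)}-r_0]\,\psi(a)$, whose restriction to $\{a\ge\ee\}$ is precisely the kernel $K_0$ of Proposition~\ref{prp:TheoremkernelRiesz}. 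That is the term which later needs the operator-valued multiplier theorem, so the order cannot be waved through.

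With the order fixed, both claims are immediate, as in the paper. $A$ acts fiberwise in $(x,z)$, so by Fubini (Remark~\ref{rem:productmeasure}) the scalar $L^p(\RR)$ and weak-$(1,1)$ bounds for the truncated Hilbert kernel transfer directly to $S$. No UMD-valued Calder\'on--Zygmund theory is needed. Weak type $(1,1)$ then follows because the $L^1$-bounded factor acts first: $\rho(\{|ABf|>t\})\lesssim\|Bf\|_{1}/t\lesssim\|f\|_{1}/t$. In your order $T_{r_0}\circ H_\psi$ you would instead need $B$ to be bounded on weak $L^1$, which is not available. So your fallback is the right argument, but only after this correction.

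Your first attempt via Proposition~\ref{prp:propcriteria} should be dropped. For $k_n=r_0\otimes\psi_n$ one computes $\vfX_j k_n^*=(\vfXN_j r_0)_{(a)}\,\psi_n(1/a)$ for $j=1,\dots,\dimV$. Hence $\|\nabla k_n^*\|_{L^1(S)}\ge\|\vfXN_j r_0\|_{L^1(N)}\,\|\psi_n\|_{L^1(\Rpos)}\gtrsim 1$ for all $n\ge0$, and \eqref{eq:criteria3} fails, as you suspected.
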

\begin{proof}
Recall from \eqref{eq:tK0} that $\tilde K_0(\bx) = r_0(x,z) J(\log a)$, where $J(u)\defeq \frac{\chr_{\{|u|\geq 1\}}}{u}$.
From \eqref{eq:convSN} we then see that
\begin{equation*}
(f*\tilde K_0)^{[a]} = \int_{\Rpos} (f^{[a']} *_N (r_0)_{(a)}) \, J(\log a -\log a') \,\frac{\dd a'}{a'} .
\end{equation*}
In other words, if we use the notation \eqref{eq:sect_dil} and
\[
	f_{[(x,z)]}(u) \defeq f(x,z,\ee^{u}),
\]
then $f*\tilde{K}_{0}=ABf$, where the operators $A$ and $B$ are given by
\[
	(Bf)^{[a]} = f^{[a]} *_{N} (r_{0})_{[a]}, \qquad (Af)_{[(x,z)]} = f_{[(x,z)]} *_{\RR} J.
\]

Now, from \eqref{eq:r0} we see that $r_0 \in L^1(N)$, and the scaling $F \mapsto F_{(\lambda)}$ preserves the $L^1(N)$-norm, thus from Remark \ref{rem:productmeasure} and Fubini's theorem it readily follows that $B$ is $L^p(S)$-bounded for all $p \in [1,\infty]$.
	
On the other hand, $J$ is the truncation of a Calder\'on--Zygmund kernel on $\RR$, thus the corresponding convolution operator is of weak type (1,1) and bounded on $L^{p}(\RR)$ for $p\in(1,\infty)$, see \cite[Chapter I, Section 7.1]{S95}. As a consequence (see Remark \ref{rem:productmeasure}), $A$ is of weak type (1,1) and bounded on $L^{p}(S)$ for $p\in(1,\infty)$.
\end{proof}

In light of the previous proposition and Theorem \ref{thm:theoremlocalpart}, we can now connect the $L^p$-boundedness properties of $\Rz_j$ with those of the convolution operator by $K_j$.

\begin{proposition}\label{prp:reductioninfinity}
Let $p \in (1,2]$ and $j \in \{0,\dots,\dimN\}$. The convolution operator $f \mapsto f * K_j$ is $L^p(S)$-bounded if and only if $\Rz_j^*$ is.
\end{proposition}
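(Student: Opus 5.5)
The plan is to decompose the kernel of $\Rz_j^*$ into a local part, a main term at infinity, and an integrable remainder. Then we check that every piece other than the main term is bounded on $L^p(S)$ for all $p\in(1,2]$, so that $L^p$-boundedness of $\Rz_j^*$ and of $f\mapsto f*K_j$ become equivalent. Fix a cutoff $\zeta\in C^\infty_c(S)$ with $\zeta\equiv1$ on a neighbourhood $U$ of the identity. Choose $U$ (or rather its complement's complement) large enough that $U$ contains the region where $e^{-1}<a<e$ and $|\bx|\le 1$ is not the only concern; all we need is that $1-\zeta$ vanishes near the identity.

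\emph{Case $j\ge1$.} Write
\[
\krn_{\Rz_j^*}=\zeta\krn_{\Rz_j^*}+(1-\zeta)\krn_{\Rz_j^*}=\zeta\krn_{\Rz_j^*}+(1-\zeta)\bigl(-\tilde C_{\dimV,\dimZ}K_j+p_j\bigr),
\]
using \eqref{eq:kRzj}. Each piece is handled as follows.
\begin{itemize}
\item The term $\zeta\krn_{\Rz_j^*}$ gives an $L^p$-bounded operator for all $p\in(1,\infty)$ by Theorem \ref{thm:theoremlocalpart}.
\item The term $(1-\zeta)p_j$ is integrable on $S$, because $p_j$ is integrable at infinity and $1-\zeta$ vanishes near the identity. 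Convolution on the right by an $L^1(S)$ function is bounded on every $L^p(S)$: by Young's inequality with the right Haar measure (equivalently, Minkowski's integral inequality applied to $\int_S f(\cdot\,\by^{-1})g(\by)\,\dd\rho(\by)$), and right translations are isometries of $L^p(\rho)$.
\item It remains to compare $(1-\zeta)K_j$ with $K_j$, i.e.\ to control $\zeta K_j$. Since $K_j$ is supported in $\{a\le 1/\ee\}$, which stays away from the identity, $\zeta K_j$ is bounded and compactly supported, hence in $L^1(S)$.
\end{itemize}
Consequently $\Rz_j^*-(-\tilde C_{\dimV,\dimZ})\,(\,\cdot*K_j)$ is $L^p$-bounded, and the equivalence follows since $\tilde C_{\dimV,\dimZ}\neq0$.

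\emph{Case $j=0$.} We use \eqref{eq:kRz0skew}, which gives
\[
\krn_{\Rz_0^*}=\krn_{\Rz_0}-\krn_{\Rz_0-\Rz_0^*}=\krn_{\Rz_0}+\tilde C_{\dimV,\dimZ}(\tilde K_0+K_0)-p_0 .
\]
\begin{itemize}
\item The operator $\Rz_0$ itself is bounded on $L^p$ for $p\in(1,2]$ by Theorem \ref{thm:Theoremboundedleq2}.
\item The operator $f\mapsto f*\tilde K_0$ is bounded by Proposition \ref{prp:propsitiontildeK0}.
\item The term $p_0$ is split as $\zeta p_0+(1-\zeta)p_0$.
\end{itemize}
The local part $\zeta p_0$ requires care, since $p_0$ is only defined as a remainder and contains the local singularity. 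To handle it, rewrite
\[
\zeta p_0=\zeta\bigl(\krn_{\Rz_0}-\krn_{\Rz_0^*}\bigr)+\tilde C_{\dimV,\dimZ}\,\zeta(\tilde K_0+K_0).
\]
The first summand is bounded by Theorem \ref{thm:theoremlocalpart}. The second is bounded and compactly supported, because $\tilde K_0$ and $K_0$ vanish for $e^{-1}<a<e$ and are locally bounded elsewhere, so it lies in $L^1$. Finally, $(1-\zeta)p_0\in L^1(S)$. Putting these together, $\Rz_0^*-\tilde C_{\dimV,\dimZ}(\,\cdot*K_0)$ is $L^p$-bounded, which gives the equivalence.

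The main (mild) obstacle is justifying these splittings rigorously at the level of distributions and operators. Each convolution kernel must be well defined: $K_j$ is not integrable at infinity, so $f\mapsto f*K_j$ should first be defined on $C^\infty_c(S)$, where the convolution converges absolutely given the decay of the $r_j$ in $N$, with $1/\log a$ locally bounded on the support. The identities above then hold as operators on $C^\infty_c(S)$, and boundedness of either side extends by density. One must also check local boundedness of $r_j$ and $r_0$ and the fact that the supports avoid $\{e^{-1}<a<e\}$ to ensure $\zeta K_j\in L^1$; I expect this bookkeeping to be routine.
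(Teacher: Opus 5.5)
Your proposal is correct and follows essentially the same route as the paper. You split off the local part with a cutoff $\zeta$ (handled by Theorem~\ref{thm:theoremlocalpart}), use Proposition~\ref{prp:TheoremkernelRiesz} to replace the part at infinity by a multiple of $K_j$ up to an $L^1$ error, and for $j=0$ additionally invoke Theorem~\ref{thm:Theoremboundedleq2} for $\Rz_0$ and Proposition~\ref{prp:propsitiontildeK0} for $\tilde K_0$. The only difference is cosmetic: for $j=0$ you expand $\zeta p_0$ directly rather than first reducing to $\Rz_0-\Rz_0^*$, which is the same bookkeeping; you also keep track of the constant $-\tilde C_{\dimV,\dimZ}$, which the paper's proof silently drops.
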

\begin{proof}
Consider first $j \geq 1$.
Let $\zeta \in C^\infty_c(S)$ be identically one on a neighbourhood of the identity element. Then we can decompose
\[
\Rz_j^* f = f * \krn_{\Rz_j^*} = f * (\zeta \krn_{\Rz_j^*}) + f * ((1-\zeta) \krn_{\Rz_j^*}).
\]
Now, $f \mapsto \zeta \krn_{\Rz_j^*}$ is $L^{p}$-bounded by Theorem \ref{thm:theoremlocalpart}. Moreover, by Proposition \ref{prp:TheoremkernelRiesz}, we can write
\[
(1-\zeta) \krn_{\Rz_j^*} = K_j - \zeta K_j + (1-\zeta) p_j,
\]
where $K_j \in L^1_\loc(S)$, while $p_j$ is integrable at infinity. As $\zeta$ is compactly supported and $1-\zeta$ vanishes in a neighbourhood of the identity, we have $-\zeta K_j + (1-\zeta) p_j \in L^1(G)$, thus the corresponding convolution operator is $L^p$-bounded. This shows that $\Rz_j^*$ is $L^p$-bounded if and only if $f \mapsto f * K_j$ is.

In the case $j=0$, we first notice that, as $\Rz_0$ is $L^p$-bounded by Theorem \ref{thm:Theoremboundedleq2}, we have that $\Rz_0^*$ is $L^p$-bounded if and only if the difference $\Rz_0 - \Rz_0^*$ is.
So, much as before, we write
\[
(\Rz_0 - \Rz_0^*) f = f * (\zeta \krn_{\Rz_0}) - f * (\zeta \krn_{\Rz_0^*}) + f * ((1-\zeta) \krn_{\Rz_0 - \Rz_0^*}).
\]
Now, the $L^p$-boundedness of $f \mapsto f * (\zeta \krn_{\Rz_0})$ and $f \mapsto f * (\zeta \krn_{\Rz_0^*})$ follows from Theorem \ref{thm:theoremlocalpart}. On the other hand, by Proposition \ref{prp:TheoremkernelRiesz},
\[
(1-\zeta) \krn_{\Rz_0 - \Rz_0^*} = K_0 + \tilde K_0 - \zeta (K_0 + \tilde K_0) + (1-\zeta) p_0,
\]
where $p_0$ is integrable at infinity, while $K_0, \tilde K_0 \in L^1_\loc(S)$; therefore $- \zeta (K_0 + \tilde K_0) + (1-\zeta) p_0 \in L^1(S)$ and the corresponding convolution operator is $L^p$-bounded. As $f \mapsto f * \tilde K_0$ is $L^p$-bounded by Proposition \ref{prp:propsitiontildeK0}, we conclude that $\Rz_0 - \Rz_0^*$ is $L^p$-bounded if and only if $f \mapsto f * K_0$ is.
\end{proof}

Recall from \eqref{eq:Kj} and Proposition \ref{prp:rjXjH} that, for $j=1,\dots,\dimN$, the kernels $K_j$ have the form
\[
K_j(\bx) = r_j(x,z) \,\frac{\chr_{\{a\leq \frac{1}{\ee}\}}}{\log a}, \qquad\text{where } r_j = Q^{-1} (\vfXN_j H^{-Q/2})^*
\]
and $H$ is given in \eqref{eq:Hnorm}. We shall now reduce the $L^p$-boundedness properties for the corresponding convolution operators to that of two convolution operators defined in terms of the sub-Laplacian $\opLN$ on $N$; this reduction exploits the $L^p$-boundedness of the Riesz transforms associated with $\opLN$ and may be compared to that in \cite[Proposition 3.5]{MP24bis}.

\begin{proposition}\label{prp:RzNfactorisation}
Define
\begin{equation}\label{eq:Kbullet}
\begin{aligned}
K_{\lie{v}}(\bx) &\defeq r_{\lie{v}}(x,z) \,\frac{\chr_{\{a\leq \frac{1}{\ee}\}}}{\log a}, \qquad\text{where } r_{\lie{v}} \defeq \opLN^{1/2} H^{-Q/2},\\
K_{\lie{z}}(\bx) &\defeq r_{\lie{z}}(x,z) \,\frac{\chr_{\{a\leq \frac{1}{\ee}\}}}{\log a}, \qquad\text{where } r_{\lie{z}} \defeq \opLN H^{-Q/2}.
\end{aligned}
\end{equation}
Let $p \in (1,\infty)$.
\begin{enumerate}[label=(\roman*)]
\item If $f \mapsto f * K_{\lie{v}}$ is $L^p(S)$-bounded, then the operators $f \mapsto f * K_j$ are $L^p(S)$-bounded for $j=1,\dots,\dimV$.
\item If $f \mapsto f * K_{\lie{z}}$ is $L^p(S)$-bounded, then the operators $f \mapsto f * K_j$ are $L^p(S)$-bounded for $j=\dimV+1,\dots,\dimV+\dimZ$.
\end{enumerate}
\end{proposition}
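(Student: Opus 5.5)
The idea is to factor each $K_j$ as $K_{\lie{v}}$ or $K_{\lie{z}}$ convolved, in the $N$-variable only, with an $L^p(N)$-bounded Riesz-type operator on $N$; the $L^p(N)$-bound then lifts to $L^p(S)$ through the product structure of Remark \ref{rem:productmeasure}.

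\textbf{Step 1: factorisation of the $r_j$ on $N$.} Since $\vfXN_j$ is left-invariant and $\opLN$ commutes with left-invariant operators on the spectral side, we have, in the sense of distributions on $N$,
\[
\vfXN_j H^{-Q/2} = (\vfXN_j \opLN^{-1/2}) \opLN^{1/2} H^{-Q/2} = r_{\lie{v}} *_N \krn_{\vfXN_j \opLN^{-1/2}} \qquad (j=1,\dots,\dimV).
\]
For the central directions $j=\dimV+1,\dots,\dimN$ we write instead $\vfXN_j H^{-Q/2} = \vfXN_j \opLN^{-1}\opLN H^{-Q/2} = r_{\lie{z}} *_N \krn_{\vfXN_j\opLN^{-1}}$. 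Here $\vfXN_j\opLN^{-1} = \partial_{z_j}\opLN^{-1}$ is a homogeneous operator of degree $0$, a second-order Riesz transform on $N$. Indeed $\partial_{z_j}$ is a linear combination of commutators $[\vfXN_k,\vfXN_l]$, so $\partial_{z_j}\opLN^{-1}$ is a combination of $\vfXN_k\vfXN_l\opLN^{-1}$. These are Calderón--Zygmund operators on $N$, bounded on $L^p(N)$ for $1<p<\infty$.

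Taking involutions and using Proposition \ref{prp:rjXjH}, we get $r_j = Q^{-1} T_j^* r_{\lie{v}}$ or $r_j = Q^{-1} T_j^* r_{\lie{z}}$, with $T_j^*$ the adjoint Riesz transforms acting by convolution. Concretely, $r_j = Q^{-1} k_j^* *_N r_\bullet^*$, where we note that $r_{\lie{v}}$ and $r_{\lie{z}}$ are real and invariant under $(x,z)\mapsto(-x,-z)$, hence $r_\bullet^*=r_\bullet$.

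To justify these identities rigorously, one checks that $H^{-Q/2}$ is Schwartz-like enough. It is smooth and decays like $|(x,z)|_{\mathrm{hom}}^{-2Q}$, so the functional calculus identities can be verified via the spectral theorem on $L^2(N)$.

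\textbf{Step 2: dilation compatibility.} By \eqref{eq:convSN}, convolution on $S$ with a kernel of the form $r(x,z)J(\log a)$ acts on the $N$-fibres via $f^{[a']} *_N r_{(a)}$. The dilations $\dil_a$ are automorphisms of $N$ and the operators $T_j$ are homogeneous of degree $0$, so $(k_j^* *_N r)_{(\lambda)} = k_j^* *_N r_{(\lambda)}$. Hence
\[
f * K_j = Q^{-1} \, \mathcal{T}_j (f * K_\bullet),
\]
where $\mathcal{T}_j$ acts fibrewise in $a$ as right convolution on $N$ by $k_j^*$. The order of operations is harmless by associativity and commutation.

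\textbf{Step 3: lifting to $L^p(S)$.} By Remark \ref{rem:productmeasure} and Fubini, $L^p(S)=L^p(\Rpos;L^p(N))$. An $L^p(N)$-bounded operator applied for each fixed $a$ is therefore $L^p(S)$-bounded, which gives (i) and (ii).

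\textbf{Main obstacle.} The hardest part is making Step 2 rigorous for genuinely non-integrable kernels. The factor $1/\log a$ is not integrable, and $k_j$ is only a principal-value distribution. I would first prove the identity for $f\in C^\infty_c(S)$ with the $a$-integral truncated to $a\in[\ee^{-R},\ee^{-1}]$, where everything is absolutely convergent in $L^2$ and Fubini applies. I would then let $R\to\infty$, using the assumed $L^p$-boundedness of $f\mapsto f*K_\bullet$ together with a density argument.
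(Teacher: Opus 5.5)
Your route is the paper's. You write $Q r_j$ as an $N$-convolution of $r_{\lie{v}}$ or $r_{\lie{z}}$ with the kernel of an $L^p(N)$-bounded Riesz transform, move the dilations through it by $0$-homogeneity, and lift fibrewise using $L^p(S)=L^p(\Rpos;L^p(N))$. However, the identity $f*K_j = Q^{-1}\mathcal{T}_j(f*K_\bullet)$ in Step 2 is false in part (i), and the ``commutation'' you invoke to justify it does not hold.

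You correctly find $Q r_j = (r_\bullet *_N k_j)^* = k_j^* *_N r_\bullet$. So, by \eqref{eq:convSN} and $0$-homogeneity, the fibres of $Q\,f*K_j$ are built from $f^{[a']} *_N k_j^* *_N (r_\bullet)_{(a')}$. The fibres of $\mathcal{T}_j(f*K_\bullet)$ are instead built from $f^{[a']} *_N (r_\bullet)_{(a')} *_N k_j^*$.

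In operator form, these two expressions are $F_\bullet(a'\opLN,a'(-i\nabla_z))\,(\Rz_j^N)^*$ and $(\Rz_j^N)^*\,F_\bullet(a'\opLN,a'(-i\nabla_z))$, each applied to $f^{[a']}$. For $j\leq\dimV$ one has, formally, $(\Rz_j^N)^* = -\opLN^{-1/2}\vfXN_j$. On the nonabelian group $N$, $\vfXN_j$ does not commute with $\opLN$, nor with $F_{\lie{v}}(\opLN,-i\nabla_z)$, so the two operators differ. Convolution on $N$ is not commutative, and $k_j^*$ is not in the commutative algebra of the joint functional calculus.

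For $j>\dimV$, the operator $\partial_{z_j}\opLN^{-1}$ does belong to the joint functional calculus of $(\opLN,-i\nabla_z)$. There your ordering happens to be harmless.

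The same slip appears in Step 1. Applying the operator $T_j^*$ to $r_{\lie{v}}$ gives $r_{\lie{v}} *_N k_j^*$, not $k_j^* *_N r_{\lie{v}}$. Also, $\opLN$ does not commute with left-invariant operators such as $\vfXN_j$. It commutes with left convolutions, which are right-invariant operators.

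The repair needs only associativity, with no commutation at all. Since $f^{[a']} *_N k_j^* *_N (r_\bullet)_{(a')} = (f^{[a']} *_N k_j^*) *_N (r_\bullet)_{(a')}$, one gets $f*K_j = Q^{-1}\,(\mathcal{T}_j f)*K_\bullet$, where $\mathcal{T}_j = (\Rz_j^N)^*\otimes\id$ acts \emph{before} the convolution by $K_\bullet$.

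This is exactly the paper's factorisation $A_{\lie{n}}B_j = Q^{-1}A_{\lie{n}}B_{\lie{v}}((\Rz_j^N)^*\otimes\id)$, and its analogue with $B_{\lie{z}}$. Only the boundedness of a composition of two $L^p(S)$-bounded operators is needed, so your Step 3 then finishes the proof unchanged. Your truncation-and-limit plan for making the formal identities rigorous goes beyond what the paper writes and is reasonable.
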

\begin{proof}
Much as in the proof of Proposition \ref{prp:propsitiontildeK0}, we can write $f * K_j = A_{\lie{n}} B_j f$, $f * K_{\lie{v}} = A_{\lie{n}} B_{\lie{v}} f$ and $f * K_{\lie{z}} = A_{\lie{n}} B_{\lie{z}} f$, where
\begin{align*}
(A_{\lie{n}} f)_{[(x,z)]} &= f_{[(x,z)]} *_{\RR} J_{\lie{n}}, &
(B_j f)^{[a]} &= f^{[a]} *_{N} (r_j)_{(a)}, \\
(B_{\lie{v}} f)^{[a]} &= f^{[a]} *_{N} (r_{\lie{v}})_{(a)}, &
(B_{\lie{z}} f)^{[a]} &= f^{[a]} *_{N} (r_{\lie{z}})_{(a)},
\end{align*}
and $J_{\lie{n}}(u) = \frac{\chr_{\{u \leq -1\}}}{u}$.

Now, for $j=1,\dots,\dimV$,
\[
Q r_j = (\vfXN_j H^{-Q/2})^* = (\vfXN_j \opLN^{-1/2} \opLN^{1/2} H^{-Q/2})^* = (r_{\lie{v}} *_N \krn_{\Rz_j^N})^* = \krn_{\Rz_j^N}^* *_N r_{\lie{v}},
\]
where $\Rz_j^N \defeq \vfXN_j \opLN^{-1/2}$ is a first-order Riesz transform
relative to the sub-Laplacian $\opLN$ on $N$, and we used that $r_{\lie{v}}^* = r_{\lie{v}}$ (as $H^* = H$ and $\opLN$ is self-adjoint). As a consequence,
\[
(Q B_j f)^{[a]} = f^{[a]} *_N \krn_{\Rz_j^N}^* *_N (r_{\lie{v}})_{(a)} = ((\Rz_j^N)^* f^{[a]} ) *_N (r_{\lie{v}})_{(a)},
\]
where we also used that $(\krn_{\Rz_j^N}^*)_{(a)} = \krn_{\Rz_j^N}^*$ by the $0$-homogeneity of $\Rz_j^N = \vfXN_j \opLN^{-1/2}$.

In other words, for $j=1,\dots,\dimV$, we can write 
\begin{equation}\label{eq:RzNfactorisation_v}
A_{\lie{n}} B_j = Q^{-1} A_{\lie{n}} B_{\lie{v}} ((\Rz_j^N)^* \otimes \id),
\end{equation}
where
\[
((\Rz_j^N)^* \otimes \id) f)^{[a]} = (\Rz_j^N)^* f^{[a]}.
\]
In much the same way, for $j=\dimV+1,\dots,\dimV+\dimZ$, we obtain that
\begin{equation}\label{eq:RzNfactorisation_z}
A_{\lie{n}} B_j = Q^{-1} A_{\lie{n}} B_{\lie{z}} ((\Rz_j^N)^* \otimes \id),
\end{equation}
where $\Rz_j^N \defeq \vfXN_j \opLN^{-1}$ is a second-order Riesz transform relative to $\opLN$ on $N$; indeed, as $\lie{z} = [\lie{v},\lie{v}]$, we can write $\vfXN_j$ as a linear combination of products $\vfXN_k \vfXN_\ell$ with $k,\ell \in \{1,\dots,\dimV\}$.

Now, Riesz transforms on $N$ of arbitrary order are known to be $L^q(N)$-bounded for all $q \in (1,\infty)$ \cite{CG84,tERS97,F75,LV85}. Thus, if $A_{\lie{n}} B_\lie{v}$ is $L^p(S)$-bounded, from \eqref{eq:RzNfactorisation_v} we deduce that the operators $A_{\lie{n}} B_j$ for $j=1,\dots,\dimV$ are $L^p(S)$-bounded too. Similarly, if $A_{\lie{n}} B_\lie{z}$ is $L^p(S)$-bounded, from \eqref{eq:RzNfactorisation_z} we deduce that the operators $A_{\lie{n}} B_j$ for $j=\dimV+1,\dots,\dimZ$ are $L^p(S)$-bounded too.
\end{proof}

By Propositions \ref{prp:reductioninfinity} and \ref{prp:RzNfactorisation}, the problem of the $L^p$-boundedness for $p \in (2,\infty)$ of the Riesz transforms $\Rz_j$ is thus reduced to that of the $L^p$-boundedness for $p \in (1,2)$ of the convolution operators by the kernels $K_0,K_{\lie{v}},K_{\lie{z}}$ from \eqref{eq:K0} and \eqref{eq:Kbullet}. 

Crucially, the functions $r_0,r_\lie{v},r_\lie{z}$ on $N$ appearing in the definition of $K_0,K_{\lie{v}},K_{\lie{z}}$ are radial in the first-layer variable $x$; so, as we shall see in Section \ref{s:sectionmultiplierapproach}, the convolution operators with kernels $r_0,r_\lie{v},r_\lie{z}$ on $N$ are in the joint functional calculus for the sub-Laplacian $\opLN$ and the central derivatives $-i\nabla_z$ on $N$.

This is essentially what will allow us to consider the convolution operators by $K_0,K_{\lie{v}},K_{\lie{z}}$ on $S$ as belonging to an operator-valued joint functional calculus for $(\opLN,-i \nabla_z)$, and eventually deduce their $L^p$-boundedness from a suitable operator-valued spectral multiplier theorem, which we prove in Section \ref{s:sectionmultipliertheorem}.

\section{An operator-valued multiplier theorem for the joint functional calculus of a system of commuting self-adjoint operators}\label{s:sectionmultipliertheorem}

This section is devoted to proving a multivariable analogue to \cite[Theorem 7.7]{MP24}.
Roughly speaking, we are going to prove that, if for a pairwise strongly commuting system of self-adjoint operators we have a scalar-valued spectral multiplier theorem of Mihlin--H\"ormander type on $L^p$ for some $p \in (1,\infty)$, then we also have an analogous operator-valued spectral multiplier theorem.

The present section is essentially self-contained and independent of the rest of the paper. In the next section, we shall apply the operator-valued spectral multiplier theorem proved here to the system $(\opLN,-i\nabla_z)$ made of the sub-Laplacian and the central derivatives on the H-type group $N$. Our proof of the multiplier theorem, however, applies more generally to a system $(L_1,\dots,L_{\dimX})$ of commuting self-adjoint operators on $L^2(X)$ for some measure space $X$, and we present the result in this greater generality.

Central to our discussion is the notion of R-boundedness for a family of operators, whose definition we recall here for the reader's convenience, referring to \cite[Definition 8.1.1(1) and Remark 8.1.2]{HvNVW17} for additional details.

\begin{definition}
Let $Z$ be a Banach space and let $\cT \subseteq \LinBdd(Z)$. We say that the family of operators $\cT$ is \emph{R-bounded} if there exists a constant $C>0$ such that for any finite sequences $(T_{m})_{m=1}^{\bar m}$ in $\cT$ and $(z_{m})_{m=1}^{\bar m}$ in $Z$, 
\[
\int_{0}^{1} \left\| \sum_{m=1}^{\bar m} \varepsilon_{m}(t) T_{m} z_{m} \right\|_{Z} \,\dd t\leq C\int_{0}^{1} \left\| \sum_{m=1}^{\bar m} \varepsilon_{m}(t) z_{m} \right\|_{Z} \,\dd t;
\]
here $(\varepsilon_{m})_m$ is an independent sequence of Rademacher functions on $[0,1]$. The smallest constant $C$ for which the above inequality holds is denoted by $\RBd_{Z}(\cT)$.
\end{definition}

We recall the definition of the local scale-invariant Sobolev norm on $\RR^{\dimX}$ that we shall use in our discussion.
Namely, for $F : \RR^{\dimX} \to \CC$, $s \geq 0$, $q \in [1,\infty]$, we set
\begin{equation}\label{eq:sloc}
\left\|F \right\|_{L^{q}_{s,\sloc}} \defeq \sup_{t>0}\left\| F(t\cdot) \,\phi\right\|_{L^{q}_{s}(\RR^{\dimX})}, 
\end{equation}
where $L^{q}_{s}(\RR^{\dimX})$ is the $L^{q}$-Sobolev space of order $s$, while $\phi\in C_{c}^{\infty}(\dot\RR^{\dimX})$ is a nonnegative cutoff such that $\bigcup_{t>0} t \cdot \{x \in \RR^{\dimX} \tc \phi(x) \neq 0\} = \dot\RR^{\dimX}$.
It is easily seen that different choices of the cutoff $\phi$ lead to equivalent norms, and moreover
\[
\|F\|_{L^q_{s,\sloc}} \lesssim_{q,s,k} \max_{|\alpha| \leq k} \sup_{\xi \in \dot\RR^\dimX} |\partial_\xi^\alpha F(\xi)| \lesssim_{q,s',k} \|F\|_{L^q_{s',\sloc}}
\]
for any $k\in \NN$ and $s,s' \in [0,\infty)$ with $s \leq k < s' - \dimX/q$; for additional details, see, e.g., \cite[Section 2.4]{M10}.

We can now state our assumption on the system of operators $(L_1,\dots,L_{\dimX})$.

\begin{assumption}\label{ass:fctcal}
$\vec L \defeq (L_{1},\dots,L_{\dimX})$ is pairwise strongly commuting system of self-adjoint operators on $L^{2}(X)$, where $(X,\mu)$ is a $\sigma$-finite measure space. Moreover, for some $p \in (1,\infty)$ and $\thr \in [0,\infty)$, the system of operators $L$ admits a bounded $L_{s,\sloc}^{\infty}$-functional calculus on $L^{p}(X)$ for all $s>\thr$: in other words, for all $s > \thr$ and for all bounded Borel functions $F : \RR^{\dimX} \to \CC$ such that $\|F\|_{L_{s,\sloc}^\infty} < \infty$, the operator $F(\vec L) = F(L_1,\dots,L_{\dimX})$, initially defined on $L^2(X)$, extends to a bounded operator on $L^p(X)$, and
\begin{equation}\label{eq:fctcal}
\|F(\vec L)\|_{\LinBdd(L^{p}(X))} \lesssim_{\vec L,p,s} \left\| F\right\|_{L^{\infty}_{s,\sloc}}.
\end{equation}
\end{assumption}

By applying \eqref{eq:fctcal} with $F = \chr_{\{0\}}$, we see that, under Assumption \ref{ass:fctcal}, the operators $L_1,\dots,L_{\dimX}$ are jointly injective, i.e., the spectral measure of $\{0\}$ in the joint spectrum of $L_1,\dots,L_{\dimX}$ is zero. In particular, the operator $F(\vec L)$ only depends on the restriction of $F : \RR^{\dimX} \to \CC$ to $\dot\RR^{\dimX}$.

The existence of a $L^\infty_{s,\sloc}$-bounded functional calculus on $L^p$ for a system $\vec L$ of commuting self-adjoint operators on $L^2$ can often be deduced as a consequence of suitable ``heat kernel bounds''; we refer, e.g., to \cite[Section 6]{M17} for a more detailed discussion with examples and further references to the literature. For the clarity of the presentation, the definition in \eqref{eq:sloc} of the scale-invariant Sobolev norm used here involves isotropic dilations on $\dot\RR^{\dimX}$; however, one could consider, as in \cite[Theorem 6.1]{M17}, more general nonisotropic dilations.

We now want to make sense of an operator-valued functional calculus for $\vec L = (L_1,\dots,L_{\dimX})$.
To this purpose, we recall from \cite[Definition 7.4]{MP24} some definitions about operator-valued functions (in \cite{MP24} they are presented for $\dimX=1$ only, but the multivariate extension is straightforward).

Throughout this section, we assume that $(Y,\nu)$ is a $\sigma$-finite measure space which is \emph{separable}, i.e., such that $L^1(Y)$ (equivalently, any $L^p(Y)$ with $p \in [1,\infty)$) is a separable Banach space.

\begin{definition}
Let $U \subseteq \dot\RR^{\dimX}$ be open and let $M: U \to \LinBdd(L^{2}(Y))$.
\begin{enumerate}[label=(\alph*)]
\item We say that $M$ is measurable function if the ``matrix coefficients''
\begin{equation}\label{eq:coefficient}
\xi \mapsto \left\langle M(\xi)f, g \right\rangle
\end{equation}
are measurable on $U$ for all $f,g\in L^{2}(Y)$. In this case, as $L^2(Y)$ is separable, the function 
\[
\xi \mapsto \left\| M(\xi) \right\|_{\LinBdd(L^{2}(Y))}
\]
is also measurable.
\item We say that $M$ is of class $C^{k}$ for $k\in\NN$, if the ``matrix coefficients'' \eqref{eq:coefficient} are of class $C^{k}$ on $U$ for all $f,g\in L^{2}(Y)$. In this case, from the Banach--Steinhaus theorem, one can deduce that, for any $\xi \in U$ and $|\alpha| \leq k$, there exists operators in $\LinBdd(L^{2}(Y))$, denoted by $\partial_{\xi}^{\alpha} M(\xi)$, such that 
\[
\langle\partial_\xi^\alpha M(\xi)f,g\rangle=\partial_\xi^\alpha \langle M(\xi)f,g\rangle \quad\forall f,g\in L^2(Y).
\]
\end{enumerate}
\end{definition}

Following \cite[Section 7]{MP24}, we now define the operator-valued functional calculus for a system $\vec L = (L_1,\dots,L_{\dimX})$ of operators satisfying Assumption \ref{ass:fctcal}.
By the spectral theorem, there exist a measure space $\Omega$, a measurable function $\ell : \Omega \to \dot\RR^{\dimX}$, whose image is contained in the joint spectrum of $L_1,\dots,L_{\dimX}$, and a unitary operator $\Upsilon : L^{2}(X) \to L^{2}(\Omega)$, such that for any bounded Borel function $F:\dot\RR^{\dimX} \to \CC$,
\[
\Upsilon(F(\vec L)f)(\omega) = F(\ell(\omega)) (\Upsilon f)(\omega), \quad f \in L^{2}(X).
\]
We can now make sense of $M(\vec L)$ as a bounded operator on $L^2(X \times Y)$ for any measurable and bounded operator-valued function $M : \dot\RR^{\dimX} \to \LinBdd(L^{2}(Y))$, where $Y$ is a separable $\sigma$-finite measure space. 
Namely, by means of the unitary operator $\widetilde{\Upsilon} \defeq \Upsilon \otimes \id : L^{2}(X\times Y) \to L^{2}(\Omega\times Y)$, we define
\begin{equation}\label{eq:ovfc}
\widetilde{\Upsilon}(M(\vec L)f)(\omega,y) = \left(M(\ell(\omega)) (\widetilde{\Upsilon}f)(\omega,\cdot)\right)(y),\quad f \in L^{2}(X\times Y).
\end{equation}
We refer to \cite[Remark 7.6]{MP24} for additional details on the above definition. We remark in particular that, in the case $M(\xi) = F(\xi) \, \id$ for some scalar-valued $F$, we have $M(\vec L) = F(\vec L) \otimes \id$; in this sense, the operator-valued calculus extends the scalar-valued one.

Under Assumption \ref{ass:fctcal}, we know that, for a bounded scalar-valued function $F$ on $\dot\RR^{\dimX}$, the operator $F(\vec L)$, initially defined on $L^2$, extends to a bounded operator on $L^p$ under suitable smoothness conditions on $F$. The purpose of this section is establishing an analogous result, Theorem \ref{thm:theoremmultiplier} below, for the operators $M(\vec L)$ corresponding to operator-valued functions $M$. With a slight abuse of notation, we shall not distinguish between an $L^2$-bounded operator and its $L^p$-bounded extension, when the latter exists.

The proof of Theorem \ref{thm:theoremmultiplier} will make fundamental use of the two following lemmas.
The first one is a Littlewood--Paley decomposition adapted to the joint functional calculus for $L_1,\dots,L_{\dimX}$; while the result below is likely well known to experts (cf., e.g., \cite{KW16} for the case of a single operator, or \cite[Lemmas 5.5 and 5.6]{M12} for related results for multiple operators), we could not find a reference that we could directly cite, so we include a proof for the reader's convenience.

\begin{lemma}\label{lem:lemma1}
Under Assumption \ref{ass:fctcal}, if $\eta\in C_{c}^{\infty}(\dot\RR^{\dimX})$ is nonnegative and satisfies
\[
\sum_{m\in\ZZ} \eta(2^{m}\xi) \simeq 1
\]
for $\xi \in \dot\RR^{\dimX}$, then there holds 
\begin{equation*}
\left\|f\right\|_{L^{p}(X\times Y)}
\simeq_{\vec L,\eta,p} \int_{0}^{1} \left\|\sum_{m\in\ZZ}\varepsilon_{m}(t) (\eta(2^{m} \vec L) \otimes \id) f\right\|_{L^{p}(X\times Y)} \,\dd t.
\end{equation*}
for all $f\in L^{p}(X\times Y)$, where $(\varepsilon_{m})_{m\in\ZZ}$ is an independent sequence of Rademacher functions on $[0,1]$.
\end{lemma}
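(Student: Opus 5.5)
The plan is to prove both inequalities through the standard route: Kahane's contraction principle together with Khintchine's inequality on $L^p$ spaces (Khintchine--Kahane applied pointwise), combined with the scalar functional calculus of Assumption \ref{ass:fctcal}. Two reductions come first. Since $L^p(X\times Y)=L^p(Y;L^p(X))$ and $\eta(2^m\vec L)\otimes\id$ acts only in the $X$-variable, Fubini lets us reduce everything to $L^p(X)$-estimates that are uniform in the $Y$-variable. Concretely, by Khintchine's inequality,
\[
\int_0^1\Bigl\|\sum_m\varepsilon_m(t)\,g_m\Bigr\|_{L^p(X\times Y)}\,\dd t\simeq_p \Bigl\|\Bigl(\sum_m|g_m|^2\Bigr)^{1/2}\Bigr\|_{L^p(X\times Y)},
\]
and the same holds for the $p$-th power average; I will work with the latter and convert at the end. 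Since both sides then factor through integration over $Y$, it suffices to prove the claim for $Y$ a point, i.e.\ on $L^p(X)$, with constants depending only on $\vec L,\eta,p$.

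\emph{Upper bound.} For a fixed $t$ and a finite truncation $|m|\le \bar m$, set $F_t(\xi)\defeq\sum_{|m|\le\bar m}\varepsilon_m(t)\eta(2^m\xi)$. The supports of the $\eta(2^m\cdot)$ have bounded overlap, since $\supp\eta$ is a compact subset of $\dot\RR^{\dimX}$. Hence $\|F_t\|_{L^\infty_{s,\sloc}}$ is bounded uniformly in $t$ and $\bar m$: after the rescaling $F_t(\tau\cdot)\phi$, only boundedly many terms survive, each of them smooth with uniformly bounded derivatives. Choosing $s>\thr$, \eqref{eq:fctcal} gives $\|F_t(\vec L)f\|_p\lesssim\|f\|_p$. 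Integrating in $t$ and letting $\bar m\to\infty$, using Fatou's lemma and the $L^2$-convergence of the series on a dense subspace, yields $\int_0^1\|\sum_m\varepsilon_m(t)\eta(2^m\vec L)f\|_p\,\dd t\lesssim\|f\|_p$.

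\emph{Lower bound.} I will use duality. Choose $\tilde\eta\in C^\infty_c(\dot\RR^{\dimX})$ with $\tilde\eta\,\eta=\eta/\Sigma$, where $\Sigma(\xi)\defeq\sum_m\eta(2^m\xi)\simeq1$; more precisely, take $\tilde\eta$ equal to $1/\Sigma$ on $\supp\eta$. Since $\Sigma$ is invariant under dilations by powers of $2$, we get $\sum_m\tilde\eta(2^m\xi)\,\eta(2^m\xi)=1$ on $\dot\RR^{\dimX}$. Joint injectivity of $\vec L$ then gives $f=\sum_m\tilde\eta(2^m\vec L)\,\eta(2^m\vec L)f$ in $L^2$. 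Pairing with $g\in L^{p'}$ and using the orthogonality of Rademacher functions,
\[
\langle f,g\rangle=\int_0^1\Bigl\langle\sum_m\varepsilon_m(t)\,\eta(2^m\vec L)f,\ \sum_k\varepsilon_k(t)\,\tilde\eta(2^k\vec L)^*g\Bigr\rangle\,\dd t .
\]
By Hölder's inequality and the Kahane--Khintchine inequality, this is bounded by the Rademacher average of $f$ times the $L^{p'}$ Rademacher average of $g$ with $\bar{\tilde\eta}$. The latter average is $\lesssim\|g\|_{p'}$ by the upper bound applied on $L^{p'}$, which in turn follows by duality from the upper bound on $L^p$ for the conjugate function $\bar{\tilde\eta}$.

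The main obstacle is exactly this last step. Assumption \ref{ass:fctcal} is stated only for one $p$, so I cannot invoke the calculus on $L^{p'}$ directly. I would handle it by observing that $F(\vec L)^*=\bar F(\vec L)$, so that bounded calculus on $L^p$ transfers to $L^{p'}$ by duality with the same Sobolev norms. With that in hand, the upper bound holds verbatim on $L^{p'}$. Finally, the required convergence and density issues, namely working with finite sums on $L^2\cap L^p$, are routine.
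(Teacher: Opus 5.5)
Your proposal is correct and follows essentially the same route as the paper: you reduce to $Y$ a point via Kahane--Khintchine and Fubini, and you get the upper bound from uniform $L^\infty_{s,\sloc}$ control of $\sum_m \varepsilon_m(t)\eta(2^m\cdot)$. You then get the lower bound by duality from the corresponding upper bound on $L^{p'}$, obtained through $F(\vec L)^*=\overline{F}(\vec L)$; the only difference is cosmetic: you dualize directly at the level of Rademacher sums using a dual partition $\tilde\eta$ with $\sum_m \tilde\eta(2^m\cdot)\,\eta(2^m\cdot)=1$, whereas the paper inverts $\sum_m \eta^2(2^m\cdot)$ through the functional calculus and dualizes the square-function estimate in $L^{p}(X;\ell^2)$.
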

\begin{proof}
As $p \in (1,\infty)$, \cite[Theorem 3.2.23]{HvNVW16} and Fubini's theorem give that 
\[\begin{split}
&\int_{0}^{1} \left\| \sum_{m\in\ZZ} \varepsilon_{m}(t)(\eta(2^{m} \vec L) \otimes \id)f \right\|_{L^{p}(X\times Y)} \,\dd t\\
&\simeq_p \left(\int_{0}^{1} \left\| \sum_{m\in\ZZ} \varepsilon_{m}(t) (\eta(2^{m} \vec L) \otimes \id)f \right\|^{p}_{L^{p}(X \times Y)} \,\dd t \right)^{\frac{1}{p}} \\
&= \left(\int_{Y} \int_{0}^{1} \left\| \sum_{m\in\ZZ} \varepsilon_{m}(t) \eta(2^{m} \vec L)f(\cdot,y) \right\|^{p}_{L^{p}(X)} \,\dd t \,\dd y \right)^{\frac{1}{p}}.
\end{split}\]
Similarly,
\begin{equation*}
\left\|f\right\|_{L^{p}(X\times Y)}
= \left(\int_{Y}\left\|f(\cdot,y)\right\|^{p}_{L^{p}(X)} \,\dd y \right)^{\frac{1}{p}}.
\end{equation*}
So the estimate for arbitrary $Y$ can be reduced to the particular case where $Y=\CC$; namely, we need to prove that, for $f\in L^{p}(X)$, there holds 
\begin{equation}\label{eq:reduction2}
\left\|f\right\|_{L^{p}(X)}
\simeq_{\vec L,\eta,p} \left( \int_{0}^{1} \left\| \sum_{m\in\ZZ} \varepsilon_{m}(t) \eta(2^{m} \vec L) f\right\|^{p}_{L^{p}(X)} \,\dd t \right)^{\frac{1}{p}}.
\end{equation} 

The inequality $\gtrsim$ in \eqref{eq:reduction2} follows immediately from the fact that
\begin{equation*}
\sup_{t \in [0,1]} \left\| \sum_{m\in\ZZ}\varepsilon_{m}(t) \eta(2^{m} \vec L) \right\|_{\LinBdd(L^{p}(X))}
\lesssim_{\vec L,\eta,p} 1,
\end{equation*}
which in turn follows from Assumption \ref{ass:fctcal} and the fact that 
\[
\sup_{t \in [0,1]} \left\| \sum_{m \in \ZZ} \varepsilon_m(t) \eta(2^{m} \cdot) \right\|_{L^\infty_{s,\sloc}} < \infty
\]
for any $s \geq 0$.

To prove the reverse inequality $\lesssim$ in \eqref{eq:reduction2}, we notice that, under our assumptions, we have $\sum_{m \in \ZZ} \eta^2(2^m \xi) \simeq 1$ for all $\xi \in \dot\RR^{\dimX}$, whence it readily follows that
\[
\left\| \frac{1}{\sum_{m \in \ZZ} \eta^2(2^{m} \cdot)} \right\|_{L^\infty_{s,\sloc}} < \infty,
\]
and therefore, by Assumption \ref{ass:fctcal},
\begin{equation}\label{eq:reductionfnorm}
\left\|f\right\|_{L^{p}(X)}
\lesssim_{\vec L,\eta,p} \left\| \sum_{m\in\ZZ} \eta^{2}(2^{m} \vec L) f \right\|_{L^{p}(X)}.
\end{equation}
Moreover, by Fubini's theorem and Khintchine's inequality \cite[Corollary 3.2.24]{HvNVW16},
\begin{equation}\label{eq:equivalence}
\begin{split}
&\left( \int_{0}^{1} \left\| \sum_{m\in\ZZ} \varepsilon_{m}(t) \eta(2^{m} \vec L)f \right\|^{p}_{L^{p}(X)} \,\dd t \right) ^{\frac{1}{p}}\\
&= \left\| \left( \int_{0}^{1} \left| \sum_{m\in\ZZ} \varepsilon_{m}(t) \eta(2^{m} \vec L)f \right|^{p} \,\dd t \right)^{\frac{1}{p}}\right\| _{L^{p}(X)}\\
&\simeq_p \left\| \left( \sum_{m\in\ZZ} \left| \eta(2^{m} \vec L)f \right|^{2} \right)^{\frac{1}{2}} \right\|_{L^{p}(X)}
= \left\| \left( \eta(2^{m} \vec L)f \right)_{m\in\ZZ} \right\|_{L^{p}(X;\ell^{2})}.
\end{split}
\end{equation} 
Combining with \eqref{eq:reductionfnorm}, we are thus reduced to proving that
\begin{equation*}
\left\|\sum_{m\in\ZZ}\eta^{2}(2^{m} \vec L) f\right\|_{L^{p}(X)}
\lesssim_{\vec L,\eta,p}
\left\| \left( \eta(2^{m} \vec L)f\right)_{m\in\ZZ} \right\|_{L^{p}(X;\ell^{2})}
\end{equation*}
for all $f \in L^p(X)$, which is a particular case of the estimate
\begin{equation}\label{eq:reduction3}
\left\|\sum_{m\in\ZZ} \eta(2^{m} \vec L) f_{m} \right\|_{L^{p}(X)}
\lesssim_{\vec L,\eta,p}
\left\|(f_{m})_{m\in\ZZ} \right\|_{L^{p}(X;\ell^{2})}
\end{equation}
for all $(f_m)_{m \in \ZZ} \in L^p(X;\ell^2)$.
By duality, \eqref{eq:reduction3} reduces to showing that 
\begin{equation}\label{eq:reduction4}
\left\| \left( \eta(2^{m} \vec L)f \right)_{m\in\ZZ} \right\|_{L^{p'}(X;\ell^{2})}
\lesssim_{\eta} \|f\|_{L^{p'}(X)}
\end{equation}
for all $f \in L^{p'}(X)$, where $p'$ is the conjugate exponent to $p$.

We now observe that, as $L_1,\dots,L_{\dimX}$ are self-adjoint, for any $F : \dot\RR^{\dimX} \to \CC$,
\[
\|F(\vec L) \|_{\LinBdd(L^{p'}(X))}
= \|\overline{F}(\vec L) \|_{\LinBdd(L^{p}(X))}, 
\]
while $\|F\|_{L^\infty_{s,\sloc}} = \|\overline{F}\|_{L^\infty_{s,\sloc}}$; thus, $\vec L$ also satisfies the analogue of Assumption \ref{ass:fctcal} with $p'$ in place of $p$, and therefore all the previous estimates hold for the conjugate exponent $p'$ too. In particular, by applying \eqref{eq:equivalence} with $p'$ in place of $p$, we see that \eqref{eq:reduction4} reduces to
\begin{equation*}
\left( \int_{0}^{1} \left\| \sum_{m\in\ZZ} \varepsilon_{m}(t) \eta(2^{m} \vec L)f \right\|^{p'}_{L^{p'}(X)} \,\dd t \right)^{\frac{1}{p'}} 
\lesssim_{\vec L,\eta,p} \| f \|_{L^{p'}(X)},
\end{equation*} 
i.e., the $p'$ version of the already established inequality $\gtrsim$ in \eqref{eq:reduction2}.
\end{proof}

The second one concerns the R-boundedness of a family of operators in the joint functional calculus for $L_1,\dots,L_{\dimX}$. The proof is inspired, among others, by some ideas from \cite[proof of Theorem 6.1]{KW18}.

\begin{lemma}\label{lem:lemma2}
Let $\chi\in C_{c}^{\infty}(\dot\RR^{\dimX})$.
Set $E_{k}(\xi) \defeq \chi(\xi) \,\ee^{ik\cdot\xi}$ and $E_{k,m}(\xi) \defeq E_k(2^m \xi)$ for $m \in \ZZ$, $k\in\ZZ^{\dimX}$. 
Under Assumption \ref{ass:fctcal}, if $\sigma>\dimX+1+\thr$, then
\[
\RBd_{L^{p}(X)}\{(1+|k|)^{-\sigma} E_{k,m}(\vec L) \tc m\in\ZZ, \, k\in\ZZ^{\dimX}\}<\infty.
\]
\end{lemma}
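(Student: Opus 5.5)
The idea is to reduce R-boundedness to an $L^p$ bound, uniform in the random signs, for a single randomized operator in the functional calculus. Since $L^p(X)$ has finite cotype, for $p\in(1,\infty)$ R-boundedness of a family $\{T_j\}$ is equivalent (by Khintchine--Kahane and Fubini, as in the proof of Lemma~\ref{lem:lemma1}) to the square function estimate
\[
\Bigl\|\Bigl(\sum_j |T_j f_j|^2\Bigr)^{1/2}\Bigr\|_{L^p(X)} \lesssim \Bigl\|\Bigl(\sum_j |f_j|^2\Bigr)^{1/2}\Bigr\|_{L^p(X)}.
\]

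The first step is to remove the parameter $k$ by averaging. Write the finite sequence as $T_j = (1+|k_j|)^{-\sigma} E_{k_j,m_j}(\vec L)$. Since $\sum_{k}(1+|k|)^{-\sigma+\sigma'}<\infty$ for suitable $\sigma'$ with $\sigma-\sigma'>\dimX$, and R-bounds are subadditive over countable unions with summable weights (a union of families $\cT_k$ has R-bound at most $\sum_k \RBd(\cT_k)$ by the triangle inequality, after splitting the sum over $j$ according to $k_j$), it suffices to show
\[
\RBd\{E_{k,m}(\vec L)\tc m\in\ZZ\} \lesssim (1+|k|)^{\sigma'} \qquad\text{for each fixed } k,
\]
with any $\sigma'>1+\thr$. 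Then $\sigma>\dimX+1+\thr$ is exactly what makes the $k$-sum converge.

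For fixed $k$, I would handle the family over $m$ with the Littlewood--Paley trick. Choose $\tilde\chi\in C_c^\infty(\dot\RR^{\dimX})$ with $\tilde\chi=1$ on $\supp\chi$. Since dyadic dilates of $\supp\chi$ have bounded overlap, I would split $\ZZ$ into finitely many residue classes $m\equiv r \pmod{R}$, so that the supports of the $\tilde\chi(2^m\cdot)$ with $m$ in one class are pairwise disjoint. For $m$ in one class, random signs $\varepsilon_m$, and a finite sequence $f_m$ (repetitions of $m$ are handled by splitting further into finitely many families where each $m$ occurs at most once, or by the standard observation that R-bounds of $\{E_{k,m}\}$ need only be tested with distinct operators up to a convexity argument), I would write
\[
\sum_m \varepsilon_m E_{k,m}(\vec L) f_m = \sum_m \varepsilon_m E_{k,m}(\vec L)\,\tilde\chi(2^m\vec L) f_m .
\]
Then I would estimate via the operator $G_{\varepsilon}(\vec L)$, where $G_\varepsilon=\sum_m \varepsilon_m' E_{k,m}$ with an independent sign sequence, and use a decoupling argument. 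Simpler and more robust is the route through square functions: bound
\[
\Bigl\|\Bigl(\sum_m|E_{k,m}(\vec L)f_m|^2\Bigr)^{1/2}\Bigr\|_p
\]
by dualizing against $(g_m)\in L^{p'}(\ell^2)$. This gives $\sum_m\langle f_m, \overline{E_{k,m}}(\vec L)g_m\rangle$, and one needs an $\ell^2$-valued bound for $(E_{k,m}(\vec L))_m$ acting diagonally. That diagonal square-function bound is the main obstacle. A clean way to obtain it is a vector-valued extension of Assumption~\ref{ass:fctcal}: by Khintchine, the operator $\sum_m \varepsilon_m(t) E_{k,m}(\vec L)\tilde\chi(2^m\vec L)$ is the functional calculus of the single scalar function $F_t=\sum_m\varepsilon_m(t)E_{k,m}$, and this function satisfies
\[
\|F_t\|_{L^\infty_{s,\sloc}}\lesssim (1+|k|)^{s},
\]
since each derivative of $\ee^{ik\cdot 2^m\xi}$, measured at scale $2^{-m}$, costs a factor $|k|$, and the supports overlap boundedly.

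Combining this with Lemma~\ref{lem:lemma1}-type Littlewood--Paley estimates for $\tilde\chi(2^m\vec L)$ (and its randomized version, together with Stein's/Bourgain's inequality for Rademacher projections on the UMD space $L^p$ to pass from a fixed $f$ to a sequence $f_m$), I expect to get the R-bound $\lesssim (1+|k|)^{s}$ for every $s>\thr$. Should that transference step fail, I would instead use the fallback in the spirit of \cite{KW18}: expand $E_{k,m}$ via a Fourier series on a dyadic annulus and apply the bounded $H^\infty$/spectral calculus. Either way the expected loss is $(1+|k|)^{\thr+1+\epsilon}$, which is comfortably within the allowed $\sigma$.
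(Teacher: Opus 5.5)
Your outer reduction (subadditivity of R-bounds over the union in $k$, then summing $(1+|k|)^{-\sigma}$ against a fixed-$k$ bound) is correct, and the residue-class/disjoint-support idea is sound. This gives a genuinely different route from the paper. However, you leave the fixed-$k$ step as a menu of options (independent signs and decoupling, Stein/Bourgain, a vector-valued calculus, a fallback) rather than an argument. None of these is needed.

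Here is how to close it. Choose $R$ so that $\tilde\chi(2^{m'}\cdot)$ vanishes on $\supp\chi(2^{m}\cdot)$ whenever $m\neq m'$ lie in the same class modulo $R$. Take pairwise distinct $m_1,\dots,m_J$ in one class, any signs $\varepsilon\in\{-1,1\}^J$, and use the \emph{same} signs on both sides. Multiplicativity of the calculus gives the exact factorization
\[
\sum_{j}\varepsilon_j E_{k,m_j}(\vec L)f_j = G_\varepsilon(\vec L)\Bigl(\sum_{j'}\tilde\chi(2^{m_{j'}}\vec L)f_{j'}\Bigr), \qquad G_\varepsilon \defeq \sum_j \varepsilon_j E_{k,m_j}.
\]
Now apply \eqref{eq:fctcal} and the dual square-function bound \eqref{eq:reduction3}; the latter's proof in Lemma~\ref{lem:lemma1} only uses that the cutoff lies in $C^\infty_c(\dot\RR^{\dimX})$. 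Together with Khintchine, this yields
\[
\Bigl\|\sum_{j}\varepsilon_j E_{k,m_j}(\vec L)f_j\Bigr\|_{L^p(X)} \lesssim \|G_\varepsilon\|_{L^\infty_{s,\sloc}} \int_0^1\Bigl\|\sum_j\varepsilon_j(t)f_j\Bigr\|_{L^p(X)}\,\dd t .
\]
By bounded overlap, $\|G_\varepsilon\|_{L^\infty_{s,\sloc}}\lesssim(1+|k|)^{s}$ uniformly in $\varepsilon$ and in the choice of the $m_j$. Averaging over $\varepsilon$ then gives the fixed-$k$ R-bound.

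Repeated operators need the standard reduction to pairwise distinct ones: multiply each block of indices with equal $m$ by one extra independent sign, which leaves the joint distribution unchanged. Your other suggestion, splitting into finitely many families in which each $m$ occurs once, does not work, because multiplicities are unbounded.

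The paper avoids Littlewood--Paley theory and disjointness altogether. It writes $E_k(\xi)=H_k(\xi/|\xi|,\log|\xi|)$ and Fourier-expands in $\log|\xi|$, which turns the dilation by $2^m$ into the unimodular scalar $2^{imu}$. Then every $(1+|k|)^{-\sigma}E_{k,m}(\vec L)$, for all $k$ and $m$ at once, is an average of the fixed operators $(1+|\ell|)^{-\sigma}\hat H_\ell(\vec L/|\vec L|,u)\,|\vec L|^{iu}$ against weights $\delta_{k\ell}2^{imu}$ bounded by $1$. The R-bound follows from \cite[Theorem 8.5.4]{HvNVW17} (essentially the contraction principle) and \eqref{eq:fctcal} applied for each $(\ell,u)$. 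Only operator-norm bounds enter, and $k$ needs no separate treatment. The price is the extra $+1$ in $\sigma>\dimX+1+\thr$, which comes from integrating $(1+|u|)^{-r}$ with $r>1$. Your route instead leans on the square-function estimate \eqref{eq:reduction3}, and hence on the lattice structure of $L^p$. In exchange, once completed as above, it only needs $\sigma>\dimX+\thr$: choose $s\in(\thr,\sigma-\dimX)$ in the sum over $k$. That is slightly better than the stated hypothesis.
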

\begin{proof}
Let $S^{\dimX-1}$ be the unit sphere in $\RR^{\dimX}$. If we define $H_k : S^{\dimX-1} \times \RR \to \CC$ by
\[
H_k(\omega,s) \defeq E_k(e^s \omega),
\]
then $H_k$ is smooth and compactly supported. By the Fourier inversion formula,
\[
H_{k}(\omega,s)=\frac{1}{2\pi}\int_{\RR}\hat{H}_{k}(\omega,u) \,\ee^{isu} \,\dd u,
\]
where $\hat H_k(\omega,\cdot)$ is the Fourier transform of $H_k(\omega,\cdot)$.
Therefore, for all $\xi \in \dot\RR^{\dimX}$,
\[
E_{k}(\xi)= H_k(\xi/|\xi|,\log |\xi|) =  \frac{1}{2\pi}\int_{\RR}\hat{H}_{k}(\xi/|\xi|,u) \,|\xi|^{iu} \,\dd u.
\]
Thus,
\[\begin{split}
(1+|k|)^{-\sigma} E_{k,m}(\vec L)
&=\frac{1}{2\pi} \int_{\RR} 2^{imu} \, (1+|k|)^{-\sigma} \hat{H}_{k}(\vec L/|\vec L|,u) \,|L|^{iu} \,\dd u\\
&=\int_{\ZZ^{\dimX}}\int_{\RR} \delta_{k\ell} \, 2^{imu} \, (1+|\ell|)^{-\sigma} F(\ell,u,\vec L) \,\frac{\dd u}{2\pi} \,\dd\#(\ell),
\end{split}\]
where $F(\ell,u,\xi) \defeq \hat{H}_{\ell}(\xi/|\xi|,u) \,|\xi|^{iu}$ and $\dd\#$ is the counting measure on $\ZZ^{\dimX}$. 
By applying \cite[Theorem 8.5.4]{HvNVW17} with $S= \left(\RR\times\ZZ^{\dimX},\frac{\dd u}{2\pi} \,\dd\#(\ell)\right)$, since 
\[
\left\| (u,\ell) \mapsto \delta_{k\ell} \, 2^{imu}\right\|_{L^{\infty}(S)}\leq 1 \qquad\forall k\in\ZZ^{\dimX}, \,m\in\ZZ,
\]
we can conclude that 
\begin{equation}\label{eq:reduction6}
\begin{split}
&\RBd_{L^{p}(X)}\{(1+|k|)^{-\sigma} E_{k,m}(L) \tc m\in\ZZ,\, k\in\ZZ^{\dimX}\} \\
&\lesssim_{p}\int_{\ZZ^{\dimX}}\int_{\RR}(1+|\ell|)^{-\sigma} \| F(\ell,u,\vec L)\|_{\LinBdd(L^{p}(X))} \,\frac{\dd u}{2\pi} \,\dd\#(\ell)\\
&\lesssim_{\vec L,p,s}\int_{\ZZ^{\dimX}}\int_{\RR}(1+|\ell|)^{-\sigma} \| F(\ell,u,\cdot) \|_{L_{s,\sloc}^{\infty}(\RR^{\dimX})} \,\dd u \,\dd\#(\ell)
\end{split}
\end{equation}
for any $s > \thr$, where the last inequality follows from Assumption \ref{ass:fctcal}.

In order to show that the last integral in \eqref{eq:reduction6} is finite, it remains to estimate $\| F(\ell,u,\cdot) \|_{L_{s,\sloc}^{\infty}(\RR^{\dimX})}$. By the Leibniz rule,
\begin{equation}\label{eq:Leibniz}
\partial_{\xi}^{\alpha} F(\ell,u,\xi)=\sum_{\alpha'+\alpha''=\alpha}C_{\alpha',\alpha''}\left(\partial_{\xi}^{\alpha'} \hat{H}_{\ell}(\xi/|\xi|,u)\right) \left(\partial_{\xi}^{\alpha''}|\xi|^{iu}\right)
\end{equation}
for some $C_{\alpha',\alpha''} > 0$, and clearly
\begin{equation}\label{eq:easypart}
\left|\partial_{\xi}^{\alpha''}|\xi|^{iu}\right|\lesssim_{\alpha''}(1+|u|)^{|\alpha''|}|\xi|^{-|\alpha''|}.
\end{equation}
On the other hand,
\[
\partial_{\xi}^{\alpha'} \hat{H}_{\ell}(\xi/|\xi|,u)
=\int_{\RR}\partial_{\xi}^{\alpha'} ( E_{\ell}(\ee^{s} \xi/|\xi|) ) \, \ee^{-isu} \,\dd s,
\]
so, by integrations by parts, for any $r \in \NN$,
\[
(iu)^{r}\partial_{\xi}^{\alpha'} \hat{H}_{\ell}(\xi/|\xi|,u)
=\int_{\RR}\partial_{s}^{r}\partial_{\xi}^{\alpha'} ( E_{\ell}(\ee^{s} \xi/|\xi|) ) \,\ee^{-isu} \,\dd s.
\]
As $E_\ell(\zeta) = \chi(\zeta) \ee^{i \ell \cdot \zeta}$ and due to the support of $\chi$,
\[
\left| |u|^{r}\partial_{\xi}^{\alpha'} \hat{H}_{\ell}(\xi/|\xi|,u)\right|
\lesssim \sup_{s\lesssim1} \left|\partial_{s}^{r}\partial_{\xi}^{\alpha'} ( E_{\ell}(\ee^{s} \xi/|\xi|) ) \right| \lesssim_{\alpha',r}|\xi|^{-|\alpha'|}(1+|\ell|)^{r+|\alpha'|},
\]
whence also, for any $r \in \NN$,
\begin{equation*}
\left| \partial_{\xi}^{\alpha'} \hat{H}_{\ell}(\xi/|\xi|,u) \right| 
\lesssim_{\alpha',r} |\xi|^{-|\alpha'|}(1+|\ell|)^{r+|\alpha'|}(1+|u|)^{-r}.
\end{equation*}
If we apply the last estimate with $r+|\alpha''|$ in place of $r$, and combine it with \eqref{eq:Leibniz} and \eqref{eq:easypart}, we conclude that 
\[
|\xi|^{|\alpha|} \left|\partial_{\xi}^{\alpha} F(\ell,u,\xi) \right| \lesssim_{\alpha,r} (1+|\ell|)^{r+|\alpha|}(1+|u|)^{-r}
\]
for all $\alpha\in\NN^{\dimX}$ and $r\in\NN$. By interpolation, we thus deduce that
\begin{equation}\label{eq:sslocnorm}
\left\| F(\ell,u,\cdot) \right\|_{L_{s,\sloc}^{\infty}(\RR^{\dimX})}
\lesssim_{s,r}(1+|\ell|)^{r+s} (1+|u|)^{-r},
\end{equation} 
for all $s,r\in [0,\infty)$.

By \eqref{eq:sslocnorm}, the last integral in \eqref{eq:reduction6} is bounded by a multiple of
\[
\sum_{\ell\in\ZZ^{\dimX}} \int_{\RR} (1+|k|)^{-\sigma+r+s} (1+|u|)^{-r} \,\dd u,
\]
which is finite whenever $r>1$ and $\sigma>\dimX+r+s$. In particular, for any $\sigma>\dimX+1+\thr$, we can choose appropriate values of $r>1$ and $s>\thr$ and deduce from \eqref{eq:reduction6} the claimed R-boundedness.
\end{proof}

Thanks to Lemma \ref{lem:lemma1} and Lemma \ref{lem:lemma2}, we can now follow closely the proof of \cite[Theorem 7.1]{MP24} to obtain the following result.

\begin{theorem}\label{thm:theoremmultiplier}
Under Assumption \ref{ass:fctcal}, fix an integer $B>2\dimX+1+\thr$. Let $M:\dot\RR^{\dimX} \to \LinBdd(L^{2}(Y))$ be measurable and bounded.
Assume that $M$ is of class $C^{B}$ and 
\begin{equation}\label{eq:assumption1}
C_{p,B}(M) 
\defeq \RBd_{L^{p}(Y)}\{ |\xi|^{|\alpha|} \partial^{\alpha}_{\xi}M(\xi) \tc |\alpha|\leq B, \ \xi\in\dot\RR^{\dimX} \}<\infty.		
\end{equation}
Then, $M(\vec L)$ is a bounded operator on $L^{p}(X\times Y)$ with 
\begin{equation*}
\| M(\vec L) \|_{\LinBdd(L^{p}(X\times Y))} \lesssim_{B,p,\vec L} C_{p,B}(M).
\end{equation*}
\end{theorem}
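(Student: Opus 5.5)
Following the scheme of \cite[Theorem 7.1]{MP24}, I would combine a Littlewood--Paley decomposition with Fourier series expansions on dyadic pieces. Fix $\eta \in C^\infty_c(\dot\RR^{\dimX})$ nonnegative with $\sum_{m} \eta(2^m\xi) \simeq 1$, and $\chi \in C^\infty_c(\dot\RR^{\dimX})$ with $\chi = 1$ on a neighbourhood of $\supp\eta$. Replacing $\eta$ by $\eta/(\sum_{m'}\eta(2^{m'}\cdot))$ (still compactly supported away from zero), I may assume $\sum_m \eta(2^m \xi) = 1$, so that $M(\vec L) = \sum_m M(\vec L)(\eta(2^m\vec L)\otimes\id)$. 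Dualising via Lemma \ref{lem:lemma1} with respect to an enlarged cutoff $\tilde\eta$ (equal to one on $\supp \eta$, still with $\sum_m \tilde\eta(2^m\cdot)\simeq 1$), the goal becomes a square-function estimate
\[
\int_0^1 \Big\|\sum_m \varepsilon_m(t)\, (\tilde\eta(2^m\vec L)\otimes\id) M(\vec L)f\Big\|_p \,\dd t \lesssim C_{p,B}(M)\, \|f\|_p .
\]
By almost-orthogonality, each summand on the left is essentially $M_m(\vec L)(\eta'(2^m\vec L)\otimes \id)f$ for a slightly larger cutoff $\eta'$, where $M_m(\xi) \defeq \chi(2^m\xi) M(\xi)$; only finitely many neighbouring $m$ interact, so this reduction costs only a constant.

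Next I would localize each $M_m$ and expand it in Fourier series. Rescaling, $\chi(\zeta)M(2^{-m}\zeta)$ is supported in a fixed cube $[-\pi R,\pi R]^{\dimX}$, so
\[
\chi(\zeta) M(2^{-m}\zeta) = \chi(\zeta)\sum_{k\in\ZZ^{\dimX}} \ee^{ik\cdot\zeta/R}\, \hat M_{m,k},
\]
with Fourier coefficients $\hat M_{m,k}\in\LinBdd(L^2(Y))$, where I have inserted a second copy of $\chi$ on the right. Integrating by parts $B$ times shows that $(1+|k|)^{B}\hat M_{m,k}$ is an average (an integral against an $L^1$-bounded kernel) of the operators $\zeta^{\beta}$-weighted derivatives $2^{-m|\alpha|}(\partial^\alpha M)(2^{-m}\zeta)$, with $|\alpha| \le B$ and $|\zeta|\simeq 1$. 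By the convex-hull property of R-bounds \cite[Theorem 8.5.2]{HvNVW17}, the family $\{(1+|k|)^{B}\hat M_{m,k}\}_{m,k}$ therefore has R-bound $\lesssim C_{p,B}(M)$. Then
\[
M_m(\vec L) = \sum_k E_{k,m}(\vec L)\otimes \hat M_{m,k},
\]
with $E_{k,m}$ as in Lemma \ref{lem:lemma2} (after rescaling $k$ by $R$).

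The main step is to control the Rademacher sum
\[
\sum_m \varepsilon_m\, \sum_k (1+|k|)^{-\sigma-\dimX-\epsilon'} \big[(1+|k|)^{\sigma}E_{k,m}(\vec L)\otimes \id\big]\big[\id\otimes(1+|k|)^{B}\hat M_{m,k}\big] g_m,
\]
where $g_m = (\eta'(2^m\vec L)\otimes\id)f$ and $B = \sigma+\dimX+\epsilon'$. Take $\sigma>\dimX+1+\thr$ in Lemma \ref{lem:lemma2}; the hypothesis $B>2\dimX+1+\thr$ leaves room to choose $\epsilon'>0$. For each fixed $k$:
\begin{itemize}
\item the operators $(1+|k|)^{-\sigma}E_{k,m}(\vec L)\otimes\id$ are R-bounded on $L^p(X\times Y)$ uniformly in $m$ and $k$. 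This follows from Lemma \ref{lem:lemma2} and Fubini, since the R-boundedness of scalar operators tensored with $\id$ transfers to $L^p(X;L^p(Y))$ through the Kahane--Khintchine square-function characterization;
\item the operators $\id\otimes(1+|k|)^{B}\hat M_{m,k}$ are R-bounded on $L^p(X\times Y)$ with bound $\lesssim C_{p,B}(M)$, again by Fubini in $X$.
\end{itemize}
Applying these two R-bounds in succession removes both factors from the Rademacher sum. The $k$-sum is then summable because of the factor $(1+|k|)^{-\dimX-\epsilon'}$, leaving $\int_0^1\|\sum_m\varepsilon_m(t) g_m\|\,\dd t \lesssim \|f\|_p$ by Lemma \ref{lem:lemma1}.

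I expect the main obstacle to be the Fourier-coefficient R-bound: one must check that the integration by parts gives an integral average with uniformly bounded $L^1$ weights. This is needed to invoke the convex-hull result for the weak operator integral, and it requires that $M$ be strongly measurable in $\LinBdd(L^p(Y))$, or else an approximation argument using the separability of $Y$. I would handle it as in \cite[proof of Theorem 7.7]{MP24}: first pass to a dense class, then extend to $L^p$ by density.
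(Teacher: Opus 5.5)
Your proposal is correct and follows essentially the paper's route: a Littlewood--Paley reduction via Lemma \ref{lem:lemma1}, a Fourier series expansion of the localized dyadic pieces of $M$, R-boundedness of $(1+|k|)^{-\sigma}E_{k,m}(\vec L)$ from Lemma \ref{lem:lemma2}, and R-boundedness of $(1+|k|)^{B}\hat M_{m,k}$ via integration by parts plus the contraction/convex-hull principle, with $B=\sigma+\dimX+\epsilon'$ making the $k$-sum converge. The paper packages your direct square-function computation as the bound $\|M(\vec L)\|\lesssim\RBd\{M_m(2^m\vec L)\}_m$ taken from \cite{MP24}; the only slips are cosmetic: in your displayed Rademacher sum the bracket should be $(1+|k|)^{-\sigma}E_{k,m}(\vec L)\otimes\id$ with prefactor $(1+|k|)^{-\dimX-\epsilon'}$, and the cutoff multiplying the Fourier series must equal $1$ on $\supp\chi$ rather than be $\chi$ itself.
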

\begin{proof}
Fix a nonnegative cutoff $\eta \in C^\infty_c(\dot\RR^{\dimX})$ such that $\supp \eta \subseteq \{ \xi \tc 1/2 \leq |\xi| \leq 2 \}$ and $\sum_{m \in \ZZ} \eta(2^m \cdot) = 1$ on $\dot\RR^{\dimX}$.
If we set 
\begin{equation}\label{eq:Mm}
M_{m}(\xi) \defeq M(2^{-m}\xi)\eta(\xi),
\end{equation}
then we can decompose 
\[
M(\xi)=\sum_{m\in\ZZ}M_{m}(2^{m}\xi), \quad\xi\in\dot\RR^{\dimX},
\]
with convergence in the weak operator topology of $\LinBdd(L^{2}(Y))$.
Following the proof of \cite[Theorem 7.7]{MP24}, using Lemma \ref{lem:lemma1} in place of \cite[Lemma 7.3]{MP24}, we obtain that
\[
\|M(\vec L)\|_{\LinBdd(L^{p}(X\times Y))}
\lesssim_{p} \RBd_{L^{p}(X\times Y)}\{ M_{m}(2^{m}\vec L) \}_{m\in\ZZ}.
\]

To prove the R-boundedness of $\{M_{m}(2^{m} \vec L)\}_{m\in\ZZ}$ on $L^{p}(X\times Y)$, 
we introduce the Fourier coefficients of the functions $\xi \mapsto M_m(\xi)$, which are supported in $\supp \eta \subseteq (-\pi,\pi)^{\dimX}$ by construction.
Namely, define $\hat{M}_{m}(k)\in \LinBdd(L^{2}(Y))$ for $k\in\ZZ^{\dimX}$ by
\[
\hat{M}_{m}(k) \defeq \frac{1}{(2\pi)^{\dimX}}\int_{-\pi}^{\pi}\dots\int_{-\pi}^{\pi}M_{m}(\xi) \,\ee^{-ik\cdot\xi} \,\dd\xi.
\]
Then, for $k\in\ZZ^{\dimX}$, integration by parts shows that
\begin{equation}\label{eq:coefficient1}
(ik)^{\alpha}\hat{M}_{m}(k)=\frac{1}{(2\pi)^{\dimX}}\int_{-\pi}^{\pi}\dots\int_{-\pi}^{\pi}\partial^{\alpha}_{\xi}M_{m}(\xi) \,\ee^{-ik\cdot\xi} \,\dd\xi.
\end{equation}
Moreover, if $\chi \in C^\infty_c(\dot\RR^{\dimX})$ is supported in $(-\pi,\pi)^{\dimX}$ and such that $\chi = 1$ on $\supp \eta$, then we can write
\[
M_{m}(\xi)=\sum_{k\in\ZZ^{\dimX}}\chi(\xi)\ee^{ik\cdot\xi}\hat{M}_{m}(k),\quad\xi\in\dot\RR^{\dimX},
\]
with convergence in the weak operator topology of $\LinBdd(L^{2}(Y))$; therefore, if the functions $E_{k,m}$ are defined as in Lemma \ref{lem:lemma2}, we obtain that
\begin{equation*}
M_{m}(2^{m}\vec L)
= \sum_{k\in\ZZ^{\dimX}} E_{k,m}(\vec L) \otimes \hat{M}_{m}(k)
\end{equation*}
in the weak operator topology of $\LinBdd(L^{2}(X\times Y))$. 
Much as in the proof of \cite[Theorem 7.7]{MP24},
from this decomposition we deduce that
\[\begin{split}
&\RBd_{L^{p}(X\times Y)}\{M_{m}(2^{m} \vec L)\}_{m\in\ZZ}\\
&\lesssim_\varepsilon \RBd_{L^{p}(X)}\{(1+|k|)^{-\sigma} E_{k,m}(\vec L)\}_{m\in\ZZ,k\in\ZZ^{\dimX}}
\, \RBd_{L^{p}(Y)}\{(1+|k|)^{B}\hat{M}_{m}(k)\}_{m\in\ZZ, k\in\ZZ^{\dimX}},
\end{split}\]
whenever $B-\sigma = \dimX+\varepsilon$ for some $\varepsilon > 0$ (here we are exploiting \cite[Propositions 8.1.24 and 8.1.19(3)]{HvNVW17} and the fact that $\sum_{k \in \ZZ^{\dimX}} (1+|k|)^{-\dimX-\varepsilon} < \infty$). As $B > \thr+2\dimX+1$ by assumption, we can take $\varepsilon>0$ such that $B=\thr+2\dimX+1+2\varepsilon$, and correspondingly $\sigma=\thr+\dimX+1+\varepsilon$.

With this choice of $\sigma$, the R-boundedness of $\{(1+|k|)^{-\sigma} E_{k,m}(\vec L)\}_{m\in\ZZ, k\in\ZZ^{\dimX}}$ on $L^p(X)$ follows from Lemma \ref{lem:lemma1}. To conclude, it only remains to show the R-boundedness of $\{(1+|k|)^{B}\hat{M}_{m}(k)\}_{m\in\ZZ,k\in\ZZ^{\dimX}}$ on $L^p(Y)$, which is a consequence of the smoothness assumption \eqref{eq:assumption1} on $M$.

Indeed, arguing as in the proof of \cite[Theorem 7.7]{MP24}, from \eqref{eq:coefficient1} we deduce that
\[
\RBd_{L^{p}(Y)}\{|k^{\alpha}| \hat{M}_{m}(k) \tc m\in\ZZ, \, k\in\ZZ^{\dimX}\}
\leq \RBd_{L^{p}(Y)}\{\partial_{\xi}^{\alpha} M_{m}(\xi) \tc \xi\in\dot\RR^{\dimX}, \, m\in\ZZ\}
\]
for all $\alpha \in \NN^{\dimX}$, and therefore also
\[\begin{split}
&\RBd_{L^{p}(Y)} \{(1+|k|)^B \hat{M}_{m}(k)\tc m\in\ZZ,\,k\in\ZZ^{\dimX}\} \\
&\lesssim_B \RBd_{L^{p}(Y)} \{\partial_{\xi}^{\alpha} M_{m}(\xi) \tc \xi\in\dot\RR^{\dimX}, \, m\in\ZZ, \, |\alpha|\leq B\} \\
&\lesssim_{\eta,B} \RBd_{L^{p}(Y)} \{|\xi|^{|\beta|}\partial_{\xi}^{\beta}M(\xi) \tc |\beta|\leq B, \, \xi\in\dot\RR^{\dimX}\} = C_{p,B}(M),
\end{split}\]
where the last inequality follows from the fact that, by \eqref{eq:Mm},
\[
\partial_\xi^\alpha M_m(\xi)=\sum_{\beta\leq \alpha}\binom{\alpha}{\beta}\frac{\partial^{(\alpha-\beta)}_{\xi}\eta(\xi)}{|\xi|^{|\beta|}}\left( |\xi|^{|\beta|}\partial_\xi^{\beta} M\right)(2^{-m}\xi).
\]
and by the Kahane contraction principle \cite[Proposition 3.2.10]{HvNVW16}. As $C_{p,B}(M)$ is finite by \eqref{eq:assumption1}, we are done.
\end{proof}

In the particular case where $Y=\RR^{\dimY}$ with the Lebesgue measure for some $\dimY \in \Npos$, the smoothness condition \eqref{eq:assumption1} on the multiplier $M$ in Theorem \ref{thm:theoremmultiplier}, which is expressed in terms of R-boundedness, can be replaced with a somewhat more concrete condition involving uniform weighted $L^2$-boundedness with respect to weights in the Muckenhoupt class $A_2(\RR^{\dimY})$. Let $[w]_{A_{2}}$ denote the $A_{2}$-characteristic of a weight $w \in A_{2}(\RR^{\dimY})$. The proof of the following corollary, based on \cite[Theorem 8.2.6]{HvNVW17}, is analogous to that of \cite[Corollary 7.8]{MP24}.

\begin{corollary}\label{cor:corollary}
Under Assumption \ref{ass:fctcal}, fix an integer $B>2\dimX+1+\thr$. Let $M:\dot\RR^{\dimX} \to \LinBdd(L^{2}(\RR^{\dimY}))$ be measurable and bounded.
Assume that $M$ is of class $C^{B}$ and there exists a nondecreasing function $\psi:[1,\infty)\to[0,\infty)$ such that, for all $w\in A_{2}(\RR^{\dimY})$,
\begin{equation}\label{eq:assumption_A2}
\max_{\alpha \tc |\alpha| \leq B} \sup_{\xi \in \dot\RR^{\dimX}} \left\||\xi|^{|\alpha|} \partial_{\xi}^{\alpha} M(\xi)\right\|_{\LinBdd(L^{2}(w))}
\leq \psi([w]_{A_{2}}).
\end{equation}
Then, $M(\vec L)$ is a bounded operator on $L^{p}(X\times\RR^{\dimY})$.
\end{corollary}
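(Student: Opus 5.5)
The plan is to deduce the corollary from Theorem \ref{thm:theoremmultiplier} with $Y = \RR^{\dimY}$, which is a separable $\sigma$-finite measure space. Everything then comes down to verifying the R-boundedness hypothesis \eqref{eq:assumption1} on $L^p(\RR^{\dimY})$ for the family
\[
\cT \defeq \{ |\xi|^{|\alpha|} \partial^\alpha_\xi M(\xi) \tc |\alpha| \leq B, \ \xi \in \dot\RR^{\dimX} \}.
\]
The tool is the extrapolation-type criterion \cite[Theorem 8.2.6]{HvNVW17}. It says that a family of operators which are uniformly bounded on $L^2(w)$ for every $w \in A_2(\RR^{\dimY})$, with bound controlled by a nondecreasing function of $[w]_{A_2}$, is R-bounded on $L^p(\RR^{\dimY})$ for every $p \in (1,\infty)$. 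Its proof runs through Rubio de Francia extrapolation applied to the $\ell^2$-valued square function, combined with the Khintchine--Kahane equivalence between Rademacher averages and square functions in $L^p$.

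The steps are as follows.

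First, observe that each operator in $\cT$ is a priori defined on $L^2(\RR^{\dimY})$, and that hypothesis \eqref{eq:assumption_A2} gives bounds on $L^2(w)$. One checks that these operators act consistently on $L^2 \cap L^2(w)$ and extend, so that they make sense as operators on $L^p$. This is the same technical point handled in \cite[Corollary 7.8]{MP24}. Its source is that $L^2(w) \cap L^2$ is dense in $L^2(w)$, and that, by the extrapolation theorem applied with a single operator, each member of $\cT$ extends boundedly to $L^p$, consistently with its $L^2$ action.

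Second, apply \cite[Theorem 8.2.6]{HvNVW17} to $\cT$ with the given $\psi$. This yields $\RBd_{L^p(\RR^{\dimY})}(\cT) \lesssim_{\psi,p,\dimY} 1$, that is, $C_{p,B}(M) < \infty$.

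Third, invoke Theorem \ref{thm:theoremmultiplier}. Measurability, boundedness and the $C^B$ regularity of $M$ are assumed, and $B > 2\dimX + 1 + \thr$ matches that theorem's requirement. It follows that $M(\vec L)$ is bounded on $L^p(X \times \RR^{\dimY})$.

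The main obstacle is the compatibility step. The criterion in \cite{HvNVW17} is formulated for operators that are already bounded on $L^p$ (or on all the weighted spaces) in a consistent way. Here, by contrast, $M(\xi)$ and its derivatives are only given as $L^2$-bounded operators, so one must check that the weighted bounds refer to extensions of the same operators. I would handle this exactly as in \cite{MP24}: interpret \eqref{eq:assumption_A2} as the statement that $T \in \cT$ satisfies $\|Tf\|_{L^2(w)} \leq \psi([w]_{A_2}) \|f\|_{L^2(w)}$ for $f \in L^2 \cap L^2(w)$. Extrapolation is then applied on this common dense class, and density is used to pass to $L^p$.
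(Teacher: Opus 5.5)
Your proposal is correct and follows essentially the same route as the paper. The paper's proof likewise uses \cite[Theorem 8.2.6]{HvNVW17} to turn the uniform $A_2$-weighted bounds \eqref{eq:assumption_A2} into R-boundedness on $L^p(\RR^{\dimY})$ of the family in \eqref{eq:assumption1}, then applies Theorem \ref{thm:theoremmultiplier}, handling the technical details exactly as in \cite[Corollary 7.8]{MP24}.
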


The following remark discusses a weakening of the smoothness assumption on the multiplier, in case additional information is available on the joint spectrum of $(L_1,\dots,L_{\dimX})$. Here a subset $\Gamma$ of $\dot\RR^{\dimX}$ is said to be \emph{conic} if $t \cdot \Gamma \subseteq \Gamma$ for all $t>0$.

\begin{remark}\label{rem:conicspectrum}
Under Assumption \ref{ass:fctcal}, assume further that the joint $L^2$-spectrum of $(L_1,\dots,L_{\dimX})$ is contained in $\{0\} \cup \Gamma$ for some closed conic subset $\Gamma$ of $\dot\RR^{\dimX}$. Then, the smoothness conditions \eqref{eq:assumption1} and \eqref{eq:assumption_A2} can be replaced by analogous conditions, where $\xi$ is restricted to some conic neighbourhood $U \subseteq \dot\RR^{\dimX}$ of $\Gamma$. Indeed, if the operator-valued function $M : U \to \LinBdd(L^2(Y))$ satisfies such a restricted smoothness assumption, then
\[
M(\vec L) = \tilde M(\vec L),
\]
where $\tilde M(\xi) \defeq \zeta(\xi) M(\xi)$, while $\zeta \in C^\infty(\dot\RR^{\dimX})$ is any $0$-homogeneous cutoff with $\supp \zeta \subseteq U$ and $\zeta|_{\Gamma} \equiv 1$. One can then apply Theorem \ref{thm:theoremmultiplier} or Corollary \ref{cor:corollary} to $\tilde M$ in order to deduce the $L^p$-boundedness of $M(\vec L)$.
\end{remark}

\section{Boundedness of the parts at infinity}\label{s:sectionmultiplierapproach}

\subsection{Convolution operators as operator-valued spectral multipliers}\label{ss:convKopvalmult}

As discussed in Section \ref{s:sectionlocalpart}, the proof of Theorem \ref{thm:maintheorem} is now reduced to proving the $L^p(S)$-boundedness for $p \in (1,2)$ of the convolution operators with the kernels $K_0,K_{\lie{v}},K_{\lie{z}}$ from \eqref{eq:K0} and \eqref{eq:Kbullet}.
To this purpose, we shall write those operators as $M_0(\opLN,-i\nabla_z),M_{\lie{v}}(\opLN,-i\nabla_z),M_{\lie{z}}(\opLN,-i\nabla_z)$ for suitable operator-valued functions $M_0,M_{\lie{v}},M_{\lie{z}}$, and deduce their $L^p$-boundedness from the operator-valued spectral multiplier theorem of Section \ref{s:sectionmultipliertheorem}.

Let us first discuss how the theory of Section \ref{s:sectionmultipliertheorem} applies to the joint functional calculus for $(\opLN,-i\nabla_z)$, thus proving Theorem \ref{thm:opval_intro}.

Recall from Remark \ref{rem:productmeasure} that $S = N \times \Rpos$ as measure spaces, and that moreover the change of variables $a = \ee^u$ allows one to identify $\Rpos$ and $\RR$; in particular, we can define the class of weights $A_2(\Rpos)$ on $\Rpos$ by ``transplanting'' the classical Muckenhoupt class $A_2(\RR)$ via that change of variables.

In addition, if $\opLN$ and $\nabla_z$ denote, as before, the sub-Laplacian and the vector of central derivatives on $N$, then
\[
(\opLN,-i\nabla_z) = (\opLN,-i\partial_{z_1},\dots,-i\partial_{z_{\dimZ}})
\]
is a commuting system of self-adjoint operators on $L^2(N)$, whose joint spectrum is contained in 
\[
\{ (\lambda,\mu) \in \RR \times \RR^{\dimZ} \tc \lambda \geq \tfrac{\dimV}{2} |\mu|\},
\]
see \cite[Remark 2.1]{MRS96}.
Furthermore, by \cite[Corollary 2.4]{MRS96} (see also \cite[Propositions 3.9 and 4.1, and Theorem 4.6]{M12})
the system $(\opLN,-i\nabla_z)$ satisfies Assumption \ref{ass:fctcal} for any $p \in (1,\infty)$ with $\thr = \dimN/2$.

Thus, for any bounded measurable function $M : \Rpos \times \RR^{\dimZ} \to \LinBdd(L^2(\Rpos))$, we can consider the operator $M(\opLN,-i\nabla_z)$ defined by \eqref{eq:ovfc} as a bounded operator on $L^2(S)$; moreover, from Theorem \ref{thm:theoremmultiplier} and Remark \ref{rem:conicspectrum}, we deduce Theorem \ref{thm:opval_intro},
giving a sufficient condition for the $L^p(S)$-boundedness of such operators $M(\opLN,-i\nabla_z)$. We shall actually use the following version of the result, corresponding to Corollary \ref{cor:corollary}.

\begin{corollary}\label{cor:applytheorem}
Let $\kappa \in (0,\dimV/2)$ and $\Omega_\kappa \defeq \{ (\lambda,\mu) \in \Rpos \times \RR^{\dimZ} \tc \kappa |\mu| \leq \lambda \}$. Fix an integer $B > (\dimV+5\dimZ)/2+3$. If $M : \Rpos \times \RR^{\dimZ} \to \LinBdd(L^2(\Rpos))$ is of class $C^B$ and there exists a nondecreasing function $\psi : [1,\infty) \to [0,\infty)$ such that
\[
\max_{\alpha \tc |\alpha| \leq B} \sup_{(\lambda,\mu) \in \Omega_\kappa} \left\|\lambda^{|\alpha|} \partial_{(\lambda,\mu)}^{\alpha}M(\lambda,\mu)\right\|_{\LinBdd(L^{2}(w))}\leq \psi([w]_{A_{2}}) \qquad \forall w \in A_2(\Rpos),
\]
then $M(\opLN,-i\nabla_z)$ is bounded on $L^p(S)$ for all $p \in (1,\infty)$.
\end{corollary}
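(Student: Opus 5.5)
This follows from Corollary \ref{cor:corollary} combined with Remark \ref{rem:conicspectrum}, applied to the system $\vec L = (\opLN,-i\nabla_z)$ on $L^2(N)$. Here $\dimX = 1+\dimZ$, and $Y = \Rpos$ with the measure $\dd a/a$. Via $a = \ee^u$ (Remark \ref{rem:productmeasure}) we identify $Y$ with $\RR$, so $\dimY = 1$. The $A_2(\Rpos)$ class was defined exactly by this transplantation, so the weighted hypothesis transfers verbatim to $L^2(w)$ on $\RR$. By \cite[Corollary 2.4]{MRS96}, as recalled above, Assumption \ref{ass:fctcal} holds for every $p\in(1,\infty)$ with $\thr = \dimN/2$. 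The required order is therefore
\[
B > 2\dimX + 1 + \thr = 2(1+\dimZ) + 1 + \frac{\dimV+\dimZ}{2} = \frac{\dimV + 5\dimZ}{2} + 3,
\]
which is precisely the assumption on $B$.

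The first step is to pass to a conic region. The joint spectrum of $(\opLN,-i\nabla_z)$ lies in $\{0\}\cup\Gamma$, where $\Gamma \defeq \{(\lambda,\mu) \tc \lambda \geq \frac{\dimV}{2}|\mu|,\ \lambda>0\}$ is a closed conic subset of $\dot\RR^{1+\dimZ}$. Since $\kappa < \dimV/2$, the interior of $\Omega_\kappa$ is a conic neighbourhood $U$ of $\Gamma$. Choose a $0$-homogeneous smooth cutoff $\zeta$ on $\dot\RR^{1+\dimZ}$ with $\zeta \equiv 1$ on $\Gamma$ and $\supp\zeta \subseteq \Omega_{\kappa'}$ for some $\kappa' \in (\kappa,\dimV/2)$. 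Then $\supp\zeta$ stays away from the boundary of $\Omega_\kappa$, and $M(\vec L) = \tilde M(\vec L)$ with $\tilde M \defeq \zeta M$.

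The second step is to replace the weight $\lambda^{|\alpha|}$ by $|\xi|^{|\alpha|}$. On $\Omega_\kappa$ we have $|\xi| = |(\lambda,\mu)| \simeq_\kappa \lambda$, so $|\xi|^{|\alpha|} \lesssim_{\kappa,B} \lambda^{|\alpha|}$ for $|\alpha|\leq B$. For the Leibniz expansion of $\partial^\alpha(\zeta M)$, the $0$-homogeneity of $\zeta$ gives $|\partial^\beta\zeta(\xi)| \lesssim |\xi|^{-|\beta|}$. Each term is therefore bounded in $\LinBdd(L^2(w))$ by $C\,\lambda^{|\alpha-\beta|}\|\partial^{\alpha-\beta}M\|_{\LinBdd(L^2(w))}$ times a constant, and hence by $C\psi([w]_{A_2})$. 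Outside $\supp\zeta$ the function $\tilde M$ vanishes identically. We conclude that $\tilde M$ is $C^B$ and bounded on $\dot\RR^{1+\dimZ}$ (the case $\alpha=0$), and that it satisfies \eqref{eq:assumption_A2} with $\psi$ replaced by a constant multiple of itself.

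Corollary \ref{cor:corollary} then gives that $\tilde M(\vec L) = M(\opLN,-i\nabla_z)$ is bounded on $L^p(N\times\Rpos) = L^p(S)$ for every $p\in(1,\infty)$. There is no real obstacle. The only points needing care are matching the dimension count for $B$ and checking that the cutoff support lies strictly inside $\Omega_\kappa$, so that only values of $M$ on $\Omega_\kappa$ are used. Measurability and boundedness of $M$ on $\Omega_\kappa$ follow from the hypothesis with $\alpha=0$ and $w\equiv1$.
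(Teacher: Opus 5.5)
Your proposal is correct and follows the same route as the paper, which obtains this corollary directly from Corollary~\ref{cor:corollary} and Remark~\ref{rem:conicspectrum} applied to $(\opLN,-i\nabla_z)$ with $\dimX = 1+\dimZ$, $\thr = \dimN/2$ from \cite[Corollary 2.4]{MRS96}, and $Y = \Rpos \cong \RR$ via $a = \ee^u$. You also spell out two details the paper leaves implicit: the choice of a $0$-homogeneous cutoff supported in $\Omega_{\kappa'}$ with $\kappa<\kappa'<\dimV/2$, and the comparability $|(\lambda,\mu)| \simeq_\kappa \lambda$ on $\Omega_\kappa$, which turns the $\lambda^{|\alpha|}$-weighted hypothesis into the $|\xi|^{|\alpha|}$-weighted one.
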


Now, from \eqref{eq:K0} and \eqref{eq:Kbullet} we have
\begin{gather*}
K_0(\bx) = [(r_0)_{(a)}(x,z) - r_0(x,z)] J_0(\log a), \\
K_{\lie{v}}(\bx) = r_{\lie{v}}(x,z) J_{\lie{n}}(\log a), \qquad
K_{\lie{z}}(\bx) = r_{\lie{z}}(x,z) J_{\lie{n}}(\log a)
\end{gather*}
for certain functions $r_0,r_{\lie{v}},r_{\lie{z}}$ on $N$ and $J_0,J_{\lie{n}}$ on $\RR$, where in particular
\begin{equation}\label{eq:J0Jbullet}
J_0(u) = \frac{\chr_{\{u\geq 1\}}}{u}, \qquad J_{\lie{n}}(u) = \frac{\chr_{\{u \leq -1\}}}{u}.
\end{equation}
As we shall see, one can write
\[
r_0 = \krn_{F_0(\opLN,-i\nabla_z)}, \qquad r_{\lie{v}} = \krn_{F_{\lie{v}}(\opLN,-i\nabla_z)}, \qquad r_{\lie{z}} = \krn_{F_{\lie{z}}(\opLN,-i\nabla_z)}
\]
for suitable bounded smooth functions $F_0,F_{\lie{v}},F_{\lie{z}} : \Rpos \times \RR^{\dimZ} \to \CC$ (see Propositions \ref{prp:r0rbulletF0Fbullet} and \ref{prp:theoremtildeHtildeestimate} below).
As a consequence, from \eqref{eq:convSN} and \eqref{eq:ovfc} we see that, for $\ell \in \{\lie{v},\lie{z}\}$, at least formally,
\[\begin{split}
(f * K_\ell)^{[\ee^u]} 
&= \int_{\RR} f^{[\ee^{u'}]} *_N (\krn_{F_\ell(\opLN,-i\nabla_z)})_{(\ee^{u'})} J_{\lie{n}}(u-u') \,\dd u' \\
&= \int_{\RR} F_\ell(\ee^{u'}\opLN,\ee^{u'}(-i\nabla_z))  f^{[\ee^{u'}]}  J_{\lie{n}}(u-u') \,\dd u' \\
&= (M_\ell(\opLN,-i\nabla_z) f)^{[\ee^u]},
\end{split}\]
where $M_\ell(\lambda,\mu)$ is the integral operator acting on functions $\phi : \Rpos \to \CC$ by
\begin{equation}\label{eq:Mbullet}
M_\ell(\lambda,\mu) \phi(\ee^u) = \int_\RR F_\ell(\ee^{u'} \lambda,\ee^{u'} \mu) J_{\lie{n}}(u-u') \phi(\ee^{u'}) \,\dd u'.
\end{equation}
In a similar way, we see that
\[\begin{split}
(f * K_0)^{[\ee^u]} 
&= \int_{\RR} f^{[\ee^{u'}]} *_N ((\krn_{F_0(\opLN,-i\nabla_z)})_{(\ee^{u})} - (\krn_{F_0(\opLN,-i\nabla_z)})_{(\ee^{u'})}) J_0(u-u') \,\dd u' \\
&= \int_{\RR} (F_0(\ee^{u}\opLN,\ee^{u}(-i\nabla_z)) - F_0(\ee^{u'}\opLN,\ee^{u'}(-i\nabla_z)))  f^{[\ee^{u'}]}  J_0(u-u') \,\dd u' \\
&= (M_0(\opLN,-i\nabla_z) f)^{[\ee^u]},
\end{split}\]
where $M_0(\lambda,\mu)$ is the integral operator acting on functions $\phi : \Rpos \to \CC$ by
\begin{equation}\label{eq:M0}
M_0(\lambda,\mu) \phi(\ee^u) = \int_\RR (F_0(\ee^{u} \lambda,\ee^{u} \mu) - F_0(\ee^{u'} \lambda,\ee^{u'} \mu)) J_0(u-u') \phi(\ee^{u'}) \,\dd u'.
\end{equation}

In order to check that $M_0,M_{\lie{v}},M_{\lie{z}}$ satisfy the smoothness assumptions of Corollary \ref{cor:applytheorem}, we shall obtain sufficiently explicit expressions for the functions $F_0,F_{\lie{v}},F_{\lie{z}}$, allowing us eventually to derive suitable estimates for their derivatives.

\subsection{Gelfand transform on H-type groups}\label{ss:gelfand}

We briefly recall some facts about the Gelfand transform of radial functions on H-type groups that we shall use in our discussion; for more details, we refer to \cite{DR921,MRS96}.

A function $f$ on the H-type group $N$ is said to be \emph{radial} if it has the form $f(x,z) = \tilde f(|x|,z)$ for some function $\tilde f$ and all $(x,z) \in N$. The space $L^1_{\rad}(N)$ of integrable radial functions on $N$, with convolution and involution, is a commutative Banach $*$-subalgebra of $L^1(N)$.

The Gelfand transform $\Gf f$ of a function $f \in L^1_\rad(N)$ is essentially given (up to a negligible set of characters) by
\begin{equation*}
\Gf f(\mu,\ell)=\binom{\ell+\frac{\dimV}{2}-1}{\ell}^{-1} \int_{N} f(x,z) \, L_{\ell}^{\frac{\dimV}{2}-1}(|\mu||x|^{2}/2)\, \ee^{-|\mu||x|^{2}/4} \, \ee^{-i\mu\cdot z} \,\dd x \,\dd z
\end{equation*}
for $\ell \in \NN$ and $\mu \in \RR^{\dimZ}$, where
\[
L_\ell^a(t) \defeq \frac{1}{\ell!} \, \ee^t \, t^{-a} \frac{d^\ell}{dt^\ell}(\ee^{-t} \,t^{\ell+a}),\quad t\in\RR,
\]
is the Laguerre polynomial of type $a>-1$ and degree $\ell$. For the Gelfand transform we have a Plancherel formula (see \cite[eq.\ (1.1)]{MRS96}),
\begin{equation}\label{eq:Gf_Plancherel}
\|f\|_{L^2(N)}^2 = (2\pi)^{-Q} \int_{\RR^{\dimZ}} \sum_{\ell \in \NN} |\Gf f(\mu,\ell)|^2 \binom{\ell + \frac{\dimV}{2}-1}{\ell} \,|\mu|^\frac{\dimV}{2} \,\dd\mu,
\end{equation}
valid at least for radial $f \in L^1 \cap L^2(N)$,
by means of which $\Gf$ extends to an isometric isomorphism from $L^2_{\rad}(G)$ to $L^2(\RR^{\dimZ} \times \NN, (2\pi)^{-Q} \binom{\ell + \frac{\dimV}{2}-1}{\ell}\,|\mu|^\frac{\dimV}{2} \,\dd\mu \, \dd\#(\ell))$; correspondingly, we have
an inversion formula (see \cite[eq.\ (1.2)]{MRS96}),
\[
f(x,z) = (2\pi)^{-Q} \int_{\RR^{\dimZ}} \sum_{\ell \in \NN} \Gf f(\mu,\ell) \, L_\ell^{\frac{\dimV}{2}-1}(|\mu||x|^2/2) \,\ee^{-|\mu| |x|^2/4} \,\ee^{i\mu\cdot z} \,|\mu|^\frac{\dimV}{2} \,\dd\mu,
\]
valid at least for $f \in L^1_\rad(N)$ in the Schwartz class.

Radial functions on $N$ are intimately related with the joint functional calculus for $(\opLN,-i\nabla_z)$; indeed, the convolution kernel $\krn_{F(\opLN,-i\nabla_z)}$ of an operator $F(\opLN,-i\nabla_z)$ in the joint functional calculus is radial, and
\begin{equation}\label{eq:Gffun}
\Gf \krn_{F(\opLN,-i\nabla_z)}(\mu,\ell) = F((2\ell+\dimV/2)|\mu|,\mu),
\end{equation}
see \cite[eq.\ (2.1)]{MRS96}. In light of this, the Plancherel formula \eqref{eq:Gf_Plancherel} corresponds to that for the joint functional calculus for $(\opLN,-i\nabla_z)$ \cite[Theorem 3.10 and Section 5.3]{M11}.
In addition, we recall from \cite[eq.\ (5.2)]{MRS96} the effect on the Gelfand transform of multiplication by the weight $|x|^2$:
\begin{multline}\label{eq:Gfwgt}
\Gf [(x,z) \mapsto |x|^2 f(x,z)](\mu,\ell) \\
= \frac{2}{|\mu|} \left[ \left(2\ell+\frac{\dimV}{2} \right)\Gf f(\mu,\ell)-\ell \Gf f(\mu,\ell-1)-\left(\ell+\frac{\dimV}{2} \right) \Gf f(\mu,\ell+1) \right].
\end{multline}

We now exploit some formulas from \cite{RT20} to obtain an explicit expression for the Gelfand transform of certain radial functions, which may be compared with the formula for the fundamental solution for $\opLN$ \cite{K80}.

\begin{lemma}\label{lem:lemmaPsi}
Let $H$ be given by \eqref{eq:Hnorm},
and let $s \in \CC$ be such that $\Re s> -1$. If
\[
\Psi_{s}(x,z) \defeq H(x,z)^{-(Q+s)/2},
\]
then
\begin{equation}\label{eq:psi_kernel}
\Psi_s = \krn_{\Xi_s(\opLN,-i\nabla_z)},
\end{equation}
where
\begin{equation}\label{eq:definitionHs}
\Xi_s(\lambda,\mu) \defeq 
\frac{2^{\dimV} \pi^{\frac{\dimN}{2}}\Gamma\left(\frac{\dimV}{4}+\frac{s}{2}\right)}{\Gamma\left( \frac{Q+s}{2}\right) \Gamma\left( \frac{\dimV}{2}+s\right) }
\int_{0}^{\infty} \left( \frac{|\mu|}{\sinh(t|\mu|)}\right)^{s+1} \ee^{-|\mu|\coth(t|\mu|)} \,\ee^{-t\lambda} \,\dd t;
\end{equation}
in particular,
\begin{equation}\label{eq:PSIFOURIER}
\Gf \Psi_{s}(\mu,\ell) = \Xi_s\left( (2\ell+\frac{\dimV}{2})|\mu|,\mu\right).
\end{equation}
\end{lemma}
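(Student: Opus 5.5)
Since $\Psi_s$ is radial, the plan is to compute its Gelfand transform directly. We then match the result with the heat-kernel-type representation \eqref{eq:definitionHs}, and finally invoke \eqref{eq:Gffun} together with injectivity of $\Gf$ on $L^1_\rad(N)$ to deduce \eqref{eq:psi_kernel}.

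The starting point is a subordination identity for the negative power of $H$. For $\Re s>-1$ and $H\geq 1$ we write
\[
H(x,z)^{-(Q+s)/2}=\frac{1}{\Gamma\left(\frac{Q+s}{2}\right)}\int_0^\infty \tau^{\frac{Q+s}{2}-1}\,\ee^{-\tau H(x,z)}\,\dd\tau .
\]
A more convenient alternative is to exploit that $H=(1+|x|^2/4)^2+|z|^2$ has the form $A^2+|z|^2$. The partial Fourier transform in $z$ of $(A^2+|z|^2)^{-(Q+s)/2}$ is a Bessel-$\BesK$ function, namely $c\,(|\mu|/A)^{\nu}\BesK_\nu(A|\mu|)$ with $\nu=\frac{Q+s-\dimZ}{2}=\frac{\dimV/2+s}{2}$. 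Here $A=1+|x|^2/4$. The next step is to integrate this against $L_\ell^{\dimV/2-1}(|\mu||x|^2/2)\,\ee^{-|\mu||x|^2/4}$ in $x$ (polar coordinates, $r=|\mu||x|^2/2$). This integral is of Laguerre--Bessel type, and I would take its evaluation from \cite{RT20}, where exactly such integrals appear, presumably in the form
\[
\int_0^\infty \frac{\BesK_\nu(|\mu|+r/2)}{(|\mu|+r/2)^{\nu}}\, L_\ell^{\dimV/2-1}(r)\,\ee^{-r/2}\,r^{\dimV/2-1}\,\dd r
= c\int_0^\infty \left(\frac{|\mu|}{\sinh t|\mu|}\right)^{s+1}\ee^{-|\mu|\coth t|\mu|}\,\ee^{-t(2\ell+\dimV/2)|\mu|}\,\dd t .
\]
One way to verify this is to insert the integral representation $\BesK_\nu(w)=\frac12(w/2)^\nu\int_0^\infty \ee^{-u-w^2/4u}u^{-\nu-1}\dd u$. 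Then the $r$-integral is a Laguerre Laplace transform, $\int_0^\infty L_\ell^a(r)\ee^{-br}r^a\dd r=\frac{\Gamma(\ell+a+1)}{\ell!}\frac{(b-1)^\ell}{b^{\ell+a+1}}$. After a change of variables $\ee^{-2t|\mu|}=\frac{b-1}{b}$ (Mehler-type substitution), the factor $\left(\frac{b-1}{b}\right)^\ell b^{-\dimV/2}$ becomes $\ee^{-2\ell t|\mu|}$ times powers of $\sinh$, which produces the $\ee^{-t\lambda}$ structure with $\lambda=(2\ell+\dimV/2)|\mu|$. The binomial normalization in $\Gf$ cancels $\Gamma(\ell+\dimV/2)/\ell!$, which is why the final function depends on $\ell$ only through $\lambda$.

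The main obstacle I expect is this last computation: carrying out the $x$- and auxiliary integrations so as to land exactly on the kernel $\left(\frac{|\mu|}{\sinh t|\mu|}\right)^{s+1}\ee^{-|\mu|\coth t|\mu|}$ with the stated constant $2^{\dimV}\pi^{\dimN/2}\Gamma(\frac{\dimV}{4}+\frac s2)/(\Gamma(\frac{Q+s}{2})\Gamma(\frac{\dimV}{2}+s))$. For the constant, a convenient consistency check is the limit $\mu\to0$ at fixed $\lambda$: there the formula must reduce to $\int_N\Psi_s$, computable in closed form by Lemma-\ref{lem:intradial}-type polar integration. Both sides must also be defined, and justified by Fubini, for $\Re s>-1$. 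Here integrability of $\Psi_s$ (decay $H^{-(Q+\Re s)/2}$ against homogeneous dimension $Q$ requires $\Re s>0$) fails for $-1<\Re s\leq 0$. So I would first prove the identity for $\Re s>0$, where $\Psi_s\in L^1_\rad(N)$, and then extend by analytic continuation in $s$. In that range $\Psi_s\in L^2$ (as $\Re s>-1$ gives $2(Q+\Re s)>Q$), and both sides depend holomorphically on $s$ in $L^2$. We then identify them via the Plancherel isometry \eqref{eq:Gf_Plancherel}, which gives \eqref{eq:PSIFOURIER}, and hence \eqref{eq:psi_kernel} by \eqref{eq:Gffun}.
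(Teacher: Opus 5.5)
Your strategy (compute $\Gf\Psi_s$ by hand, using the Fourier transform in $z$ and then a Laguerre integral in $x$) is a legitimate alternative to the paper's. However, the step it hinges on fails as written.

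\textbf{The gap.} Put $w=|\mu|+r/2$ into the representation you chose, $w^{-\nu}\BesK_\nu(w)=2^{-\nu-1}\int_0^\infty \ee^{-u-w^2/4u}u^{-\nu-1}\,\dd u$. This produces the factor $\ee^{-(|\mu|+r/2)^2/4u}$, which is Gaussian in $r$. The $r$-integral is therefore not of the form $\int_0^\infty L_\ell^{\dimV/2-1}(r)\,\ee^{-br}r^{\dimV/2-1}\,\dd r$. So neither the Laguerre Laplace-transform formula nor the Mehler-type substitution applies.

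\textbf{The fix.} You need a representation of $w^{-\nu}\BesK_\nu(w)$ as a Laplace transform in $w$:
\[
w^{-\nu}\BesK_\nu(w)=\frac{\sqrt{\pi}}{2^{\nu}\,\Gamma(\nu+\frac12)}\int_1^\infty \ee^{-w\tau}\,(\tau^2-1)^{\nu-\frac12}\,\dd\tau,\qquad \Re\nu>-\tfrac12 .
\]
This applies because $\Re\nu=\frac{\dimV}{4}+\frac{\Re s}{2}>0$. With it, $b=\frac{1+\tau}{2}$ and $\frac{b-1}{b}=\frac{\tau-1}{\tau+1}$. The substitution $\tau=\coth(t|\mu|)$ then turns $\ee^{-|\mu|\tau}(\tau^2-1)^{\nu-\frac12}(\tau-1)^{\ell}(\tau+1)^{-\ell-\frac{\dimV}{2}}\,\dd\tau$ into $|\mu|^{-s}\bigl(\frac{|\mu|}{\sinh(t|\mu|)}\bigr)^{s+1}\ee^{-|\mu|\coth(t|\mu|)}\,\ee^{-t(2\ell+\frac{\dimV}{2})|\mu|}\,\dd t$.

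\textbf{Completing the computation.} The factor $|\mu|^{-s}$ cancels the factor $|\mu|^{2\nu-\dimV/2}=|\mu|^{s}$ coming from the $z$-Fourier transform and the polar change of variables. The binomial normalisation cancels $\Gamma(\ell+\frac{\dimV}{2})/\ell!$, as you anticipated.

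Tracking constants gives the prefactor $2^{\frac{\dimV}{2}+1-s}\pi^{\frac{\dimN+1}{2}}/\bigl(\Gamma(\frac{Q+s}{2})\,\Gamma(\frac{\dimV}{4}+\frac{s}{2}+\frac12)\bigr)$. By the duplication formula applied to $\Gamma(\frac{\dimV}{2}+s)$, this equals the constant in \eqref{eq:definitionHs}, so no consistency check is needed.

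The check you proposed is also misstated. $\int_N\Psi_s$ is the Gelfand transform at $(\lambda,\mu)=(0,0)$, where $\Xi_s(0,0)$ equals $\Gamma(s)$ times the prefactor. The limit $\mu\to0$ at fixed $\lambda>0$ corresponds to a Bessel-type character, not to integration against $1$. Your analytic-continuation and Plancherel argument for $-1<\Re s\le 0$ is the same as the paper's.

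\textbf{Comparison with the paper.} The paper avoids all Bessel and Laguerre computations. It quotes \cite[eq.\ (3.6) and Corollary 3.6]{RT20}, which writes a multiple of $((1+|x|^2)^2+16|z|^2)^{-(Q+s)/2}$ as $\int_0^\infty q_t*_N(\delta_0\otimes p_{t,s})\,\dd t$. Combined with the Mehler formula for the heat kernel $q_t$ of $\opLN$, this exhibits the function directly as the kernel of $\int_0^\infty \widehat p_{t,s}(-i\nabla_z)\,\ee^{-t\opLN}\,\dd t$. A dilation then gives \eqref{eq:psi_kernel}, and \eqref{eq:PSIFOURIER} follows from \eqref{eq:Gffun}.

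That identity is only used for $s\in(0,1)$, and the paper continues analytically from there. Your repaired computation is self-contained and works directly for all $\Re s>0$, at the cost of the special-function bookkeeping.
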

\begin{proof}
From the above definitions it readily follows that $\Psi_s \in L^2(N)$ and that the right-hand side of \eqref{eq:PSIFOURIER} is in $L^2(\RR^{\dimZ} \times \NN, (2\pi)^{-Q}\binom{\ell + \frac{\dimV}{2}-1}{\ell}\,|\mu|^\frac{\dimV}{2} \,\dd\mu \, \dd\#(\ell))$ whenever $\Re s >-1$ (cf.\ Lemma \ref{lem:Propositionderivativeestimate}). So, by \eqref{eq:Gffun} and the Plancherel formula \eqref{eq:Gf_Plancherel}, both sides of \eqref{eq:psi_kernel} are in $L^2(N)$ for $\Re s> -1$, and they are (weakly) analytic in $s$. Hence, by analytic continuation, it is enough to prove \eqref{eq:psi_kernel} for $s \in (0,1)$.

Let $q_{t} = \krn_{\ee^{-t\opLN}}$ be the heat kernel associated to $\opLN$.
We recall the well-known Mehler-type formula for the partial Fourier transform of $q_t$ along $\lie{z}$ (see, e.g., \cite[Proposition 4]{MM16} or \cite[eq.\ (3.4)]{RT20}):
\begin{equation*}
\int_{\RR^{\dimZ}} q_t(x,z) \, \ee^{-i\mu\cdot z} \,\dd z 
= (4\pi)^{-\dimV/2} \left(\frac{|\mu|}{\sinh(t|\mu|)}\right)^{\dimV/2} \ee^{-\frac{1}{4}|\mu|\coth(t|\mu|)|x|^2}
\end{equation*}
for $x \in \RR^{\dimV}$ and $\mu\in \RR^{\dimZ}$.
Much as in \cite[eq.\ (3.5)]{RT20}, for $t>0$ and $s \in (0,1)$, we consider functions $p_{t,s} : \RR^{\dimZ} \to \CC$ whose Fourier transforms are given by
\[
\widehat p_{t,s}(\mu) =	\int_{\RR^{\dimZ}} p_{t,s}(z) \, \ee^{-i\mu\cdot z} \,\dd z = (4\pi)^{-s-1} \left(\frac{|\mu|}{\sinh(t|\mu|)}\right)^{s+1} \ee^{-\frac{1}{4}|\mu|\coth(t|\mu|)}
\]
for $\mu\in \RR^{\dimZ}$ (notice that $p_{t,s}$ here corresponds to $p_{t,s}(1,\cdot)$ in \cite{RT20}). 
By \cite[eq.\ (3.6) and Corollary 3.6]{RT20} applied with $\rho=1$, we deduce that
\begin{equation*}
C_{\dimV,\dimZ,s} ((1+|x|^2)^2+16|z|^2)^{-(Q+s)/2} = \int_{0}^{\infty} \int_{\RR^{\dimZ}} p_{t,s}(w) \, q_{t}(x,z-w) \,\dd w \,\dd t.
\end{equation*}
where
\[
C_{\dimV,\dimZ,s}=\frac{4^{\dimZ-1}}{\pi^{\dimN/2+1+s}}\frac{\Gamma(\dimV/2+s) \,\Gamma((Q+s)/2)}{\Gamma(\dimV/4+s/2)}.
\]
In other words,
\begin{equation}\label{eq:int_convN}
C_{\dimV,\dimZ,s} ((1+|x|^2)^2+16|z|^2)^{-(Q+s)/2} = \int_{0}^{\infty} (q_{t} *_N (\delta_0 \otimes p_{t,s}))(x,z) \,\dd t.
\end{equation}
As $q_t$ is the convolution kernel of $\ee^{-t\opLN}$, while $\delta_0 \otimes p_{t,s}$ is the convolution kernel of $\widehat p_{t,s}(-i\nabla_z)$ on $N$, the right-hand side of \eqref{eq:int_convN} is the convolution kernel of the operator
\[	
\int_{0}^{\infty} \widehat p_{t,s}(-i\nabla_z) \, \ee^{-t\opLN} \,\dd t.
\]
Through the automorphic scaling $(x,z) \mapsto \dil_{1/4} (x,z)$ we thus obtain that
\[
4^{-Q} C_{\dimV,\dimZ,s} ((1+|x|^2/4)^2+|z|^2)^{-(Q+s)/2} = \krn_{\Theta_s(\opLN,-i\nabla_z)}(x,z),
\]
where
\[\begin{split}
\Theta_s(\lambda,\mu) 
&\defeq \int_0^\infty \widehat p_{t,s}(4\mu) \,\ee^{-4t\lambda} \,\dd t \\
&= 4^{-1} \pi^{-s-1} \int_0^\infty \left(\frac{|\mu|}{\sinh(t|\mu|)}\right)^{s+1} \ee^{-|\mu|\coth(t|\mu|)} \, \ee^{-t\lambda} \,\dd t
\end{split}\]
This proves \eqref{eq:psi_kernel}, and \eqref{eq:PSIFOURIER} follows from \eqref{eq:Gffun}.
\end{proof}

\begin{lemma}\label{lem:Psiconvolutionkernel}
With the notation of Lemma \ref{lem:lemmaPsi}, we have
\[
(1+|x|^{2}/4)\Psi_s(x,z) = \krn_{\tilde \Xi_s{(\opLN,-i\nabla_z)}}(x,z),	
\]
where
\begin{equation}\label{eq:definitiontildeHs}
\tilde \Xi_s(\lambda,\mu) \defeq \Xi_s(\lambda,\mu) - \lambda \Xi^{(2)}_s(\lambda,\mu) - \frac{\dimV}{4} \Xi_s^{(1)}(\lambda,\mu)
\end{equation}
and 
\begin{equation}\label{eq:H123}
\begin{aligned}
\Xi^{(1)}_s(\lambda,\mu) &\defeq \int_{-1}^{1} \partial_\lambda \Xi_s(\lambda + 2v |\mu|,\mu) \,\dd v,\\
\Xi^{(2)}_s(\lambda,\mu) &\defeq \int_{-1}^{1} \partial_\lambda^{2} \Xi_s(\lambda + 2v |\mu|,\mu) \, (1-|v|) \,\dd v,
\end{aligned}
\end{equation}
\end{lemma}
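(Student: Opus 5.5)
The plan is to work on the Gelfand transform side and use the weight formula \eqref{eq:Gfwgt} together with Lemma \ref{lem:lemmaPsi}. Since $(1+|x|^2/4)\Psi_s$ is radial, and it lies in $L^2(N)$ when $\Re s$ is large enough (we may first restrict to such $s$ and then extend by analytic continuation, as in the proof of Lemma \ref{lem:lemmaPsi}), it suffices to show that
\[
\Gf[(1+|x|^2/4)\Psi_s](\mu,\ell) = \tilde\Xi_s\bigl((2\ell+\tfrac{\dimV}{2})|\mu|,\mu\bigr)
\]
for almost every $\mu$ and all $\ell\in\NN$. The conclusion then follows from \eqref{eq:Gffun} and the Plancherel isomorphism \eqref{eq:Gf_Plancherel}.

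First I apply \eqref{eq:Gfwgt} to $f=\Psi_s$ and use \eqref{eq:PSIFOURIER}. Writing $\lambda=(2\ell+\dimV/2)|\mu|$, so that $\lambda\pm 2|\mu|$ correspond to $\ell\pm1$, this gives
\[
\frac14\Gf[|x|^2\Psi_s](\mu,\ell) = \frac{1}{2|\mu|}\Bigl[\lambda\,\Xi_s(\lambda,\mu) - \ell\,\Xi_s(\lambda-2|\mu|,\mu) - (\ell+\tfrac{\dimV}{2})\Xi_s(\lambda+2|\mu|,\mu)\Bigr].
\]
The term with $\ell=0$ and $\ell-1$ carries the coefficient $0$, so it is harmless. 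Next I substitute $\ell = \lambda/(2|\mu|)-\dimV/4$ and $\ell+\dimV/2=\lambda/(2|\mu|)+\dimV/4$. The bracket then splits as
\[
-\frac{\lambda}{2|\mu|}\bigl[\Xi_s(\lambda+2|\mu|)-2\Xi_s(\lambda)+\Xi_s(\lambda-2|\mu|)\bigr] - \frac{\dimV}{4}\bigl[\Xi_s(\lambda+2|\mu|)-\Xi_s(\lambda-2|\mu|)\bigr].
\]

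Now I use Taylor's formula with integral remainder in the variable $\lambda$. For the first-order difference, $\Xi_s(\lambda+2|\mu|)-\Xi_s(\lambda-2|\mu|) = 2|\mu|\int_{-1}^1\partial_\lambda\Xi_s(\lambda+2v|\mu|)\,\dd v = 2|\mu|\,\Xi_s^{(1)}$. For the second-order difference, the Peano kernel identity gives $g(h)-2g(0)+g(-h)=h^2\int_{-1}^1 g''(vh)(1-|v|)\,\dd v$, and with $h=2|\mu|$ this equals $4|\mu|^2\,\Xi_s^{(2)}$. Dividing by $2|\mu|$ and adding $\Gf\Psi_s=\Xi_s$ yields exactly \eqref{eq:definitiontildeHs}.

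The one nontrivial point is that these steps require $\Xi_s(\cdot,\mu)$ to be defined and twice differentiable on the whole interval $[\lambda-2|\mu|,\lambda+2|\mu|]$. When $\ell=0$ this interval reaches down to $\lambda=(\dimV/2-2)|\mu|$, which may be $\le0$. So I need the integral \eqref{eq:definitionHs} to converge there. As $t\to\infty$ the integrand behaves like $\ee^{-t(s+1)|\mu|-t\lambda}$, so it converges for $\lambda>-(\Re s+1)|\mu|$, and this holds for $\lambda\ge(\dimV/2-2)|\mu|$ once $\Re s>1-\dimV/2$. Differentiation under the integral sign is then justified by dominated convergence, and the range of $s$ is handled by the analytic-continuation reduction.
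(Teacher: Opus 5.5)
Your proposal is correct and is essentially the paper's argument: apply \eqref{eq:Gfwgt} to $\Psi_s$ together with \eqref{eq:PSIFOURIER}, regroup the three terms as $(\ell+\dimV/4)$ times a second difference plus $\dimV/4$ times a first difference, and turn these into $\Xi^{(2)}_s$ and $\Xi^{(1)}_s$ by Taylor's formula with integral remainder. The paper does this in the variable $\ell$, extended to non-integer values, while you do it in $\lambda$, which is the same computation.

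Two minor remarks. In your first display the coefficient of $\Xi_s(\lambda,\mu)$ should be $2\ell+\dimV/2=\lambda/|\mu|$ rather than $\lambda$; your subsequent splitting already uses the correct value. Also, your observation that $\Xi_s(\cdot,\mu)$ must make sense down to $(\dimV/2-2)|\mu|$, i.e.\ $\Re s>1-\dimV/2$, is a point the paper leaves implicit. Analytic continuation cannot go beyond this range, which matters only when $\dimV=2$, but that range contains the case $s=2$ actually used.
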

\begin{proof}
By \eqref{eq:PSIFOURIER} and \eqref{eq:definitionHs}, we can think of $\Gf \Psi_s(\mu,\ell) = \Xi_s((2\ell+\dimV/2)|\mu|,\mu)$ as being defined for noninteger values of $\ell$ too. Thus, by \eqref{eq:Gfwgt} and the fundamental theorem of calculus, for all $\ell \in \NN$ and $\mu \in \dot\RR^{\dimZ}$,
\[\begin{split}
&\frac{|\mu|}{2} \Gf (|x|^{2}\Psi_{s})(\mu,\ell)\\
&=\ell \left( \Gf \Psi_s(\mu,\ell)-\Gf \Psi_s(\mu,\ell-1)\right) +\left(\ell+\frac{\dimV}{2}\right)
\left( \Gf\Psi_s(\mu,\ell)-\Gf\Psi_s(\mu,\ell+1 )\right) \\
&=\ell \int_0^1 \partial_\ell \Gf \Psi_s(\mu,\ell-1+w) \,\dd w- \left(\ell+\frac{\dimV}{2}\right) \int_0^1 \partial_\ell \Gf\Psi_s(\mu,\ell+w) \,\dd w \\
&=-\left(\ell+\frac{\dimV}{4}\right) \int_0^1 \int_0^1 \partial_\ell^2 \Gf \Psi_s(\mu,\ell-v+w) \,\dd w \,\dd v-\frac{\dimV}{4} \int_{-1}^1 \partial_\ell \Gf\Psi_s(\mu,\ell+w) \,\dd w \\
&=-\left(\ell+\frac{\dimV}{4}\right) \int_{-1}^1 \partial_\ell^2 \Gf \Psi_s(\mu,\ell+u) \, (1-|u|) \,\dd u -\frac{\dimV}{4} \int_{-1}^1 \partial_\ell \Gf\Psi_s(\mu,\ell+w) \,\dd w .
\end{split}\]
By plugging the expression $\Gf \Psi_s(\mu,\ell) = \Xi_s((2\ell+\dimV/2)|\mu|,\mu)$ into the previous formula, we finally get, by \eqref{eq:H123},
\[\begin{split}
\frac{1}{4} \Gf (|x|^{2}\Psi_{s})(\mu,\ell)
&=-\left(2\ell+\frac{\dimV}{2}\right) |\mu| \int_{-1}^1 \partial_\lambda^2 \Xi_s((2\ell+2u+\dimV/2)|\mu|,\mu) \, (1-|u|) \,\dd u \\
&\quad -\frac{\dimV}{4} \int_{-1}^1 \partial_\lambda \Xi_s((2\ell+2w+\dimV/2)|\mu|,\mu) \,\dd w \\
&= \left. \left[-\lambda \Xi_s^{(2)}(\lambda,\mu) - \frac{\dimV}{4} \Xi_s^{(1)}(\lambda,\mu)\right]\right|_{\lambda = (2\ell+\dimV/2)|\mu|},
\end{split}\]
thus also, by \eqref{eq:definitiontildeHs},
\[
\Gf ((1+|x|^{2}/4)\Psi_{s})(\mu,\ell)
= \tilde \Xi_s((2\ell+\dimV/2)|\mu|,\mu),
\]
and the conclusion follows by \eqref{eq:Gffun}.
\end{proof}

Thanks to the previous lemmas, we can now write explicit expressions for the functions $F_0,F_{\lie{v}},F_{\lie{z}}$ discussed in Section \ref{ss:convKopvalmult}.

\begin{proposition}\label{prp:r0rbulletF0Fbullet}
We can write the functions $r_0,r_{\lie{v}},r_{\lie{z}}$ from \eqref{eq:r0} and \eqref{eq:Kbullet} as
\[
r_0 = \krn_{F_0(\opLN,-i\nabla_z)}, \qquad r_{\lie{v}} = \krn_{F_{\lie{v}}(\opLN,-i\nabla_z)}, \qquad r_{\lie{z}} = \krn_{F_{\lie{z}}(\opLN,-i\nabla_z)},
\]
where, with the notation of Lemmas \ref{lem:lemmaPsi} and \ref{lem:Psiconvolutionkernel},
\begin{equation}\label{eq:F0Fbullet}
F_0(\lambda,\mu) = \tilde \Xi_2(\lambda,\mu), \quad 
F_{\lie{v}}(\lambda,\mu) = \lambda^{1/2} \Xi_0(\lambda,\mu), \quad
F_{\lie{z}}(\lambda,\mu) = \lambda \Xi_0(\lambda,\mu).
\end{equation}
\end{proposition}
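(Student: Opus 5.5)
The plan is to reduce each identity to Lemmas \ref{lem:lemmaPsi} and \ref{lem:Psiconvolutionkernel}, combined with the multiplicativity of the joint functional calculus.

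For $r_0$: by \eqref{eq:r0},
\[
r_0 = (1+|x|^2/4)\, H^{-(Q+2)/2} = (1+|x|^2/4)\,\Psi_2 .
\]
Since $s=2$ satisfies $\Re s>-1$, Lemma \ref{lem:Psiconvolutionkernel} applies directly and gives $r_0 = \krn_{\tilde\Xi_2(\opLN,-i\nabla_z)}$, that is, $F_0=\tilde\Xi_2$.

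For $r_{\lie v}$ and $r_{\lie z}$: note that $H^{-Q/2}=\Psi_0$, and $s=0$ is admissible in Lemma \ref{lem:lemmaPsi}. Hence $H^{-Q/2} = \krn_{\Xi_0(\opLN,-i\nabla_z)}$. Now $r_{\lie v} = \opLN^{1/2} H^{-Q/2}$ and $r_{\lie z} = \opLN H^{-Q/2}$, where $\opLN$ acts on the kernel as a left-invariant operator applied to a function. Therefore, formally,
\[
\opLN^{\gamma}\,\krn_{\Xi_0(\opLN,-i\nabla_z)} = \krn_{\opLN^{\gamma}\,\Xi_0(\opLN,-i\nabla_z)} = \krn_{(\lambda^\gamma\Xi_0)(\opLN,-i\nabla_z)}
\]
for $\gamma\in\{1/2,1\}$. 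This uses that $\phi*_N(\opLN^\gamma k)=\opLN^\gamma(\phi*_N k)$ for left-invariant $\opLN$, together with the joint functional calculus product rule. The product rule yields $F_{\lie v}=\lambda^{1/2}\Xi_0$ and $F_{\lie z}=\lambda\Xi_0$.

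The main point needing care is justifying the fractional power $\opLN^{1/2}$ applied to $H^{-Q/2}$. I would argue on the Gelfand side. By \eqref{eq:PSIFOURIER}, $\Gf H^{-Q/2}(\mu,\ell)=\Xi_0((2\ell+\dimV/2)|\mu|,\mu)$. The function $\lambda^\gamma \Xi_0$, evaluated on the joint spectrum, must lie in the weighted $L^2$ space of \eqref{eq:Gf_Plancherel}; this holds because the integral defining $\Xi_0$ decays like $\lambda^{-1}$ times bounded factors for large $\lambda$ (cf.\ the estimates of Lemma \ref{lem:Propositionderivativeestimate}), and because $H^{-Q/2}$ is smooth with $\opLN^{\gamma}H^{-Q/2}\in L^2$. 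The two sides then define the same element of $L^2_{\rad}(N)$ by \eqref{eq:Gffun} and spectral-theoretic commutation: for $g\in\operatorname{dom}(\opLN^\gamma)$ radial, $\Gf(\opLN^\gamma g)=\lambda^\gamma\Gf g$ evaluated at $\lambda=(2\ell+\dimV/2)|\mu|$. For $\gamma=1$ this is immediate by the classical computation on eigenfunctions. For $\gamma=1/2$ it follows from the spectral theorem, since $\opLN$ acts on radial $L^2$ functions diagonally in the Gelfand variables.

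Finally, smoothness and boundedness of $F_0,F_{\lie v},F_{\lie z}$ follow from the integral formula \eqref{eq:definitionHs}; this is not needed for the identities themselves.
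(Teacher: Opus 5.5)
Your proposal is correct and follows the paper's proof: $r_0=(1+|x|^2/4)\Psi_2$ is handled by Lemma~\ref{lem:Psiconvolutionkernel} with $s=2$, and $r_{\lie{v}},r_{\lie{z}}$ by Lemma~\ref{lem:lemmaPsi} with $s=0$ together with $\opLN^{\gamma}\krn_{T}=\krn_{\opLN^{\gamma}T}$ for left-invariant $T$. Your Gelfand-side justification makes rigorous what the paper leaves formal, with one small correction: a bound $|\Xi_0|\lesssim\lambda^{-1}$ alone would not put $\lambda^{1/2}\Xi_0$ in the weighted $L^2$ space of \eqref{eq:Gf_Plancherel}, since the Plancherel measure grows like $\lambda^{Q-1}\,\dd\lambda$ on the cone; you need either the exponential decay $\ee^{-2c\sqrt{\lambda+|\mu|}}$ from Lemma~\ref{lem:Propositionderivativeestimate}, which you cite, or your other observation that $\vfXN_j H^{-Q/2}$ and $\opLN H^{-Q/2}$ lie in $L^2(N)$.
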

\begin{proof}
From \eqref{eq:r0} we know that
\[
r_0(x,z) = (1+|x|^{2}/4) \Psi_2(x,z),
\]
so the expression for $F_0$ is a direct consequence of Lemma \ref{lem:Psiconvolutionkernel}. On the other hand, from \eqref{eq:Kbullet} and Lemma \ref{lem:lemmaPsi} we have
\[
r_{\lie{v}} = \opLN^{1/2} \Psi_0 = \opLN^{1/2} \krn_{\Xi_0(\opLN,-i\nabla_z)} = \krn_{\opLN^{1/2} \Xi_0(\opLN,-i\nabla_z)},
\]
whence we deduce the expression for $F_{\lie{v}}$, and a similar computation gives the expression for $F_{\lie{z}}$.
\end{proof}

\begin{remark}\label{rem:notradial}
Arguing as in the proof of Proposition \ref{prp:r0rbulletF0Fbullet}, from \eqref{eq:rjXj} on could also deduce that $r_{j+\dimV} = \krn_{F_j(\opLN,-i\nabla_z)}$, where $F_j(\lambda,\mu) = -Q^{-1} i \mu_j \Xi_0(\lambda,\mu)$, for $j=1,\dots,\dimZ$. However, such representation is not possible for the $r_j$ with $j=1,\dots,\dimV$, because they are not radial, see \eqref{eq:rjV}. The reduction in Proposition \ref{prp:RzNfactorisation} is thus essential to treat the Riesz transforms $\Rz_j$ for $j=1,\dots,\dimV$ with our approach.
\end{remark}

\subsection{Estimates for the operator-valued symbols}\label{ss:symbol_estimates}

On the basis of Proposition \ref{prp:r0rbulletF0Fbullet} and the discussion in Section \ref{ss:convKopvalmult}, we can write the convolution operators with kernels $K_0,K_{\lie{v}},K_{\lie{z}}$ as the operator-valued spectral multipliers $M_0(\opLN,-i\nabla_z)$, $M_{\lie{v}}(\opLN,-i\nabla_z)$ and $M_{\lie{z}}(\opLN,-i\nabla_z)$, whose operator-valued symbols $M_0,M_{\lie{v}},M_{\lie{z}}$ are given in \eqref{eq:Mbullet} and \eqref{eq:M0}, where the functions $F_0,F_{\lie{v}},F_{\lie{z}}$ are as in \eqref{eq:F0Fbullet}.

We are now ready to present the main result of this section, which concerns the weighted $L^{2}$-boundedness of the integral operators $M_{0}(\lambda,\mu),M_{\lie{v}}(\lambda,\mu),M_{\lie{z}}(\lambda,\mu)$ from \eqref{eq:Mbullet} and \eqref{eq:M0}, as well as their $(\lambda,\mu)$-derivatives.

\begin{proposition}\label{prp:propositionaboutkjk0}
The operator-valued functions $M_{0},M_{\lie{v}},M_{\lie{z}}$ are of class $C^{\infty}$ on $\Rpos \times \RR^{\dimZ}$. Moreover, for all $\alpha \in \NN^{1+\dimZ}$, $\kappa \in \Rpos$, $w\in A_{2}(\RR^+)$ and $\ell \in \{0,\lie{v},\lie{z}\}$,
\begin{equation}\label{eq:inequalityaboutkjk0h0}
		\max_{|\alpha|\geq0}\sup_{\substack{(\lambda,\mu)\in\RR^{\dimZ+1}\setminus \{0\}\\\lambda\geq \kappa|\mu|}}\left\|\lambda^{|\alpha|}\partial_{(\lambda,\mu)}^{\alpha} M_\ell(\lambda,\mu)\right\|_{\opLN(L^{2}(w))}
		\lesssim_{[w]_{A_{2}}} 1.
\end{equation}
\end{proposition}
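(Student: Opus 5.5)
The plan is to pass to the logarithmic variable $u=\log a$, as in Remark \ref{rem:productmeasure}. There each $M_\ell(\lambda,\mu)$ becomes, up to multiplication operators by bounded functions, a truncated Hilbert transform on $\RR$. Weighted $L^2(w)$ bounds with $w\in A_2$ are classical for such operators. This reduces \eqref{eq:inequalityaboutkjk0h0} to uniform pointwise bounds for scale-invariant derivatives of the scalar functions $F_0,F_{\lie{v}},F_{\lie{z}}$ on the cone $\{\lambda\ge\kappa|\mu|\}$.

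\emph{Step 1 (structure of the symbols).} Write $T_J\phi \defeq \phi*_\RR J$ for $J\in\{J_0,J_{\lie{n}}\}$ from \eqref{eq:J0Jbullet}. For a bounded function $g$ on $\RR$, let $m_g$ be multiplication by $g$, and set $g_{\lambda,\mu}(u') \defeq F(\ee^{u'}\lambda,\ee^{u'}\mu)$. Then \eqref{eq:Mbullet} and \eqref{eq:M0} read
\[
M_\ell(\lambda,\mu)=T_{J_{\lie{n}}}\, m_{g_{\lambda,\mu}} \quad(\ell=\lie{v},\lie{z},\ F=F_\ell),\qquad
M_0(\lambda,\mu)=m_{g_{\lambda,\mu}}T_{J_0}-T_{J_0}\,m_{g_{\lambda,\mu}} \quad (F=F_0).
\]
The key point is that the commutator form of $M_0$ means we never need to exploit the cancellation $F_0(\ee^u\lambda)-F_0(\ee^{u'}\lambda)$. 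Since $J_0,J_{\lie{n}}$ are truncations of the Hilbert kernel $1/u$, the operators $T_{J}$ are bounded on $L^2(w)$ with norm depending only on $[w]_{A_2}$. This is standard weighted Calder\'on--Zygmund theory: the maximal truncated Hilbert transform is $A_2$-bounded. Hence
\[
\|M_\ell(\lambda,\mu)\|_{\LinBdd(L^2(w))}\lesssim_{[w]_{A_2}}\sup_{\Rpos\times\RR^{\dimZ}\cap\{\lambda\ge\kappa|\mu|\}}|F|.
\]

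\emph{Step 2 (derivatives).} By the chain rule, $\lambda^{|\alpha|}\partial^\alpha_{(\lambda,\mu)}[F(\ee^{u'}\lambda,\ee^{u'}\mu)]$ is a finite linear combination of functions $(\xi^\beta\partial^\beta_\xi F)(\ee^{u'}\xi)\cdot(\lambda^{|\alpha|}/\xi^{\beta'})$ with $|\beta|=|\alpha|$. On the cone $|\mu|\le\lambda/\kappa$, each factor $\lambda^{|\alpha|}\mu^{-\gamma}$ that arises is replaced by a bounded multiple, because every derivative costs a factor comparable to $\lambda$ there; more precisely, $\lambda^{|\alpha|}\partial^\alpha$ is controlled by the homogeneous derivatives $|\xi|^{|\alpha|}\partial^\alpha$ since $\lambda\simeq|\xi|$ on the cone. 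The cone is dilation-invariant, so the point $\ee^{u'}\xi$ stays in it. Thus $\lambda^{|\alpha|}\partial^\alpha M_\ell$ has the same structure as in Step 1, with $F$ replaced by the functions $\xi^\beta\partial^\beta F$. Differentiation under the integral sign is justified by the same bounds applied with matrix coefficients, and this gives the $C^\infty$ claim. Everything therefore reduces to
\[
\sup_{\lambda\ge\kappa|\mu|,\ (\lambda,\mu)\neq0}\ |(\lambda,\mu)|^{|\beta|}\,|\partial^\beta_{(\lambda,\mu)}F_\ell(\lambda,\mu)|<\infty\qquad\forall\beta,\ \ell\in\{0,\lie{v},\lie{z}\}.
\]

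\emph{Step 3 (the main obstacle: symbol estimates).} This is where the explicit formulas \eqref{eq:definitionHs}, \eqref{eq:definitiontildeHs} and \eqref{eq:F0Fbullet} enter. I expect this to be the delicate part, presumably via the derivative estimates for $\Xi_s$ (the Lemma \ref{lem:Propositionderivativeestimate} alluded to earlier). In the integral for $\Xi_s$, the derivatives fall on $\ee^{-t\lambda}$ and on $(|\mu|/\sinh t|\mu|)^{s+1}\ee^{-|\mu|\coth t|\mu|}$. I would substitute $t\mapsto t/\lambda$ and use $|\mu|/\lambda\le1/\kappa$: each homogeneous derivative then produces a factor $t$ or $t|\mu|/\lambda$, which is harmless against $\ee^{-t}$. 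The required bounds are the following.
\begin{itemize}
\item[(i)] For $\ell=\lie{v},\lie{z}$, the factor $\lambda^{1/2}$ or $\lambda$ in \eqref{eq:F0Fbullet} must kill the logarithmic divergence of $\Xi_0$ as $(\lambda,\mu)\to0$. Here $\Xi_0(0,0)$ is infinite, since $\int^\infty t^{-1}\ee^{-1/t}\,\dd t$ diverges, but $\Xi_0\lesssim1+|\log\lambda|$.
\item[(ii)] At infinity, $\ee^{-|\mu|\coth(t|\mu|)}\le\ee^{-1/t}$ gives decay of the form $\ee^{-c\sqrt\lambda}$.
\item[(iii)] For $F_0=\tilde\Xi_2$, the averaged derivatives $\Xi^{(1)}_2$ and $\Xi^{(2)}_2$ are evaluated at $\lambda+2v|\mu|$. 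On the cone with $\kappa<\dimV/2$ this stays comparable to $\lambda$ only near the spectrum. I would therefore first replace the symbols by $\zeta\cdot F$ with a $0$-homogeneous cutoff as in Remark \ref{rem:conicspectrum}, choosing $\kappa>2$ where needed or restricting to a smaller conic neighbourhood of $\{\lambda\ge\frac{\dimV}{2}|\mu|\}$, and then check boundedness of $\lambda\,\partial_\lambda^2\Xi_2$ and $\partial_\lambda\Xi_2$ together with their homogeneous derivatives, using $s=2>-1$.
\end{itemize}
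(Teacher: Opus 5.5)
There is a genuine gap in Step 1, and it invalidates the reduction in Step 2. The kernels $J_0(u)=\chr_{\{u\geq1\}}/u$ and $J_{\lie{n}}(u)=\chr_{\{u\leq-1\}}/u$ are \emph{one-sided} truncations of $1/u$. Convolution with them is not bounded even on unweighted $L^2(\RR)$. The even part of $J_{\lie{n}}$ is $-\chr_{\{|u|\geq1\}}/(2|u|)$, whose Fourier transform diverges like $\log(1/|\xi|)$ at $\xi=0$. Concretely, for $\phi=\chr_{[0,R]}$ one gets $T_{J_{\lie{n}}}\phi(u)=-\log(R-u)$ for $u\in[0,R-1]$, so $\|T_{J_{\lie{n}}}\phi\|_2\gtrsim \log R\,\|\phi\|_2$. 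Only the two-sided truncation $\chr_{\{|u|\geq1\}}/u$, used for $\tilde K_0$ in Proposition \ref{prp:propsitiontildeK0}, is a Calder\'on--Zygmund kernel. As a sanity check: if $T_{J_{\lie{n}}}$ were bounded, then $f*K_j=A_{\lie{n}}B_jf$ with $B_j$ bounded (since $r_j\in L^1(N)$) would already be bounded, and the whole operator-valued multiplier machinery would be superfluous.

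Consequently, bounded scale-invariant derivatives of $F_\ell$ on the cone are not enough. The symbol $F\equiv1$ satisfies your Step 2 condition, yet yields $M_{\lie{v}}=T_{J_{\lie{n}}}$, which is unbounded. The commutator form $M_0=[m_g,T_{J_0}]$ does not rescue the argument either. For $F(\lambda,\mu)=\sin\log\lambda$ the Fourier multiplier of $M_0$ involves $\hat J_0(\xi\mp1)-\hat J_0(\xi)$, which blows up logarithmically, so $M_0$ is again unbounded. The cancellation in $F_0(\ee^u\xi)-F_0(\ee^{u'}\xi)$ is therefore essential, contrary to your remark.

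What is actually needed are the two sets of hypotheses of \cite[Proposition 5.6]{M23}, which the paper invokes.
\begin{itemize}
\item[(a)] For $\ell\in\{\lie{v},\lie{z}\}$ you need decay $|\lambda^{|\alpha|}\partial^\alpha F_\ell|\lesssim\min\{\lambda,\lambda^{-1}\}^{\delta}$ on the cone. This localizes $g_{\lambda,\mu}$ exponentially around $u'\approx-\log\lambda$ and so tames the non-Calder\'on--Zygmund even part. Your estimates (i)--(ii) do give this, but you must use them as decay rather than mere boundedness.
\item[(b)] For $F_0$, note that $F_0(0,0)=\int_N r_0>0$, so there is no decay at the origin. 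You need decay $(1+\lambda)^{-\delta}$ at infinity together with $\delta$-H\"older continuity of $\lambda^{|\alpha|}\partial^\alpha F_0$ on the cone. The paper obtains the H\"older bound from the gradient estimate $\lesssim\lambda^{\varepsilon-1}$ via Lemma \ref{lem:Lemmanabla}; this ingredient is absent from your plan.
\end{itemize}

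A minor point concerns (iii): you need not restrict $\kappa$. Since $(\lambda+2v|\mu|)+3|\mu|\geq\lambda+|\mu|$, Lemma \ref{lem:Propositionderivativeestimate} with $s=2$ applies for every $\kappa>0$. Choosing $\kappa>2$ would not prove the statement as given, and it is incompatible with $\kappa<\dimV/2$ when $\dimV=2$.
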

\begin{proof}
We shall derive this estimate from \cite[Proposition 5.6]{M23}; we point out that the result in \cite{M23} applies to operators on $L^2(\RR)$, so we must use the change of variables $a = \ee^u$ as in Remark \ref{rem:productmeasure} to apply it to the operators on $L^2(\Rpos)$ considered here.

Now, for $\ell \in \{\lie{v},\lie{z}\}$, in light of \eqref{eq:Mbullet} and \eqref{eq:J0Jbullet}, the operators $\lambda^{|\alpha|} \partial_{(\lambda,\mu)}^\alpha M_\ell(\lambda,\mu)$ are integral operators on $L^2(\Rpos)$ with kernels
\[
(\ee^u,\ee^{u'}) \mapsto F_\ell^\alpha(\ee^{u'} \lambda,\ee^{u'} \mu) \,\frac{\chr_{\{u \leq u'-1\}}}{u-u'},
\]
where $F_\ell^\alpha(\lambda,\mu) \defeq \lambda^{|\alpha|} \partial_{(\lambda,\mu)}^{\alpha} F_\ell(\lambda,\mu)$;
hence, by \cite[Proposition 5.6(ii)]{M23}, to establish the estimate \eqref{eq:inequalityaboutkjk0h0}, it is enough to show that, for some $\delta >0$,
\begin{equation*}
|F_\ell^\alpha(\lambda,\mu)| \lesssim_{\alpha,\kappa} \min\{\lambda,\lambda^{-1}\}^{\delta} \qquad\text{when } \lambda \geq \kappa|\mu|,
\end{equation*}
which follows from Proposition \ref{prp:theoremtildeHtildeestimate} below.

Similarly, in light of \eqref{eq:M0} and \eqref{eq:J0Jbullet}, the operators $\lambda^{|\alpha|} \partial_{(\lambda,\mu)}^\alpha M_0(\lambda,\mu)$ are integral operators with kernels
\[
(\ee^u,\ee^{u'}) \mapsto 
[F_0^\alpha(\ee^{u} \lambda,\ee^{u} \mu) - F_0^\alpha(\ee^{u'} \lambda,\ee^{u'} \mu)] \, \frac{\chr_{\{u \geq u'+1\}}}{u-u'},
\]
where $F_0^\alpha(\lambda,\mu) \defeq \lambda^{|\alpha|} \partial_{(\lambda,\mu)}^{\alpha} F_0(\lambda,\mu)$.
Thus, in order to obtain \eqref{eq:inequalityaboutkjk0h0}, by \cite[Proposition 5.6(i)]{M23}, it is enough to prove that, for some $\delta>0$,
\begin{align*}
\left|F_0^\alpha(\lambda,\mu) \right| &\lesssim_{\alpha,\kappa} (1+\lambda)^{-\delta} &&\text{when } \lambda \geq \kappa|\mu|,\\
|F_0^\alpha(\lambda,\mu)- F_0^\alpha(\lambda',\mu')| &\lesssim_{\alpha,\kappa} |(\lambda,\mu)-(\lambda',\mu')|^{\delta} &&\text{when } \lambda \geq \kappa|\mu|, \ \lambda' \geq \kappa|\mu'|,
\end{align*}
which also follows from Proposition \ref{prp:theoremtildeHtildeestimate} below.
\end{proof}

\begin{proposition}\label{prp:theoremtildeHtildeestimate}
Let $F_0,F_\lie{v},F_{\lie{z}}$ be as in \eqref{eq:F0Fbullet}.
Let $\kappa \in \Rpos$ and $c \in (0,1)$. For any $\alpha\in \NN^{1+\dimZ}$ and $(\lambda,\mu) \in \Rpos \times \RR^{\dimZ}$ with $\lambda\geq \kappa|\mu|$, we have 
\begin{equation}\label{eq:l12H0est}
\begin{aligned}
\left|\lambda^{|\alpha|}\partial_{(\lambda,\mu)}^\alpha F_{\lie{v}}(\lambda,\mu) \right|
&\lesssim_{c,\kappa,\alpha} \lambda^{1/2} \log(\ee+1/\lambda) \, \ee^{-2c\sqrt{\lambda+|\mu|}}, \\
\left|\lambda^{|\alpha|}\partial_{(\lambda,\mu)}^\alpha F_{\lie{z}}(\lambda,\mu) \right|
&\lesssim_{c,\kappa,\alpha} \lambda \log(\ee+1/\lambda) \, \ee^{-2c\sqrt{\lambda+|\mu|}}
\end{aligned}
\end{equation}
and
\begin{equation}\label{eq:tH2est}
\left|\lambda^{|\alpha|}\partial_{(\lambda,\mu)}^\alpha F_0(\lambda,\mu) \right|
\lesssim_{c,\kappa,\alpha} \begin{cases}
\ee^{-2c\sqrt{\lambda+|\mu|}} &\text{if } |\alpha|=0,\\
\lambda \log(\ee+1/\lambda) \, \ee^{-2c\sqrt{\lambda+|\mu|}} &\text{if } |\alpha|=1,\\
\lambda \, \ee^{-2c\sqrt{\lambda+|\mu|}} &\text{if } |\alpha|\geq 2.
\end{cases}
\end{equation}
In addition, for all $\varepsilon \in (0,1)$ and $\alpha \in \NN^{1+\dimZ}$, the function
$(\lambda,\mu) \mapsto \lambda^{|\alpha|}\partial_{(\lambda,\mu)}^\alpha F_0(\lambda,\mu)$ is $\varepsilon$-H\"older continuous on $\{ (\lambda,\mu) \in \Rpos \times \RR^{\dimZ} \tc \lambda \geq \kappa|\mu|\}$.
\end{proposition}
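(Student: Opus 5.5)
The plan is to estimate everything directly from the integral representation \eqref{eq:definitionHs} of $\Xi_s$. Write $\Xi_s(\lambda,\mu) = c_s \int_0^\infty g_s(t,|\mu|)\,\ee^{-t\lambda}\,\dd t$, where
\[
g_s(t,m) \defeq \left(\frac{m}{\sinh(tm)}\right)^{s+1}\ee^{-m\coth(tm)}.
\]
The basic pointwise facts are $m/\sinh(tm) \le 1/t$ and $m\coth(tm) \ge \max\{1/t, m\}$. So, up to polynomial factors, $g_s(t,m) \lesssim t^{-s-1}\ee^{-1/t}\ee^{-m/2}\ee^{-1/(2t)}$. Moreover $g_s$ is a smooth function of $m^2$ (as a function of $\mu$ it is smooth and even in $m$). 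Each $\mu$-derivative of $g_s$ costs at most a polynomial factor in $(t, 1/t, m)$, and each $\lambda$-derivative brings down a factor $-t$.

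\emph{Step 1 (weighted symbol bounds for $\Xi_s$).} Multiplying by $\lambda^{|\alpha|}$, the factor $\lambda^{|\alpha|}t^{k}$ against $\ee^{-t\lambda}$ is handled by $\ee^{-t\lambda} \lesssim (t\lambda)^{-k}\ee^{-c't\lambda}$ with $c' < 1$. For $\mu$-derivatives, on the cone $|\mu| \le \lambda/\kappa$ one has $\lambda \gtrsim |\mu|$, so the factors $\lambda^{|\alpha|}$ and $m^{k}$ are likewise comparable to powers of $t\lambda$ and $1/t$. Hence we are reduced to bounding
\[
I_\sigma(\lambda) = \int_0^\infty t^{-\sigma}\ee^{-c/t}\,\ee^{-c t(\lambda+|\mu|)}\,\dd t,
\]
after splitting $\ee^{-m\coth}\ee^{-t\lambda}$ and absorbing $m \lesssim \lambda$. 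A Laplace (Bessel $K$) computation gives $I_\sigma \lesssim \ee^{-2c\sqrt{\lambda+|\mu|}}$ for large $\lambda$. For small $\lambda$ it gives a constant when $\sigma > 1$, $\log(\ee + 1/\lambda)$ when $\sigma = 1$, and $\lambda^{\sigma-1}$ when $\sigma < 1$. With $s = 0$ we get $|\lambda^{|\alpha|}\partial^\alpha\Xi_0| \lesssim \log(\ee+1/\lambda)\,\ee^{-2c\sqrt{\lambda+|\mu|}}$; this is the borderline $\sigma = 1$ case, and it produces the log. Multiplying by $\lambda^{1/2}$ or $\lambda$ and using the Leibniz rule (derivatives of $\lambda^{1/2}$ and $\lambda$ are harmless under $\lambda^{|\alpha|}$) gives \eqref{eq:l12H0est}.

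\emph{Step 2 ($F_0 = \tilde\Xi_2$).} Here $s = 2$, so $\sigma = 3$ at order zero and small $\lambda$ causes no singularity. The terms $\Xi^{(1)}_2$ and $\Xi^{(2)}_2$ are averages of $\partial_\lambda\Xi_2$ and $\partial_\lambda^2\Xi_2$ over $\lambda' = \lambda + 2v|\mu|$. On the cone with $\kappa$ small one does not have $\lambda' \simeq \lambda$ automatically, so I would first use Remark-type reasoning: the joint spectrum lies in $\lambda \ge (\dimV/2)|\mu|$. Alternatively, I would note that $\Xi_2$ extends analytically/smoothly in $\lambda'$ down to $\lambda' > -|\mu|$, since $m\coth(tm) - tm \ge 0$ gives extra decay $\ee^{-t\lambda'}$ controlled for $\lambda' \ge -|\mu|(1-\epsilon)$. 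Thus $\Xi_2$ is controlled for $\lambda' \ge \lambda - 2|\mu|$. If this fails for small $\kappa$, I would instead estimate $\tilde\Xi_2$ via its original expression as the Gelfand transform of $(1+|x|^2/4)\Psi_2$: rewrite it with $|x|^2$ acting as $-\partial$-type operators in the $t$-integral, by differentiating the Mehler formula in $|x|^2$. That gives a single $t$-integral with an extra factor $\coth(t|\mu|)/|\mu|$-type weight, i.e. effectively $s = 1$ behaviour plus $s = 2$ terms.

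From either representation, $F_0$ behaves like an $s = 2$ term minus $\lambda\,\partial_\lambda^2$ of an $s = 2$ term, each $\partial_\lambda$ contributing a factor $t$. The order-zero bound is then $\ee^{-2c\sqrt{\lambda+|\mu|}}$. For $|\alpha| = 1$, one derivative combined with $\lambda$ makes the effective exponent $\sigma = 1$, which gives $\lambda\log$. For $|\alpha| \ge 2$ the exponent is $\sigma < 1$ and we get the bound $\lambda$. This yields \eqref{eq:tH2est}.

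\emph{Step 3 (Hölder continuity).} Since $\lambda^{|\alpha|}\partial^\alpha F_0$ is bounded and its gradient is $\lambda^{-1}$ times a sum of terms $\lambda^{|\beta|}\partial^\beta F_0$ with $|\beta| = |\alpha|+1 \ge 1$, Step 2 gives $|\nabla(\cdot)| \lesssim \log(\ee+1/\lambda)$ on the cone. Near $\lambda = 0$ (where also $\mu \to 0$) this is integrable along segments inside the convex cone. Integrating gives a modulus of continuity $\omega(h) \lesssim h\log(\ee+1/h)$, which combined with boundedness implies $\varepsilon$-Hölder continuity for every $\varepsilon < 1$. The main obstacle I expect is Step 2: controlling the shifted arguments $\lambda + 2v|\mu|$ in $\Xi_2^{(1)}$ and $\Xi_2^{(2)}$ uniformly on cones with small $\kappa$, and verifying the cancellation that makes the order-zero bound free of any small-$\lambda$ singularity.
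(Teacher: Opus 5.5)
The gap is in Step 2, at the point you yourself flag: none of your three suggestions handles the shifted evaluations $\Xi_2(\lambda+2v|\mu|,\mu)$, $|v|\le 1$, in \eqref{eq:H123}. Steps 1 and 3 are essentially the paper's argument: Lemma~\ref{lem:Propositionderivativeestimate} with $s=0$ plus the Leibniz rule, and Lemma~\ref{lem:Lemmanabla}. In Step 1, what you actually need is that each $\lambda$- or $\mu$-derivative costs at most a factor $\max\{1,t\}$, as in Lemma~\ref{lem:lemmaExpST}, not merely some polynomial in $(t,1/t,m)$.

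Your three suggestions fail as follows.
\begin{itemize}
\item The appeal to the joint spectrum is beside the point. The statement concerns every $\kappa>0$, and even on the spectrum, for $\dimV=2$ and $\ell=0$, the shifted point $\lambda-2|\mu|=-|\mu|$ is negative.
\item The second suggestion relies on the wrong factor. Since $\ee^{-|\mu|\coth(t|\mu|)}\to\ee^{-|\mu|}$ as $t\to\infty$, it gives no decay growing in $t$ and cannot offset $\ee^{-t\lambda'}$ for $\lambda'<0$. The inequality $m\coth(tm)\ge tm$ you invoke is false for large $t$.
\item Even granting control for $\lambda'>-|\mu|$, that range misses $\lambda-2|\mu|$ whenever $\kappa\le 1$. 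So ``$\Xi_2$ is controlled for $\lambda'\ge\lambda-2|\mu|$'' does not follow.
\end{itemize}

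The missing idea is the decay you discarded at the outset by bounding $|\mu|/\sinh(t|\mu|)\le 1/t$. Since $\fS(u)\lesssim(1+u)\,\ee^{-u}$, one has $\bigl|(|\mu|/\sinh(t|\mu|))^{1+s}\bigr|\lesssim t^{-1-\sigma}(1+t|\mu|)^{1+\sigma}\ee^{-(1+\sigma)t|\mu|}$. Hence $\Xi_s$ and all its derivatives are controlled on $\{\lambda+(1+\sigma)|\mu|\ge\kappa|\mu|\}$, negative $\lambda$ included, by bounds in terms of $\lambda+(1+\sigma)|\mu|$; this is why Lemma~\ref{lem:Propositionderivativeestimate} is stated on that set.

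For $s=2$ one has $(\lambda+2v|\mu|)+3|\mu|\ge\lambda+|\mu|$ for all $|v|\le 1$. So every shifted evaluation obeys the bound at level $\lambda+|\mu|$, uniformly in $\kappa$. The $\mu$-derivatives falling on the $|\mu|$ inside the shift are handled by writing $\Xi_2^{(b)}(\lambda,\mu)=\tilde\Xi_2^{(b)}(\lambda,|\mu|,\mu)$, with $\tilde\Xi_2^{(b)}$ even in the middle variable, and applying Lemma~\ref{lem:propositionderivative}.

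No cancellation is needed at order zero. $\Xi_2$ and $\Xi_2^{(1)}$ are bounded by $\ee^{-2c\sqrt{\lambda+|\mu|}}$, and $\lambda\Xi_2^{(2)}$ by $\lambda\log(\ee+1/\lambda)\,\ee^{-2c\sqrt{\lambda+|\mu|}}$.

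Your Mehler-based fallback would also work if carried out, but its outcome is not the weight you describe. Write $\hat q_t(x,\mu)$ for the partial Fourier transform in $z$ of the heat kernel of $\opLN$. Then $\frac{|x|^2}{4}\hat q_t=\frac{\sinh^2(t|\mu|)}{|\mu|^2}\bigl(\partial_t+\frac{\dimV}{2}|\mu|\coth(t|\mu|)\bigr)\hat q_t$. Equivalently, compute the $v$-integrals of \eqref{eq:H123} under the $t$-integral; that interchange is justified precisely by the $\ee^{-3t|\mu|}$ decay. Either way one finds
\begin{equation*}
\tilde\Xi_2(\lambda,\mu)=C_2\int_0^\infty \ee^{-|\mu|\coth(t|\mu|)}\,\ee^{-t\lambda}\Bigl[\Bigl(\frac{|\mu|}{\sinh(t|\mu|)}\Bigr)^{3}-\lambda\,\frac{|\mu|}{\sinh(t|\mu|)}+\frac{\dimV}{2}\,\frac{|\mu|^{2}\cosh(t|\mu|)}{\sinh^{2}(t|\mu|)}\Bigr]\,\dd t,
\end{equation*}
where $C_2$ is the prefactor in \eqref{eq:definitionHs} for $s=2$. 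This is an $s=2$ term, minus a constant multiple of $\lambda\Xi_0$ (that is, of $F_{\lie{z}}$), plus a term with the $t$-homogeneity of the case $s=1$. Applying your Step 1 bookkeeping to these three pieces yields \eqref{eq:tH2est}. As it stands, however, your proposal carries out neither this computation nor the extended-domain argument, so the key step remains open.
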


The proof of the above proposition will be presented after a few preliminary lemmas. The first one concerns differentiation of radial extensions of even functions, and will be useful to estimate $\mu$-derivatives of expressions, such as \eqref{eq:H123}, which also involve $|\mu|$.

\begin{lemma}\label{lem:propositionderivative}
Let $n\in\NN$. Let $f\in C^{n}(\RR)$ be even, and define $F : \RR^{\dimWZX} \to \CC$ by $F(\mu)=f(|\mu|)$, $\mu\in\RR^{\dimWZX}$. Then 
$F\in C^{n}(\RR^{\dimWZX})$. Moreover, for all $\mu \in \RR^{\dimWZX}$ and $\alpha\in\NN^{\dimWZX}$ with $|\alpha|\leq n$,
\begin{equation}\label{eq:derivative2}
\left| \partial_{\mu}^{\alpha}F(\mu)\right| 
\lesssim_{\dimWZX,|\alpha|} \int_{0}^{1}|f^{(|\alpha|)}(s|\mu|)| \,\dd \mu_{|\alpha|}(s)\leq \sup_{s\in[0,1]}|f^{(|\alpha|)}(s|\mu|)|,
\end{equation}
for some probability measure $\mu_{|\alpha|}$ on $[0,1]$, and more precisely
\begin{equation}\label{eq:derivative1}
|\partial_{\mu}^{\alpha}F(\mu)|\leq|f^{(|\alpha|)}(|\mu|)| \qquad\text{if } |\alpha|\leq 1.
\end{equation}
\end{lemma}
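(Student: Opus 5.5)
The plan is to write $F$ in terms of an even function of one variable and differentiate along lines. Since $f$ is even and $C^n$, its odd-order derivatives at $0$ vanish. I will use the representation
\[
F(\mu) = f(|\mu|) = g(|\mu|^2), \qquad g(t) \defeq f(\sqrt{t}) \text{ for } t \ge 0,
\]
but regularity of $g$ at $0$ is delicate for finite $n$. So instead I would work directly with the symmetric representation
\[
F(\mu) = \int_{S^{\dimWZX-1}} f(\omega\cdot\mu)\,\dd\sigma(\omega)
\]
only if $f$ were a polynomial. That is not the case, so I drop this and use the following strategy.

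\emph{First-order derivatives and \eqref{eq:derivative1}.} Away from $\mu=0$ the chain rule gives
\[
\partial_{\mu_j} F(\mu) = f'(|\mu|)\,\mu_j/|\mu|,
\]
whence $|\partial_{\mu_j}F(\mu)| \le |f'(|\mu|)|$. Since $f'(0)=0$ by evenness, $f'(|\mu|)\,\mu/|\mu|$ extends continuously by $0$ at the origin. $F$ is Lipschitz near $0$ with this derivative, so $F\in C^1$ and \eqref{eq:derivative1} holds; the case $|\alpha|=0$ is trivial.

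\emph{Higher derivatives.} The key identity comes from evenness. Since $f'$ is odd, $f'(0)=0$ and
\[
f'(r) = \int_0^r f''(t)\,\dd t = r\int_0^1 f''(sr)\,\dd s .
\]
Hence
\[
\nabla F(\mu) = \mu\, G_1(\mu), \qquad G_1(\mu) \defeq \int_0^1 f''(s|\mu|)\,\dd s = g_1(|\mu|),
\]
where $g_1(r)=\int_0^1 f''(sr)\,\dd s$ is even and of class $C^{n-2}$, with $g_1^{(k)}(r)=\int_0^1 s^k f^{(k+2)}(sr)\,\dd s$. I then induct on $n$: the statement applied to $g_1$ (with $n-2$) gives $G_1\in C^{n-2}$, with derivatives of order $k$ bounded by averages of $|f^{(k+2)}(s|\mu|)|$ against probability measures on $[0,1]$. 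Since
\[
\partial_{\mu_j}F = \mu_j G_1,
\]
$F$ is $C^{n-1}$ by Leibniz. To get full $C^n$ and the sharp order count I need an improved recursion. The plan is to use instead
\[
\partial^\alpha(\mu_j G_1) = \mu_j\,\partial^\alpha G_1 + \alpha_j\,\partial^{\alpha-e_j}G_1 ,
\]
bound $|\mu_j \partial^{\alpha}G_1|$ using that the derivatives of $G_1$ of order $k$ themselves have the form $\mu^\beta$ times radial factors, and track the homogeneity. Concretely, I would prove by induction the structural statement
\[
\partial^\alpha F(\mu) = \sum_{2m\ge |\alpha|,\ |\beta| = 2m-|\alpha|} c_{\beta}\,\mu^\beta\, h_m(|\mu|),
\]
with $m \le |\alpha|$, $c_\beta$ constants, and
\[
h_m(r) = \int_0^1 f^{(2m-|\beta|)}(sr)\,(\text{kernel in } s)\,\dd s
\]
suitably normalized. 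This follows by repeatedly applying the averaging identity above, which trades each factor $1/r$ produced by differentiating $|\mu|$ for one integration in $s$ together with one more derivative of $f$.

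\emph{Sizes and continuity.} The heart of the matter is bookkeeping: each term $\mu^\beta h_m$ is to be bounded by the integral of $|f^{(|\alpha|)}(s|\mu|)|$ against some positive measure. The main obstacle is arranging the kernels so that the powers $|\mu|^{|\beta|}$ are absorbed; I would use Taylor's formula with integral remainder, expressing $r^{-k}f^{(j)}(r)$ for odd-parity situations as averages of $f^{(j+k)}(sr)$. After normalization, the measures can be collected into a single probability measure $\mu_{|\alpha|}$ at the cost of a constant depending on $\dimWZX$ and $|\alpha|$. Continuity at $0$ of each term follows from dominated convergence, giving $F\in C^n$ and \eqref{eq:derivative2}.
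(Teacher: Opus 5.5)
Your argument for $|\alpha|\le 1$ is fine, and the recursion $\nabla F(\mu)=\mu\,g_1(|\mu|)$ with $g_1(r)=\int_0^1 f''(sr)\,\dd s$ does give $F\in C^{n-1}$, but it loses one derivative. The actual content of the lemma, namely $F\in C^n$ and the bound of $\partial^\alpha F$ by averages of $|f^{(|\alpha|)}|$, is left to the ``improved recursion''. That recursion is not carried out, and as stated it is false.

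Since $|\beta|=2m-|\alpha|$, your $h_m$ is an average of $f^{(|\alpha|)}(sr)$, which is incompatible with homogeneity. Take $f(r)=r^4$ and $|\alpha|=2$. Then $\partial_{\mu_j}\partial_{\mu_k}|\mu|^4=4\delta_{jk}|\mu|^2+8\mu_j\mu_k$. The constant coefficient $8$ of $\mu_j\mu_k$ cannot equal $\int_0^1 f''(sr)K(s)\,\dd s=12r^2\int_0^1 s^2K(s)\,\dd s$ for any kernel $K$, since the latter is either $0$ or nonconstant in $r$.

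In the natural expansion, the coefficient of $\mu^\beta$ is a multiple of $(r^{-1}\partial_r)^m f$. Written as an average over $[0,1]$ by iterating your identity, this involves $f^{(2m)}$, i.e.\ up to $2|\alpha|$ derivatives of $f$, more than you have. To express it through $f^{(|\alpha|)}$ you must allow a singular factor $r^{-|\beta|}$ and a \emph{signed} kernel. Then both boundedness and continuity at $0$ hinge on cancellations you do not establish; note that the angular factor $\mu^\beta/|\mu|^{|\beta|}$ is discontinuous at $0$, so ``dominated convergence'' does not apply term by term.

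Your averaging identity cannot produce these cancellations. The formula $\phi(r)=r\int_0^1\phi'(sr)\,\dd s$ needs $\phi(0)=0$, which holds for odd derivatives of $f$ but not even ones. So your ``odd-parity'' Taylor step only trades a factor $1/r$ for one derivative once. Dividing $f^{(2i)}(r)$ by $r$ leaves singular terms $f^{(2i)}(0)/r$ that must cancel against other terms.

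The missing idea is to perform this cancellation globally. For $|\alpha|=m$, subtract from $f$ its Taylor polynomial $P_m$ at $0$ of degree $<m$; it contains only even powers. Then $P_m(|\mu|)$ is a polynomial in $\mu$ of degree $<m$, killed by $\partial^\alpha$, so $\partial^\alpha F=\partial^\alpha[R_m(|\mu|)]$ with $R_m=f-P_m$. Since $R_m^{(k)}(0)=0$ for $k<m$ and $R_m^{(m)}=f^{(m)}$, Taylor's formula with integral remainder gives
\[
R_m^{(k)}(r)=r^{m-k}\int_0^1 f^{(m)}(sr)\,\frac{(1-s)^{m-k-1}}{(m-k-1)!}\,\dd s .
\]
In the chain-rule expansion $\partial^\alpha[R_m(|\mu|)]=\sum_{k=1}^m Q_{\alpha,k}(\mu)R_m^{(k)}(|\mu|)$, the $Q_{\alpha,k}$ are homogeneous of degree $k-m$. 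Hence every term is separately bounded by an average of $|f^{(m)}(s|\mu|)|$, with no cancellation bookkeeping needed.

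Likewise, $F\in C^n$ follows by subtracting the order-$n$ Taylor polynomial. The remainder $R$ satisfies $R^{(k)}(t)=o(t^{n-k})$, so all derivatives of $R(|\mu|)$ of order at most $n$ tend to $0$ at the origin.
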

\begin{proof}
The case $n=0$ is trivial, so we may assume $n>0$.
	
Let us first prove that $F\in C^{n}(\RR^{\dimWZX})$. As $f$ is even, $f^{(k)}(0)=0$ for any odd $k \leq n$. Thus we can write $f=P+R$, where 
\[
P(t)\defeq \sum_{\substack{0\leq k\leq n\\k \text{ even}}} \frac{f^{(k)}(0)}{k!} t^{k}
\]
is the Taylor polynomial of $f$ at $0$ of order $n$, and the remainder $R\in C^{n}(\RR)$ satisfies $R^{(k)}(0)=0$ for $k=0,\dots,n$. In particular, 
\begin{equation}\label{eq:smalloremainder}
R^{(k)}(t)=o(t^{n-k}) \text{ as } t\to 0 \quad \text{for } k=0,\dots,n.
\end{equation}
Correspondingly, $F(\mu)=f(|\mu|)=P(|\mu|)+R(|\mu|)$, where 
\[
P(|\mu|)=\sum_{\substack{0\leq k\leq n\\k \text{ even}}}\frac{f^{(k)}(0)}{k!} |\mu|^{k}
\]
is a polynomial in $|\mu|^2$, so $\mu\mapsto P(|\mu|)$ is in $C^\infty(\RR^{\dimWZX})$, and it just remains to show that $\mu\mapsto R(|\mu|)$ is in $C^{n}(\RR^{\dimWZX})$.

On the other hand, as $|\cdot|$ is smooth off the origin, $R(|\cdot|) \in C(\RR^{\dimWZX})\cap C^{n}(\dot\RR^{\dimWZX})$.
We now show that $\partial_{\mu}^{\alpha}[R(|\mu|)]\to 0$ as $\mu\to 0$ whenever $|\alpha| \leq n$. When $\alpha = 0$, this is clear from \eqref{eq:smalloremainder}.
On the other hand, if $1\leq |\alpha|\leq n$, by the Chain and Leibniz rules,
\begin{equation}\label{eq:der_abs}
\partial_{\mu}^{\alpha}[R(|\mu|)]=\sum_{k=1}^{|\alpha|}Q_{\alpha,k}(\mu)R^{(k)}(|\mu|),
\end{equation}
for suitable functions $Q_{\alpha,k} \in C^{\infty}(\dot\RR^{\dimWZX})$ homogeneous of degree $k-|\alpha|$; thus, again by \eqref{eq:smalloremainder},
\[
\partial_{\mu}^{\alpha}[R(|\mu|)] = \sum_{k=1}^{|\alpha|} O(|\mu|^{k-|\alpha|}) \, o(|\mu|^{n-k}) = o(|\mu|^{n-|\alpha|})\to 0\text{ as } \mu\to 0.
\]
This proves that $\mu\mapsto \partial_{\mu}^{\alpha}[R(|\mu|)]$ extends to a continuous function on the whole $\RR^{\dimWZX}$ for any $\alpha\in\NN^{\dimWZX}$ with $|\alpha|\leq n$, and consequently $R(|\cdot|) \in C^n(\RR^{\dimWZX})$.
	
It remains to prove the estimates \eqref{eq:derivative2} and \eqref{eq:derivative1}. Clearly they hold for $\alpha=0$, so we may assume $|\alpha|>0$, and moreover by continuity it is enough to check them for $\mu \neq 0$.
	
As $\partial_{\mu_{j}}F(\mu)=\frac{\mu_{j}}{|\mu|}f'(|\mu|)$ for $j=1,\dots,\dimWZX$, the estimate \eqref{eq:derivative1} is clear. 
To prove \eqref{eq:derivative2}, let $\alpha \in \NN^{\dimWZX}$ be such that $1 \leq m \defeq |\alpha|\leq n$, consider the polynomial
\[
P_{m}(t) \defeq \sum_{\substack{0 \leq k < m \\ k \text{ even}}} \frac{f^{(k)}(0)}{k!}t^{k}
\]
and set $R_{m} \defeq f-P_{m}$. As before, $R_{m}\in C^n(\RR)$ and $R_{m}^{(k)}(0)=0$ for $0\leq k<m$. Moreover $P_{m}^{(m)} \equiv 0$, thus $R^{(m)}_{m} = f^{(m)}$. Therefore, for $0 \leq k < m$, the Taylor expansion of $R_m^{(k)}$ with integral remainder at $0$ of order $m-k-1$ is given by
\begin{equation}\label{eq:RNk}
\begin{split}
R_{m}^{(k)}(t) 
&= \frac{1}{(m-k-1)!} \int_0^t R_m^{(m)}(u) \, (t-u)^{m-k-1} \,\dd u\\
&= t^{m-k} \int_0^1 f^{(m)}(st) \frac{(1-s)^{m-k-1}}{(m-k-1)!} \,\dd s.
\end{split}
\end{equation}
Now, $F(\mu) = f(|\mu|) = P_{m}(|\mu|) + R_{m}(|\mu|)$, where $P_{m}(|\mu|)$ is a polynomial of degree less than $m$; so, as $|\alpha|=m$,
\[
\partial_{\mu}^{\alpha}F(\mu)
= \partial_{\mu}^{\alpha}[R_{m}(|\mu|)]
= \sum_{k=1}^{m} Q_{\alpha,k}(\mu) R_{m}^{(k)}(|\mu|),
\]
where the $Q_{\alpha,k}$ are smooth and $(k-|\alpha|)$-homogeneous as in \eqref{eq:der_abs}; combining this identity with \eqref{eq:RNk} and the fact that $R_m^{(m)} = f^{(m)}$, we conclude that
\[\begin{split}
&|\partial_{\mu}^{\alpha}F(\mu)| 
\lesssim_{\dimWZX,m} \sum_{k=1}^{m} |\mu|^{k-m} |R_{m}^{(k)}(|\mu|)| \\
&\leq \sum_{k=1}^{m-1} |\mu|^{k-m} |\mu|^{m-k} \int_0^1 |f^{(m)}(s|\mu|)| \, \frac{(1-s)^{m-k-1}}{(m-k-1)!} \,\dd s+|f^{(m)}(|\mu|)|\\
&\lesssim_{m} \int_0^1 |f^{(m)}(s|\mu|)| \,\dd \mu_{m}(s),
\end{split}\]
as required, where $\mu_{m}$ is some probability measure on $[0,1]$.
\end{proof}

Let $\fS,\fT : \RR \to \RR$ be the even, real analytic functions defined by
\begin{equation}\label{eq:defST}
\fS(u) = \frac{u}{\sinh u}, \qquad \fT(u) = \frac{u}{\tanh u}.
\end{equation}
As these functions appear in the expression \eqref{eq:definitionHs} for $\Xi_s$, it is useful to collect a few estimates for their derivatives.

\begin{lemma}
The functions $\mu \mapsto \fS(|\mu|)$ and $\mu \mapsto \fT(|\mu|)$ are real-analytic on $\RR^{\dimWZX}$, and
\begin{align}
\label{eq:estimateS}
|\partial_\mu^\alpha \fS(|\mu|) | &\lesssim_{\alpha} (1+|\mu|) \, \ee^{-|\mu|},\\
\label{eq:estimateT}
|\partial_\mu^\alpha \fT(|\mu|) | &\lesssim_{\alpha} (1+|\mu|)^{1-|\alpha|}
\end{align}
for all $\alpha \in \NN^{\dimWZX}$ and $\mu \in \RR^{\dimWZX}$. In addition $\fT \geq 1$ pointwise.
\end{lemma}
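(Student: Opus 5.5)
Real-analyticity comes first. Both $\fS$ and $\fT$ are even, real-analytic functions on $\RR$ that extend holomorphically to the strip $|\Im u|<\pi$, since $\sinh$ and $\tanh$ vanish only at $i\pi\ZZ$ and the singularities at $u=0$ are removable. Being even, they can be written as $\fS(u)=g_{\fS}(u^2)$ and $\fT(u)=g_{\fT}(u^2)$ with $g_{\fS},g_{\fT}$ real-analytic on $(-\pi^2,\infty)$: near $0$ this follows from the power series, which contain only even powers, and away from $0$ from composing with $u=\sqrt{v}$. Hence $\fS(|\mu|)=g_{\fS}(|\mu|^2)$ and $\fT(|\mu|)=g_{\fT}(|\mu|^2)$ are compositions of real-analytic maps and are therefore real-analytic on $\RR^{\dimWZX}$. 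The inequality $\fT\ge1$ is elementary: for $u>0$ we have $\tanh u<u$, hence $u/\tanh u\geq 1$; evenness covers $u<0$, and $\fT(0)=1$.

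For the derivative bounds I would split into $|\mu|\le1$ and $|\mu|\ge1$. On the compact ball $|\mu|\le1$ both functions are smooth by the above, so all their derivatives are bounded, and both \eqref{eq:estimateS} and \eqref{eq:estimateT} hold trivially there. For $|\mu|\ge1$ I would use the chain-rule formula \eqref{eq:der_abs} from the proof of Lemma \ref{lem:propositionderivative}: $\partial_\mu^\alpha[f(|\mu|)]=\sum_{k=1}^{|\alpha|}Q_{\alpha,k}(\mu)f^{(k)}(|\mu|)$ with $Q_{\alpha,k}$ homogeneous of degree $k-|\alpha|$, so that $|Q_{\alpha,k}(\mu)|\lesssim |\mu|^{k-|\alpha|}\le 1$. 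It then suffices to establish the one-variable bounds, for $u\ge1$,
\[
|\fS^{(k)}(u)|\lesssim_k u\,\ee^{-u},\qquad |\fT^{(k)}(u)|\lesssim_k u^{1-k}.
\]
The $k=0$ terms ($\alpha=0$) are immediate.

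For $\fS$, write $\fS(u)=2u\ee^{-u}(1-\ee^{-2u})^{-1}$. On $u\ge1$ the factor $(1-\ee^{-2u})^{-1}$ has all derivatives bounded, and every derivative of $u\ee^{-u}$ is $O((1+u)\ee^{-u})$; the Leibniz rule then gives the claim. For $\fT$, write $\fT(u)=u+u\,\frac{2\ee^{-2u}}{1-\ee^{-2u}}$. The second summand has all derivatives $O(u\ee^{-2u})$, which is far below any $u^{1-k}$. The first summand $u$ contributes $u$ for $k=0$, $1$ for $k=1$, and $0$ for $k\ge2$, all consistent with $u^{1-k}$. So the bound $(1+|\mu|)^{1-|\alpha|}$ follows.

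The one delicate point is the $\fT$ estimate for $|\alpha|\ge2$. Here the improvement comes entirely from the homogeneity factors $|\mu|^{k-|\alpha|}$, so the crude bound $|Q_{\alpha,k}|\le1$ is not enough: one must keep the full factor. For the $k=1$ term, $|Q_{\alpha,1}|\lesssim|\mu|^{1-|\alpha|}$ multiplied by $|\fT'|\lesssim1$ gives exactly $|\mu|^{1-|\alpha|}$. For $k\ge2$, $\fT^{(k)}$ is exponentially small, so those terms are harmless. Combining the two regions gives \eqref{eq:estimateS} and \eqref{eq:estimateT}.
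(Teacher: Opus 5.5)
Your proposal is correct and follows essentially the same route as the paper. Both proofs take the trivial bounds on $|\mu|\le 1$, prove the one-variable estimates $|\fS^{(k)}(u)|\lesssim_k u\,\ee^{-u}$ and $|\fT^{(k)}(u)|\lesssim_k u^{1-k}$ for $u\ge 1$, and combine them through the chain-rule expansion with coefficients homogeneous of degree $k-|\alpha|$. You rightly keep the full factor $|\mu|^{k-|\alpha|}$ for $\fT$.

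The differences are cosmetic. You obtain the one-variable bounds from the exponential rewritings $\fS(u)=2u\ee^{-u}(1-\ee^{-2u})^{-1}$ and $\fT(u)=u+2u\ee^{-2u}(1-\ee^{-2u})^{-1}$, where the paper differentiates $1/\sinh u$ and $\coth u$. You also spell out the real-analyticity of the radial extensions via the representation $g(u^2)$, which the paper takes for granted.
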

\begin{proof}
As $\left|\tanh u\right| \leq |u|$ for all $u \in \RR$, we have $\fT(u) \geq 1$ for all $u \in \RR$. Moreover, as $\fS$ and $\fT$ are even and real-analytic on $\RR$, it is clear that $\fS(|\cdot|)$ and $\fT(|\cdot|)$ are real-analytic on $\RR^{\dimWZX}$, and the estimates \eqref{eq:estimateS}-\eqref{eq:estimateT} hold trivially for $|\mu| \leq 1$. It only remains to prove the estimates \eqref{eq:estimateS}-\eqref{eq:estimateT} for large $|\mu|$.

It is easily checked by induction that, for all $k \in \NN$,
\[
\partial_u^k \left( \frac{1}{\sinh u}\right) = \frac{P_k(\sinh u, \cosh u)}{\sinh^{k+1} u},
\]
where $P_{k}$ is a two-variate homogeneous polynomial of degree $k$;
consequently, as $\sinh u \simeq \cosh u \simeq \ee^u$ for $u \geq 1$, we get
\begin{equation}\label{eq:est_1sh}
\left|\partial_u^k \left(\frac{1}{\sinh u}\right)\right| \lesssim_k \ee^{-u} \qquad\text{for } u \geq 1.
\end{equation}
As
\[
\fS^{(k)}(u) = k \partial_u^{k-1} \left(\frac{1}{\sinh u}\right) + u \partial_u^k \left(\frac{1}{\sinh u}\right),
\]
from \eqref{eq:est_1sh} we deduce that
\begin{equation}\label{eq:derS}
|\fS^{(k)}(u)| \lesssim_k u \, \ee^{-u} \qquad\text{for } u \geq 1.
\end{equation}
Now, as
\[
\partial_u \coth u = -\frac{1}{\sinh^2 u},
\]
by the Leibniz rule and \eqref{eq:est_1sh} we deduce that, for $u \geq 1$ and $k \in \NN$, 
\begin{equation}\label{eq:est_cth}
\left|\coth u\right| = \left|\frac{\sinh u}{\cosh u}\right| \simeq 1, \qquad |\partial_u^{k+1} \coth u| = \left| \partial_u^k \frac{1}{\sinh^2 u} \right| \lesssim_k e^{-2u}.
\end{equation}
Thus, much as above, from
\[
\fT^{(k)}(u) = k \partial_u^{k-1} \coth u + u \partial_u^k \coth u
\]
and \eqref{eq:est_cth} we conclude that, for $u \geq 1$ and $k \in \NN$,
\[
|\fT^{(k)}(u)| \lesssim_k \begin{cases}
u & \text{if } k = 0,\\
1 &\text{if } k = 1,\\
u \,\ee^{-2u} & \text{if } k \geq 2,
\end{cases}
\]
and in particular
\begin{equation}\label{eq:derT}
|\fT^{(k)}(u)| \lesssim_k u^{1-k} \qquad\text{for } u \geq 1.
\end{equation}

Finally, arguing as in \eqref{eq:der_abs}, for all $\alpha \in \NN^{\dimWZX}$ and $|\mu| \geq 1$, from \eqref{eq:derS} and \eqref{eq:derT} we deduce that
\[
\begin{aligned}
|\partial_{\mu}^{\alpha} \fS(|\mu|) | &\lesssim_\alpha \sum_{k=1}^{|\alpha|} |\mu|^{k-|\alpha|} |\fS^{(k)}(\mu)| \lesssim_\alpha |\mu| \, \ee^{-|\mu|}, \\
|\partial_{\mu}^{\alpha} \fT(|\mu|) | &\lesssim_\alpha \sum_{k=1}^{|\alpha|} |\mu|^{k-|\alpha|} |\fT^{(k)}(\mu)| \lesssim_\alpha |\mu|^{1-|\alpha|}
\end{aligned}
\]
as desired.
\end{proof}

\begin{lemma}\label{lem:lemmaExpST}
For all $\gamma \in \CC$, $\mu \in \RR^{\dimWZX}$, $t > 0$, $\alpha \in \NN^{\dimWZX}$,
\[
|\partial_\mu^\alpha [\fS(t|\mu|)^\gamma] | \lesssim_{\alpha,\gamma}
 t^{|\alpha|} (1+t|\mu|)^{\Re\gamma} \, \ee^{- t |\mu| \Re \gamma} 
\]
while
\[
|\partial_\mu^\alpha \ee^{-\fT(t|\mu|)/t}| \lesssim_\alpha \ee^{-\fT(t|\mu|)/t} \left( 1 + \frac{t}{1+t|\mu|} \right)^{(|\alpha|-1)_+}.
\]
\end{lemma}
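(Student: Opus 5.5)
Both estimates come from Faà di Bruno's formula combined with \eqref{eq:estimateS}--\eqref{eq:estimateT}. Write $G(\mu) \defeq \fS(|\mu|)$ and $U(\mu) \defeq \fT(|\mu|)$, so that $\fS(t|\mu|)^\gamma = G(t\mu)^\gamma$ and $\ee^{-\fT(t|\mu|)/t} = \ee^{-U(t\mu)/t}$. By the chain rule, each $\mu$-derivative of a function of $t\mu$ produces one factor of $t$. Hence $\partial_\mu^\alpha[G(t\mu)^\gamma] = t^{|\alpha|}(\partial^\alpha G^\gamma)(t\mu)$, and for the first claim it suffices to prove
\[
|\partial^\alpha_\nu [G(\nu)^\gamma]| \lesssim_{\alpha,\gamma} (1+|\nu|)^{\Re\gamma}\,\ee^{-|\nu|\Re\gamma}.
\]

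\emph{First estimate.} The function $G$ is positive and real-analytic, and $G(\nu) \simeq (1+|\nu|)\ee^{-|\nu|}$ because $\fS(u) = u/\sinh u$. By Faà di Bruno,
\[
\partial^\alpha (G^\gamma) = \sum_{k=0}^{|\alpha|} c_k\, G^{\gamma-k} \sum \prod_{i=1}^k \partial^{\beta_i} G,
\]
where $\beta_1+\dots+\beta_k = \alpha$ and each $\beta_i \neq 0$. We have $|G^{\gamma-k}| = G^{\Re\gamma-k}$, and each factor satisfies $|\partial^{\beta_i}G| \lesssim (1+|\nu|)\ee^{-|\nu|} \simeq G$ by \eqref{eq:estimateS}. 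Therefore every term is $\lesssim G^{\Re\gamma} \simeq (1+|\nu|)^{\Re\gamma}\ee^{-|\nu|\Re\gamma}$. This holds for every real $\Re\gamma$, because the comparability $G \simeq (1+|\nu|)\ee^{-|\nu|}$ is two-sided.

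\emph{Second estimate.} Here $\partial_\mu^\alpha \ee^{-U(t\mu)/t}$ equals $\ee^{-U(t\mu)/t}$ times a sum of products $\prod_{i=1}^k \bigl(-t^{-1}\,\partial_\mu^{\beta_i}[U(t\mu)]\bigr)$ with $\sum\beta_i = \alpha$. Each factor equals $-t^{|\beta_i|-1}(\partial^{\beta_i}U)(t\mu)$. By \eqref{eq:estimateT} it is bounded by
\[
t^{|\beta_i|-1}(1+t|\mu|)^{1-|\beta_i|} = \Bigl(\frac{t}{1+t|\mu|}\Bigr)^{|\beta_i|-1}.
\]
So with $x \defeq t/(1+t|\mu|)$, a product with $k$ factors is bounded by $x^{|\alpha|-k}$. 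Since $1 \leq k \leq |\alpha|$ (or $k=0$ when $\alpha=0$), the exponent $|\alpha|-k$ ranges over $0,\dots,|\alpha|-1$. Each such power satisfies $x^j \leq (1+x)^{(|\alpha|-1)_+}$, and this gives the claim.

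Neither step is a genuine obstacle. The only subtlety is bookkeeping the factors of $t$ coming from the chain rule and the $1/t$ in the exponent, and noting that a product of $k$ first-order-reduced terms gives exactly the exponent $|\alpha|-k$.
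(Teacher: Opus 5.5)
Your proposal is correct and follows essentially the same route as the paper. Both arguments reduce the first estimate to $t=1$ by scaling, expand by the chain rule, and apply \eqref{eq:estimateS} together with the two-sided bound $\fS(|\mu|)\simeq(1+|\mu|)\,\ee^{-|\mu|}$. For the second estimate, both bound each factor of the expansion by $t^{|\beta_i|-1}(1+t|\mu|)^{1-|\beta_i|}$ via \eqref{eq:estimateT} and collect the resulting powers of $t/(1+t|\mu|)$.
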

\begin{proof}
Both estimates are trivial for $\alpha =0$, so we may assume $|\alpha|>0$.

By homogeneity, the estimate for $\fS(t|\mu|)$ reduces to the case $t=1$; now, by the chain and Leibniz rules,
\[
|\partial_\mu^\alpha [\fS(|\mu|)^\gamma] | \lesssim_{\alpha,\gamma}
\sum_{P=1}^{|\alpha|} \fS(|\mu|)^{\Re\gamma-P} \sum_{\substack{\alpha_1+\dots+\alpha_P = \alpha \\ |\alpha_j| \geq 1 \ \forall j}} \prod_{j=1}^P |\partial_\mu^{\alpha_j} \fS(|\mu|)|,
\]
so the required estimate follows by \eqref{eq:estimateS} and the fact that $\fS(|\mu|) \simeq (1+|\mu|) \,\ee^{-|\mu|}$.

As for the second estimate, by the Leibniz and chain rules, together with \eqref{eq:estimateT},
\begin{equation*}
\begin{split}
\left| \partial_\mu^{\alpha} \ee^{-\fT(t|\mu|)/t} \right|
&\lesssim_{\alpha} \ee^{-\fT(t|\mu|)/t} \sum_{P=1}^{|\alpha|} \sum_{\substack{\alpha_1 +\dots +\alpha_{P}=\alpha\\|\alpha_{j}|\geq 1 \ \forall j}} \prod_{j=1}^{P} \left| \partial_\mu^{\alpha_{j}} \left[\fT(t|\mu|)/t \right] \right| \\
&\lesssim_\alpha \ee^{-\fT(t|\mu|)/t} \sum_{P=1}^{|\alpha|} \sum_{\substack{\alpha_1 +\dots +\alpha_{P}=\alpha\\|\alpha_{j}|\geq 1 \ \forall j}} \prod_{j=1}^{P} t^{|\alpha_j|-1} (1+t|\mu|)^{1-|\alpha_j|}\\
&\simeq_\alpha \ee^{-\fT(t|\mu|)/t} \sum_{P=1}^{|\alpha|} t^{|\alpha|-P} (1+t|\mu|)^{P-|\alpha|}\\
&\simeq_\alpha \ee^{-\fT(t|\mu|)/t} \left(1+ \frac{t}{1+t|\mu|}\right)^{|\alpha|-1} ,
\end{split}
\end{equation*}
as claimed.
\end{proof}

The previous lemmas allow us to obtain sufficiently precise estimates for the derivatives of the functions $\Xi_s$ from \eqref{eq:definitionHs}.

\begin{lemma}\label{lem:Propositionderivativeestimate}
Let $s \in \CC$, $\kappa  \in \Rpos$ and $c \in (0,1)$, and assume
that $\sigma \defeq \Re s > -1$. 
For all $\alpha \in \NN^{1+\dimZ}$ and $(\lambda,\mu) \in \RR^{1+\dimZ}$ such that $\lambda+(\sigma+1)|\mu| \geq \kappa |\mu|$,
\[
|\partial_{(\lambda,\mu)}^{\alpha} \Xi_{s}(\lambda,\mu)| \lesssim_{s,\alpha,c,\kappa}
\begin{cases}
(\lambda+(\sigma+1)|\mu|)^{-(|\alpha|-\sigma)_+} \,\ee^{-2c\sqrt{\lambda+(\sigma+1)|\mu|}} &\text{if } |\alpha| \neq \sigma,\\
\log\left(\ee+\frac{1}{\lambda+(\sigma+1)|\mu|}\right) \,\ee^{-2c\sqrt{\lambda+(\sigma+1)|\mu|}} &\text{if } |\alpha|=\sigma.
\end{cases}
\]
\end{lemma}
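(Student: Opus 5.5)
We work from the integral representation \eqref{eq:definitionHs} and rewrite the integrand with the functions $\fS,\fT$ of \eqref{eq:defST}:
\[
\Xi_s(\lambda,\mu) = C_s \int_0^\infty t^{-(s+1)} \fS(t|\mu|)^{s+1} \,\ee^{-\fT(t|\mu|)/t} \,\ee^{-t\lambda} \,\dd t .
\]
Since $\fT(u)\geq 1$ and, more precisely, $\fT(u)\geq 1+c'u$-type behaviour holds (indeed $\fT(u)\approx 1+u^2/3$ near $0$ and $\fT(u)\approx u$ at infinity), the factor $\ee^{-\fT(t|\mu|)/t}$ supplies the decay $\ee^{-1/t}$ as $t\to 0$. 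Likewise $\fS(t|\mu|)^{s+1}\lesssim (1+t|\mu|)^{\sigma+1}\ee^{-(\sigma+1)t|\mu|}$, which combines with $\ee^{-t\lambda}$ to give $\ee^{-t(\lambda+(\sigma+1)|\mu|)}$. We set $\Lambda\defeq\lambda+(\sigma+1)|\mu|\geq\kappa|\mu|$.

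The first step is to differentiate under the integral sign. A $\lambda$-derivative brings down a factor $-t$. For the $\mu$-derivatives we apply the Leibniz rule to the product $\fS(t|\mu|)^{s+1}\ee^{-\fT(t|\mu|)/t}$ and use Lemma \ref{lem:lemmaExpST}: each $\mu$-derivative of the $\fS$-factor costs at most $t$, while derivatives of the exponential factor cost $(1+t/(1+t|\mu|))^{(|\alpha|-1)_+}\lesssim (1+t)^{|\alpha|}$. Altogether, for $|\alpha|=k$,
\[
|\partial^\alpha \Xi_s| \lesssim \int_0^\infty t^{-\sigma-1}(1+t)^{k}\,t^{k}\,(1+t|\mu|)^{\sigma+1} \,\ee^{-1/t}\,\ee^{-t\Lambda}\,\dd t .
\]
The term $(1+t|\mu|)^{\sigma+1}\lesssim (1+t\Lambda/\kappa)^{\sigma+1}$ is absorbed by $\ee^{-(1-c')t\Lambda}$ at the cost of slightly lowering the exponential rate. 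Here we should be careful: the powers $t^{k}$ at small $t$ are the useful ones, whereas the extra $(1+t)^k$ at large $t$ must not spoil the bound. At large $t$ this is harmless because $\ee^{-t\Lambda}$ decays once $\Lambda$ is large. When $\Lambda$ is small, we check that the $\mu$-derivative factor is really $(1+t/(1+t|\mu|))\lesssim 1+\min(t,1/|\mu|)$; for the worst case we retain the finer bound $t^{|\alpha|}$ coming from $\lambda$- and $\fS$-derivatives, and $t$ times bounded factors from the $\fT$-derivatives.

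The core estimate is then the scalar integral
\[
I_{\beta}(\Lambda)\defeq\int_0^\infty t^{\beta-1}\,\ee^{-1/t-c''t\Lambda}\,\dd t, \qquad \beta=k-\sigma .
\]
By Laplace's method, the saddle at $t=\Lambda^{-1/2}$ yields $\ee^{-2\sqrt{c''\Lambda}}$ times powers of $\Lambda$ for $\Lambda\geq 1$; choosing $c''$ close to $1$ gives $\ee^{-2c\sqrt\Lambda}$, with polynomial factors absorbed by shrinking $c$. For $\Lambda\leq1$ the integral behaves like $\int_1^{1/\Lambda}t^{\beta-1}\,\dd t$. This gives $\Lambda^{-\beta}$ when $\beta>0$, $\log(\ee+1/\Lambda)$ when $\beta=0$, and a bounded quantity when $\beta<0$. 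These are exactly the three cases $\Lambda^{-(|\alpha|-\sigma)_+}$ and the logarithm at $|\alpha|=\sigma$.

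The main obstacle is controlling the $\mu$-derivatives of $\ee^{-\fT(t|\mu|)/t}$ for large $t$ without losing powers of $t$ beyond $t^{|\alpha|}$. Lemma \ref{lem:lemmaExpST} gives $(1+t/(1+t|\mu|))^{|\alpha|-1}$, which for $t|\mu|\geq1$ is at most $(1+1/|\mu|)^{|\alpha|-1}$. I would handle this by splitting into the regions $t|\mu|\leq1$ and $t|\mu|\geq1$. In the first region the factor is $\lesssim t^{|\alpha|-1}$, which is fine. In the second, $1/|\mu|\lesssim t$, so the factor is again bounded by powers of $t$, and moreover $\ee^{-t\Lambda}\leq\ee^{-\kappa t|\mu|}$ gives the required decay. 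With this split, every derivative costs at most a factor $t$, and the $I_\beta$ estimate concludes the proof. Smoothness at $\mu=0$ follows from the real-analyticity of $\fS(|\cdot|)$ and $\fT(|\cdot|)$ together with dominated convergence.
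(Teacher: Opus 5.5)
Your proposal is correct and follows the paper's proof. Both arguments rewrite $\Xi_s$ via $\fS,\fT$, differentiate under the integral with the Leibniz rule and Lemma~\ref{lem:lemmaExpST}, and use $\fT\ge1$ together with $\lambda+(\sigma+1)|\mu|\ge\kappa|\mu|$ to reduce everything to $\int_0^\infty t^{|\alpha|-\sigma-1}\ee^{-c/t-ct(\lambda+(\sigma+1)|\mu|)}\,\dd t$; the paper reads off this integral's behaviour from the Bessel function $\BesK_{|\alpha|-\sigma}$ rather than by Laplace's method, and otherwise your bookkeeping has two blemishes. Your first displayed bound $t^k(1+t)^k$ overcounts, since the $\fT$-factor in Lemma~\ref{lem:lemmaExpST} carries no extra power of $t$; the split $t|\mu|\lessgtr 1$ is also unnecessary, because $t/(1+t|\mu|)\le t$ directly gives the correct count $t^{r+|\beta_1|}(1+t)^{(|\beta_2|-1)_+}$, with all small-$t$ polynomial factors absorbed by $\ee^{-1/t}$.
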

\begin{proof}
By the definition of $\Xi_s(\lambda,\mu)$ in \eqref{eq:definitionHs}, together with \eqref{eq:defST}, we can write
\[
\frac{\Gamma\left( \frac{Q+s}{2}\right) \Gamma\left( \frac{\dimV}{2}+s\right) }{2^{\dimV} \pi^{\frac{\dimN}{2}}\Gamma\left(\frac{\dimV}{4}+\frac{s}{2}\right) }
\Xi_s(\lambda,\mu)
=\int_0^{\infty} \fS(t|\mu|)^{1+s} \, \ee^{-\fT(t|\mu|)/t} \, \ee^{-t\lambda} \, t^{-s} \,\frac{\dd t}{t}.
\]
Write $\alpha = (r,\beta)$, where $r \in \NN$ and $\beta \in \NN^{\dimZ}$. Then, by Lemma \ref{lem:lemmaExpST},
\[\begin{split}
&|\partial_{(\lambda,\mu)}^\alpha \Xi_s(\lambda,\mu)| \\
&\lesssim_{s,\alpha} \sum_{\beta_1+\beta_2=\beta} \int_0^\infty | \partial_{\mu}^{\beta_1} [\fS(t|\mu|)^{1+s}] | \, | \partial_{\mu}^{\beta_2} [\ee^{-\fT(t|\mu|)/t}] | \, \ee^{-t\lambda} \, t^{r-\sigma} \,\frac{\dd t}{t} \\
&\lesssim_{s,\alpha} \int_0^\infty (1+t|\mu|)^{1+\sigma} \, \ee^{-t[\lambda + (1+\sigma)|\mu|]} \, \ee^{-\fT(t|\mu|)/t} \\
&\qquad\times \sum_{\beta_1+\beta_2=\beta} t^{|\beta_1|+r-\sigma}  \left(1+ \frac{t}{1+t|x|}\right)^{(|\beta_2|-1)_+}   \,\frac{\dd t}{t} \\
&\lesssim_{s,\alpha} \int_0^\infty (1+t|\mu|)^{1+\sigma} \, \ee^{-t[\lambda + (1+\sigma)|\mu|]} \, \ee^{-1/t} \, t^{r-\sigma} (1+t^{|\beta|}) \,\frac{\dd t}{t} \\
&\lesssim_{c,\kappa,s,\alpha} \int_0^\infty \ee^{-ct[\lambda + (1+\sigma)|\mu|]} \, \ee^{-c/t} \, t^{|\alpha|-\sigma} \,\frac{\dd t}{t} \\
&\simeq_{c,s,\alpha} (\lambda+(1+\sigma)|\mu|)^{(\sigma-|\alpha|)/2} \,\BesK_{|\alpha|-\sigma}(2c\sqrt{\lambda+(1+\sigma)|\mu|}),
\end{split}\]
where $\BesK_\nu$ is the modified Bessel function of the second kind \cite[10.32.10]{DLMF}, and we used the fact that $\fT \geq 1$, $c \in (0,1)$ and $\lambda + (1+\sigma)|\mu| \geq \kappa |\mu|$. The desired estimate thus follows from the known asymptotics for $\BesK_\nu$ with $\nu\in \RR$, namely
\[
\BesK_\nu(t) \simeq_\nu \begin{cases}
t^{-|\nu|} &\text{if } \nu\neq 0, \ t \in (0,1],\\
\log(\ee+1/t) &\text{if } \nu= 0, \ t \in (0,1],\\
t^{-1/2} \,\ee^{-t} &\text{if } t \in [1,\infty),
\end{cases}
\]
see \cite[10.25.3, 10.27.3, 10.30.2, 10.30.3]{DLMF}, by taking into consideration that, by slightly decreasing the value of $c \in (0,1)$, any power of $\lambda+(1+\sigma)|\mu|$ can be absorbed by the exponentially decaying term $\ee^{-2c\sqrt{\lambda+(\sigma+1)|\mu|}}$ when $\lambda+(1+\sigma)|\mu| \geq 1$.
\end{proof}

Now we are in the position to present the proof of Proposition \ref{prp:theoremtildeHtildeestimate}.

\begin{proof}[Proof of Proposition \ref{prp:theoremtildeHtildeestimate}]
We first prove \eqref{eq:l12H0est}. Let $\delta \in \Rpos$. For any $\alpha \in\NN\times\NN^{\dimZ}$, when $\lambda\geq \kappa|\mu|$, by Lemma \ref{lem:Propositionderivativeestimate},
\[\begin{split}
\left| \lambda^{|\alpha|} \partial_{(\lambda,\mu)}^{\alpha} [\lambda^\delta \Xi_0 (\lambda,\mu)] \right| 
&\lesssim_{\delta,\alpha} \sum_{\alpha'\leq \alpha} \lambda^{\delta+|\alpha'|} \left| \partial_{(\lambda,\mu)}^{\alpha'} \Xi_0(\lambda,\mu)\right| \\
&\lesssim_{\alpha,c,\kappa} \lambda^{\delta} \log(\ee+1/\lambda) \,\ee^{-2c\sqrt{\lambda+|\mu|}},
\end{split}\]
which, in light of \eqref{eq:F0Fbullet}, proves \eqref{eq:l12H0est}.

As for \eqref{eq:tH2est}, recalling the definitions in \eqref{eq:H123}, and setting 
\begin{equation*}
\begin{aligned}
\tilde \Xi_2^{(1)}(\lambda,s,\mu) &\defeq \int_{-1}^1 \partial_\lambda \Xi_2 (\lambda+2vs,\mu) \,\dd v,\\
\tilde \Xi_2^{(2)}(\lambda,s,\mu) &\defeq \int_{-1}^1 \partial_\lambda^2 \Xi_2 (\lambda+2vs,\mu) \, (1-|v|) \,\dd v,
\end{aligned}
\end{equation*}
we see that, for $b=1,2$, the function $s \mapsto \tilde \Xi_2^{(b)}(\lambda,s,\mu)$ is even and
\[
\tilde \Xi_2^{(b)}(\lambda,|\mu|,\mu) = \Xi_2^{(b)}(\lambda,\mu).
\]
Take $\alpha=(r,\beta) \in \NN \times \NN^{\dimZ}$; then, by Lemma \ref{lem:propositionderivative}, 
\begin{equation}\label{eq:H4}
\begin{split}
| \partial^{\alpha}_{(\lambda,\mu)} \Xi_2^{(b)} (\lambda,\mu)|
&\lesssim_\alpha \sum_{\beta_{1}+\beta_{2}=\beta} \sup_{u\in[0,1]} \left| \left.\partial^{r}_\lambda \partial_{s}^{|\beta_{1}|}\partial_{\mu}^{\beta_{2}} \tilde \Xi_2^{(b)}(\lambda,s,\mu)\right|_{s=u|\mu|}\right|\\
&\lesssim_\alpha \sup_{\substack{ v\in[-1,1] \\ \gamma \tc |\gamma| = b+|\alpha|}} \left| [\partial_{(\lambda,\mu)}^\gamma \Xi_2](\lambda+2v|\mu|,\mu) \right|.
\end{split}
\end{equation}
Notice now that, for $v \in [-1,1]$, we have $(\lambda+2v|\mu|)+3|\mu|\geq \lambda+|\mu|$. Hence from \eqref{eq:H4} and Lemma \ref{lem:Propositionderivativeestimate} we conclude that, for all $c \in (0,1)$,
\[
| \partial^{\alpha}_{(\lambda,\mu)} \Xi^{(b)}_2 (\lambda,\mu)| \lesssim_{\alpha,c} \begin{cases}
(\lambda+|\mu|)^{-(|\alpha|+b-2)_+} \,\ee^{-2c \sqrt{\lambda+|\mu|}} &\text{if } |\alpha| \neq 2-b,\\
\log(\ee+1/(\lambda+|\mu|)) \,\ee^{-2c \sqrt{\lambda+|\mu|}} &\text{if } |\alpha| = 2-b.
\end{cases}
\]
We thus see that, for $\lambda \geq \kappa|\mu|$,
\begin{multline*}
\left|\lambda^{|\alpha|} \partial_{(\lambda,\mu)}^{\alpha} [(\dimV/4) \Xi_2^{(1)}(\lambda,\mu) + \lambda \Xi_2^{(2)}(\lambda,\mu)] \right| \\
\lesssim_{\alpha,c,\kappa} \begin{cases}
\ee^{-2c\sqrt{\lambda+|\mu|}} &\text{if } |\alpha|=0,\\
\lambda \log(\ee+1/\lambda) \, \ee^{-2c\sqrt{\lambda+|\mu|}} &\text{if } |\alpha|=1,\\
\lambda \, \ee^{-2c\sqrt{\lambda+|\mu|}} &\text{if } |\alpha|\geq 2.
\end{cases}
\end{multline*}
Similarly, from Lemma \ref{lem:Propositionderivativeestimate} we also deduce that
\[
\left| \lambda^{|\alpha|} \partial_{(\lambda,\mu)}^{\alpha} \Xi_2(\lambda,\mu) \right| 
\lesssim_{\alpha,c,\kappa} \begin{cases}
\lambda^{|\alpha|} \ee^{-2c\sqrt{\lambda+|\mu|}} &\text{if } |\alpha|\leq 1,\\
\lambda^2 \log(\ee+1/\lambda) \, \ee^{-2c\sqrt{\lambda+|\mu|}} &\text{if } |\alpha|=2,\\
\lambda^2 \, \ee^{-2c\sqrt{\lambda+|\mu|}} &\text{if } |\alpha|\geq 3.
\end{cases}
\]
In light of \eqref{eq:definitiontildeHs} and \eqref{eq:F0Fbullet}, the estimate \eqref{eq:tH2est} follows.

From \eqref{eq:tH2est} we also deduce that
\[
\left|\nabla_{(\lambda,\mu)}[\lambda^{|\alpha|} \partial_{(\lambda,\mu)}^\alpha F_0(\lambda,\mu)] \right|
\lesssim_{c,\kappa,\alpha} \begin{cases}
\log(\ee+1/\lambda) \, \ee^{-2c\sqrt{\lambda+|\mu|}} &\text{if } |\alpha|\leq 1,\\
\ee^{-2c\sqrt{\lambda+|\mu|}} &\text{if } |\alpha|\geq 2,
\end{cases}
\]
and the claimed H\"older continuity thus follows from Lemma \ref{lem:Lemmanabla} below.
\end{proof}

\begin{lemma}\label{lem:Lemmanabla}
Let $\kappa,C \in \Rpos$, $\varepsilon \in (0,1)$, and $\dimWZX \in \Npos$. Let $\Omega \subseteq \{(\lambda,\mu)\in \Rpos \times \RR^{\dimWZX} \tc \lambda\geq \kappa |\mu|\}$ be convex. Assume that $F\in C^{1}(\Omega)$ satisfies 
\begin{equation}\label{eq:ass_grad_hoelder}
\left|\nabla_{(\lambda,\mu)}F(\lambda,\mu) \right|\leq C\lambda^{\varepsilon-1}  \qquad\forall(\lambda,\mu)\in\Omega.
\end{equation}
Then, $F$ is $\varepsilon$-H\"older continuous, i.e.,
\[
\left|F(\lambda,\mu)-F(\lambda',\mu') \right|\lesssim_{\varepsilon,\kappa}	C \left|(\lambda-\lambda',\mu-\mu') \right|^{\varepsilon} \qquad\forall(\lambda,\mu),(\lambda',\mu')\in\Omega.
\]
\end{lemma}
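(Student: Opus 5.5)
Fix two points $P=(\lambda,\mu)$ and $P'=(\lambda',\mu')$ in $\Omega$, and set $d \defeq |P-P'|$. By symmetry assume $\lambda \geq \lambda'$. Since $\Omega$ is convex, the segment $P_\tau \defeq P' + \tau(P-P')$, $\tau\in[0,1]$, lies in $\Omega$. Also $\lambda \geq \kappa|\mu|$ on $\Omega$, so $|P_\tau| \simeq_\kappa \lambda_\tau$, where $\lambda_\tau$ is the first coordinate of $P_\tau$. I split into two cases according to whether $d$ is small or large compared with $\lambda$.

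\emph{Case $d \leq \lambda/2$.} Then $\lambda' \geq \lambda - d \geq \lambda/2$, and $\lambda_\tau$ is a convex combination of $\lambda,\lambda'$, so $\lambda_\tau \geq \lambda/2$ along the whole segment. The fundamental theorem of calculus and \eqref{eq:ass_grad_hoelder} give
\[
|F(P)-F(P')| \leq \int_0^1 |\nabla F(P_\tau)| \, d \,\dd\tau \leq C (\lambda/2)^{\varepsilon-1} d .
\]
Since $d \leq \lambda/2$ and $\varepsilon-1<0$, we have $(\lambda/2)^{\varepsilon-1} \leq d^{\varepsilon-1}$. Hence $|F(P)-F(P')| \leq C d^\varepsilon$.

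\emph{Case $d > \lambda/2$.} Here $\lambda' \leq \lambda < 2d$, so both points satisfy $|P|,|P'| \lesssim_\kappa d$. I compare each point with a point near the origin along a ray. The main obstacle is that the origin need not belong to $\Omega$. The way around it is to use the segment itself and integrate the gradient bound, using that $\lambda_\tau^{\varepsilon-1}$ is integrable near a zero of the affine function $\tau\mapsto\lambda_\tau$. Concretely, $\lambda_\tau$ is affine in $\tau$ with $|\partial_\tau\lambda_\tau| = |\lambda-\lambda'| \leq d$. Then
\[
|F(P)-F(P')| \leq C d \int_0^1 \lambda_\tau^{\varepsilon-1}\,\dd\tau .
\]
If $\lambda-\lambda' \geq d/2$, the change of variables $u=\lambda_\tau$ bounds the integral by $\frac{1}{\lambda-\lambda'}\int_{\lambda'}^{\lambda} u^{\varepsilon-1}\dd u \lesssim d^{-1}\lambda^\varepsilon/\varepsilon \lesssim_\varepsilon d^{\varepsilon-1}$. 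This gives the claim in that subcase.

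If instead $\lambda-\lambda' < d/2$, then $|\mu-\mu'| \gtrsim d$. But $|\mu|,|\mu'| \leq \lambda/\kappa$, so $|\mu-\mu'| \leq 2\lambda/\kappa$, which forces $\lambda \gtrsim_\kappa d$. Together with $\lambda' \geq \lambda - d/2$, this does not immediately give a lower bound on $\lambda'$. I would refine the case split: use the threshold $d \leq \theta\lambda$ with small $\theta=\theta(\kappa)$ in the first case, which there yields $C\theta^{\varepsilon-1}d^\varepsilon$. In the second case ($d>\theta\lambda$), if $\lambda-\lambda'\geq \theta' d$ the integration above works. Otherwise $\lambda_\tau \geq \lambda'$ for all $\tau$, and the cone constraint gives $|\mu-\mu'| \leq (\lambda+\lambda')/\kappa$, so $\lambda' \gtrsim_\kappa d$ once $\theta'$ is small. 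Then $\lambda_\tau \gtrsim_\kappa d$ on the segment, and the FTC bound gives $\lesssim_\kappa C d^{\varepsilon}$. Choosing the constants $\theta,\theta'$ depending on $\kappa$ is the only delicate bookkeeping; the rest is routine.
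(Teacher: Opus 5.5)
Your proof is correct, but it is organized differently from the paper's. (After your refinement, the first case already works with $\theta=1/2$, and only $\theta'$ needs to depend on $\kappa$.)

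Both arguments integrate $\nabla F$ along the segment from $(\lambda',\mu')$ to $(\lambda,\mu)$ and use the cone condition. The paper then proves one uniform estimate,
\[
\int_0^1 [(1-\tau)\lambda'+\tau\lambda]^{\varepsilon-1}\,\dd\tau \simeq_\varepsilon \max\{\lambda,\lambda'\}^{\varepsilon-1}, \qquad \lambda,\lambda'>0 .
\]
It combines this with the bound $d \le |\lambda-\lambda'|+|\mu-\mu'| \le (1+1/\kappa)(\lambda+\lambda') \lesssim_\kappa \max\{\lambda,\lambda'\}$. Note that this bound holds for every pair of points in $\Omega$, not only in your second case. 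Then $d\max\{\lambda,\lambda'\}^{\varepsilon-1} = d^\varepsilon (d/\max\{\lambda,\lambda'\})^{1-\varepsilon} \lesssim_\kappa d^\varepsilon$, and no case distinction is needed.

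The obstacle you ran into in the subcase $\lambda-\lambda'<d/2$ is not real: no lower bound on $\lambda'$ is required. For $\lambda>\lambda'$ and $x=\lambda'/\lambda\in[0,1)$,
\[
\frac{\lambda^\varepsilon-(\lambda')^\varepsilon}{\lambda-\lambda'} = \lambda^{\varepsilon-1}\,\frac{1-x^\varepsilon}{1-x} \le \lambda^{\varepsilon-1},
\]
because $x^\varepsilon\ge x$ on $[0,1]$. So the change-of-variables bound you already wrote gives $\int_0^1\lambda_\tau^{\varepsilon-1}\,\dd\tau\le\varepsilon^{-1}\lambda^{\varepsilon-1}$ uniformly, including when $\lambda'$ is tiny or $\lambda-\lambda'$ is small compared with $d$. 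This inequality also makes rigorous the paper's somewhat informal split into the regimes $\lambda\simeq\lambda'$ and $\lambda\gg\lambda'$.

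Your route replaces this one-line inequality with a three-regime analysis. In the regime where $\lambda$ and $\lambda'$ are close, you use the cone condition to force $\lambda'\gtrsim_\kappa d$. It is valid, just longer than necessary.
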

\begin{proof}
Without loss of generality, we may assume $C=1$. Via the fundamental theorem of calculus, and as $\Omega$ is convex, from \eqref{eq:ass_grad_hoelder} we deduce that 
\begin{equation}\label{eq:ftc_hoelder}
\begin{split}
\left|F(\lambda,\mu)-F(\lambda',\mu') \right|
&=\left| \int_0^1 \nabla_{(\lambda,\mu)} F((1-t)(\lambda',\mu')+t(\lambda,\mu))\cdot(\lambda-\lambda',\mu-\mu') \,\dd t\right| \\
&\leq \left|(\lambda-\lambda',\mu-\mu') \right| \int_0^1 \left[ (1-t)\lambda'+t\lambda\right] ^{\varepsilon-1} \,\dd t.
\end{split}
\end{equation}

Notice now that
\begin{equation}\label{eq:claim_hoelder}
\int_0^1 \left[ (1-t)\lambda'+t\lambda\right]^{\varepsilon-1} \,\dd t
\simeq_{\varepsilon} \max\{\lambda,\lambda'\}^{\varepsilon-1}.
\end{equation}
Indeed, if $\lambda\simeq\lambda'$, then $(1-t)\lambda'+t\lambda\simeq\lambda\simeq\lambda'\simeq\max\{\lambda,\lambda'\}$, and \eqref{eq:claim_hoelder} trivially follows. If instead $\lambda\gg\lambda'$ (the case $\lambda \ll \lambda'$ is treated analogously), the change of variables $u = (1-t)\lambda'+t\lambda$ gives
\[\begin{split}
\int_{0}^{1}\left[ (1-t)\lambda'+t\lambda\right] ^{\varepsilon-1} \,\dd t
&=\frac{1}{\lambda-\lambda'}\int_{\lambda'}^{\lambda}u^{\varepsilon-1} \,\dd u\\
&=\frac{\lambda^{\varepsilon}-(\lambda')^{\varepsilon}}{\varepsilon(\lambda-\lambda')}
\simeq_{\varepsilon}\frac{\lambda^{\varepsilon}}{\lambda}
=\lambda^{\varepsilon-1}
=\max\{\lambda,\lambda'\}^{\varepsilon-1},
\end{split}\]
and \eqref{eq:claim_hoelder} is proved.

From \eqref{eq:ftc_hoelder} and \eqref{eq:claim_hoelder} we see that
\[\begin{split}
\left|F(\lambda,\mu)-F(\lambda',\mu') \right|
&\lesssim_{\varepsilon} \left|(\lambda-\lambda',\mu-\mu') \right|\max\{\lambda,\lambda'\}^{\varepsilon-1} \\
&=\left|(\lambda-\lambda',\mu-\mu') \right|^{\varepsilon} \left( \frac{\left|(\lambda-\lambda',\mu-\mu') \right|}{\max\{\lambda,\lambda'\}}\right) ^{1-\varepsilon} \\
&\lesssim_{\varepsilon,\kappa} \left|(\lambda-\lambda',\mu-\mu') \right|^{\varepsilon},
\end{split}\]
as required; in the last step we used that
\[
\left|(\lambda-\lambda',\mu-\mu') \right|\lesssim |\lambda-\lambda'|+|\mu-\mu'|\leq\lambda+\lambda'+|\mu|+|\mu'|\lesssim_\kappa \max\{\lambda,\lambda'\}.
\]
for all $(\lambda,\mu),(\lambda',\mu') \in \Omega$.
\end{proof}

\subsection{\texorpdfstring{$L^{p}$}{Lp}-boundedness of the Riesz transforms for \texorpdfstring{$p\geq 2$}{p>=2}}

We now have all the ingredients to complete the proof of Theorem \ref{thm:maintheorem}.

\begin{proof}[Proof of Theorem \ref{thm:maintheorem}]
By Theorem \ref{thm:Theoremboundedleq2} we know that the Riesz transforms $\Rz_j$ are of weak type $(1,1)$ and $L^p$-bounded for $p \in (1,2]$. So, by duality, it only remains to prove that the adjoint transforms $\Rz_j^*$ are $L^p$-bounded for $p \in (1,2]$ and, by Propositions \ref{prp:reductioninfinity} and \ref{prp:RzNfactorisation}, we are reduced to showing that the convolution operators with kernels $K_0,K_{\lie{v}},K_{\lie{z}}$ are $L^p$-bounded. 

From the discussion in Section \ref{ss:convKopvalmult}, those convolution operators can be written as $M_0(\opLN,-i\nabla_z),M_{\lie{v}}(\opLN,-i\nabla_z),M_{\lie{z}}(\opLN,-i\nabla_z)$. By Proposition \ref{prp:propositionaboutkjk0}, the operator-valued symbols $M_\ell$ with $\ell \in \{0,\lie{v},\lie{z}\}$ satisfy the smoothness assumptions of the multiplier theorem stated in Corollary \ref{cor:applytheorem}, so by the latter result the corresponding operators $M_\ell(\opLN,-i\nabla_z)$ are indeed $L^p$-bounded, as required.
\end{proof}


\begin{thebibliography}{99}

\bibitem{AL94}
G. Alexopoulos and N. Lohou\'e, Riesz means on Lie groups and Riemannian manifolds of nonnegative curvature,
\textit{Bull. Soc. Math. France} \textbf{122} (1994), 209--223.

\bibitem{ADY96}
J.P. Anker, E. Damek, and C. Yacoub, Spherical analysis on harmonic $AN$ groups,
\textit{Ann. Scuola Norm. Sup. Pisa Cl. Sci. (4)} \textbf{23} (1996), 643--679.

\bibitem{ACDH04}
P. Auscher, T. Coulhon, X.T. Duong, and S. Hoffman, Riesz transform on manifolds and heat kernel regularity,
\textit{Ann. Sci. \'Ecole Norm. Sup. (4)} \textbf{37} (2004), 911--957.

\bibitem{B89}
D. Bakry, The Riesz transforms associated with second order differential operators.
In: \textit{Seminar on Stochastic Processes, 1988 (Gainesville, FL, 1988)}, 
Progr. Probab., 17, Birkh\"auser, Boston, MA, 1989, pp.\ 1--43.

\bibitem{CG84}
M. Christ and D. Geller, Singular integral characterizations of Hardy spaces on homogeneous groups,
\textit{Duke Math. J.} \textbf{51} (1984), 547--598.

\bibitem{CD03}
T. Coulhon and X.T. Duong, Riesz transform and related inequalities on noncompact Riemannian manifolds,
\textit{Comm. Pure Appl. Math.} \textbf{56} (2003), 1728--1751.

\bibitem{CDKR91}
M.G. Cowling, A.H. Dooley, A. Koranyi, and F. Ricci, H-type groups and Iwasawa decompositions,
\textit{Adv. Math.} \textbf{87} (1991), 1--41.

\bibitem{CDKR98}
M.G. Cowling, A.H. Dooley, A. Koranyi, and F. Ricci, An approach to symmetric spaces of rank one via groups of Heisenberg type,
\textit{J. Geom. Anal.} \textbf{8} (1998), 199--237.

\bibitem{CGHM94}
M.G. Cowling, S. Giulini, A. Hulanicki, and G. Mauceri, Spectral multipliers for a distinguished Laplacian on certain groups of exponential growth,
\textit{Studia Math.} \textbf{111} (1994), 103--121.

\bibitem{D87}
E. Damek, Curvature of a semidirect extension of a Heisenberg type nilpotent group,
\textit{Colloq. Math.} \textbf{53} (1987), 249--253.

\bibitem{DR92}
E. Damek and F. Ricci, A class of nonsymmetric harmonic Riemannian spaces,
\textit{Bull. Amer. Math. Soc. (N.S.)} \textbf{27} (1992), 139--142.

\bibitem{DR921}
E. Damek and F. Ricci, Harmonic analysis on solvable extensions of H-type groups,
\textit{J. Geom. Anal.} \textbf{2} (1992), 213--248.

\bibitem{DLMF}
Digital Library of Mathematical Functions, version 1.2.5 (2025-12-15), \url{https://dlmf.nist.gov}.
		
\bibitem{tERS97}
A.F.M. ter Elst, D.W. Robinson, and A. Sikora, Heat kernels and Riesz transforms on nilpotent Lie groups,
\textit{Colloq. Math.} \textbf{74} (1997), 191--218.

\bibitem{EMOT1}
A. Erdelyi, W. Magnus, F. Oberhettinger, and F.G. Tricomi, \textit{Higher transcendental functions. Vol. I}, Robert E. Krieger Publishing Co. Inc., Melbourne, Fla., 1981.

\bibitem{F75}
G.B. Folland, Subelliptic estimates and function spaces on nilpotent Lie groups,
\textit{Ark. Mat.} \textbf{13} (1975), 161--207.

\bibitem{GS96}
G.I. Gaudry and P. Sj\"ogren, Singular integrals on Iwasawa $NA$ groups of rank $1$,
\textit{J. Reine Angew. Math.} \textbf{479} (1996), 39--66.

\bibitem{GS99}
G.I. Gaudry and P. Sj\"ogren, Haar-like expansions and boundedness of a Riesz operator on a solvable Lie group,
\textit{Math. Z.} \textbf{232} (1999), 241--256.

\bibitem{HS03}
W. Hebisch and T. Steger, Multipliers and singular integrals on exponential growth groups,
\textit{Math. Z.} \textbf{245} (2003), 37--61.

\bibitem{H74}
A. Hulanicki, Subalgebra of $L^1(G)$ associated with Laplacian on a Lie group,
\textit{Coll. Math.} \textbf{31} (1974), 259--287.

\bibitem{HvNVW16}
T. Hyt\"onen, J. van Neerven, M. Veraar, and L. Weis, \textit{Analysis in Banach Spaces. Vol. I}, Ergebnisse der Mathematik und ihrer Grenzgebiete, vol. 63, Springer, Cham, 2016.

\bibitem{HvNVW17}
T. Hyt\"onen, J. van Neerven, M. Veraar, and L. Weis, \textit{Analysis in Banach Spaces. Vol. II}, Ergebnisse der Mathematik und ihrer Grenzgebiete, vol. 67, Springer, Cham, 2017.

\bibitem{K80}
A. Kaplan, Fundamental solutions for a class of hypoelliptic PDE generated by composition of quadratic forms,
\textit{Trans. Amer. Math. Soc.} \textbf{258} (1980), 147--153.

\bibitem{KW16}
C. Kriegler and L. Weis, Paley--Littlewood decomposition for sectorial operators and interpolation spaces,
\textit{Math. Nachr.} \textbf{289} (2016), 1488--1525.

\bibitem{KW18}
C. Kriegler and L. Weis, Spectral multiplier theorems via $H^{\infty}$ calculus and R-bounds,
\textit{Math. Z.} \textbf{289} (2018), 405--444.

\bibitem{LMSTV23}
M. Levi, A. Martini, F. Santagati, A. Tabacco, and M. Vallarino, Riesz transform for a flow Laplacian on homogeneous trees,
\textit{J. Fourier Anal. Appl.} \textbf{29} (2023), art.no. 15.

\bibitem{L06}
X.-D. Li, Riesz transforms for symmetric diffusion operators on complete Riemannian manifolds,
\textit{Rev. Mat. Iberoam.} \textbf{22} (2006), 591--648.

\bibitem{LM98}
N. Lohou\'e and S. Mustapha, Sur les transform\'ees de Riesz sur les groupes de Lie moyennables et sur certains espaces homog\`enes,
\textit{Canad. J. Math.} \textbf{50} (1998), 1090--1104.

\bibitem{LV85}
N. Lohou\'e and N.T. Varopoulos, Remarques sur les transform\'ees de Riesz sur les groupes de Lie nilpotents,
\textit{C. R. Acad. Sci. Paris S\'er. I Math.} \textbf{301} (1985), 559--560.

\bibitem{M10}
A. Martini, \textit{Algebras of differential operators on Lie groups and spectral multipliers}, PhD Thesis, Scuola Normale Superiore, 2010, \arXivlink{1007.1119}.

\bibitem{M11}
A. Martini, Spectral theory for commutative algebras of differential operators on Lie groups,
\textit{J. Funct. Anal.} \textbf{260} (2011), 2767--2814.

\bibitem{M12}
A. Martini, Analysis of joint spectral multipliers on Lie groups of polynomial growth,
\textit{Ann. Inst. Fourier (Grenoble)} \textbf{62} (2012), 1215--1263.

\bibitem{M17}
A. Martini, Joint functional calculi and a sharp multiplier theorem for the Kohn Laplacian on spheres,
\textit{Math. Z.} \textbf{286} (2017), 1539--1574.

\bibitem{M23}
A. Martini, Riesz transforms on $ax+b$ groups, \textit{J. Geom. Anal.} \textbf{33} (2023), art.no. 222.

\bibitem{MM16}
A. Martini and D. M\"uller, Spectral multipliers on 2-step groups: topological versus homogeneous dimension,
\textit{Geom. Funct. Anal.} \textbf{26} (2016), 680--702.	

\bibitem{MP24}
A. Martini and P. Plewa, Singular integrals on $ax+ b$ hypergroups and an operator-valued spectral multiplier theorem, 
\textit{Ann. Sc. Norm. Super. Pisa Cl. Sci. (5)} (to appear), \DOIlink{10.2422/2036-2145.202409\_036}.

\bibitem{MP24bis}
A. Martini and P. Plewa, $L^p$-boundedness of Riesz transforms on solvable extensions of Carnot groups, preprint (2024), \arXivlink{2409.13233}.

\bibitem{MSTV25}
A. Martini, F. Santagati, A. Tabacco, and M. Vallarino, Riesz transform and spectral multipliers for the flow Laplacian on nonhomogeneous trees,
\textit{Rev. Mat. Iberoam.} \textbf{41} (2025), 2221--2282.

\bibitem{MV21}
A. Martini and M. Vallarino, Riesz transforms on solvable extensions of stratified groups,
\textit{Studia. Math.} \textbf{259} (2021), 175--200.

\bibitem{MRS96}
D. M\"uller, F. Ricci, and E.M. Stein, Marcinkiewicz multipliers and multi-parameter structure on Heisenberg (-type) groups. II,
\textit{Math. Z.} \textbf{221} (1996), 267--291.

\bibitem{MV10}
D. M\"uller and M. Vallarino, Wave equation and multiplier estimates on Damek-Ricci spaces,
\textit{J. Four. Anal. Appl.} \textbf{16} (2010), 204--232.

\bibitem{RT20}
L. Roncal and S. Thangavelu, An extension problem and trace Hardy inequality for the sub-Laplacian on H-type groups,
\textit{Int. Math. Res. Not. IMRN} \textbf{2020} (2020), 4238--4294.

\bibitem{Sj99}
P. Sj\"ogren, An estimate for a first-order Riesz operator on the affine group,
\textit{Trans. Amer. Math. Soc.} \textbf{351} (1999), 3301--3314.

\bibitem{SV08}
P. Sj\"ogren and M. Vallarino, Boundedness from $H^1$ to $L^1$ of Riesz transforms on a Lie group of exponential growth,
\textit{Ann. Inst. Fourier (Grenoble)} \textbf{58} (2008), 1117--1151.

\bibitem{S95}
E.M. Stein, \textit{Harmonic Analysis: Real-Variable Methods, Orthogonality, and Oscillatory Integrals},
Princeton Mathematical Series, 43, Princeton University Press, Princeton, NJ, 1993.

\bibitem{SW07}
\v{Z}. \v{S}trkalj and L. Weis, On operator-valued Fourier multiplier theorems,
\textit{Trans. Amer. Math. Soc.} \textbf{359} (2007), 3529--3547.

\bibitem{V07}
M. Vallarino, Spectral multipliers on Damek--Ricci spaces,
\textit{J. Lie Theory} \textbf{17} (2007), 163--189.

\bibitem{VSC92}
N.T. Varopoulos, L. Saloff-Coste, and T. Coulhon, \textit{Analysis and Geometry on Groups},
Cambridge University Press, Cambridge, 1992.

\bibitem{W01}
L. Weis, Operator-valued Fourier multiplier theorems and maximal $L_p$-regularity,
\textit{Math. Ann.} \textbf{319} (2001), 735--758.

\end{thebibliography}
\end{document}